\newcommand{\f}{\varphi}
\newcommand{\aA}{{\mathcal{A}}}
\newcommand{\bB}{{\mathcal{B}}}
\newcommand{\cC}{{\mathcal{C}}}
\newcommand{\dD}{{\mathcal{D}}}
\newcommand{\eE}{{\mathcal{E}}}
\newcommand{\fF}{{\mathcal{F}}}
\newcommand{\hH}{{\mathcal{H}}}
\newcommand{\iI}{{\mathcal{I}}}
\newcommand{\kK}{{\mathcal{K}}}
\newcommand{\mM}{{\mathcal{M}}}
\newcommand{\nN}{{\mathcal{N}}}
\newcommand{\lL}{{\mathcal{L}}}
\newcommand{\oO}{{\mathcal{O}}}
\newcommand{\pP}{{\mathcal{P}}}
\newcommand{\qQ}{{\mathcal{Q}}}
\newcommand{\tT}{{\mathcal{T}}}
\newcommand{\yY}{{\mathcal{Y}}}
\newcommand{\ve}{\varepsilon}
\newcommand{\lle}{\mbox{\raisebox{0.25ex}{${\scriptscriptstyle\le}$}}}
\newcommand{\gge}{\mbox{\raisebox{0.25ex}{${\scriptscriptstyle\ge}$}}}
\newcommand{\tr}{{\mbox{$t$-struc}\-tu\-r}}
\newcommand{\bcdot}{{\mbox{\boldmath{$\cdot$}}}}
\newcommand{\Bimod}{\mathop{\textbf{Bimod}}\nolimits}
\newcommand{\FM}{\mathop{\textbf{FM}}\nolimits}
\newcommand{\Cat}{\mathop{\textbf{Cat}}\nolimits}
\newcommand{\Hom}{\mathop{\textrm{Hom}}\nolimits}
\newcommand{\Ext}{\mathop{\textrm{Ext}}\nolimits}
\newcommand{\End}{\mathop{\textrm{End}}\nolimits}
\newcommand{\Coh}{\mathop{\textrm{Coh}}\nolimits}
\newcommand{\opp}{\mathop{\textrm{op}}\nolimits}
\newcommand{\hHom}{\mathop{\mathcal{H}\textrm{om}}\nolimits}
\newcommand{\Ex}{\mathop{\textrm{Ex}}\nolimits}
\newcommand{\Spec}{\mathop{\textrm{Spec}}\nolimits}
\newcommand{\Per}[1]{\mathop{{}^{#1}\textrm{Per}}\nolimits}
\newcommand{\Id}{\mathop{\textrm{Id}}\nolimits}
\newcommand{\Perf}{\mathop{\textrm{Perf}}\nolimits}
\newcommand{\Hot}{\mathop{\textrm{Hot}}\nolimits}
\newcommand{\wt}[1]{\widetilde{#1}}
\newcommand{\ol}[1]{\overline{#1}}
\newtheorem*{THM*}{Theorem}
\newtheorem*{COR*}{Corollary}
\newtheorem*{PROP*}{Proposition}
\newtheorem*{LEM*}{Lemma}
\newtheorem{LEM}{Lemma}[section]
\newtheorem{THM}[LEM]{Theorem}
\newtheorem{PROP}[LEM]{Proposition}
\newtheorem{COR}[LEM]{Corollary}
\theoremstyle{definition}
\newtheorem{REM}[LEM]{Remark}
\newtheorem{DEF}[LEM]{Definition}
\begin{document}
\title{Flops and spherical functors}
\author{Agnieszka Bodzenta}
\author{Alexey Bondal}
\date{\today}

\address{Agnieszka Bodzenta\\
	Faculty of Mathematics, Informatics and Mechanics 
	University of Warsaw \\ Banacha 2 \\ Warsaw 02-097,
	Poland} \email{A.Bodzenta@mimuw.edu.pl}

\address{Alexey Bondal\\
	Steklov Mathematical Institute of Russian Academy of Sciences, Moscow, Russia, and \\
	AGHA Laboratory, Moscow Institute of Physics and Technology, Russia, and\\
	Kavli Institute for the Physics and Mathematics of the Universe (WPI), The University of Tokyo, Kashiwa, Chiba 277-8583, Japan,  and \\
	National Research University Higher School of Economics, Russia} \email{bondal@mi.ras.ru}

	\begin{abstract}
	We study derived categories of Gorenstein varieties $X$ and $X^+$ connected by a flop. We assume that the flopping contractions $f\colon X\to Y$, $f^+ \colon X^+ \to Y$ have fibers of dimension bounded by 1 and $Y$ has canonical hypersurface singularities of multiplicity 2. We consider the fiber product $W=X\times_YX^+$ with projections $p\colon W\to X$, $p^+\colon W\to X^+$ and prove that the flop functors $F = Rp^+_*Lp^* \colon \dD^b(X) \to \dD^b(X^+)$, $F^+= Rp_*L{p^+}^* \colon \dD^b(X^+) \to \dD^b(X)$ are  equivalences, inverse to those constructed by M. Van den Bergh.
	
	The composite $F^+ \circ F \colon \dD^b(X) \to \dD^b(X)$ is a non-trivial auto-equivalence. When variety $Y$ is affine, we present $F^+ \circ F$ as the spherical cotwist associated to a spherical functor $\Psi$. The functor $\Psi$ is constructed by deriving the inclusion of the null-category $\mathscr{A}_f$ of sheaves $\fF \in \Coh (X)$ with $Rf_*(\fF )=0$ into $\Coh (X)$.
	
	We construct a spherical pair ($\dD^b(X)$, $\dD^b(X^+)$) in the quotient $\dD^b(W) /\kK^b$, where $\kK^b$ is the common kernel of the derived push-forwards for the projections to $X$ and $X^+$, thus implementing in  geometric terms a schober for the flop.
	
	A technical innovation of the paper is the $L^1f^*f_*$ vanishing for the Van den Bergh's projective generator. We construct a projective generator in the null-category and prove that its endomorphism algebra is the contraction algebra. 
\end{abstract}

\maketitle

\tableofcontents
\section{Introduction}\label{sec_intro}
A homological interpretation of the Minimal Model Program (MMP) in Birational Geometry was proposed in \cite{BonOrl,BO}. The basic idea is that MMP is about "minimisation" of the derived category $\dD^b(X)$ of coherent sheaves on a variety $X$ for varieties in a given birational class. More precisely, it is expected that if $X$  allows a divisorial contraction or a flip $X\dashrightarrow Y$, then $\dD^b(X)$ has a semi-orthogonal decomposition (SOD) with one component of SOD equivalent to $\dD^b(Y)$. Thus, minimizing the birational model should have the categorical meaning of chopping off semi-orthogonal factors of the derived category. A minimal model is expected to be a representative in the birational class of a variety whose derived category does not allow semi-orthogonal factors equivalent to the derived category of a variety birationally equivalent to $X$.

Since MMP in dimension greater than 2 deals with singular varieties, the right choice of the derived category to consider also matters. In particular, for ${\mathbb Q}$-Gorenstein varieties the derived category of a suitable stack is relevant (cf. \cite{Kaw}, \cite{Kaw1}). Minimal models are not unique, and MMP considers birational maps, called flops,  that link various minimal models. Conjecturally, flops induce derived equivalences \cite{BonOrl}, \cite{Kaw}, \cite{BO}.

There is a numerous evidence in favour of this conjectural picture, starting from the original paper \cite{BonOrl}, where various instances of flops
were proved to induce derived equivalences, and for simple higher dimensional flips, the required SOD was constructed. 

We expect that the whole zoology of categories, functors and natural transformations relevant to MMP should be governed by interesting hidden homotopy types, maps and higher homotopies (as it is proposed in \cite{BonTok}).
The present work can be considered as taking steps in this direction for categories and functors invoked by flops.

It was mentioned by the authors of \cite{BonOrl} that the functor that provides an equivalence $\dD^b(X)\to \dD^b(X^+)$, where $X$ and $X^+$ are connected by a flop, when composed with the analogous functor in the opposite direction $\dD^b(X^+)\to \dD^b(X)$ is not the identity but produces a non-trivial auto-equivalence of $\dD^b(X)$ (nowadays called a flop-flop functor). For Atiyah flop, the functor is given by what is now known as the spherical twist with respect to the spherical object ${\mathcal O}_C(-1)$, where $C$ is the (rational) exceptional curve. It was probably one of the first appearance of the spherical twists (though we should also mention here the work of S. Mukai \cite{Muk2} and S. Kuleshov \cite{Kul2} who used the action of spherical twists in their non-derived version to describe moduli of sheaves on K3 surfaces. A quick generalization to Calabi-Yau/derived case was understood by authors of \cite{BonOrl} in those days of {\em sturm und drang} on derived categories in Moscow in early 90's). M. Kontsevich suggested that the spherical twist of Atiyah flop should be transferred by Mirror Symmetry into the equivalence of the Fukaya category induced by the Dehn twist along a vanishing cycle. Properties of spherical twists with respect to spherical objects were later scrutinized by P. Seidel and R. Thomas in \cite{SeiTho} and for more general spherical functors by R. Anno and T. Logvinenko in \cite{AnnLog}.

Spherical (co)twist is a unification tool for various non-trivial auto-equivalences of $\dD^b(X)$ (cf. \cite{Add}) such as tensor products with line bundles, twists around spherical objects \cite{SeiTho}, EZ-twists \cite{Horja} or window shifts \cite{DonSeg}. 

The homotopy meaning of 
spherical (co)twists can be read off from their interpretation via schobers, i.e. categorifications of perverse sheaves on stratified topological spaces, suggested by M. Kapranov and V. Schechtman \cite{KapSche}. The homotopy type of the underlying stratification (the punctured disc for the case of one spherical functor) encodes the algebra of functors and natural transformations in the schober. The categorical incarnation of a schober on a punctured disc is a spherical pair, i.e. a pair of admissible subcategories of a triangulated category satisfying conditions that imply a spherical functor between them.

We study functors and natural transformations for flops with dimension of fibers of the flopping contractions bounded by 1. We construct the spherical pair $(\dD^b(X), \dD^b(X^+))$ in the appropriate quotient of the derived category of $X\times_Y X^+$.
The orthogonal complement to $\dD^b(X)$ is the bounded derived category of the abelian null-category 
\begin{equation}\label{eqtn_A_f} 
\mathscr{A}_f= \{E \in \Coh(X)\,|\, Rf_* E  =0 \}.
\end{equation} 
By deriving the embedding $\aA_f\to \Coh(X)$ we get the spherical functor 
$\Psi \colon\dD^b(\mathscr{A}_f)\to \dD^b(X)$. Its spherical cotwist is the flop-flop functor.

We lift our functors and natural transformations to bicategories $\Bimod$ and $\FM$ (see Appendix \ref{sec_sph_funct_and_enh}). We systematically consider 2-categorical adjunctions instead of the usual adjunctions of functors, scrutinise the uniqueness of adjoints and of associated twists and cotwists, i.e. the cones of adjunction units and counits.

Now we describe our results in more detail.

\vspace{0.3cm}
\noindent
\textbf{The flop functor and Van den Bergh's functor.}

We consider a flopping contraction $f:X \to Y$ and its flop $f^+: X^+\to Y$ with dimension of fibers bounded by 1. Exceptional loci of $f$ and $f^+$ are assumed to have codimension greater than 1 in $X$, respectively in $X^+$, while varieties $X$, $X^+$ and $Y$ to be Gorenstein and $Y$ to have canonical hypersurface singularities of multiplicity 2. 

Consider the diagram for the fiber product: 
\[
\xymatrix{& X\times_Y X^+ \ar[dl]_p \ar[dr]^{p^+} & \\ X \ar[dr]_f && X^+ \ar[dl]^{f^+} \\ & Y &}
\]
where $p$ and $p^+$ stand for the projections to the factors.
The flop functor is:
$$
F = Rp^+_*Lp^*\colon\dD_{qc}(X)\to \dD_{qc}(X^+).
$$
Note that this functor does not necessarily induce an equivalence if the dimension of fibers of $f$ is greater than 1 (see \cite{Nam2}), which means that the functor needs an adjustment for fibers of  higher dimension.

The above assumptions on the flopping contraction are those adopted by M. Van den Bergh in \cite{VdB}. We use also some technics borrowed from his work. M. Van den Bergh constructed an equivalence ${\Sigma} \colon \dD^b(X)\xrightarrow{\simeq} \dD^b(X^+)$, which is given via the identification of both categories with the derived category of modules over a sheaf of non-commutative algebras on affine $Y$. We first give a new interpretation for functor ${\Sigma}$. To this end, we identify $\dD^b(X)$ with $\Hot^{-,b}(\mathscr{P}_{-1})$, the homotopy category of complexes of projective objects $\mathscr{P}_{-1}$ in the heart $\Per{-1}(X/Y)$ of the perverse t-structure, introduced by T. Bridgeland in \cite{Br1}. We show that ${\Sigma}$ can be defined as the functor $(f^{+*}f_*(-))^{\vee \vee }$ applied term-wise to complexes of objects in $\mathscr{P}_{-1}$.  We give an independent proof that the flop functor is an equivalence and that it can be extended to an equivalence $\Sigma_{\textrm{qc}}$ between unbounded categories of quasi-coherent sheaves (Proposition \ref{prop_DG_enha_of_sigma}).

Then we show that the flop functor is given in the same way by the term-wise application of the functor $f^{+*}f_*(-)$. To this end, we need one of the technical innovations of our paper, the $L^1f^*f_*$-\emph{vanishing} for objects in $\mathscr{P}_{-1}$. We prove (Lemma \ref{lem_L1ffOM_i=0}) that if $f \colon X \to Y$ is a flopping contraction satisfying the above conditions, then $L^1f^*f_* \mM$ is zero, for any $\mM \in \mathscr{P}_{-1}$. The proof of the vanishing is based on a local presentation of $Y$ as a divisor in a smooth variety $\yY$.
Note that, since $Y$ is singular, object $Lf^*f_* \mM$ in general has infinitely many non-zero cohomology sheaves, more precisely, they satisfy 2-periodicity (see Section \ref{ssec_periodicity}), which is a reminiscent of matrix factorizations \cite{Buch}, \cite{Orl3}.
Finally, we prove that the composite $F^+ F$, the `flop-flop' functor, is the term-wise extension of $f^*f_*(-)|_{\mathscr{P}_{-1}}$.

We consider a divisorial embedding $i \colon Y \to \yY$ into a smooth $\yY$ together with  $g=i \circ f$, $g^+ = i \circ f^+$. If the base $Y$ of the flopping contraction is affine, then the flop functor $F$ and the Van den Bergh's equivalence $\Sigma_{\textrm{qc}}\colon \dD_{\textrm{qc}}(X)\to \dD_{\textrm{qc}}(X^+)$ fit into a functorial exact triangle (Proposition \ref{prop_triangle_for_flop}):
\begin{equation}\label{tria1}
\Sigma_{\textrm{qc}}[1] \to Lg^{+*}Rg_* \to F \to \Sigma_{\textrm{qc}}[2].
\end{equation}

Since both $Lg^{+*}Rg_*$ and $\Sigma_{\textrm{qc}}$ take $\dD^b(X)$ to $\dD^b(X^+)$, this allows us to conclude that the flop functor also preserves the boundedness of the derived categories.

Theorem \ref{thm_flop_is_inv_of_vdB} states that $\Sigma_{\textrm{qc}}$ is actually the inverse of the opposite flop functor
$$
F^+ = Rp_*Lp^{+*} \colon \dD_{\textrm{qc}}(X^+) \to \dD_{\textrm{qc}}(X).
$$
This implies that flop functors $F$ and $F^+$ yield (not mutually inverse) equivalences between $\dD^b(X)$ and $\dD^b(X^+)$. Following an argument of Chen \cite{Chen}, we generalize this statement to the case when base $Y$ is quasi-projective (see Section \ref{ssec_red_to_affine}).

\vspace{0.3cm}
\noindent
\textbf{The null-category and spherical functor $\Psi$.}

One of the original motivations for our work was to recover the importance of the null-category for $f$ as in (\ref{eqtn_A_f}).
Category $\mathscr{A}_f$ admits a projective generator $\pP$, which we obtain from Van den Bergh's projective generator $\mathcal{M}$ for the perverse heart $\Per{-1}(X/Y)$ by a projection to the null-category (see Proposition \ref{prop_proj_gen_of_A_f}).
Deriving the embedding $\mathscr{A}_f \to \Coh(X)$ gives us functor
$$
\Psi \colon \dD^b(\mathscr{A}_f) \to \dD^b(X).
$$

We prove that, for a  flopping contraction $f \colon X \to Y$ with affine $Y$, functor $\Psi$ is spherical. The flop-flop functor $F^+F$ is its spherical cotwist, i.e. the $\Psi^* \dashv \Psi$ adjunction unit fits into a functorial exact triangle:
$$
F^+F \to \Id_{\dD^b(X)} \to \Psi \Psi^* \to F^+F[1].
$$
We also show that the spherical twist $\dD^b(\mathscr{A}_f) \to \dD^b(\mathscr{A}_f)$ associated to $\Psi$ is the shift $\Id_{\dD^b(\mathscr{A}_f)}[4]$ of the identity functor (see Corollary \ref{cor_co_twist_for_Psi}).

\vspace{0.3cm}
\noindent
\textbf{Spherical pairs.}

We  assume again $f \colon X\to Y$ to be a flopping contraction with affine $Y$. Functor $p^*$  induces an isomorphism of endomorphism algebras of a projective generator  $\pP\in \mathscr{A}_f$ and of $p^* \pP\in \Coh(X \times_Y X^+)$. Moreover, $\Ext^i_{X\times_Y X^+}(p^*\pP, p^* \pP) = 0$, for $i>0$  (see Proposition \ref{prop_pP_no_higher_ext}). Thus, we construct a fully faithful functor $\dD(\mathscr{A}_f) \to \dD_{\textrm{qc}}(X\times_Y X^+)$.

We consider the 'common kernel subcategories':
\begin{align*} 
&\kK^b = \{ E\in \dD^b(X\times_Y X^+)\,|\, Rp_*(E) =0,\, Rp^+_*(E) =0\},&\\
&\kK^- = \{ E\in \dD^-(X\times_Y X^+)\,|\, Rp_*(E) =0,\, Rp^+_*(E) =0\}.&
\end{align*}
The composite $\dD^b(X) \xrightarrow{Lp^*} \dD^-(X\times_Y X^+) \to \dD^-(X\times_Y X^+)/\kK^-$ (not {
	$Lp^*$ itself!) factors via $\dD^b(X\times_Y X^+)/\kK^b$, thus inducing a fully faithful functor (Proposition \ref{prop_Lp_has_adjoint})
	$$
	\wt{L}p^* \colon \dD^b(X) \to \dD^b(X\times_Y X^+)/\kK^b.
	$$
	We prove semi-orthogonal decompositions (see \cite{B}):
	\begin{equation}\label{eqtn_intro_SOD}
	\dD^b(X\times_Y X^+)/\kK^b = \langle \dD^b(\mathscr{A}_{f^+}), \wt{L}p^* \dD^b(X)\rangle = \langle \wt{L}p^* \dD^b(X), \dD^b(\mathscr{A}_f)\rangle.
	\end{equation}
	As a result, we obtain a geometric description of the category $\dD^b(\mathscr{A}_{f^+})$:
	$$
	\dD^b(\mathscr{A}_{f^+}) \simeq \{E \in \dD^b(X\times_Y X^+)\,|\, Rp_*(E) = 0\}/\kK^b.
	$$
	By exchanging the roles of $X$ and $X^+$ in (\ref{eqtn_intro_SOD}) we get semi-orthogonal decompositions
	\begin{equation}\label{eqtn_intro_SOD1}
	\dD^b(X\times_Y X^+)/\kK^b = \langle \dD^b(\mathscr{A}_{f}), \wt{L}{p^+}^* \dD^b(X^+)\rangle = \langle \wt{L}{p^+}^* \dD^b(X^+), \dD^b(\mathscr{A}_{f^+})\rangle.
	\end{equation}
	Together decompositions (\ref{eqtn_intro_SOD}) and (\ref{eqtn_intro_SOD1}) provide us with a geometric description of \emph{4-periodical SODs} (see Proposition \ref{prop_quad_of_recol}), whose relation to spherical functors was basically discovered by D. Halpern-Leistner and I. Shipman \cite{HLSchi} (we thank M. Kapranov for explanations on this). Two pairs of subcategories $(\dD^b(X), \dD^b(X^+))$, $(\dD^b(\mathscr{A}_f), \dD^b(\mathscr{A}_{f^+}))$ are \emph{spherical pairs} \cite{KapSche} (see Theorem \ref{thm_schober}). The corresponding spherical functor for the second spherical pair is $\Psi$.
	
	\vspace{0.3cm}
	\noindent
	\textbf{The contraction algebra.}
	
	Assume the base $Y$ of the flopping contraction $f$ to be the spectrum of a complete Noetherian local ring.  Then the reduced fiber of $f$ over the unique closed point of $Y$ is a union of $n$ smooth irreducible rational curves $C_1, \ldots, C_n$ (see Theorem \ref{thm_fiberstructure}). The category $\Per{-1}(X/Y)$ has $n+1$ irreducible projective objects $\mM_0, \ldots, \mM_n$, with $\mM_0 \simeq \oO_X$ (see \cite{VdB}). $\mM = \bigoplus_{i=0}^n \mM_i$ is a projective generator for $\Per{-1}(X/Y)$.

	We prove that the endomorphism algebra
	$$
	A_P = \Hom_{\mathscr{A}_f}(\pP, \pP)
	$$
	of the corresponding projective generator for $\mathscr{A}_f$, $\pP= \ker(f^*f_* \mM \to \mM)$, is isomorphic to the \emph{contraction algebra} introduced in \cite{IyaWem2}, which is defined as the quotient of $\Hom_X( \mM, \mM)$ by the ideal of morphisms that factor via direct sums of copies of $\oO_X$ (Theorem \ref{thm_A_P_deform_alge}). This theorem relates our work to the results of W. Donovan and M. Wemyss \cite{DonWem}, where contraction algebras appear in the context of non-commutative deformations for the case of a flopping contraction of threefolds with an irreducible fiber over the unique closed point of $Y$.

	\vspace{0.3cm}	
	There are five Appendices attached to the main body of the paper.
	
	\vspace{0.3cm}
	\noindent
	\textbf{Appendix \ref{sec_Grot-Ver-dual}.}
	It is an extract of some properties of the functor $f^!$, right dual to $Rf_*$, and the Grothendieck duality.
	
	\vspace{0.3cm}
	\noindent
	\textbf{Appendix \ref{sec_func_ex_tr}.}
	We introduce functorial exact triangles and use them to define spherical functors. We recall after \cite{KapSche} the notion of a spherical pair and the associated spherical functor. 4-periodical SODs introduced by D. Halpern-Leistner and I. Shipman \cite{HLSchi} produce spherical pairs.		
	
	\vspace{0.3cm}
	\noindent
	\textbf{Appendix \ref{sec_sph_funct_and_enh}.} 
	We discuss a bicategory $\cC$ and a pair of 1-morphisms $s\in \Hom_{\cC}(A,B)$, $r\in \Hom_{\cC}(B,A)$ that fit into a 2-categorical adjunction $(s,r,\eta,\ve)$. When $\cC$ is 1-triangulated (meaning the categories of 1-morphisms are triangulated), we define the twist $t_s\in \Hom_{\cC}(B,B)$ and the cotwist $c_s\in \Hom_{\cC}(A,A)$ as the cones of the counit $\ve\colon sr\to \Id_B$ and the unit $\eta \colon \Id_A\to rs$. By using pseudo-functors and 2-categorical equivalences we show that the twist and cotwist are in a suitable sense invariant under replacing $A$ and $B$ by equivalent objects. If $t_s$ and $c_s$ are invertible in the 2-categorical sense, we say that $(s,r,\eta, \ve)$ is a \emph{spherical couple}. 

	We lift exact functors between triangulated categories to 1-morphisms in appropriate 1-triangulated bicategories. The first bicategory is $\Bimod$ whose objects are DG algebras and categories of 1-morphisms are defined as the derived categories of DG bimodules. The second one is the bicategory $\FM$ of schemes and the derived categories of their products as categories of 1-morphisms. Both bicategories admit 2-functors to the bicategory $\textbf{Cat}$ of categories, functors and natural transformations. More precisely, we have  $\Phi \colon \Bimod \to \textbf{Cat}$, $\Phi(A) = \dD(A)$ and $\Xi \colon \FM \to \textbf{Cat}$, $\Xi(X) = \dD_{\textrm{qc}}(X)$. 
	
	The above theory of 2-categorical adjunctions ensures that once a lift of an exact functor $\dD_{\textrm{qc}}(X) \to \dD_{\textrm{qc}}(Y)$ to a 1-morphism in $\Bimod$ or $\FM$ admitting an adjoint is fixed, we get essentially unique exact functors corresponding to the twist and the cotwist. 
	
	We describe 2-categorical adjunctions in $\Bimod$ using formulae for adjoint bimodules as in \cite{AnnLog2}. 
	For $\eE\in \dD_{\textrm{qc}}(X\times Y)$ we discuss functors adjoint to $\Xi_\eE\colon \dD_{\textrm{qc}}(X) \to \dD_{\textrm{qc}}(Y)$ and the conditions under which they are FM functors. Results of \cite{LunSchnu} allow us to transfer between $\Bimod$ and $\FM$ via fixing compact generators for $\dD_{\textrm{qc}}(X)$ and $\dD_{\textrm{qc}}(Y)$. We use the 2-categorical adjunctions in $\Bimod$ to construct functorial exact triangles for $\Xi_\eE$ and its adjionts. We check that these triangles are, up to isomorphism, independent of the choice of compact generators.
	In particular, given a morphism $f\colon X \to Y$, we discuss the lift of $Rf_*$, its adjoints, and the adjunction (co)units to $\Bimod$ and $\FM$. We also describe a 2-morphism in $\Bimod$ whose image under $\Phi$ is the base-change morphism.

	\vspace{0.3cm}
	\noindent
	\textbf{Appendix \ref{sec_fiber_of_f}.} This appendix is devoted to the description of the reduced fiber of a flopping contraction $f \colon X\to Y$ with fibers of relative dimension bounded by one over a closed point of $Y$.
	
	\vspace{0.3cm}
	\noindent
	\textbf{Appendix \ref{sec_spectr_seq}.} Here we show that cohomology of an appropriate complex allow us to calculate morphisms in the derived category of an abelian category between bounded above complexes with bounded cohomology.
	
	\vspace{0.3cm}
	\noindent
	\textbf{Notation.}~\\

	We denote by $k$ an algebraically closed field of characteristic zero. For a Noetherian $k$ scheme $X$, by $\Coh(X)$, respectively $\textrm{QCoh}(X)$, we denote the category of coherent, respectively quasi-coherent, sheaves on $X$.
	
	For an abelian category $\aA$, we denote by $\dD^b(\aA)$ and $\dD(\aA)$ the bounded and unbounded derived categories of $\aA$. We write $\dD^b(X) = \dD^b(\Coh(X))$, $\dD_{\textrm{qc}}(X) = \dD(\textrm{QCoh}(X))$.
	
	For an abelian category $\aA$, by $\textrm{Perf}(\aA)$ we denote the full subcategory of $\dD^b(\aA)$ of objects that are quasi-isomorphic to finite complexes of projective objects in $\aA$. For a scheme $X$, by $\textrm{Perf}(X)$ we denote the category of perfect complexes on $X$, i.e. objects of $\dD^b(X)$  locally quasi-isomorphic to finite complexes of locally free sheaves.
	
	For a $k$-algebra $A$, we denote by $\textrm{Mod--}A$ the abelian category of right $A$ modules. 
		
	For a \tr e $(\tT_{\lle 0}, \tT_{\gge 0})$ on a triangulated category $\tT$ with heart $\aA = \tT_{\lle 0 }\cap \tT_{\gge 0}$ and an object $T \in \tT$, we denote by $\hH_{\aA}^i(T)$ the $i$-th cohomology of $T$ with respect to the \tr e $(\tT_{\lle 0}, \tT_{\gge 0})$. The truncation functors are denoted by $\tau_{\gge i}^{\aA}$ and $\tau^{\aA}_{\lle i}$. If $\tT = \dD_{\textrm{qc}}(X)$ or $\dD^b(X)$ with the standard \tr e with heart $\textrm{QCoh}(X)$, respectively $\Coh(X)$, we shorten the notation to $\hH_X^i(T)$, $\tau_{\gge i}^X$ and $\tau_{\lle i}^X$ respectively.
	
	\vspace{0.3cm}
	\noindent
	\textbf{Assumptions.}~\\

	Throughout the paper we assume $X$ and $Y$ to be normal varieties over $k$.   We usually work under one of the following assumptions on a morphism $f\colon X\to Y$:
	\begin{itemize}
		\item \textit{(r)} $f\colon X\to Y$ is a proper morphism with fibers of \textbf{r}elative dimension bounded by one and such that $Rf_* \oO_X \simeq \oO_Y$.
		
		\item \textit{(p)} $f\colon X\to Y$ is a projective birational morphism with relative dimension of fibers bounded by one between quasi-\textbf{p}rojective Gorenstein varieties of dimension $n\geq 3$. The exceptional locus of $f$ is of codimension greater than 1 in $X$. Variety $Y$ has canonical hypersurface singularities of multiplicity two.
		
		\item \textit{(a)} The same as in (p) and we further assume that variety $Y$ is \textbf{a}ffine and is embedded as a principal divisor into a smooth variety $\yY$ of dimension $n+1$.
		
		\item \textit{(c)} The same as in (p) with an extra assumption that $Y= \Spec R$, where $R$ is a \textbf{c}omplete local $k$-algebra.
	\end{itemize}
	Let $f$ satisfy (p). Since $Y$ has canonical singularities, $Rf_* \oO_X \simeq \oO_Y$ (cf. \cite{Elk}),
	i.e. the condition (r) is satisfied when $Y$ has rational singularities and $f\colon X\to Y$ is a smooth birational resolution.
	
	\vspace{0.3cm}
	\noindent
	\textbf{Acknowledgements.} We are indebted to Pieter Belmans, Chris Brav, Alexander Efimov, Mikhail Kapranov, Yujiro Kawamata, Alexander Kuznetsov, Timothy Logvinenko, Valery Lunts, Yuri Prokhorov, {\L}ukasz Sienkiewicz, Greg Stevenson, Michel Van den Bergh and Maciej Zdanowicz for useful discussions. The first named author would like to thank Kavli IPMU and Higher School of Economics for their hospitality. The first named author was partially supported by Polish National Science Centre grants No. DEC-2013/11/N/ST1/03208 and 2012/07/B/ST1/03343. The second named author was partially supported by RFBR grant 18-01-00908 and by Laboratory of Mirror Symmetry NRU HSE, RF Government grant, ag. No. 14.641.31.0001. This research was partially supported by World Premier International Research Center Initiative (WPI Initiative), MEXT, Japan.
	
	\section{The null-category $\mathscr{A}_f$}\label{sec_null_cat_A_f}
	
	Let $f\colon X \to Y$ be a proper morphism of Noetherian schemes. In this section we introduce the \emph{null-category} of $f$: 
	\begin{equation}\label{eqtn_def_A_f}
	\mathscr{A}_f = \{ E\in \Coh(X)\,|\, Rf_*(E) = 0\}.
	\end{equation}
	Under the assumption that the dimension of fibers of $f$ is bounded by 1, $Y$ is affine, and $Rf_*\oO_X = \oO_Y$, we construct a projective generator $\pP$ for $\mathscr{A}_f$. We also study the behaviour of the null-category under decomposition $f = h \circ g$ and under restriction to fibers.

	\vspace{0.3cm}
	\subsection{The triangulated and abelian null-category of a morphism of schemes}\label{ssec_null-category}~\\
	
	For a proper morphism of Noetherian schemes $f \colon X \to Y$, we consider the triangulated null-category $\cC_f$ defined as the kernel of functor $Rf_*$ restricted to $\dD^b(X)$:
	\begin{equation}\label{eqtn_def_of_C_f}
	\cC_f = \{ E^\bcdot \in \dD^b(X)\,|\, Rf_*(E^\bcdot) = 0\}.
	\end{equation}
	
	Similarly, we define $\cC_f^- \subset \dD^-(X)$ and ${\cC_f}_{\textrm{qc}} \subset \dD_{\textrm{qc}}(X)$. Denote by ${\iota_f}_* \colon {\cC_f}_{\textrm{qc}} \to \dD_{\textrm{qc}}(X)$ the inclusion functor. As $Lf^*\colon \dD_{\textrm{qc}}(Y) \to \dD_{\textrm{qc}}(X)$ is fully faithful with right adjoint $Rf_*$, $\dD_{\textrm{qc}}(X)$ admits a semi-orthogonal decomposition $\dD_{\textrm{qc}}(X) = \langle {\cC_f}_{\textrm{qc}}, Lf^*\dD_{\textrm{qc}}(Y)\rangle$, \cite[Lemma 3.1]{B}. In particular, ${\iota_f}_*$ admits a left adjoint $\iota_f^*\colon \dD_{\textrm{qc}}\to {\cC_f}_{\textrm{qc}}$.
	
	For the null-category of $f$ as in (\ref{eqtn_def_A_f}), we have: $\mathscr{A}_f=\Coh(X)\cap \cC_f$.

	\begin{LEM}
		Let $f \colon X \to Y$ be a proper morphisms with fibers of relative dimension bounded by 1. Then category $\mathscr{A}_f$ is abelian. The embedding functor $\mathscr{A}_f \to \Coh(X)$ is exact and fully faithful. Its image is closed under extensions.
	\end{LEM}
	\begin{proof}
		Let $A, B$ be objects in $\mathscr{A}_f$ and let $\f\in \Hom_X(A,B)$. Consider the kernel $K$, the cokernel $C$, and the image $I$ of $\f$. Since $Rf_*A \simeq 0 \simeq Rf_* B$, the long exact sequences of higher derived functors for $f_*$ applied to short exact sequences on $X$
		\begin{align*}
		& 0 \to K \to A \to I\to 0,& &0 \to I \to B \to C \to 0,&
		\end{align*}
		together with vanishing of $R^if_*$, for $i>1$, imply that first $I$, hence $K$ and $C$ lie in $\mathscr{A}_f$. That $\mathscr{A}_f$ is closed under extensions is obvious. 		
	\end{proof}
	It follows that $\Ext^1_X(A,B)=\Ext^1_{\mathscr{A}_f}(A,B)$, for $A, B \in \mathscr{A}_f$.
	\begin{LEM}\cite[Lemma 3.1]{Br1}\label{lem_A_f_is_a_heart}
		Let $X$ and $Y$ be Noetherian schemes and $f \colon X \to Y$ a proper morphism with fibers of dimension bounded by 1. Then $E\in \cC_f$ if and only if $\hH^i_X(E)\in \mathscr{A}_f$, for all $i\in \mathbb{Z}$. In particular, $\mathscr{A}_f \subset \cC_f$ is the heart of a bounded \tr e.
	\end{LEM}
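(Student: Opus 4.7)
The plan is to deduce both claims from a degeneration of the hyperdirect image spectral sequence forced by the relative dimension hypothesis. For any $E\in \dD^b(X)$ the canonical filtration yields a convergent spectral sequence
$$E_2^{p,q} = R^p f_*\, \hH^q_X(E)\;\Rightarrow\; \hH^{p+q}_Y(Rf_*(E)).$$
Since the fibres of $f$ have dimension at most one, $R^p f_* = 0$ for $p\geq 2$, so only the rows $p=0,1$ contribute. All differentials $d_r$ with $r\geq 2$ shift $p$ by at least $2$ and therefore vanish, so the spectral sequence degenerates at $E_2$ into short exact sequences
$$0 \to R^1 f_*\,\hH^{n-1}_X(E) \to \hH^n_Y(Rf_*(E)) \to f_*\,\hH^n_X(E) \to 0.$$

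If every $\hH^i_X(E)$ lies in $\mathscr{A}_f$, both outer terms of each sequence vanish for every $n$, so $\hH^n_Y(Rf_*(E))=0$ for all $n$ and $E\in \cC_f$. Conversely, if $E\in \cC_f$, the middle term vanishes for every $n$, which forces $f_*\,\hH^i_X(E)=0$ and $R^1 f_*\,\hH^i_X(E)=0$ for every $i$; combined with $R^p f_*=0$ for $p\geq 2$ this gives $Rf_*(\hH^i_X(E))=0$, i.e.\ $\hH^i_X(E)\in \mathscr{A}_f$.

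For the heart statement, the first part shows that for any $E\in \cC_f$ the canonical truncations $\tau^X_{\lle n}E$ and $\tau^X_{\gge n}E$ again have all cohomology sheaves in $\mathscr{A}_f$, and therefore themselves lie in $\cC_f$. Hence the standard \tr e on $\dD^b(X)$ restricts to a \tr e on $\cC_f$; it is bounded because the standard one is, and its heart equals $\cC_f\cap \Coh(X)=\mathscr{A}_f$. The only real point is the degeneration of the spectral sequence, which is immediate from the one-dimensional fibre hypothesis; the rest of the argument is formal.
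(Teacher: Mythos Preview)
Your proof is correct. The paper does not supply its own proof of this lemma; it is simply quoted from \cite[Lemma~3.1]{Br1}. Your argument via degeneration of the Leray spectral sequence $R^pf_*\,\hH^q_X(E)\Rightarrow \hH^{p+q}_Y(Rf_*E)$, using that $R^pf_*=0$ for $p\ge 2$, is exactly the standard one (and is how Bridgeland proves it). The deduction of the short exact sequences and the conclusion that the standard truncations preserve $\cC_f$ are both correct, so the restriction of the standard \tr e gives the desired bounded \tr e with heart $\mathscr{A}_f$.
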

	
	\begin{REM}\label{rem_def_of_perv}
		The restriction of the standard \tr e on $\dD_{\textrm{qc}}(X)$ to 
		$$
		{\cC_f}_{\textrm{qc}} := \{ E^\bcdot\in \dD_{\textrm{qc}}(X)\,|\, Rf_*(E^\bcdot) =0 \}
		$$
		was basically considered by T. Bridgeland \cite{Br1} for a projective morphism $f\colon X\to Y$ satisfying (r). For this case, category $\dD_{\textrm{qc}}(X)$ has an SOD $\dD_{\textrm{qc}}(X) = \langle {\cC_f}_{\textrm{qc}}, \dD_{\textrm{qc}}(Y)\rangle$. The \tr e on ${\cC_f}_{\textrm{qc}}$ with heart $\mathscr{A}_f[-p]$ can be glued with the standard \tr e on $\dD_{\textrm{qc}}(Y)$. Since functor $f^!$ is needed for gluing \tr es, one has to consider unbounded derived categories of quasi-coherent sheaves on $X$ and $Y$. The heart of the resulted \tr e is the category ${}^p\textrm{Per}_{\textrm{qc}}(X/Y)$ of perverse sheaves.
		If $p=-1$ or $p=0$, the \tr es with hearts ${}^p\textrm{Per}_{\textrm{qc}}(X/Y)$ can also be obtained by the tilting in torsion pairs $(\tT_{-1}, \fF_{-1})$, $(\tT_{0}, \fF_{0})$  in $\textrm{QCoh(X)}$. Also, by restricting the torsion pairs to $\Coh(X)$ one can define abelian categories ${}^p\textrm{Per}(X/Y)$ as subcategories of $\dD^{b}(X)$. Thus, for $p=-1,0$,
		$$
		\Per{p}(X/Y) := \{ E\in \dD^b(X)\,|\, \hH^0(E) \in \tT_p,\, \hH^{-1}(E) \in \fF_p\, \textrm{ and }\hH^i(E) =0,\,\textrm{for }\, i\neq -1,0\},
		$$
		where (see \cite{VdB})
		\begin{align}
		&\tT_0= \{ T \in \Coh(X) \, |\, R^1 f_*(T)=0 \},\label{eqtn_def_T_0}&\\
		&\fF_0 = \{ E \in \Coh(X)\,|\, f_*(E) = 0,\, \Hom(\mathscr{A}_f, E) = 0 \},\label{eqtn_def_F_0}&\\
		&\tT_{-1} = \{T \in \Coh(X)\,|\, R^1f_*(T) = 0,\, \Hom(T, \mathscr{A}_f) = 0\}, \label{eqtn_def_T_-1}&\\
		&\fF_{-1} = \{ E \in \Coh(X)\,|\, f_*(E) =0 \}.\label{eqtn_def_F_-1}&
		\end{align}
	\end{REM}
	
	\vspace{0.3cm}
	\subsection{A projective generator for $\mathscr{A}_f$}\label{ssec_exist_of_proj_obj}~\\

	Let $f\colon X\to Y$ satisfy (r) and assume $Y$ is affine. By \cite[Proposition 3.2.5]{VdB}, there exists a vector bundle $\mM$ on $X$ which is a projective generator for $\Per{-1}(X/Y)$.  Let
	\begin{equation}\label{eqtn_def_of_P}
	\pP := \hH^{-1}_X \iota_f^* \mM,
	\end{equation}
	an object in $\mathscr{A}_f$, or, by abuse of notation, its image of $\pP$ under $\iota_{f*}$.
	\begin{PROP}\label{prop_proj_gen_of_A_f}
		Sheaf $\pP$ is a projective generator for the category $\mathscr{A}_f$.
	\end{PROP}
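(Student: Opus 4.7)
The plan is to identify $\pP$ explicitly as a coherent sheaf, then transfer projectivity and the generating property from $\mM \in \Per{-1}(X/Y)$ to $\pP \in \mathscr{A}_f$ via an adjunction-plus-spectral-sequence argument.

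Since $\mM \in \tT_{-1}$, one has $Rf_* \mM = f_* \mM$, so the triangle (\ref{eqtn_triangle_for_iota*}) applied to $\mM$ becomes $Lf^* f_* \mM \to \mM \to \iota_{f*} \iota_f^* \mM$. The long exact sequence of standard cohomology forces $\hH^i(\iota_f^* \mM) = 0$ for $i \geq 1$ (since $Lf^*$ lives in nonpositive degrees and $\mM$ is a sheaf), and exhibits $\hH^0(\iota_f^* \mM)$ as a quotient of $\mM$ belonging to $\mathscr{A}_f$ by Lemma~\ref{lem_A_f_is_a_heart}; this quotient vanishes because $\mM \in \tT_{-1}$ implies $\Hom_X(\mM, \mathscr{A}_f) = 0$. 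The LES at degree $-1$ then yields
$$
0 \to \pP \to f^* f_* \mM \to \mM \to 0
$$
in $\Coh(X)$, so in particular $\pP \in \mathscr{A}_f$, and $\iota_{f*}\iota_f^* \mM$ is concentrated in degrees $\leq -1$ with $\hH^{-1} = \pP$.

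Fix $N \in \mathscr{A}_f$. The adjunction $Lf^* \dashv Rf_*$ and $Rf_* N = 0$ give $\Hom_X(Lf^* f_* \mM, N[j]) = 0$ for every $j$, so the triangle above produces isomorphisms $\Hom_X(\iota_{f*}\iota_f^* \mM, N[i]) \simeq \Hom_X(\mM, N[i])$ for all $i$. Now apply the hypercohomology spectral sequence
$$
E_2^{p,q} = \Ext^p_X(\hH^{-q}(\iota_f^* \mM), N) \;\Longrightarrow\; \Hom_X(\iota_{f*}\iota_f^* \mM, N[p+q]).
$$
Since $\hH^{-q}(\iota_f^* \mM) = 0$ for $q \leq 0$, at total degree $1$ the sole surviving summand is $E_2^{0,1} = \Hom_X(\pP, N)$, yielding
$$
\Hom_X(\pP, N) \simeq \Hom_X(\mM, N[1]) = \Hom_{\Per{-1}(X/Y)}(\mM, N[1]),
$$
which is nonzero whenever $N \neq 0$ because $\mM$ is a generator of $\Per{-1}(X/Y)$; thus $\pP$ generates $\mathscr{A}_f$. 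At total degree $2$, the abutment $\Hom_X(\mM, N[2]) = \Ext^1_{\Per{-1}(X/Y)}(\mM, N[1])$ vanishes by projectivity of $\mM$ in $\Per{-1}(X/Y)$; moreover every differential $d_r$ into or out of $E_r^{1,1}$ vanishes because its source or target involves $\hH^j(\iota_f^* \mM)$ with $j \geq 0$ or $\Ext^p$ with $p < 0$. Consequently $\Ext^1_X(\pP, N) = E_\infty^{1,1} = 0$, and $\Ext^1_{\mathscr{A}_f}(\pP, N) \subseteq \Ext^1_{\Coh(X)}(\pP, N) = 0$ delivers projectivity.

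The delicate point is that $Lf^* f_* \mM$ generically has infinitely many nonzero cohomology sheaves, a $2$-periodic tail coming from the hypersurface singularities of $Y$. Thus $\iota_{f*}\iota_f^* \mM$ is not concentrated in a single degree and one cannot simply identify it with $\pP[1]$. The spectral sequence above absorbs this difficulty: the infinite tail feeds only into $E_\infty^{p,q}$ with $q \geq 2$, leaving the slots $E_\infty^{0,1}$ and $E_\infty^{1,1}$ that govern $\Hom_X(\pP, N)$ and $\Ext^1_X(\pP, N)$ untouched.
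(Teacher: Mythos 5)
Your proof is correct in substance and arrives at the same two facts the paper establishes (that $\Hom_X(\pP,N)$ is computed by $\Hom_X(\mM,N[1])$ and that $\Ext^1_X(\pP,N)$ injects into $\Hom_X(\mM,N[2])$), but it gets there by a different mechanism. The paper works inside the $t$-structure formalism: it observes that $\iota_{f*}$ restricted to $\mathscr{A}_f[1]$ is exact, identifies $\pP[1]$ with $\tau^X_{\gge -1}\iota_{f*}\iota_f^*\mM$ as the image of the left adjoint ${}^{-1}\iota_f^*$ to the inclusion $\mathscr{A}_f[1]\hookrightarrow \Per{-1}(X/Y)$, and then uses the triangle $\tau^X_{\lle -2}\iota_f^*\mM \to \iota_f^*\mM \to \pP[1]$ together with $t$-structure orthogonality ($\Hom(\dD^{\lle -2}, \dD^{\gge -1})=0$) to discard the unbounded tail. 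You instead run the hypercohomology spectral sequence $E_2^{p,q}=\Ext^p(\hH^{-q}(\iota_f^*\mM),N)$ and show by inspection of differentials that the slots $(0,1)$ and $(1,1)$ are stable. A genuinely nice touch on your side is the derivation of the surjectivity of $f^*f_*\mM\to\mM$ directly from $\mM\in\tT_{-1}$ and $\Hom(\mM,\mathscr{A}_f)=0$, rather than citing \cite[Lemma 3.1.3]{VdB} as the paper does.

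The one place where your argument is thinner than the paper's is exactly the one you flag: for a bounded-above complex with cohomology in infinitely many negative degrees, convergence of the spectral sequence to $\Hom(\iota_{f*}\iota_f^*\mM,N[\bullet])$ is not automatic and requires either the kind of inverse-limit/Mittag-Leffler argument spelled out in Appendix~\ref{sec_spectr_seq} of the paper, or (more simply) the single-triangle observation that $\tau^X_{\lle -m-1}\iota_f^*\mM$ contributes nothing to $\Hom(-,N[m])$ and $\Hom(-,N[m-1])$ by $t$-structure orthogonality, so $\Hom(\iota_f^*\mM,N[m])\simeq\Hom(\tau^X_{\gge -m}\iota_f^*\mM,N[m])$ for $m\in\{1,2\}$. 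With that supplement the gap closes; without it, "the tail feeds only into $q\gge 2$" is a statement about differentials, not about convergence, and does not by itself guarantee that $E_\infty^{1,1}$ is a subquotient of the abutment $\Hom_X(\mM,N[2])$.
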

	\begin{proof}
		Since the \tr e on $\dD_{\textrm{qc}}(X)$ with heart $\Per{-1}_{\textrm{qc}}(X/Y)$ is glued, functor $\iota_{f*}|_{\mathscr{A}_f[1]} \colon \mathscr{A}_f[1] \to \Per{-1}(X/Y)$ is exact.
		It implies isomorphisms of functors
		$$
		\tau^X_{\gge -1} \iota_{f*} \simeq  \tau^{\Coh(X)[1]}_{\gge 0} \iota_{f*} \simeq \tau^{\Per{-1}(X/Y)}_{\gge 0} \iota_{f*} \simeq
		\iota_{f*} \tau^{\mathscr{A}_f}_{\gge -1}.
		$$
		It follows that $\tau^X_{\gge -1} \iota_{f*} \iota_f^* \mM$ is isomorphic to $\iota_{f*}\circ{}^{-1}\iota_f^* \mM$, where
		$$
		{}^{-1} \iota_f^*  := \tau_{\geq -1}^{\mathscr{A}_f}\circ \iota_f^* \colon \Per{-1}(X/Y) \to \mathscr{A}_f[1]
		$$
		is the left adjoint functor to the inclusion $\mathscr{A}_f[1] \to \Per{-1}(X/Y)$. By Lemma \ref{lem_restricting_projective} below, $\pP\in \mathscr{A}_f$ is a projective generator.
\end{proof}
\begin{LEM}\label{lem_restricting_projective}
	Let $\dD_0$ and $\dD$ be triangulated categories with \tr es with hearts $\aA_0\subset \dD_0$ and $\aA\subset \dD$, and $i_*\colon \dD_0 \to \dD$ a $t$-exact functor with left adjoint $i^*$. If $\mM \in \aA$ is projective, then $\pP:=\hH^0(i^* \mM) \in \aA_0$ is projective. Moreover, if $\mM \in \aA$ is a projective generator and $i_*$ is fully faithful, then $\pP \in \aA_0$ is a projective generator.  
\end{LEM}		
\begin{proof}
	Let $E$ be an object in $\aA_0$. Then $i_*E \in \aA$, hence $0=\Ext^1_{\dD}(\mM, i_*E) \simeq \Ext^1_{\dD_0}(i^*\mM, E)$. Since functor $i^*$ is left adjoint to a $t$-exact functor, it is right $t$-exact, i.e. $i^* \mM \in \dD_0^{\lle 0}$. Thus we have an exact triangle
	\begin{equation}\label{eqtn_truncation}
	\tau_{\lle -1} i^* \mM \to i^* \mM \to \pP \to \tau_{\lle -1}i^* \mM[1].
	\end{equation} 
	As, for degree reasons $\Hom_{\dD_0}(\tau_{\lle -1}i^*M, E) =0$, by applying $\Hom_{\dD_0}(-,E)$ to (\ref{eqtn_truncation}), we get that $\Ext^1_{\dD_0}(\pP, E)=0$, i.e. $\pP\in \aA_0$ is projective.

	If $\mM\in \aA$ is a projective generator and $i_*$ is fully faithful then, for any non-zero $E\in \aA_0$, object $i_*E$ is non-zero, hence $\Hom_{\dD}(\mM, i_*E) \simeq \Hom_{\dD_0}(i^*\mM, E)\neq 0$. It follows from the long exact sequence obtained by applying $\Hom_{\dD_0}(-,E)$ to $(\ref{eqtn_truncation})$ that $\Hom_{\dD_0}(\pP, E) \neq 0$, i.e. $\pP$ is a projective generator for $\aA_0$.
	\end{proof}
	\begin{REM}\label{rem_pro_gen_from_N}
		If morphism $f$ satisfies (r) and $Y$ is affine, the category $\Per{0}(X/Y)$ has a projective generator $\nN = \mM^{\vee}$ (see \cite[Proposition 3.2.5]{VdB}). Analogous argument as in the proof of Proposition \ref{prop_proj_gen_of_A_f} shows that $\pP_{\nN} := \hH_X^0 \iota_f^* \nN$ is a projective generator for $\mathscr{A}_f$.
	\end{REM}
	
	The decomposition of a sheaf $F$ with $R^1f_*F =0$ with respect to the SOD $\dD_{\textrm{qc}}(X) = \langle {\cC_f}_{\textrm{qc}}, Lf^*\dD_{\textrm{qc}}(Y)\rangle$ 
	gives a cohomology sequence
	\begin{equation}\label{eqtn_triangle_for_coh_of_iota}
	0 \to \hH_X^{-1}(\iota_{f_*}\iota_f^* F) \to f^*f_* F\to F\to \hH_X^0(\iota_{f*}\iota_f^* F) \to 0.
	\end{equation}
	Since by \cite[Lemma 3.1.3]{VdB} morphism  $f^*f_* \mM \to \mM$ is surjective, sequence
	\begin{equation}\label{eqtn_P_M_in_Coh}
	0 \to \pP \to f^*f_* \mM \to \mM \to 0
	\end{equation}
	is exact in $\Coh(X)$.
	
	Let $f\colon X\to Y$ satisfy (r) and $Y = \Spec R$ be a spectrum of a complete Noetherian local ring $R$. The reduced fiber $C_{\textrm{red}} = \bigcup_{i=1}^n C_i$ of $f$ over the unique closed point $y\in Y$ is a tree of rational curves (see a more precise statement in Theorem \ref{thm_fiberstructure}).
	
	The Picard group of $X$ is isomorphic to $\mathbb{Z}^n$, where the isomorphism is given by the degrees of the restriction to irreducible components of $C_{\textrm{red}}$: $\lL \mapsto \deg(\lL|_{C_i})_{i=1,\ldots,n}$.
	
	\begin{REM}(cf. \cite[Lemma 3.4.4]{VdB})\label{rem_divisors_D_i}
		Let $x_i \in C_i\subset X$ be a closed point such that $x_i \notin C_k$, for any $k \neq i$, and $j_i\colon {X}_i \to \mathcal{X}_i$
		a closed embedding of a neighborhood ${X}_i$ of $x_i$ in $X$ into a smooth variety $\mathcal{X}_i$. There exists an effective Cartier divisor $\mathcal{D}_i \subset \mathcal{X}_i$ such that scheme-theoretically
		$\mathcal{D}_i \cap j_{i*} C_i = \{j_{i*} x_i\}$. By pulling back $\mathcal{D}_i$ to ${X}_i$, we obtain an effective divisor $D_i \subset X$ such that scheme-theoretically $D_i.C_i = \{x_i\}$ and $D_i.C_k =0$, for $k\neq i$. We denote by $\iota_{D_i} \colon D_i \to X$ the embedding of $D_i$ into $X$.
	\end{REM}
	
	Denote by $\lL_i$ the line bundle on $X$ defined by:
	\begin{equation}\label{eqtn_def_L_i}
	\lL_i := \oO_X(D_i).
	\end{equation}
	
	For every $i$, M. Van den Bergh defined a vector bundle $\mM_i$ via the exact sequence
	\begin{equation}\label{eqtn_ses_def_M_i}
	0 \to \oO_X^{r_i-1} \to \mM_i \to \lL_i \to 0
	\end{equation}
	corresponding to a choice of $R$-generators in $\Ext^1_X(\lL_i, \oO_X)$. Denote $\mM_0:=\oO_X$. Then 
	$$
	\mM = \bigoplus_{i=0}^n \mM_i
	$$
	is a projective generator for ${}^{-1}\textrm{Per}(X/Y)$ \cite[Proposition 3.5.4]{VdB}.
	
	We put
	\begin{equation}\label{eqtn_def_P_i}
	\pP_i = \hH_X^{-1} \iota_f^* \mM_i.
	\end{equation}
	By Proposition \ref{prop_proj_gen_of_A_f}, sheaf $\pP = \bigoplus_{i=1}^n \pP_i$ is a projective generator for $\mathscr{A}_f$.
	
	\textbf{Example.} Let $X$ be a smooth threefold, $f\colon X\to Y$ a flopping contraction. Assume $Y = \Spec R$ has rational singularities and $C_{\textrm{red}}\simeq \mathbb{P}^1\subset X$. Then the normal bundle $N_{X/C_{\textrm{red}}}$ is isomorphic either to $\oO(-1) \oplus \oO(-1)$, $\oO \oplus \oO (-2)$ or $\oO (1) \oplus \oO (-3)$. Let $D\subset X$ be a divisor such that $D.C_{\textrm{red}} = 1$. For the first two cases, $\mM \simeq \oO_X \oplus \oO_X(D)$, and it is of higher rank for the third case. If $N_{X/C_{\textrm{red}}} \simeq \oO (-1) \oplus \oO (-1)$, the projective generator $\pP$ is $\oO (-1)$. When $N_{X/C_{\textrm{red}}} \simeq \oO  \oplus \oO (-2)$, $\pP$ is an $n$-iterated extension of $
	\oO (-1)$ by $\oO (-1)$, where $n$ is the width of $C_{\textrm{red}}$, see \cite{Reid} (cf. \cite{Toda3}).
	
	\vspace{0.3cm}
	\subsection{Basic properties of $\mathscr{A}_f$}\label{ssec_basic_prop_of_A_f}~\\
	
	Let $f \colon X \to Y$ satisfy (r) and let $E$ be a coherent sheaf on $Y$. By the derived projection formula, we have $Rf_*Lf^*(E) \simeq E$. Since $f$ has fibers of dimension bounded by 1, Leray spectral sequence $R^pf_* L^qf^*(E) \Rightarrow \hH_X^{p-q} E$ degenerates. Hence an exact sequence:
	\begin{equation}\label{eqtn_E_to_ffE}
	0 \to R^1f_*L^1f^*E \to E \to f_*f^* E \to 0
	\end{equation}

	\begin{LEM}\label{lem_projection_formula}
		Let $f \colon X \to Y$ and $E$ be as above. Then $R^1f_*f^*E = 0$ and $Rf_*L^if^*E = 0$, for any $i >1$. Further suppose that $E$ has no torsion supported at the image $f(\textrm{Ex}\, f)$ of the exceptional locus of $f$. Then $f_*f^*(E) \simeq E$ and sheaf $L^1f^*E$ is in $\mathscr{A}_f$.
	\end{LEM}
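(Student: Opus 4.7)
The plan is to derive all three assertions from one input: the derived projection formula $Rf_*Lf^*E \simeq E$, viewed through the hypercohomology spectral sequence
\begin{equation*}
E_2^{p,q} = R^pf_*\mathcal{H}^q(Lf^*E) = R^pf_* L^{-q}f^*E \;\Longrightarrow\; \mathcal{H}^{p+q}(Rf_*Lf^*E) = \mathcal{H}^{p+q}(E).
\end{equation*}
Because $f$ has relative dimension one, $R^pf_* = 0$ for $p \geq 2$, so only the columns $p = 0, 1$ contribute. All differentials $d_r$ with $r \geq 2$ have codomain in a column $p \geq 2$ and therefore vanish, so $E_2 = E_\infty$.

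First I would read off the individual vanishings column by column using that the abutment is concentrated in total degree zero and equal to $E$. In total degree $n = 1$ the only surviving term is $E_\infty^{1,0} = R^1f_*f^*E$, so this sheaf is zero. In total degree $n = -i$ with $i \geq 1$, the filtration has associated graded pieces $f_*L^i f^*E$ and $R^1f_*L^{i+1}f^*E$, both of which must vanish; this gives $f_*L^if^*E = 0$ for all $i \geq 1$ and $R^1f_*L^jf^*E = 0$ for all $j \geq 2$, which together is exactly $Rf_*L^if^*E = 0$ for $i > 1$. In total degree $n = 0$ the filtration reproduces the short exact sequence (\ref{eqtn_E_to_ffE}).

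For the last statement I would use that $f$ is an isomorphism over the open set $Y \setminus f(\mathrm{Ex}\,f)$, so $L^1f^*E$ is supported inside $\mathrm{Ex}\,f$; consequently $R^1f_*L^1f^*E$ is a subsheaf of $E$ whose support lies inside the closed set $f(\mathrm{Ex}\,f)$, which has codimension at least two in $Y$ by assumption. The hypothesis that $E$ has no torsion supported at $f(\mathrm{Ex}\,f)$ then forces $R^1f_*L^1f^*E = 0$, so (\ref{eqtn_E_to_ffE}) degenerates to $f_*f^*E \simeq E$. Combining this vanishing with $f_*L^1f^*E = 0$ from step one gives $Rf_*L^1f^*E = 0$, i.e.\ $L^1f^*E \in \mathscr{A}_f$.

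The only non-formal step is the last paragraph: one must identify $R^1f_*L^1f^*E$ as a \emph{subsheaf} of $E$ (which is the content of (\ref{eqtn_E_to_ffE})) and check that its support lies on $f(\mathrm{Ex}\,f)$ so that the torsion hypothesis applies. Everything else is routine bookkeeping on the two-column $E_2$-page.
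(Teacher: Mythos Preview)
Your proof is correct and follows essentially the same approach as the paper: both use the projection formula $Rf_*Lf^*E\simeq E$ together with the (degenerate) two-column spectral sequence $R^pf_*L^qf^*E\Rightarrow \hH^{p-q}(E)$, then invoke sequence~(\ref{eqtn_E_to_ffE}) and the torsion hypothesis to kill $R^1f_*L^1f^*E$. You are simply more explicit about why the spectral sequence degenerates and how each conclusion falls out of a particular total degree; the paper compresses all of this into one sentence. The aside about $f(\Ex f)$ having codimension at least two is not actually needed (the torsion hypothesis is applied directly), but it does no harm.
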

	
	\begin{proof}
		The derived projection formula $Rf_*Lf^* E \simeq E$ and Leray spectral sequence $R^pf_* L^qf^*(E) \Rightarrow \hH_X^{p-q} E$ imply $R^1f_*f^* E \simeq 0$, $f_*L^1f_* E \simeq 0$, $Rf_* L^if^* E \simeq 0$, for any $i>1$. Since $R^1f_*L^1f^* E$ is supported on $f(\textrm{Ex}\, f)$, the assumption that $E$ has no torsion supported on $f(\Ex f)$ and sequence (\ref{eqtn_E_to_ffE}) imply that $R^1f_*L^1f^*E$ is zero and $E \simeq f_*f^*E$.
	\end{proof}
	
	\begin{LEM}\label{lem_Lif_in_A_f}
		Let $f\colon X \to Y$ and $E$ be as above. If $E=f_* E'$, for some $E'\in\Coh(X)$, then $f_*f^*(E) \simeq E$ and sheaves $L^if^*E$ are in $\mathscr{A}_f$, for $i>0$.
	\end{LEM}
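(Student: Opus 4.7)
The plan is to reduce the whole claim to the previous Lemma \ref{lem_projection_formula} and to extract the one missing piece, $R^1f_*L^1f^*E=0$, from the hypothesis $E=f_*E'$ via a triangle identity. Lemma \ref{lem_projection_formula} already supplies $f_*L^1f^*E=0$ and $Rf_*L^if^*E=0$ for $i>1$, as well as the short exact sequence $0\to R^1f_*L^1f^*E\to E\to f_*f^*E\to 0$ of (\ref{eqtn_E_to_ffE}), in which the right-hand map is, via the projection formula identification $Rf_*Lf^*E\simeq E$, the adjunction unit $\eta_E\colon E\to f_*f^*E$. Thus both assertions of the lemma are equivalent to $\eta_E$ being injective (equivalently, an isomorphism).

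The injectivity is immediate from the hypothesis $E=f_*E'$: the triangle identity for the adjunction $f^*\dashv f_*$ gives $f_*(\epsilon_{E'})\circ \eta_{f_*E'}=\mathrm{id}_{f_*E'}$, so the morphism $f_*(\epsilon_{E'})\colon f_*f^*E=f_*f^*f_*E'\to f_*E'=E$ is a left inverse to $\eta_E$. Hence $\eta_E$ is a split monomorphism; combined with the surjectivity coming from the Leray sequence, it is an isomorphism, and the kernel $R^1f_*L^1f^*E$ vanishes.

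Concatenating this new vanishing with those from Lemma \ref{lem_projection_formula} yields $Rf_*L^if^*E=0$ for every $i>0$, so each $L^if^*E$ lies in $\mathscr{A}_f$, while the isomorphism $f_*f^*E\simeq E$ is already part of what was just proved. I do not foresee a serious obstacle; the only point meriting careful (but standard) verification is the identification of the edge map of the Leray spectral sequence in (\ref{eqtn_E_to_ffE}) with the adjunction unit $\eta_E$, after which the triangle identity mechanically produces the required retraction.
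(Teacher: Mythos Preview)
Your proposal is correct and follows essentially the same argument as the paper: both use the exact sequence (\ref{eqtn_E_to_ffE}) to see that the unit $\eta_{f_*E'}\colon f_*E'\to f_*f^*f_*E'$ is surjective, invoke the triangle identity $f_*(\epsilon_{E'})\circ\eta_{f_*E'}=\mathrm{id}$ to see it is a split monomorphism, conclude it is an isomorphism so that $R^1f_*L^1f^*E=0$, and then invoke Lemma~\ref{lem_projection_formula} for the remaining vanishings. Your remark about identifying the edge map with $\eta_E$ is the only detail you make more explicit than the paper does.
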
	
	\begin{proof}	
		Sequence (\ref{eqtn_E_to_ffE}) implies that morphism $\alpha \colon f_* E' \to f_*f^*f_* E'$ is surjective. Morphism $\alpha$ is the inclusion of a direct summand, because its composition with the canonical map $f_*f^*f_* E'\to f_* E'$ is the identity. Hence, $\alpha$ is an isomorphism. Exact sequence (\ref{eqtn_E_to_ffE}) implies that $R^1f_*L^1f^*f_* E' = 0$. The rest follows from Lemma \ref{lem_projection_formula}.
	\end{proof}

	\begin{PROP}\label{prop_g_is_exact}
		Let $f\colon X\to Y$ be a proper morphism with dimension of fibers bounded by 1. Consider a decomposition for $f$:
		\[
		\xymatrix{X \ar[dr]^g \ar[dd]_f & \\ & Z \ar[dl]^h\\ Y&}
		\]
	 Then, for $E \in \Coh(X)$ with $R^1f_* E =0$, we have $R^1g_* E = 0$. Functor $g_*$ restricts to an exact functor $g_*\colon \mathscr{A}_f \to \mathscr{A}_h$.
	\end{PROP}
	\begin{proof}
		Morphism $g$ is proper by the valuative criterion. Replace $Z$ by the (closed) image of $g$, if necessary. Then $h$ becomes a proper morphism, \emph{cf.} \cite[Corollaire 5.4.3]{EGAII}. As the dimension of fibers for $f$ is bounded by 1, so is the dimension of fibers for $h$. 
		
Since $Rf_*(E)$ is a sheaf on $Y$, spectral sequence $R^qh_* R^sg_* E \Rightarrow R^{q+s}f_* E$ implies that $Rh_*R^1g_*(E) = 0$. Sheaf $R^1g_*(E)$ is supported in the locus of points $z \in Z$ such that the fiber of morphism $g$ over $z$ is one dimensional. Since the null-category of a finite morphism is zero, morphism $h\colon Z\to Y$ restricted to the support of $R^1g_*(E)$ must have fibers of dimension one. Let $y \in Y$ be a point in $h(\textrm{Supp}\, R^1g_*(E))$.
		Then the fiber of $f$ over $y$ is two-dimensional, which contradicts the assumptions. Thus, $R^1g_*(E)$ is zero.
		
		For any $E \in \mathscr{A}_f$, the spectral sequence $R^ph_*R^qg_* E \Rightarrow R^{p+q}f_* E = 0$ degenerates. It follows that $g_* E \in \mathscr{A}_h$. Since $R^1g_*(E) = 0$, functor $g_* \colon \mathscr{A}_f \to \mathscr{A}_h$ is exact.
	\end{proof}
	
	\begin{PROP}\label{prop_vanish_Rpi_Zpi_X_of_E}
		Let $f\colon X\to Y$ be a proper morphism with dimension of fibers bounded by 1 and $g \colon Z \to Y$ a morphism of schemes over field $k$. Assume a coherent sheaf $E$ on $X$ satisfy $R^lf_* E = 0$, for $l\geq l_0$, for some $l_0 \in \{0,1\}$. Then $R^l\pi_{Z*} \pi_X^* E = 0$, for $l\geq l_0$, where $\pi_X \colon W \to X$ and $\pi_Z \colon W \to Z$ are the projections for $W = X\times_Y Z$:
	\begin{equation}\label{eqtn_diag_korean_lem}
		\xymatrix{X \times_Y Z \ar[r]^(0.6){\pi_X} \ar[d]_{\pi_Z} & X \ar[d]^f \\ Z \ar[r]^{g} & Y}
		\end{equation}

	\end{PROP}
	
	\begin{proof}
		The statement is local on $Z$, hence we may assume that $Z = \Spec(A)$, $Y = \Spec(R)$ and morphism $g \colon Z \to Y$ is affine. Thus, we consider square (\ref{eqtn_diag_korean_lem})
		with $g$ and $\pi_X$ affine.
		
		The commutativity of (\ref{eqtn_diag_korean_lem}) implies $R(f\pi_X)_*\pi_X^* E \simeq R(g\pi_Z)_*\pi_X^* E$. Morphisms $g$ and $\pi_X$ are affine, hence we have an isomorphism $R^lf_* (\pi_{X*}\pi_X^* E) \simeq g_*R^l\pi_{Z*}(\pi_X^* E)$, for any $l\geq 0$. Morphism $g$ is affine, thus $g_*R^l\pi_{Z*}(\pi_X^*E)$ is zero if and only if so is $R^l\pi_{Z*}(\pi_X^* E)$. Hence, we need to show that $R^lf_* (\pi_{X*}\pi_X^* E) = 0$, for $l\geq l_0$.
		
		Morphism $\pi_X$ is affine, hence $\pi_{X*}\pi_X^* E \simeq E \otimes \pi_{X*}(\oO_{X\times_Y Z})$. The base change morphism (cf. (\ref{eqtn_def_of_omega})) $f^*g_* \oO_{Z}\to \pi_{X*} \oO_{X\times_Y Z}$ is an isomorphism. Indeed, this can be checked locally on $X$: if $X = \Spec(B)$, then both sheaves $\pi_{X*} \oO_{X\times_Y Z}$ and $f^*g_* \oO_Z$ correspond to $B$-module $B\otimes_R A$. Thus,
		$$
		\pi_{X*}\pi_X^* E \simeq E \otimes f^*g_* \oO_Z \simeq \hH^0_X(E \otimes^L Lf^* g_* \oO_Z).
		$$
		Derived projection formula $Rf_*(E \otimes^L Lf^*g_* \oO_Z) \simeq Rf_*(E) \otimes^L g_*\oO_Z$ implies that $Rf_* (E \otimes^L Lf^* g_* \oO_Z) \in \dD(Y)^{< l_0}$.
		Morphism $f$ is proper and with fibers of relative dimension bounded by one, hence $R^pf_*(F)=0$, for $p>1$ and for any sheaf $F$. It follows that spectral sequence
		$$
		R^pf_*\hH_X^q(E \otimes^L Lf^*g_* \oO_Z) \Rightarrow R^{p+q}f_*(E \otimes^L Lf^*g_* \oO_Z)
		$$
		degenerates. Therefore, $ R^lf_*(\pi_{X*}\pi_X^* E) \simeq R^l f_* \hH_X^0(E \otimes Lf^*g_* \oO_Z) = 0$, for $l\geq l_0$. 	
	\end{proof}
	
	\begin{COR}\label{cor_pull-back_of_A_f}
		Let $f\colon X\to Y$ and $g \colon Z \to Y$ be as in Proposition \ref{prop_vanish_Rpi_Zpi_X_of_E}. For $E\in \mathscr{A}_f$, its pull-back $\pi_X^* E$ is an object in $\mathscr{A}_{\pi_Z}$.
	\end{COR}

	\section{$L^1f^*$ vanishing and 2-periodicity}\label{sec_L1_vanish_and_period}

	Let $f\colon X\to Y$ satisfy (r). For $p=-1,0$, we denote by $\mathscr{P}_p$ the category of locally projective objects in $\Per{p}(X/Y)$. An object $\mM$ belongs to $\mathscr{P}_p$ if there exists an affine open covering $Y = \bigcup Y_i$, inducing $X = \bigcup X_i$ with $X_i = f^{-1}(Y_i)$, such that $\mM|_{X_i}$ is projective in $\Per{p}(X_i/Y_i)$. By \cite[Proposition 3.2.6]{VdB} objects in $\mathscr{P}_{-1}$ and $\mathscr{P}_0$ are locally free sheaves on $X$.

	In this section we discuss a spherical couple associated to a Cartier divisor.Under the assumption that $Y$ has hypersurface singularities we prove that  the sheaf $L^1f^*f_* \mM$ is zero, for any object $\mM \in \mathscr{P}_{-1}$. Though technical, this result is crucial for the various description of the flop and flop-flop functors presented in the subsequent sections.
	
	\vspace{0.3cm}
	\subsection{Cartier divisors and spherical couples}~\\
	
	Let $X$ be a quasi-compact, quasi-separated scheme. By an effective Cartier divisor $i\colon D\to X$ we mean a subscheme whose ideal sheaf $\iI_D$ is invertible. Here we discuss a 2-categorical adjunction and a spherical couple in the bicategory $\FM$ (see Appendix \ref{sec_sph_funct_and_enh}) related to such a  divisor.
	
	Denote by $\Gamma \subset D\times X$, $\Gamma^t \subset X\times D$ the graphs of $i$. Sheaf $\oO_\Gamma\in \dD_{\textrm{qc}}(D\times X)$ is an FM kernel for $Ri_*\colon \dD_{\textrm{qc}}(D) \to \dD_{\textrm{qc}}(X)$, and $\oO_{\Gamma^t}\in \dD_{\textrm{qc}}(X\times D)$ is an FM kernel for $Li^*\colon \dD_{\textrm{qc}}(X) \to \dD_{\textrm{qc}}(D)$.
	
	For a scheme $Y$, we
	denote by $\Delta^Y\subset Y\times Y$ the diagonal and, for closed $Z\subset Y$, by $\Delta^{Z,Y}$ the image of $Z$ under the diagonal morphism $\textrm{diag}^Y\colon Y \to Y \times Y$. For $F\in \dD_{\textrm{qc}}(D\times Y)$, we have: 
	\begin{equation}\label{eqtn_conv_Gamma_t} 
	\oO_{\Gamma^t}\ast F  = (i\times \Id_Y)_*F\in \dD_{\textrm{qc}}(X\times Y).
	\end{equation}
	In particular,
	$$
	\oO_{\Gamma^t} \ast \oO_{\Gamma}  = (i\times \Id_X)_*\oO_{\Gamma} \simeq \oO_{\Delta^{D,X}}.
	$$
	Embedding $\Delta^{D,X}\subset \Delta^X$ gives 
	\begin{align}\label{eqtn_eta_for_divisor}
	\eta \colon  \oO_{\Delta^X} \to  \oO_{\Delta^{D,X}} \simeq \oO_{\Gamma^t} \ast \oO_{\Gamma}.
	\end{align}
	For any scheme $Y$, and objects $F\in \dD_{\textrm{qc}}(X\times Y)$, $G\in \dD_{\textrm{qc}}(Y \times X)$, we have: 
	\begin{align*}
	&\oO_{\Gamma^t}\ast  \oO_{\Gamma} \ast F\simeq (i\times\Id_Y)_*(i\times \Id_Y)^*F,& &G \ast \oO_{\Gamma^t} \ast \oO_{\Gamma} \simeq (\Id_Y \times i)_*(\Id_Y\times i)^*G.&
	\end{align*}
	One checks locally that for $F\in \textrm{QCoh}(X\times Y)$, $G\in \textrm{QCoh}(Y\times X)$ morphisms
	\begin{align*}
	&F\simeq  \oO_{\Delta^X}\ast F \xrightarrow{\eta\ast F } \oO_{\Gamma^t}\ast \oO_{\Gamma} \ast F \simeq (i\times \Id_Y)_*(i\times \Id_Y)^*F \simeq F\otimes \oO_{D\times Y},&\\
	&G \simeq G\ast \oO_{\Delta^X} \xrightarrow{G\ast\eta  } G\ast \oO_{\Gamma^t} \ast\oO_{\Gamma}   \simeq (\Id_Y\times i)_*(\Id_Y\times i)^*G \simeq G\otimes \oO_{Y\times D}&
	\end{align*}
	are induced by  restriction morphisms $\oO_{X\times Y} \to \oO_{D \times Y}$, respectively $\oO_{Y\times X} \to \oO_{Y \times D}$.
	
	For any scheme $Y$ and $F\in \dD_{\textrm{qc}}(X\times Y)$, we have: 
	\begin{equation}\label{eqtn_conv_Gamma} 
	\oO_{\Gamma}\ast F \simeq (i\times\Id_Y)^*F\in \dD_{\textrm{qc}}(D\times Y).
	\end{equation}
	Hence,
	$$
	\oO_{\Gamma} \ast \oO_{\Gamma^t} \simeq (i\times \Id_D)^*\oO_{\Gamma^t} \simeq (i\times \Id_D)^*(i\times \Id_D)_*\oO_{\Delta^D} \in \dD_{\textrm{qc}}(D\times D).
	$$ 
	Subscheme $D\times D\subset X\times D$ is a Cartier divisor with ideal sheaf $\iI_{D\times D}$. Therefore, object $(i\times\Id_D)^*(i\times\Id_D)_*\oO_{\Delta^D}$ has two non-zero cohomology sheaves, $\oO_{\Delta^D}$ in degree $0$ and $\oO_{\Delta^D}\otimes \iI_{D\times D}|_{D\times D}$ in degree $-1$. Truncation to the 0'th cohomology gives morphism
	\begin{align}\label{eqtn_epsilon_for_divisor}
	\ve \colon \oO_{\Gamma}  \ast \oO_{\Gamma^t} \simeq (i\times\Id_D)^*(i\times\Id_D)_*\oO_{\Delta^D} \to \oO_{\Delta^D}
	\end{align} 
	Formulas (\ref{eqtn_conv_Gamma_t}) and (\ref{eqtn_conv_Gamma}) imply that, for any scheme $Y$ and objects $F\in \dD_{\textrm{qc}}(D\times Y)$ and $G\in \dD_{\textrm{qc}}(Y\times D)$, we have: 
	\begin{align*}
	&\oO_{\Gamma}\ast \oO_{\Gamma^t}\ast F\simeq (i\times \Id_Y)^*(i\times\Id_Y)_*F,& &G \ast \oO_{\Gamma} \ast \oO_{\Gamma^t} \simeq (\Id_Y\times i)^*(\Id_Y\times i)_*G.&
	\end{align*}
	One checks locally along $D$ that for $F\in \textrm{QCoh}(D\times Y)$ and $G\in \textrm{QCoh}(Y\times D)$  morphisms
	\begin{align*}
		&(i\times\Id_Y)^*(i\times\Id_Y)_*F \simeq \oO_{\Gamma}\ast \oO_{\Gamma^t} \ast  F\xrightarrow{\ve\ast F }  \oO_{\Delta^D}\ast F \simeq F,&\\
		&(\Id_Y\times i)^*(\Id_Y\times i)_*G \simeq G \ast \oO_{\Gamma} \ast  \oO_{\Gamma^t} \xrightarrow{G \ast \ve}G\ast\oO_{\Delta^D}   \simeq G&
	\end{align*} 
	are the truncations to the 0'th cohomology.
	
	An early version of formula (\ref{eqtn_second_triangle_for_Lii}) in the following theorem was first proven in \cite[Lemma 3.3]{BonOrl} under the assumption that both varieties are smooth. The result was also stated without proof in \cite{Ann, Add}. 
	
	\begin{THM}\label{thm_triangle_for_ii}
		Let $i\colon D \to X$ be the embedding of an effective Cartier divisor. Then $(\oO_{\Gamma^t}, \oO_{\Gamma}, \eta, \ve)$ is a spherical couple in the bicategory $\FM$.
		Object $\textrm{diag}^D_*\iI_D|_D[1]$ is the spherical twist  and  $\textrm{diag}^X_*\iI_D$ the spherical cotwist.
		The associated  functorial exact triangles read:
		\begin{align}
		&\Id_{\dD_{\textrm{qc}}(X)} \otimes \iI_D \to \Id_{\dD_{\textrm{qc}}(X)} \xrightarrow{\eta} Ri_*Li^* \to \Id_{\dD_{\textrm{qc}}(X)} \otimes \iI_D[1],&\label{eqtn_second_triangle_for_Lii}\\
		&\textrm{Id}_{\dD_{\textrm{qc}}(D)}\otimes \iI_D|_D[1] \to Li^*Ri_* \xrightarrow{\ve} \textrm{Id}_{\dD_{\textrm{qc}}(D)} \to \textrm{Id}_{\dD_{\textrm{qc}}(D)}\otimes \iI_D|_{D}[2].&\label{eqtn_triangle_for_Lii}
		\end{align} 
	\end{THM}
	\begin{proof}
		First, we check that $(\oO_{\Gamma^t},\oO_{\Gamma}, \eta, \ve)$ is a 2-categorical adjunction, i.e. that (\ref{eqtn_triangle_equalities}) 
		are equal to the identity morphisms.
		
		As $\oO_{\Gamma} \simeq (\Id_D\times i)_*\oO_{\Delta^D}$, the composite $\oO_{\Gamma}\xrightarrow{\oO_{\Gamma}\ast \eta } \oO_{\Gamma} \ast \oO_{\Gamma^t} \ast \oO_{\Gamma} \xrightarrow{\ve\ast \oO_{\Gamma}  } \oO_{\Gamma}$ reads:
		\begin{align*}
		(\Id_D\times i)_*\oO_{\Delta^D}\xrightarrow{\oO_{\Gamma}\ast\eta  } (\Id_D\times i)_*(\Id_D\times i)^*(\Id_D\times i)_*\oO_{\Delta^D} \xrightarrow{\ve\ast\oO_{\Gamma}  } (\Id_D\times i)_*\oO_{\Delta^D}.
		\end{align*}
		Object $(\Id_D\times i)_*\oO_{\Delta^D}$ is supported on $D\times D$, hence the first morphism, restriction to $D\times D$, is the identity on the 0'th cohomology. The second morphism is the truncation at 0'th cohomology, hence $(\ve\ast\oO_\Gamma   )\circ (\oO_{\Gamma}\ast\eta  ) = \Id_{\oO_{\Gamma}}$.
		
		As $\oO_{\Gamma^t} \simeq (i\times \Id_D)_*\oO_{\Delta^D}$, the composite $\oO_{\Gamma^t}\xrightarrow{ \eta\ast\oO_{\Gamma^t} } \oO_{\Gamma^t} \ast \oO_{\Gamma} \ast \oO_{\Gamma^t} \xrightarrow{\oO_{\Gamma^t}\ast\ve  } \oO_{\Gamma^t}$ reads:
		\begin{align*}
		(i \times \Id_D)_*\oO_{\Delta^D}\xrightarrow{\eta\ast \oO_{\Gamma^t} } (i  \times \Id_D)_*(i \times \Id_D)^*(i \times \Id_D)_*\oO_{\Delta^D} \xrightarrow{\oO_{\Gamma^t}\ast\ve   } (i\times \Id_D)_*\oO_{\Delta^D}.
		\end{align*}		
		The arguments as above show that $(\oO_{\Gamma^t}\ast\ve  ) \circ (\eta \ast \oO_{\Gamma^t}) $ is the identity on $\oO_{\Gamma^t}$.

		Now we calculate the twist and cotwist of the 2-categorical adjunction $(\oO_{\Gamma^t},\oO_{\Gamma}, \eta,\ve)$. Morphism $\eta$ in (\ref{eqtn_eta_for_divisor}) is induced by the morphism $\oO_{X}\to \oO_D$ with kernel $\iI_D$. Hence,
		\begin{align*}
		\textrm{diag}_*^X \iI_D \to \textrm{diag}_*^X\oO_{X} \xrightarrow{\eta} \textrm{diag}_*^X\oO_D \to \textrm{diag}_*^X \iI_D[1] 
		\end{align*}
		is an exact triangle in $\dD_{\textrm{qc}}(X\times X)$. Thus, the cotwist equals $\textrm{diag}_*^X\iI_D$. It is an equivalence whose inverse is $\textrm{diag}_*^X\iI_D^{-1}$. The corresponding functorial exact triangle  is (\ref{eqtn_second_triangle_for_Lii}). 
		
		Morphism $\ve$ in (\ref{eqtn_epsilon_for_divisor}) is induced by the truncation to the 0'th cohomology of the object $(i\times\Id_D)^*(i\times\Id_D)_*\oO_{\Delta^D}$. As we noted before, $(i\times\Id_D)^*(i\times\Id_D)_*\oO_{\Delta^D}$ has two non-zero cohomology sheaves and the truncation to the 0'th cohomology yields an exact triangle
		$$
		\oO_{\Delta^D}\otimes \iI_{D\times D}|_{D\times D}[1] \to (i\times\Id_D)^*(i\times\Id_D)_*\oO_{\Delta^D} \to \oO_{\Delta^D}\to \oO_{\Delta^D}\otimes \iI_{D\times D}|_{D\times D}[2].
		$$
		The twist $\oO_{\Delta^D} \otimes \iI_{D\times D}|_{D\times D}[1] \simeq \textrm{diag}^D_*\iI_D|_D[1]$ is an equivalence whose inverse is $\textrm{diag}^D_*\iI_D^{-1}|_D[-1]$. 
		The corresponding functorial exact triangle is (\ref{eqtn_triangle_for_Lii}). 
	\end{proof}

	\vspace{0.3cm}
	\subsection{$L^1f^*$ vanishing and consequences}\label{ssec_L1_vanish_and_conseq}~\\
	
	Vanishing of $L^1f^*f_*(-)$ is local on $Y$, therefore throughout this section we assume that $Y$ is affine and fix a closed embedding $i\colon Y \to \yY$ of $Y$ as the zero locus of a regular function on a smooth affine $\yY$. Denote $g= i \circ f$, i.e. consider a commutative diagram
		\[
		\xymatrix{X \ar[dr]^g \ar[d]_f & \\ Y \ar[r]^i & \yY.}
		\]
	Since $Y\subset \yY$ is a principal Cartier divisor, the sheaf $\iI_Y$ is isomorphic to $\oO_{\yY}$.
	
	The following lemma is one of the key technical points of this paper.
	\begin{LEM}\label{lem_L1ffOD_i=0}
		Let $f\colon X\to Y$ satisfy (r) and let $Y = \Spec R$ be a spectrum of a complete Noetherian local ring with hypersurface singularities. Let further $D_i \subset X$  be as in Remark (\ref{rem_divisors_D_i}). Then $L^1f^*f_* \oO_{D_i} \simeq 0$.
	\end{LEM}

	\begin{proof}
		Let $\iota_i \colon D_i \to X$ denote the embedding of $D_i$ into $X$. We denote by $h$ the composite $h=if\iota_i \colon D_i \to \mathcal{Y}$.
		
		By Grothendieck duality (\ref{eqtn_G-Vdual}), we have
		\begin{equation}\label{eqtn_111}
		\eE xt^i_{\yY}(Rh_* \oO_{D_i}, \oO_{\yY}) \simeq Rh_* \eE xt^i_{D_i}(\oO_{D_i}, h^!\oO_{\yY}) \simeq Rh_* \hH_{D_i}^i(h^! \oO_{\yY}).
		\end{equation}
		Divisor $D_i$ is Cartier in a Gorenstein scheme, hence it is Gorenstein itself. Since $\yY$ is smooth and a spectrum of a complete ring, we have $\omega_{\yY}^\bcdot \simeq \oO_{\yY}[n+1]$.
		It follows that $h^! \oO_Y=h^!\omega_{\yY}^\bcdot[-n-1]=\omega_{D_i}[-2]$ is a sheaf in homological degree $2$. Since, morphism $h$ is finite, $h_*$ is exact and $Rh_* \hH_{D_i}^i(h^! \oO_{\yY}) \simeq h_* \hH_{D_i}^i(h^! \oO_{\yY})$. Hence, by (\ref{eqtn_111}), we have:
		$$
		\eE xt^i_{\mathcal{Y}}(h_* \oO_{D_i}, \oO_{\mathcal{Y}}) = 0, \, \textrm{ for }\, i\neq 2.
		$$
		
		Since $\mathcal{Y}$ is affine, $\Ext_{\yY}^i(h_*\oO_{D_i}, \oO_{\mathcal{Y}}) = 0$, for $i\neq 2$. Hence, projective dimension of $h_* \oO_{D_i}$ as an $\oO_{\yY}$ module equals two. In other words, $h_* \oO_{D_i}$ has a locally free resolution 	
		$$
		0 \to \eE_{-2} \to \eE_{-1} \to \eE_0 \to h_* \oO_{D_i} \to 0.
		$$
		
		We denote by $\eE_{\bcdot}$ the complex $0 \to \eE_{-2} \to \eE_{-1} \to \eE_0 \to 0$. Cohomology of complex $i_*i^* \eE_{\bcdot}$ is equal to $\textrm{Tor}^{\yY}(h_* \oO_{D_i},\oO_Y)$. Since $Y \subset \mathcal{Y}$ is a Cartier divisor, $\oO_Y$ has a resolution of length two on $\yY$, i.e. the zeroth and first Tor-group only can be non-zero. As functor $i_*$ is exact, we conclude that the morphism $i^*\eE_{-2} \to i^* \eE_{-1}$ is injective.
		
		Morphism $f$ is an isomorphism outside a closed subset of codimension greater than one. Hence, the kernel of $f^*i^* \eE_{-2} \to f^*i^* \eE_{-1}$ is supported in codimension greater than one. Since $f^*i^* \eE_{-2}$ is locally free, it has no torsion, which shows that morphism $f^*i^* \eE_{-2} \to f^*i^* \eE_{-1}$ is also injective, thus complex $\wt{\eE}_\bcdot := f^*i^* \eE_{\bcdot}$ has zero cohomology except for $\hH^{0}_X(\wt{\eE}_{\bcdot})$ and $\hH^{-1}_X(\wt{\eE}_{\bcdot})$.

		Let $E\subset X$ be the exceptional locus of $f$. Assume $L^1f^*f_* \oO_{D_i} \neq 0$. Its support is contained in $E$. Moreover, since $L^1f^*f_* \oO_{D_i} \in \mathscr{A}_f$ (see Lemma \ref{lem_Lif_in_A_f}), the support of this sheaf should contain at least one component of the curve in the fiber over the closed point of $Y$ (see Theorem \ref{thm_fiberstructure} for the structure of the fiber). Therefore, there exists a closed point $x \in E \setminus D_i$ in the support of $L^1f^*f_* \oO_{D_i}$.
		
		By Theorem \ref{thm_triangle_for_ii} for divisor $Y$ in $\yY$, object $i^* \eE_{\bcdot} \simeq Li^*i_*f_* \oO_{D_i}$ fits into an exact triangle
		$$
		f_* \oO_{D_i}[1] \to i^* \eE_{\bcdot} \to f_* \oO_{D_i} \to f_* \oO_{D_i}[2].
		$$
		By applying functor $L f^*$ to this triangle, we obtain an exact sequence
		$$
		f^* f_* \oO_{D_i} \to \mathcal{H}_X^{-1}(\wt{\eE}_{\bcdot}) \to L^1f^*f_* \oO_{D_i}\to 0.
		$$
		Sheaf $f^*f_*\oO_{D_i}$ is supported on the preimage of $f(D_i) \subset Y$, i.e. on the set $E \cup D_i$. Let us restrict our attention to the vicinity $\wt{i} \colon \wt{X}\hookrightarrow X$ of point $x$. The support of both sheaves $f^*f_* \oO_{D_i}$ and $L^1f^*f_*\oO_{D_i}$ is contained in $E$. Therefore, $\hH_X^{-1}(\wt{\eE})$ is a non-zero sheaf with support of some codimension $l>1$.
		
		Let $j\colon \wt{X}\to \mathcal{X}$ be a closed embedding into a smooth variety of dimension $m$. We have $\omega_{\wt{X}} \simeq \wt{i}^!f^! \oO_Y \simeq \wt{i}^! \oO_X \simeq \oO_{\wt{X}}$, hence Grothendieck duality (\ref{eqtn_G-Vdual}) gives
		$$
		\eE xt^{\bcdot}_{\mathcal{X}}(j_* \wt{\eE}_{\bcdot}, \omega_{\mathcal{X}}) \simeq j_* \eE xt^{\bcdot}_{\wt{X}}(\wt{\eE}_\bcdot,\oO_{\wt{X}}[n-m]).
		$$
		Complex $\wt{\eE}_{\bcdot}$ consists of three locally free sheaves, thus $\eE xt^k_X(\wt{\eE}_{\bcdot}, \oO_X)$ is zero, for $k>2$. It follows that $\eE xt^k_{\mathcal{X}}(j_* \wt{\eE}_\bcdot, \omega_{\mathcal{X}}) = 0$, for $k>m-n+2$. By applying functor $\hH om^{\bcdot}_{\mathcal{X}}(-, \omega_{\mathcal{X}})$ to the exact triangle
		$$
		j_*\hH^{-1}_X(\wt{\eE}_{\bcdot})[1] \to j_*\wt{\eE}_{\bcdot} \to j_*\hH^0_X(\wt{\eE}_{\bcdot}) \to j_*\hH^{-1}_X(\wt{\eE}_{\bcdot})[2],
		$$
		we obtain an isomorphism
		\begin{equation}\label{eqtn_iso_ext}
		\eE xt^k_{\mathcal{X}}(j_*\hH^{-1}_X(\wt{\eE}_{\bcdot}), \omega_{\mathcal{X}}) \xrightarrow{\simeq}  \eE xt^{k+2}_{\mathcal{X}}(j_*\hH^0_X(\wt{\eE}_{\bcdot}), \omega_{\mathcal{X}}),
		\end{equation}
		for any $k\geq m-n+2$. Sheaf $j_* \hH^{-1}_X(\wt{\eE}_{\bcdot})$ is non-zero with support of codimension $l+m-n \geq m-n+2$. It follows that sheaf $\eE xt^{l+m-n}_{\mathcal{X}}(j_*\hH^{-1}_X(\wt{\eE}_{\bcdot}), \omega_{\mathcal{X}})$ is non-zero with support of codimension $l+m-n$, cf. \cite[Proposition 1.1.6]{HuyLeh}.	
		
		On the other hand, the codimension of the support of $\eE xt^{l+m-n+2}_{\mathcal{X}}(j_*\hH^0_X(\wt{\eE}_{\bcdot}), \omega_{\mathcal{X}})$ is greater than or equal to $l+m-n+2$, which contradicts (\ref{eqtn_iso_ext}).
	\end{proof}

	\begin{LEM}\label{lem_ses_for_M_N_and_D}
		Let $f\colon X \to Y$ satisfy (r) and let $Y = \Spec R$ be a spectrum of a complete Noetherian local ring. Let further $\mM_i$ be the projective object in $\Per{-1}(X/Y)$ as in (\ref{eqtn_ses_def_M_i}) and $\nN_i = \hH om_X(\mM_i, \oO_X)$ the projective object in $\Per{0}(X/Y)$. Then there exist exact sequences
		\begin{align}
		& 0 \to \oO_X^{r_i} \to \mM_i \to \oO_{D_i} \to 0, \label{eqtn_ses_M_O_D}&\\
		& 0 \to \nN_i \to \oO_X^{r_i} \to \oO_{D_i} \to 0. \label{eqtn_ses_O_N_D} &
		\end{align}
	\end{LEM}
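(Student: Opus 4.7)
The plan is to obtain both sequences by applying pullback and pushout constructions to the defining extension (\ref{eqtn_ses_def_M_i}) of $\mM_i$, together with the tautological sequence
\[
0 \to \lL_i^{-1} \to \oO_X \to \oO_{D_i} \to 0
\]
of the effective Cartier divisor $D_i$. The critical input is that under assumption (c) we have $Rf_*\oO_X \simeq \oO_Y$ with $Y = \Spec R$ affine, hence $H^1(X,\oO_X) = H^1(Y,\oO_Y) = 0$. All the auxiliary extensions that appear below will be split by this vanishing, which I expect to be the only essential ingredient.

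For sequence (\ref{eqtn_ses_M_O_D}), I would form the pullback $Q := \mM_i \times_{\lL_i} \oO_X$ of (\ref{eqtn_ses_def_M_i}) along the canonical section $s \colon \oO_X \to \lL_i$ obtained by tensoring the tautological sequence with $\lL_i$. Standard properties of pullbacks produce two short exact sequences
\[
0 \to \oO_X^{r_i-1} \to Q \to \oO_X \to 0, \qquad 0 \to Q \to \mM_i \to \oO_{D_i} \to 0,
\]
where the second uses the identification of the cokernel of $s$ with $\oO_{D_i}$ implicit in the paper's setup (i.e.\ $\lL_i|_{D_i} \simeq \oO_{D_i}$). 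The top sequence is classified by an element of $\Ext^1_X(\oO_X, \oO_X^{r_i-1}) = H^1(X,\oO_X)^{r_i-1} = 0$, so $Q \simeq \oO_X^{r_i}$, and the bottom sequence is precisely (\ref{eqtn_ses_M_O_D}).

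For sequence (\ref{eqtn_ses_O_N_D}), I would first dualize (\ref{eqtn_ses_def_M_i}) by applying $\hH om(-,\oO_X)$. Local freeness of $\lL_i$ yields $\eE xt^1(\lL_i,\oO_X) = 0$, giving
\[
0 \to \lL_i^{-1} \to \nN_i \to \oO_X^{r_i-1} \to 0.
\]
I would then pushout $\nN_i$ and $\oO_X$ along their common subsheaf $\lL_i^{-1}$ to obtain $P := \nN_i \oplus_{\lL_i^{-1}} \oO_X$, which sits in
\[
0 \to \oO_X \to P \to \oO_X^{r_i-1} \to 0, \qquad 0 \to \nN_i \to P \to \oO_{D_i} \to 0.
\]
Once more $H^1(X,\oO_X)^{r_i-1} = 0$ forces $P \simeq \oO_X^{r_i}$, and the second sequence yields (\ref{eqtn_ses_O_N_D}). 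The argument is thus formal once the cohomological vanishing is in hand; no deeper geometric input is needed beyond the defining extension of $\mM_i$ and the Koszul sequence of the divisor $D_i$.
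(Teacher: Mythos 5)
Your argument is correct and follows essentially the same path as the paper: both proofs rest on the tautological sequence of the divisor $D_i$, the vanishing $H^1(X,\oO_X)=H^1(Y,\oO_Y)=0$ coming from $Rf_*\oO_X\simeq\oO_Y$ with $Y$ affine, and local duality $\hH om(-,\oO_X)$. You phrase the first sequence as a pullback of the defining extension along $\oO_X \to \lL_i$, which is the same content as the paper's $3\times 3$ commutative diagram; for the second sequence the paper simply dualizes (\ref{eqtn_ses_M_O_D}) directly, whereas you re-derive it by dualizing (\ref{eqtn_ses_def_M_i}) first and then pushing out — a slightly longer route but equivalent. One small point worth making explicit rather than deferring to ``the paper's setup'': the identification $\lL_i|_{D_i}\simeq\oO_{D_i}$ (equivalently $\oO_{D_i}(D_i)\simeq\oO_{D_i}$) is not a standing hypothesis of case (c); the paper justifies it inside this proof by choosing a linearly equivalent divisor $D_i'$ disjoint from $D_i$, which is possible because the $D_i$ move in a basepoint-free pencil over the affine base. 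Without that sentence your cokernel identification is unsupported, though the fix is immediate.
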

	\begin{proof}
		The sheaf $\mM_i$ is defined as an extension
		$$
		0 \to \oO_X^{r_i-1} \to \mM_i \to \oO_X(D_i) \to 0.
		$$
		For any divisor $D_i$ one can choose a linearly equivalent divisor $D'_i$ such that the intersection $D_i \cap D'_i$ is empty. Hence, $\oO_{D_i}(D_i) \simeq \oO_{D_i}$ and sequence
		\begin{align*}
		&0 \to \oO_X \to \oO_X(D_i) \to \oO_{D_i} \to 0
		\end{align*}
		is exact. Since $\Ext^1_X(\oO_X, \oO_X) \simeq 0$, we have a commutative diagram with rows and columns exact sequences
		\[
		\xymatrix{0 \ar[r]& \oO_{D_i} \ar[r]^{\simeq} & \oO_{D_i} \\ \oO_X^{r_i-1} \ar[r] \ar[u] & \mM_i \ar[r] \ar[u] & \oO_X(D_i) \ar[u] \\ \oO_X^{r_i-1}\ar[r] \ar[u]^{\simeq} & \oO_X^{r_i} \ar[r] \ar[u] & \oO_X. \ar[u]}
		\]
		It gives exact sequence (\ref{eqtn_ses_M_O_D}). By considering local homomorphisms into the structure sheaf we obtain sequence (\ref{eqtn_ses_O_N_D}).
	\end{proof}
	
	\begin{LEM}\label{lem_L1ffOM_i=0}
		Let $f\colon X\to Y$ satisfy (r) and assume that $Y$ has hypersurface singularities. Then the sheaf $L^1f^*f_* \mM$ is zero, for any $\mM\in \mathscr{P}_{-1}$.
	\end{LEM}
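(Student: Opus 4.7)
The plan is to reduce to the already-established Lemma \ref{lem_L1ffOD_i=0} by two successive localizations of the base and a structural argument on the projective generator. Since the assertion concerns the vanishing of a coherent sheaf on $X$, it is local on $Y$: using that $Y$ has canonical hypersurface singularities of multiplicity two, one may cover $Y$ by affine opens each embedded as a principal divisor in a smooth variety, refining so that $\mM$ restricts to a genuine projective object in the corresponding perverse heart; this places us in case (a). Because both $f_*$ and $Lf^*$ commute with completion of $Y$ at a closed point (flat base change, combined with properness of $f$), and a coherent sheaf on $X$ vanishes if and only if it vanishes on every $X \times_Y \Spec \hat{\oO}_{Y,y}$, we can further reduce to case (c).

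In case (c), the projective generator $\mM = \bigoplus_{i=0}^n \mM_i$ of $\Per{-1}(X/Y)$ exhibits every object of $\mathscr{P}_{-1}$ as a direct summand of a finite direct sum of $\mM_i$'s, and the functor $L^1f^*f_*$ is additive and preserves direct summands; so it is enough to verify $L^1f^*f_*\mM_i = 0$ for each $i$. The case $\mM_0 = \oO_X$ is immediate since $L^1f^*\oO_Y = 0$. For $i\geq 1$, I would apply $Rf_*$ to the exact sequence
$$0 \to \oO_X^{r_i} \to \mM_i \to \oO_{D_i} \to 0$$
from Lemma \ref{lem_ses_for_M_N_and_D}; since $Rf_*\oO_X \simeq \oO_Y$ forces $R^1f_*\oO_X = 0$, this yields a short exact sequence $0 \to \oO_Y^{r_i} \to f_*\mM_i \to f_*\oO_{D_i} \to 0$. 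Applying $Lf^*$ and extracting the segment $L^1f^*(\oO_Y^{r_i}) \to L^1f^*f_*\mM_i \to L^1f^*f_*\oO_{D_i}$ of the long exact sequence, both outer terms vanish — the left trivially and the right by Lemma \ref{lem_L1ffOD_i=0} — giving $L^1f^*f_*\mM_i = 0$ as required.

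The substantial technical work is concentrated in Lemma \ref{lem_L1ffOD_i=0}; once that is at hand, the remaining obstacle is organisational, namely checking that the two base localizations preserve all hypotheses (projectivity of $\mM$, the divisor embedding, the exceptional locus condition) and invoking the structural sequence (\ref{eqtn_ses_M_O_D}) at the right step.
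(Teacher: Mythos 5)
Your proposal is correct and follows essentially the same route as the paper: reduce to the complete local case (c), decompose $\mM$ using the generators $\mM_i$ and $\oO_X$, apply $Rf_*$ to the sequence $0 \to \oO_X^{r_i} \to \mM_i \to \oO_{D_i} \to 0$, then apply $Lf^*$ and invoke Lemma \ref{lem_L1ffOD_i=0} to kill $L^1f^*f_*\mM_i$. The only difference is that you spell out the two-step localization from (p) via (a) to (c) in more detail, where the paper compresses it to a single sentence.
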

	
	\begin{proof}
		Statement is local in $Y$, therefore we might assume that $Y = \Spec R$ is a spectrum of a complete Noetherian local ring with hypersurface singularities. Then $\mM$ is a direct sum of copies of $\mM_i$ and $\oO_X$. Thus, it suffices to verify the lemma for $\mM_i$, as clearly $L^1f^*\oO_Y = 0$.
		
		Since $R^1f_* \oO_X \simeq 0$, applying $Rf_*$ to sequence (\ref{eqtn_ses_M_O_D}) gives an exact sequence
		$$
		0 \to \oO_Y^{r_i} \to f_* \mM_i \to f_* \oO_{D_i} \to 0.
		$$
		It implies an exact sequence
		$$
		0 \to L^1f^*f_* \mM_i \to L^1f^*f_* \oO_{D_i} \to \oO_X^{r_i}.
		$$
		By Lemma \ref{lem_L1ffOD_i=0}, the sheaf $L^1f^*f_* \oO_{D_i}$ is zero, hence the result.
	\end{proof}
	
	\begin{LEM}\label{lem_ses_P_N}
		Let $f \colon X\to Y$ satisfy (r) and  $Y$ have hypersurface singularities. Let $\nN$ be a locally projective object in $\Per{0}(X/Y)$. Then $f^*f_* \nN$ is torsion-free and sequence
		\begin{equation}\label{eqtn_ses_P_N}
		0 \to f^*f_* \nN \to \nN \to \mathcal{Q} \to 0,
		\end{equation}
		is exact for any object $\mathcal{Q}$ locally projective in $\mathscr{A}_f$.
	\end{LEM}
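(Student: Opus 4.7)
My plan is to reduce the question to the complete local setting (c) and then use the explicit form of projective objects in $\Per{0}(X/Y)$. Torsion-freeness of $f^*f_*\nN$, exactness of the claimed sequence, and local projectivity of $\mathcal{Q}$ in $\mathscr{A}_f$ are all local conditions on $Y$, so I may assume that $f$ satisfies (c). Then, dually to Van den Bergh's classification of projectives in $\Per{-1}$, any $\nN \in \mathscr{P}_0$ decomposes as a direct sum of copies of $\oO_X$ and of the sheaves $\nN_i$ from Lemma \ref{lem_ses_for_M_N_and_D}; the $\oO_X$-summand is trivial (giving $f^*f_*\oO_X \simeq \oO_X$ with zero cokernel), so it suffices to treat $\nN = \nN_i$.

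Apply $Rf_*$ to the short exact sequence $0 \to \nN_i \to \oO_X^{r_i} \to \oO_{D_i} \to 0$ of Lemma \ref{lem_ses_for_M_N_and_D}. Because $\nN_i \in \tT_0$ one has $R^1f_*\nN_i = 0$, and combined with $Rf_*\oO_X \simeq \oO_Y$ this produces $0 \to f_*\nN_i \to \oO_Y^{r_i} \to f_*\oO_{D_i} \to 0$. Applying $Lf^*$ and using $L^1f^*\oO_Y = 0$ together with the key vanishing $L^1f^*f_*\oO_{D_i} = 0$ from Lemma \ref{lem_L1ffOD_i=0}, the long exact sequence collapses to
\begin{equation*}
0 \to f^*f_*\nN_i \to \oO_X^{r_i} \to f^*f_*\oO_{D_i} \to 0.
\end{equation*}
In particular $f^*f_*\nN_i$ embeds into the locally free sheaf $\oO_X^{r_i}$ and so is torsion-free. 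This sequence maps to $0 \to \nN_i \to \oO_X^{r_i} \to \oO_{D_i} \to 0$ by the identity in the middle and by adjunction counits on the outer terms, and the snake lemma gives injectivity of $f^*f_*\nN_i \to \nN_i$ together with a canonical identification of its cokernel $\mathcal{Q}$ with the kernel of $f^*f_*\oO_{D_i} \to \oO_{D_i}$. To see $\mathcal{Q} \in \mathscr{A}_f$, I apply $Rf_*$ to $0 \to f^*f_*\nN_i \to \nN_i \to \mathcal{Q} \to 0$ and invoke Lemma \ref{lem_Lif_in_A_f} for $f_*\nN_i$ (giving $f_*f^*f_*\nN_i \simeq f_*\nN_i$ and $R^1f_*f^*f_*\nN_i = 0$); combined with $R^1f_*\nN_i = 0$ this forces $Rf_*\mathcal{Q} = 0$.

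Finally, to show $\mathcal{Q}$ is projective in $\mathscr{A}_f$, fix $E \in \mathscr{A}_f$ and apply $\Hom_X(-, E)$ to $0 \to f^*f_*\nN_i \to \nN_i \to \mathcal{Q} \to 0$. The $Lf^* \dashv Rf_*$ adjunction gives $\Hom_X(f^*f_*\nN_i, E) \simeq \Hom_Y(f_*\nN_i, f_*E) = 0$ since $f_*E = 0$, while $\Ext^1_X(\nN_i, E) = 0$ because $\nN_i$ is projective in $\Per{0}(X/Y)$ and $\mathscr{A}_f \subset \tT_0 \subset \Per{0}(X/Y)$ (Ext$^1$ in the heart agreeing with Ext$^1$ in $\dD^b(X)$). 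Hence $\Ext^1_X(\mathcal{Q}, E) = 0$, which is the desired projectivity. The only nontrivial input beyond diagram chasing and adjunction is the $L^1f^*$ vanishing for $f_*\oO_{D_i}$ supplied by Lemma \ref{lem_L1ffOD_i=0}; this is where the hypersurface structure of the singularities of $Y$ enters, and is the main obstacle already cleared by the preceding section.
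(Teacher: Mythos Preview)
Your proof is correct and follows essentially the same route as the paper: reduce to the complete local case, decompose $\nN$ into copies of $\oO_X$ and $\nN_i$, push the sequence $0\to\nN_i\to\oO_X^{r_i}\to\oO_{D_i}\to 0$ down, pull it back, and invoke the key vanishing $L^1f^*f_*\oO_{D_i}=0$ to obtain torsion-freeness and injectivity of the counit; the projectivity argument for $\mathcal{Q}$ is identical. The only cosmetic differences are that you package injectivity via the snake lemma (the paper deduces it directly from torsion-freeness of $f^*f_*\nN_i$ inside the locally free $\nN_i$) and that you verify $\mathcal{Q}\in\mathscr{A}_f$ by applying $Rf_*$ and Lemma~\ref{lem_Lif_in_A_f}, whereas the paper observes that $\mathcal{Q}=\hH^0_X(\iota_f^*\nN)$ by the defining triangle for $\iota_f^*$.
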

	\begin{proof}
		First, we show that $f^*f_* \nN$ is torsion free. This can be done locally, i.e. we may assume that $Y$ is a spectrum of a complete Noetherian local ring. By \cite[Lemma 3.5.2]{VdB} sheaf $\nN$ is a direct sum of copies of $\oO_X$ and $\nN_i$, where $\nN_i$ are vector bundles dual to $\mM_i$.
		
		By \cite[Lemma 3.1.2]{VdB}, the sheaf $R^1f_* \nN_i$ is zero. By applying $f_* $ to sequence (\ref{eqtn_ses_O_N_D}), we obtain an exact sequence on $Y$
		$$
		0 \to f_* \nN_i \to \oO_Y^{r_i} \to f_* \oO_{D_i} \to 0.
		$$
		By applying $f^*$, we get
		$$
		0 \to L^1f^*f_* \oO_{D_i} \to f^*f_* \nN_i \to \oO_X^{r_i}.
		$$
		It implies that the torsion of $f^*f_* \nN_i$ is $L^1f^*f_* \oO_{D_i}$, which is zero by Lemma \ref{lem_L1ffOD_i=0}. Hence, sheaf $f^*f_* \nN_i$ is torsion free and the counit morphism $f^*f_* \nN_i \to \nN_i$ is an embedding.
		
		The cokernel of the morphism $f^*f_* \nN \to \nN$ is, by the definition of $\iota_f^*$, isomorphic to $\hH_X^0(\iota_f^* \nN)$, hence $\mathcal{Q}$ is an object in $\mathscr{A}_f$. Let now $E$ be any object in $\mathscr{A}_f$. Then $E$, considered as an object in $\dD^b(X)$, lies in $\Per{0}(X/Y)$, hence $\Ext^1_X(\nN, E) \simeq 0$. Moreover, $f^* \vdash f_*$ adjunction implies that $\Hom_X(f^*f_* \nN, E) \simeq 0$. Thus, by applying higher derived functors of $\Hom_X(-,E)$ to sequence (\ref{eqtn_ses_P_N}), we get $\Ext^1_X(\mathcal{Q}, E) \simeq 0$, which proves that $\mathcal{Q} \in \mathscr{A}_f$ is indeed a projective object.
	\end{proof}
	\begin{REM}\label{rem_iota_N_i}
		In the complete local case, applying $\iota_f^*$ to sequences (\ref{eqtn_ses_M_O_D}) and (\ref{eqtn_ses_O_N_D}) and using the fact that $\iota_f^*(\oO_X) \simeq 0$,  we get an isomorphism $ \hH_X^0(\iota_f^* \nN_i) \simeq \hH_X^{-1}(\iota_f^* \oO_{D_i}) \simeq \hH^{-1}_X(\iota_f^* \mM_i)$. It implies that $\pP$ in formula (\ref{eqtn_def_of_P}) is isomorphic to $Q$ of Lemma \ref{lem_ses_P_N}.
	\end{REM}
	
	The \tr e on $\dD^b(X)$ with heart $\Per{-1}(X/Y)$ is obtained from the standard \tr e by a tilt in the torsion pair $(\tT_{-1}, \fF_{-1})$ as in (\ref{eqtn_def_T_-1}) and (\ref{eqtn_def_F_-1}), see \cite[Lemma 3.1.2]{VdB}. Hence, for any $F \in \dD^b(X)$, sequence
	\begin{equation}\label{eqtn_cohom_after_tilt}
	0 \to \fF_{-1}(\hH_X^{i-1}(F))[1] \to \hH_{{}^{-1}\textrm{Per}}^i(F) \to \tT_{-1}(\hH^i_X(F)) \to 0
	\end{equation}
	is exact in $\Coh(X)$, where by $\tT_{-1}(G)$ and $\fF_{-1}(G)$ we denote respectively torsion and torsion-free part of a coherent sheaf $G$ with respect to the torsion pair $(\tT_{-1}, \fF_{-1})$.
	
	\begin{PROP}\label{prop_ff_is_perv}
		Let $f\colon X\to Y$ satisfy (r) and assume that $Y$ has hypersurface singularities. Let $\mM$ be a projective object in ${}^{-1}\textrm{Per}(X/Y)$. Then
		$$
		\hH^0_{{}^{-1}\textrm{Per}}(Lf^*f_* \mM) = f^*f_* \mM.
		$$
	\end{PROP}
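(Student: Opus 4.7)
The plan is to unwind $\hH^0_{\Per{-1}}(Lf^*f_* \mM)$ by means of the tilting description of $\Per{-1}(X/Y)$ and then feed in the $L^1f^*$-vanishing of Lemma~\ref{lem_L1ffOM_i=0}. Since the perverse $t$-structure on $\dD^b(X)$ is the tilt of the standard one in the torsion pair $(\tT_{-1}, \fF_{-1})$, the formula (\ref{eqtn_cohom_after_tilt}) applied to $F = Lf^*f_* \mM$ in degree $i = 0$ yields a short exact sequence
$$
0 \to \fF_{-1}(L^1f^*f_* \mM)[1] \to \hH^0_{\Per{-1}}(Lf^*f_* \mM) \to \tT_{-1}(f^*f_* \mM) \to 0
$$
in $\Per{-1}(X/Y)$. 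By Lemma~\ref{lem_L1ffOM_i=0} the leftmost term vanishes, so the statement reduces to showing that $f^*f_* \mM$ already lies in $\tT_{-1}$, in which case $\tT_{-1}(f^*f_* \mM) = f^*f_* \mM$ and the remaining arrow becomes the desired isomorphism.

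To verify that $f^*f_* \mM \in \tT_{-1}$ I would check the two defining conditions in (\ref{eqtn_def_T_-1}) separately. The vanishing $R^1 f_*(f^*f_* \mM) = 0$ is the content of Lemma~\ref{lem_projection_formula} applied to the coherent sheaf $E := f_* \mM$ on $Y$. For the second condition, given any $A \in \mathscr{A}_f$, the ordinary $f^* \dashv f_*$ adjunction gives
$$
\Hom_X(f^*f_* \mM, A) \simeq \Hom_Y(f_* \mM, f_* A) = 0,
$$
the last equality holding because $A \in \mathscr{A}_f$ forces $f_* A = 0$.

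The only substantive input is Lemma~\ref{lem_L1ffOM_i=0}; granted the vanishing $L^1f^*f_* \mM = 0$, the rest is a formal unpacking of the tilted heart together with an adjunction computation. I would expect no further obstacle: the hard technical step has already been isolated in the previous subsection, and the proof of the proposition itself should be essentially a one-page unwinding of the tilting formula.
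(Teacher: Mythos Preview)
Your proof is correct and follows essentially the same route as the paper: both invoke the tilting exact sequence (\ref{eqtn_cohom_after_tilt}), kill the left term via Lemma~\ref{lem_L1ffOM_i=0}, and then observe that $f^*f_*\mM\in\tT_{-1}$. The only cosmetic difference is that the paper cites Lemma~\ref{lem_ffE_in_Per} for the last step, whereas you verify the two defining conditions of $\tT_{-1}$ directly --- which is precisely what the proof of that lemma does.
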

	\begin{proof}
		Lemma \ref{lem_ffE_in_Per} below implies that $f^*f_* \mM$ is an object in $\Coh(X) \cap {}^{-1}\textrm{Per}(X/Y)$, hence $\tT_{-1}(f^*f_* \mM) = f^*f_* \mM$. Moreover, $L^1f^*f_* \mM = 0$ by Lemma \ref{lem_L1ffOM_i=0}. We conclude by sequence (\ref{eqtn_cohom_after_tilt}).
	\end{proof}
	
	\begin{LEM}\label{lem_ffE_in_Per}
		Let $f \colon X\to Y$ satisfy (r) and $E$ be a coherent sheaf on $Y$. Then $f^*E$ is an object in ${}^{p}\textrm{Per}(X/Y)$, for $p = -1,0$. If moreover $E$ is locally free and $Y$ is affine, then $f^* E$ is an object in $\mathscr{P}_p$, for $p=-1,0$.
	\end{LEM}
	
	\begin{proof}
		We have $\textrm{Coh}(X) \cap {}^{0}\textrm{Per}(X/Y) = \tT_{0}$ and $\textrm{Coh}(X) \cap {}^{-1}\textrm{Per}(X/Y) = \tT_{-1}$, for categories $\tT_0$ and $\tT_{-1}$ defined in (\ref{eqtn_def_T_0}) and (\ref{eqtn_def_T_-1}) respectively.
		
		Let $E$ be a coherent sheaf on $Y$. By Lemma \ref{lem_Lif_in_A_f}, sheaf $R^1f_*f^* E$ is zero, hence $f^*E$ is an object in $\Per{0}(X/Y)$. From adjunction
		$$
		\Hom_X(f^* E, \aA_f) \simeq \Hom_Y(E, f_* \aA_f) \simeq 0,
		$$
		it follows that $f^*E$ lies also in $\Per{-1}(X/Y)$.
		
		Now we assume further that $E$ is locally free and $Y$ is affine. Let $E'$ be any object in $\Per{p}(X/Y)$, for $p=-1, 0$. Then
		$$
		\Ext^1_X(f^*E, E') \simeq \Ext^1_X(Lf^* E, E') \simeq \Ext^1_Y(E, Rf_* E').
		$$
		Because $Rf_*$ is exact for the perverse \tr es on $\dD^b(X)$, object $Rf_* E'$ is a coherent sheaf on $Y$. Since $E$ is locally free and $Y$ is affine, $\Ext^1_Y(E, Rf_* E') \simeq 0$, i.e. $f^*E$ is projective in $\Per{p}(X/Y)$.
	\end{proof}

	\begin{REM}\label{rem_f_per=f_for_loc_free}
		For any locally free sheaf $\eE$ on $Y$, we also have an isomorphism $\hH^0_{\Per{-1}(X/Y)}(Lf^*\eE) = f^*\eE$. Indeed, $Lf^* \eE$ is isomorphic to $f^* \eE$ and the latter is an object in $\Per{-1}(X/Y)$, by Lemma \ref{lem_ffE_in_Per}.
	\end{REM}
	
	We shall use Lemma \ref{lem_L1ffOD_i=0} and its corollaries to describe object $Lg^*g_* \mM$, for any $\mM$ in $\mathscr{P}_{-1}$. First, we prove that sheaf $g_* \mM$ has a short locally free resolution.
	
	Recall that a morphism $f\colon X\to Y$ is \emph{crepant} if $Lf^*(\omega_Y^\bcdot)\simeq \omega_X^\bcdot.$
	
	\begin{LEM}
	A proper morphism $f\colon X\to Y$ of Gorenstein varieties is crepant if and only if $f^!(\oO_Y) \simeq \oO_X$.
	\end{LEM}
\begin{proof}
	Since $X$ and $Y$ are Gorenstein, $\omega_X^\bcdot$ and $\omega_Y^\bcdot$ are, up to shift, line bundles. In particular, $\omega_Y^\bcdot$ is a perfect complex, hence, by Lemma \ref{lem_two_def_of_f_for_perf_compl}, $f^!(\omega_Y^\bcdot) \simeq f^!(\oO_Y) \otimes Lf^*(\omega_Y^\bcdot)$. Isomorphism $\omega_X^\bcdot \simeq f^!(\omega_Y^\bcdot) \simeq f^!(\oO_Y)\otimes Lf^*(\omega_Y)^\bcdot$ implies $f^!(\oO_Y) \simeq \omega_X^\bcdot \otimes (Lf^*(\omega_Y^\bcdot))^{-1}$. Hence, $Lf^*(\omega_Y^\bcdot) \simeq \omega_X^\bcdot$ if and only $f^!(\oO_Y) \simeq \oO_X$.
\end{proof}
	
	\begin{LEM}\label{lem_loc_free_res_of_g_M}
		Let $X$, $Y$ be Gorenstein and $f\colon X\to Y$ a crepant morphism satisfying (r). Assume $Y$ is affine and $i\colon Y \to \yY$ an embedding of $Y$ as a Cartier divisor into a smooth $\yY$. Denote  $g = i\circ f\colon X\to \yY$. Then $g_* \mM$ and $g_* \nN$, for $\mM \in \mathscr{P}_{-1}$, $\nN \in \mathscr{P}_0$, admit locally free resolutions of length 2.
	\end{LEM}
	\begin{proof}
		Let $\mM$ be an object in $\mathscr{P}_{-1}$. Since $\yY$ is affine, the length of the locally free resolution for $g_* \mM$ is $l+1$, for the maximal $l$ such that $\eE xt^l_{\yY}(g_* \mM, \oO_{\yY})$ is non-zero. Thus, the question is local on $\yY$ and, by restricting to a smaller affine open subset, we can assume that $\omega^{\bcdot}_{\yY} \simeq \oO_{\yY}[n+1]$ and $\omega^{\bcdot}_{Y} \simeq \oO_{Y}[n]$. Then $\omega_X^{\bcdot} \simeq \oO_X[n]$, because $f$ is crepant.
		Let $\nN = R \hH om_X(\mM, \oO_X)=\hH om_X(\mM, \oO_X)$. It is an object in $\mathscr{P}_0$, see \cite[Proposition 3.2.6]{VdB}.  As $\nN$ is an object in $\Per{0}(X/Y)$, we have $Rg_* \nN \simeq g_* \nN$. Then we have by Grothendieck duality (\ref{eqtn_G-Vdual}):
		$$
		R\hH om_{\yY}(g_* \mM, \oO_{\yY})\simeq R\hH om_{\yY}(Rg_* \mM, \omega_{\yY})[-1] \simeq Rg_* R \hH om_X(\mM, \omega_X)[-1]\simeq g_* \nN[-1].
		$$
		Hence, $g_* \mM$ is of projective dimension one. Analogously, for $\nN \in \mathscr{P}_0$, its dual $\mM = R\hH om_X(\nN,\oO_X)$ lies in $\mathscr{P}_{-1}$ and $R\hH om(g_* \nN, \oO_{\yY}) \simeq g_* \mM[-1]$.
	\end{proof}

	\begin{PROP}\label{prop_Lgg_M}
		Let $f\colon X\to Y$ be as in Lemma \ref{lem_loc_free_res_of_g_M} and let $\mM$ be a projective object in ${}^{-1}\textrm{Per}(X/Y)$. Assume further that $Y \subset \yY$ is a principal Cartier divisor. Then object $Lg^*Rg_*\mM$ has cohomology
		$$
		\hH^i_X(Lg^*Rg_* \mM) = \left\{ \begin{array}{ll}\mM, & \textrm{ if }\, i=-1,\\ f^*f_* \mM, & \textrm{ if } i=0, \\ 0, & \textrm{otherwise.} \end{array} \right.
		$$
		The same cohomology are for the \tr e with heart $\Per{-1}(X/Y)$.
	\end{PROP}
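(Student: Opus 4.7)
Since $\mM$ is projective in $\Per{-1}(X/Y)$, Van den Bergh's theory gives $R^1f_*\mM=0$, so $Rg_*\mM=i_*f_*\mM$ is concentrated in degree zero. Applying Proposition \ref{prop_triangle_for_ii} to the sheaf $f_*\mM$ on $Y$ and then the triangulated functor $Lf^*$ yields a distinguished triangle
\[
Lf^*f_*\mM[1]\to Lg^*Rg_*\mM\to Lf^*f_*\mM\to Lf^*f_*\mM[2]\qquad(\ast)
\]
in the homotopy category of DG endofunctors.

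Next I would bound the cohomological amplitude. By Lemma \ref{lem_loc_free_res_of_g_M}, $g_*\mM$ admits a locally free resolution $0\to\eE_{-2}\to\eE_{-1}\to\eE_0\to g_*\mM\to 0$ on $\yY$, so $Lg^*Rg_*\mM=Lg^*g_*\mM$ is represented by the three-term complex $g^*\eE_\bullet$ of locally free sheaves on $X$ in degrees $[-2,0]$; in particular $\hH^i(Lg^*Rg_*\mM)=0$ for $i\notin\{-2,-1,0\}$. Reading off $\hH^0$ from the long exact sequence of $(\ast)$, using $\hH^{\ge 1}(Lf^*f_*\mM)=0$, gives $\hH^0(Lg^*Rg_*\mM)\simeq f^*f_*\mM$.

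For the vanishing $\hH^{-2}(Lg^*Rg_*\mM)=0$ I would imitate the codimension argument from the proof of Lemma \ref{lem_L1ffOD_i=0}. The complex $i^*\eE_\bullet$ represents $Li^*g_*\mM\simeq Li^*i_*f_*\mM$, whose $\hH^{-2}$ vanishes by Proposition \ref{prop_triangle_for_ii}; hence $i^*\eE_{-2}\to i^*\eE_{-1}$ is injective on $Y$. The kernel of $f^*i^*\eE_{-2}\to f^*i^*\eE_{-1}$ is supported in codimension $>1$ because $f$ is an isomorphism outside a subset of codimension $>1$, and since $f^*i^*\eE_{-2}$ is locally free (hence torsion free), this kernel must vanish.

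The crucial step is $\hH^{-1}(Lg^*Rg_*\mM)\simeq\mM$. Feeding $L^1f^*f_*\mM=0$ (Lemma \ref{lem_L1ffOM_i=0}) and $\hH^{-2}(Lg^*Rg_*\mM)=0$ into the long exact sequence of $(\ast)$ produces a short exact sequence
\[
0\to L^2f^*f_*\mM\xrightarrow{\partial} f^*f_*\mM\to \hH^{-1}(Lg^*Rg_*\mM)\to 0,
\]
in which $\partial$ is the boundary of $(\ast)$ in cohomological degree $-2$. In view of the canonical short exact sequence $0\to\pP\to f^*f_*\mM\to\mM\to 0$ of (\ref{eqtn_P_M_in_Coh}), it suffices to identify $\operatorname{Im}\partial$ with $\pP\subset f^*f_*\mM$. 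I would do this by comparing $(\ast)$ with the triangle (\ref{eqtn_triangle_for_iota*}) defining $\iota_f^*\mM$: both triangles are built from the same DG-lift of the counit $Lf^*Rf_*\to\Id$, and a diagram chase matches the boundary $\partial$ with the map $L^2f^*f_*\mM\to f^*f_*\mM$ whose cokernel sits inside $\iota_{f*}\iota_f^*\mM[-1]$. Since $\pP=\hH^{-1}_X(\iota_{f*}\iota_f^*\mM)$ by construction, this pins down $\operatorname{Im}\partial=\pP$ and yields $\hH^{-1}(Lg^*Rg_*\mM)\simeq f^*f_*\mM/\pP\simeq\mM$.

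Finally, for the perverse t-structure with heart $\Per{-1}(X/Y)$, both $f^*f_*\mM$ and $\mM$ lie in $\tT_{-1}$ (the first by Lemma \ref{lem_ffE_in_Per}, the second since $\mathscr{P}_{-1}\subset\Coh(X)\cap\Per{-1}(X/Y)=\tT_{-1}$). Applying formula (\ref{eqtn_cohom_after_tilt}) to $E=Lg^*Rg_*\mM$, and using $\hH^{-2}(E)=0$, I obtain $\hH^0_{\Per{-1}}(E)=\tT_{-1}(\hH^0 E)=f^*f_*\mM$ and $\hH^{-1}_{\Per{-1}}(E)=\tT_{-1}(\hH^{-1}E)=\mM$, with all other perverse cohomologies zero.

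\textbf{Main obstacle.} The delicate point is the identification $\operatorname{Im}\partial=\pP$ in the $\hH^{-1}$ computation, which requires comparing the two DG-triangles $(\ast)$ and (\ref{eqtn_triangle_for_iota*}) and tracking the induced boundary maps at the level of cohomology sheaves of $\mM$; the remaining steps are largely mechanical given Lemmas \ref{lem_L1ffOD_i=0}, \ref{lem_L1ffOM_i=0} and \ref{lem_loc_free_res_of_g_M}.
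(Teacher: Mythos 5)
Your amplitude bound and the identification of $\hH^0(Lg^*Rg_*\mM)\simeq f^*f_*\mM$ via the long exact sequence of $(\ast)$ are correct, and you correctly reduce the $\hH^{-1}$ computation to identifying $\operatorname{Im}\partial$ with the subsheaf $\pP\subset f^*f_*\mM$ from (\ref{eqtn_P_M_in_Coh}). However, the mechanism you propose for that identification does not work, and this is a genuine gap. You claim that the triangle $(\ast)$ and the triangle (\ref{eqtn_triangle_for_iota*}) are ``built from the same DG-lift of the counit $Lf^*Rf_*\to\Id$'', but they are not. Triangle $(\ast)$ arises by sandwiching the triangle of Proposition \ref{prop_triangle_for_ii} between $Lf^*$ and $Rf_*$; its three arrows lift $Lf^*\circ(\Id_Y[1]\to Li^*i_*)\circ Rf_*$, $Lf^*\circ(Li^*i_*\to\Id_Y)\circ Rf_*$, and $Lf^*\circ(\Id_Y\to\Id_Y[2])\circ Rf_*$. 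The last of these, which produces your boundary $\partial$, is the pullback of a ``Bockstein''-type class in $\Ext^2_{Y\times Y}(\oO_\Delta,\oO_\Delta)$ encoding the singular, principal-divisor structure of $Y\subset\yY$. None of these arrows is the $Lf^*\dashv Rf_*$ adjunction counit, which is the map $Lf^*Rf_*\to\Id_X$ appearing in (\ref{eqtn_triangle_for_iota*}). So there is no diagram of triangles to chase; the comparison you want simply does not exist as described. Moreover, the circularity danger here is real: the paper only proves $L^2f^*f_*\mM\simeq\pP$ (and the 2-periodicity of $L^\bullet f^*f_*$) as a corollary of Proposition \ref{prop_Lgg_M}, read off from the very LES you are trying to use. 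Without an independent identification of $\partial$ you cannot close the loop.

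A secondary, nonfatal point: you misquote Lemma \ref{lem_loc_free_res_of_g_M}. It gives a resolution of length \emph{two}, that is a two-term resolution $0\to\eE_{-1}\to\eE_0\to g_*\mM\to0$ (the three-term resolution you wrote down is for the finite pushforward $h_*\oO_{D_i}$ in Lemma \ref{lem_L1ffOD_i=0}, a different object). With the correct resolution, $Lg^*Rg_*\mM$ is represented by a two-term complex in degrees $[-1,0]$, so $\hH^{-2}=0$ is automatic and the torsion-free argument you transplant from Lemma \ref{lem_L1ffOD_i=0} is unnecessary.

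The paper avoids the boundary-map issue entirely. It assembles the explicit four-term complex $C^\bcdot=\{0\to\mM\to f^*i^*\eE_{-1}\to f^*i^*\eE_0\to f^*f_*\mM\to0\}$ in degrees $[-3,0]$, where the first arrow $\wt\alpha$ is produced by adjunction and $f^!\simeq Lf^*$ on perfect complexes, and then proves exactness of $C^\bcdot$ by combining: $Rf_*C^\bcdot$ being exact (projection formula), hence all cohomology sheaves of $C^\bcdot$ lie in $\mathscr{A}_f$; right-exactness of $f^*$ kills $\hH^{-1}$ and $\hH^0$; torsion-freeness of $\mM$ kills $\hH^{-3}$; and finally the perverse cohomology in degree $-1$ is killed using right-exactness of $\hH^0_{\Per{-1}}Lf^*$, the left adjoint to the exact $Rf_*|_{\Per{-1}}$. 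This is the step that genuinely replaces your ``identify $\partial$'' step, and it uses the perverse t-structure in an essential way that your sketch does not touch. Your final paragraph about reading off the perverse cohomologies via (\ref{eqtn_cohom_after_tilt}) is fine once the standard cohomology is established, so the only missing ingredient is a correct argument for $\hH^{-1}(Lg^*Rg_*\mM)\simeq\mM$.
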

	
	\begin{proof}
		By Lemma \ref{lem_loc_free_res_of_g_M}, sheaf $g_*\mM$ has a locally free resolution
		$$
		0 \to \eE_{-1} \to \eE_0 \to g_* \mM \to 0.
		$$
		By Theorem \ref{thm_triangle_for_ii} for $Y\subset \yY$, triangle
		$$
		f_* \mM[1] \to Li^*g_* \mM \to f_* \mM \to f_*\mM[2]
		$$
		is exact. As $Li^* g_* \mM$ is quasi-isomorphic to the complex $i^* \eE_{-1} \to i^* \eE_{0}$ (Lemma \ref{lem_loc_free_res_of_g_M}), we get an exact sequence
		\begin{equation}\label{eqtn_fC}
		0 \to f_*\mM \xrightarrow{\alpha} i^* \eE_{-1} \xrightarrow{\beta} i^* \eE_0 \xrightarrow{\gamma} f_* \mM \to 0
		\end{equation}
		on $Y$.
		
		Let $\wt{\alpha} \colon \mM \to f^*i^*\eE_{-1}$ be a morphism which corresponds to $\alpha$ under the sequence of isomorphisms
		$$
		\Hom_X(\mM, f^*i^* \eE_{-1}) \simeq \Hom_X(\mM, f^! i^*\eE_{-1}) \simeq \Hom_Y(Rf_* \mM, i^*\eE_{-1}) \simeq \Hom_Y(f_*\mM, i^*\eE_{-1}).
		$$
		Above, we use the fact that $f$ is crepant, i.e. $f^!(\oO_Y) \simeq \oO_X$, which yields an isomorphism of functors $f^!|_{\textrm{Perf}(Y)} \simeq Lf^*|_{\textrm{Perf}(Y)}$, see Lemma \ref{lem_two_def_of_f_for_perf_compl}.
		
		Since $\Hom_X(\pP, f^*i^* \eE_{-1}) \simeq \Hom_X(\pP, f^! i^*\eE_{-1}) \simeq \Hom_Y(Rf_* \pP, i^*\eE_{-1})= 0$, sequence (\ref{eqtn_P_M_in_Coh}) implies that morphism $\wt{\alpha}$ is a unique morphism such that the composite $f^*f_* \mM \rightarrow \mM\xrightarrow{\wt{\alpha}} f^* i^*\eE_{-1}$ equals $f^* \alpha$. Since morphism $f^*f_* \mM \to \mM$ is surjective, it follows from $f^*\beta \circ f^* \alpha =0$ that the composition $f^*\beta \circ \wt{\alpha}$ is zero. We consider the complex in degrees $[-3,0]$
		\begin{equation}\label{eqtn_compl_C}
		C^\bcdot \,=\, \{ \,0 \rightarrow \mM \xrightarrow{\wt{\alpha}} f^* i^* \eE_{-1} \xrightarrow{\wt{\beta}} f^*i^* \eE_0 \xrightarrow{\wt{\gamma}} f^*f_* \mM \rightarrow 0\,\},
		\end{equation}
		where $\wt{\beta} = f^*\beta$ and $\wt{\gamma} = f^* \gamma$. In order to prove the proposition, we will show that complex $C^{\bcdot}$ is exact. All its terms are coherent sheaves and lie in ${}^{-1}\textrm{Per}(X/Y)$, by Lemma \ref{lem_ffE_in_Per} and Proposition \ref{prop_ff_is_perv}.
		
		By applying functor $Rf_*$ to $C^{\bcdot}$, we obtain, in view of the projection formula (Lemmas \ref{lem_projection_formula} and \ref{lem_Lif_in_A_f}), exact sequence (\ref{eqtn_fC}). Morphism $f$ has fibers of relative dimension bounded by one, hence $Rf_* C^\bcdot = 0$ implies that $Rf_* \hH_X^i(C^\bcdot) = 0$, for $i\in \mathbb{Z}$ (see Lemma \ref{lem_A_f_is_a_heart}). Functor $Rf_*|_{\Per{-1}(X/Y)}$ is exact, thus cohomology sheaves of $C^{\bcdot}$ in the $-1$-perverse \tr e are also objects in $\mathscr{A}_f$.
		
		Since $f^*$ is right exact,
		$\hH^{-1}_X(C^\bcdot)$ and $\hH^0_X(C^\bcdot)$ are both zero. Moreover, since $\hH^{-3}_X(C^\bcdot)$ is an object in $\aA_f$, it is torsion. Since $\mM$ is locally free, it has no torsion and we have $\hH_X^{-3}(C^\bcdot) \simeq 0$.
		Thus, $C^\bcdot$ is an object in $\cC_f^{\lle -2}\cap \cC_f^{\gge -2}$, which means that cohomology in $\Per{-1}(X/Y)$ can be non-zero in degree $-1$ only.
		By Proposition \ref{prop_ff_is_perv} and Remark \ref{rem_f_per=f_for_loc_free}, sequence
		$$
		f^*i^* \eE_{-1} \to f^*i^* \eE_0 \to f^*f_* \mM \to 0
		$$
		of objects in $\Per{-1}(X/Y)$ is isomorphic to
		\begin{equation}\label{eqtn_a}
		\hH^0_{\Per{-1}} Lf^* i^*\eE_{-1} \to \hH^0_{\Per{-1}} Lf^* i^* \eE_{0} \to \hH^0_{\Per{-1}} Lf^* f_* \mM \to 0.
		\end{equation}
		Functor $\hH^0_{\Per{-1}(X/Y)}(Lf^*(-)) \colon \Coh(Y) \to \Per{-1}(X/Y)$ is left adjoint to an exact functor $Rf_*|_{\Per{-1}(X/Y)} \colon \Per{-1}(X/Y) \to \Coh(Y)$, hence it is right exact. Thus, sequence (\ref{eqtn_a}) is exact. This shows that $\hH_{\Per{-1}(X/Y)}^{-1}(C^\bcdot) \simeq 0$, which finishes the proof.
	\end{proof}

	Proposition \ref{prop_Lgg_M} implies that triangle
	\begin{equation}\label{eqtn_LggM}
	\mM[1] \to Lg^*g_* \mM \to f^*f_* \mM \to \mM[2]
	\end{equation}
	is exact. By Lemma \ref{lem_loc_free_res_of_g_M}, object $Lg^*g_* \mM$ is quasi-isomorphic to a complex $f^* \fF_{-1} \to f^* \fF_0$, for some locally free sheaves $\fF_{-1}$, $\fF_0$ on $Y$. Hence, sequence
	\begin{equation}\label{eqtn_res_M}
	0 \to \mM \to f^* \fF_{-1} \to f^* \fF_0 \to f^*f_* \mM \to 0
	\end{equation}
	is a projective resolution for $f^*f_* \mM$ in $\Per{-1}(X/Y)$, see Lemma \ref{lem_ffE_in_Per}.

	Note that short exact sequence (\ref{eqtn_P_M_in_Coh}) and Lemma \ref{lem_ffE_in_Per} imply that
	\begin{equation}\label{eqtn_P_M_in_Per}
	0 \to f^*f_* \mM \to \mM \to \pP[1] \to 0
	\end{equation}
	is a short exact sequence in ${}^{-1}\textrm{Per}(X/Y)$.
	
	\subsection{2-periodicity}\label{ssec_periodicity}

	\begin{PROP}\label{prop_2_period_of_Lf}
		Let $f \colon X\to Y$ satisfy (r) and assume that $Y \subset \yY$ is a principal divisor in a smooth affine $\yY$. Then there exists a morphism $Lf^* \to Lf^*[2]$ of functors $\dD_{\textrm{qc}}(Y) \to \dD_{\textrm{qc}}(X)$, which, for any $E$ in $\dD^b(Y)$, is an isomorphism on almost all cohomology sheaves of $Lf^*(E)$.
	\end{PROP}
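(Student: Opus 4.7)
The plan is to pull back the triangle of Proposition \ref{prop_triangle_for_ii} along $Lf^*$, read off the required natural transformation from the rotated triangle, and then observe that the cone has bounded cohomology thanks to smoothness of $\yY$.

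First, the distinguished triangle
$$
\textrm{Id}_{\dD_{\textrm{qc}}(Y)}[1] \to Li^*i_* \to \textrm{Id}_{\dD_{\textrm{qc}}(Y)} \to \textrm{Id}_{\dD_{\textrm{qc}}(Y)}[2]
$$
from Proposition \ref{prop_triangle_for_ii} lives in the homotopy category of DG endo-functors of the DG enhancement for $\dD_{\textrm{qc}}(Y)$. Composing on the left with the DG lift $\underline{f}^*$ of $Lf^*$ (cf. Appendix \ref{ssec_DG_enh_of_func}) and using that $g=i\circ f$, so that $Lf^* Li^* \simeq Lg^*$, yields the distinguished triangle
$$
Lf^*[1] \to Lg^*i_* \to Lf^* \xrightarrow{\phi} Lf^*[2]
$$
of DG functors $\dD_{\textrm{qc}}(Y)\to \dD_{\textrm{qc}}(X)$. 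Its boundary morphism $\phi$ is the required natural transformation.

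Next, fix $E\in \dD^b(Y)$. Since $i\colon Y\hookrightarrow \yY$ is a closed embedding, the functor $i_*$ is exact and $i_*E\in\dD^b(\yY)$. As $\yY$ is smooth (and, in the local setting, affine), every bounded complex of coherent sheaves on $\yY$ is perfect, so it admits a finite locally free resolution. Applying $Lg^*$ term-wise to such a resolution shows that $Lg^*i_*E$ has only finitely many non-zero cohomology sheaves, i.e.\ lies in $\dD^b(X)$. Fix $a\le b$ with $\hH^j_X(Lg^*i_*E)=0$ for $j\notin[a,b]$.

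Finally, the long exact cohomology sequence of the triangle above at $E$,
$$
\hH^i_X(Lg^*i_*E) \to \hH^i_X(Lf^*E) \xrightarrow{\hH^i(\phi_E)} \hH^{i+2}_X(Lf^*E) \to \hH^{i+1}_X(Lg^*i_*E),
$$
shows that $\hH^i(\phi_E)$ is an isomorphism whenever $i+1<a$ or $i>b$, hence for all but finitely many $i$. This is the claimed 2-periodicity, and it makes the 2-periodicity phenomenon mentioned in Section \ref{ssec_periodicity} explicit: the "obstruction" to periodicity is the bounded complex $Lg^*i_*E$, which vanishes outside a finite range because $\yY$ is smooth. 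There is no serious obstacle in the argument beyond making sure that the construction lives at the DG level, which is ensured by stating Proposition \ref{prop_triangle_for_ii} in the homotopy category of DG functors to begin with.
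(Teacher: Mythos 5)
Your argument coincides with the paper's: both compose the triangle of Proposition \ref{prop_triangle_for_ii} with $Lf^*$, read off the morphism $Lf^*\to Lf^*[2]$ as the boundary map, and use smoothness of $\yY$ to conclude that the cone $Lf^*Li^*i_*E$ is perfect on $X$, hence has bounded cohomology, so the long exact sequence forces $\hH^i(\phi_E)$ to be an isomorphism for almost all $i$. The only cosmetic difference is that you note vanishing on both sides of the bounded range, whereas the paper only spells out the range $i<l$ (the positive side being trivial since $Lf^*E$ is bounded above); the substance is the same.
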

	\begin{proof}
		By applying $Lf^*$ to functorial exact triangle (\ref{eqtn_triangle_for_Lii}), we obtain a morphism $Lf^*E \to Lf^*E[2]$ with cone $Lf^*Li^*i_*E[1]$. Since $\yY$ is smooth, $\dD^b(\yY)$ coincides with the category of perfect complexes on $\yY$. It follows that the functor $Lf^*Li^*i_*$ takes $\dD^b(Y)$ to $\dD^b(X)$ (even to $\Perf(X)$), hence, for any $E\in \dD^b(Y)$, there exists $l\in \mathbb{Z}$ such that $\hH^i(Lf^*Li^*i_*E) = 0$, for $i < l$. Then $\hH^i Lf^*E \simeq \hH^{i-2} Lf^* E$, for any $i< l$.
	\end{proof}
	
	Let now $f\colon X\to Y$ be a crepant morphism of Gorenstein varieties satisfying (r) and assume that $Y \subset \yY$ is a principal divisor in a smooth $\yY$. Then $\iI_Y\simeq \oO_{\yY}$, hence composing  functorial exact triangle (\ref{eqtn_triangle_for_Lii}) with $Lf^*$ and precomposing with $Rf_*$ gives a functorial exact triangle
	\begin{equation}\label{eqtn_triangle_LfRf_LgRg}
	Lf^*Rf_*[1] \to Lg^*Rg_* \to Lf^*Rf_* \to Lf^*Rf_* [2].
	\end{equation}
	
	Object $Lg^*g_* \mM$ has only two non-zero cohomology sheaves described by Proposition \ref{prop_Lgg_M}, for any $\mM\in\mathscr{P}_{-1}$. Then the long exact sequence of cohomology sheaves for triangle (\ref{eqtn_triangle_LfRf_LgRg}) applied to $\mM$ implies short exact sequence
	$$
	0 \to L^2f^*f_* \mM \to f^*f_* \mM \to \mM \to 0
	$$
	and isomorphisms $L^lf^*f_* \mM \simeq L^{l+2} f^*f_* \mM$, for $l \geq 1$. Hence, we have 2-periodicity
	\begin{equation}\label{eqtn_L2ffM}
	L^lf^*f_* \mM \simeq \left\{ \begin{array}{ll}\pP, & \textrm{for even }\, l \geq 2,\\ 0, &\textrm{for odd }\, l. \end{array} \right.
	\end{equation}

	A statement similar to Proposition \ref{prop_Lgg_M} holds for projective object $\nN$ in $\Per{0}(X/Y)$. To establish this, we, first, need to be able to compare functors $g^!$ and $Lg^*$.
	\begin{LEM}\label{lem_g!_and_Lg}
		Let $f\colon X\to Y$ be a crepant morphism of Gorenstein varieties satisfying (r) and assume that $Y\subset \yY$ is a principal divisor in a smooth affine $\yY$. Then there exists a morphism
		$$
		\nu\colon Lg^*[-1] \rightarrow g^!
		$$
		of functors $\dD_{\textrm{qc}}(\yY) \to \dD_{\textrm{qc}}(X)$ that gives an isomorphism of functors $\dD^b(\yY) \to \dD^b(X)$.
	\end{LEM}
	\begin{proof}
		By Appendix \ref{sec_Grot-Ver-dual}, we have a morphism
		$$
		\nu \colon Lg^*(-) \otimes g^!(\oO_{\yY}) \rightarrow g^!(-).
		$$
		By Grothendieck duality (\ref{eqtn_G-Vdual}), we have:
		$$
		i_*i^! \oO_{\mathcal{Y}} \simeq i_* R\hH om(\oO_Y, i^!(\oO_{\mathcal{Y}})) \simeq R \hHom(i_* \oO_Y, \oO_{\mathcal{Y}}) \simeq i_*\oO_Y(Y)[-1].
		$$
		Since $Y\subset \mathcal{Y}$ is a principal divisor, sheaf $\oO_{\mathcal{Y}}(Y)$ is isomorphic to $\oO_{\mathcal{Y}}$. As $i_*$ is exact, we see that $i^! \oO_{\mathcal{Y}} \simeq \oO_Y[-1]$. Then, since $f$ is crepant, $g^!(\oO_{\yY}) \simeq f^!i^!(\oO_{\yY}) \simeq f^!(\oO_Y)[-1] \simeq \oO_X[-1]$, hence $\nu \colon Lg^*[-1]\to g^!$ is a morphism as in the formulation of the Lemma.
		
		By Lemma \ref{lem_two_def_of_f_for_perf_compl}, $\nu_{F^\bcdot}$ is an isomorphism for any $F^\bcdot \in \Perf(\yY)$. Since $\yY$ is smooth, $\Perf(\yY)$ is equivalent to $\dD^b(\yY)$, which finishes the proof.	
	\end{proof}
	
	\begin{PROP}\label{prop_LgRgN}
		Let $f\colon X\to Y$ be a crepant morphism of Gorenstein varieties satisfying (r) and assume that $Y\subset \yY$ is a principal divisor in a smooth affine $\yY$. Let further $\nN$ be an object in $\mathscr{P}_{0}$. Then object $Lg^*g_*\nN\simeq Lg^*Rg_* \nN$ has cohomology sheaves
		$$
		L^1g^*g_* \nN = \left\{ \begin{array}{ll}\nN,& \textrm{if }\, i =-1,\\ f^*f_* \nN, & \textrm{if }\, i = 0,\\ 0, & \textrm{otherwise.}	 \end{array} \right.
		$$
	\end{PROP}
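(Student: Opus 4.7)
The plan is to deduce this result from Proposition \ref{prop_Lgg_M} by Grothendieck--Verdier duality. First observe that since $\nN$ is projective in $\Per{0}(X/Y)$, the direct image $Rf_*\nN = f_*\nN$ is a sheaf, so $Rg_*\nN = i_*f_*\nN = g_*\nN$. By Lemma \ref{lem_loc_free_res_of_g_M} both $g_*\mM$ and $g_*\nN$ admit locally free resolutions of length two on $\yY$, so $Lg^*g_*\nN$ has cohomology concentrated in degrees $\{-1,0\}$; this disposes of the vanishing of $\hH^i$ for $i \notin \{-1,0\}$.

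Set $\mM = \hH om_X(\nN, \oO_X) \in \mathscr{P}_{-1}$, so $\nN = \mM^\vee$. Since $g_*\mM$ is perfect on $\yY$, the standard duality identity combined with Lemma \ref{lem_loc_free_res_of_g_M} yields
\[
R\hH om_X(Lg^*g_*\mM, \oO_X) \simeq Lg^*R\hH om_\yY(g_*\mM, \oO_\yY) \simeq Lg^*g_*\nN[-1].
\]
Applying $R\hH om_X(-, \oO_X)[1]$ to the distinguished triangle $\mM[1] \to Lg^*g_*\mM \to f^*f_*\mM \to \mM[2]$ of Proposition \ref{prop_Lgg_M} therefore produces a distinguished triangle
\[
R\hH om_X(f^*f_*\mM, \oO_X)[1] \to Lg^*g_*\nN \to \nN \to R\hH om_X(f^*f_*\mM, \oO_X)[2].
\]

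To identify $\hH^{-1}(Lg^*g_*\nN)$, I would compute $\hH om_X(f^*f_*\mM, \oO_X) \simeq \nN$ by dualizing the short exact sequence $0 \to \pP \to f^*f_*\mM \to \mM \to 0$ from (\ref{eqtn_P_M_in_Coh}), using that $\pP \in \mathscr{A}_f$ is torsion (supported in the exceptional locus, of codimension at least two) while $\oO_X$ is torsion-free, so $\hH om_X(\pP, \oO_X) = 0$ and the natural map $\nN = \hH om_X(\mM, \oO_X) \to \hH om_X(f^*f_*\mM, \oO_X)$ is an isomorphism. Reading the long exact cohomology sequence of the displayed triangle at $i = -1$ then yields $\hH^{-1}(Lg^*g_*\nN) \simeq \nN$ directly, since $\hH^i(\nN) = 0$ for $i \neq 0$.

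The value $\hH^0(Lg^*g_*\nN) = f^*f_*\nN$ is cleanest to obtain independently from the resolution: the length-two locally free resolution $0 \to \eE_{-1}^+ \to \eE_0^+ \to g_*\nN \to 0$ on $\yY$ gives $Lg^*g_*\nN \simeq [f^*i^*\eE_{-1}^+ \to f^*i^*\eE_0^+]$ in degrees $[-1,0]$, and applying $f^*$ (right exact) to the final three terms $i^*\eE_{-1}^+ \to i^*\eE_0^+ \to f_*\nN \to 0$ of the four-term exact sequence produced by Proposition \ref{prop_triangle_for_ii} for $f_*\nN$ identifies the cokernel as $f^*f_*\nN$. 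The main delicate point, and where care is needed, is the naturality of the isomorphism $R\hH om_X(Lg^*g_*\mM, \oO_X) \simeq Lg^*g_*\nN[-1]$: one must verify that the dualized triangle above really is the one we want, so that the long exact sequence argument gives the cohomology with the intended structure (e.g., the map $\hH^0(Lg^*g_*\nN) \to \nN$ coming out of the long exact sequence should be compatible with the adjunction counit $f^*f_*\nN \hookrightarrow \nN$ of Lemma \ref{lem_ses_P_N}).
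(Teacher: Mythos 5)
Your proof is correct, and it takes a genuinely different route from the paper's. The paper argues directly: it observes $\hH^0 = g^*g_*\nN = f^*f_*\nN$, constructs the unit map $\alpha\colon \nN \to L^1g^*g_*\nN$ via the $Rg_*\dashv g^!$ adjunction and Lemma \ref{lem_g!_and_Lg}, checks injectivity of $\alpha$ via a torsion argument, and then proves surjectivity by reducing to the complete local case and comparing the short exact sequence coming from triangle (\ref{eqtn_triangle_LfRf_LgRg}) with the sequence $0 \to f^*f_*\nN_i \to \nN_i \to \pP_i \to 0$ from Lemma \ref{lem_ses_P_N} and Remark \ref{rem_iota_N_i}. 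By contrast, you deduce the $\mathscr{P}_0$-statement formally from the $\mathscr{P}_{-1}$-statement (Proposition \ref{prop_Lgg_M}) by Grothendieck--Verdier duality: since $g_*\mM$ is perfect on $\yY$ and $R\hH om_\yY(g_*\mM,\oO_\yY) \simeq g_*\nN[-1]$ (from the proof of Lemma \ref{lem_loc_free_res_of_g_M}), dualizing the triangle of Proposition \ref{prop_Lgg_M} gives a triangle whose middle term is $Lg^*g_*\nN$, and the degree $-1$ cohomology is then $\hH om_X(f^*f_*\mM,\oO_X) \simeq \nN$ via the torsion argument for $\pP$. This is tidier and avoids the reduction to the complete local case entirely. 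The trade-off is that your argument requires the identification $R\hH om_\yY(g_*\mM,\oO_\yY)\simeq g_*\nN[-1]$, which (as in the proof of Lemma \ref{lem_loc_free_res_of_g_M}) holds only after shrinking $\yY$ to trivialize $\omega_\yY$ and $\omega_X$; since the claim concerns cohomology sheaves, which are determined locally on $Y$, this is harmless, but worth stating. Your final worry about naturality is overcautious: to compute cohomology sheaves one only needs the abstract distinguished triangle, not compatibility of the connecting maps with specific adjunction units, and your independent computation of $\hH^0$ via the resolution already sidesteps that issue (in fact $\hH^0 Lg^*g_*\nN = g^*g_*\nN = f^*f_*\nN$ is immediate, as the paper notes, without invoking the four-term sequence).
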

	
	\begin{proof}
		Note, that since $\nN$ is an object in $\Per{0}(X/Y)$, the sheaf $R^1f_* \nN \simeq 0$. As $i_*$ is exact, we also get an isomorphism $Rg_* \nN\simeq g_* \nN$. By Lemma \ref{lem_loc_free_res_of_g_M}, sheaf $g_* \nN$ has a locally free resolution of length two, hence $Lg^*g_* \nN$ can have non-zero cohomology sheaves in degree $-1$ and $0$ only.
		
		By the definition of $g = i \circ f$, we have $\hH^0_X(Lg^*g_* \nN) \simeq g^*g_* \nN\simeq f^*f_* \nN$. Lemma \ref{lem_g!_and_Lg} implies that $ L^1g^*Rg_* \nN \simeq \hH^0_X g^! Rg_* \nN$. The unit morphism $\Id \to g^!Rg_*$ gives a map $\alpha \colon \nN \to L^1g^*g_* \nN$. By composing $Lf^*$ with the functorial exact triangle (\ref{eqtn_triangle_for_Lii}) and then applying to the object $f_*\nN$, we get that $L^1g^*g_* \nN$ is isomorphic to $\nN$ outside the exceptional locus of $f$. This shows that the kernel of $\alpha$ is a torsion sheaf, hence it is zero, as $\nN$ is locally free. It thus suffices to check that $\alpha$ is an epimorphism.
		
		Morphism $\alpha$ is defined globally. It is enough to check that it is an isomorphism in the completions at closed points of $Y$. Therefore, we assume that morphism $f$ satisfies (c) and $\nN$ is a direct sum of copies of $\nN_i$ and $\oO_X$. Statement for $\oO_X$ follows from the exact triangle
		$$
		\oO_Y[1] \to Li^*i_* \oO_Y \to \oO_Y \to \oO_Y[2],
		$$
		see Theorem \ref{thm_triangle_for_ii}, and the fact that $Lf^* \oO_Y \simeq f^* \oO_Y$. Hence, it remains to show that $L^1g^*g_* \nN_i \simeq \nN_i$.
		
		Functor $f_*$ applied to sequences (\ref{eqtn_ses_M_O_D}) and (\ref{eqtn_ses_O_N_D}) gives short exact sequences on $Y$,
		\begin{align*}
		& 0 \to \oO_Y^{r_i} \to  f_* \mM_i \to f_* \oO_{D_i} \to 0,& & 0 \to f_* \nN_i \to  \oO_Y^{r_i} \to f_* \oO_{D_i} \to 0,&
		\end{align*}
		as $R^1f_* \oO_X \simeq 0 \simeq R^1f_* \nN_i$. Since $Lf^* \oO_Y \simeq \oO_X$, by applying $Lf^*$ to these sequences and considering long exact sequence of cohomology, we see that $L^1f^*f_* \nN_i \simeq L^2f^*f_* \oO_{D_i} \simeq L^2f^*f_* \mM_i \simeq \pP_i$ and $L^2f^*f_* \nN_i \simeq L^3f^*f_* \oO_{D_i} \simeq L^3f^*f_* \mM_i\simeq 0$ (see (\ref{eqtn_L2ffM})).
		
		Considering the sequence of cohomology sheaves of triangle (\ref{eqtn_triangle_LfRf_LgRg}) applied to $\nN$ gives an exact sequence
		$$
		0 \to f^*f_* \nN_i \to L^1g^*g_* \nN_i \to \pP_i \to 0.
		$$
		On the other hand, Lemma \ref{lem_ses_P_N} and Remark \ref{rem_iota_N_i} imply that sequence
		$$
		0 \to f^*f_* \nN_i \to \nN_i \to \pP_i \to 0
		$$
		is exact. Hence, $\alpha$ is an isomorphism.
	\end{proof}
	
	\begin{REM}\label{rem_LiffN}
		From Propositions \ref{prop_2_period_of_Lf} and \ref{prop_LgRgN} it follows that
		$$
		L^lf^*f_* \nN \simeq \left\{\begin{array}{ll} \mathcal{Q},& \textrm{for odd }\, l\geq 1,\\ 0,& \textrm{for even }\,l\geq 2.\end{array} \right.
		$$
	\end{REM}
	
	\begin{REM}\label{rem_proj_res_N}
		Let $f\colon X\to Y$ be a crepant morphism of Gorenstein varieties satisfying (r) such that $Y \subset \yY$ is a principal divisor in a smooth affine $\yY$. Using Proposition \ref{prop_LgRgN}, we construct a projective resolution
		\begin{equation}\label{eqtn_res_N}
		0 \to \nN \to f^* \eE_{-1} \to f^* \eE_0 \to f^*f_* \nN \to 0
		\end{equation}
		of $f^*f_* \nN$ in $\Per{0}(X/Y)$. Here, $\eE_{-1}$ and $\eE_0$ are locally free sheaves on $Y$ such that $Li^*g_* \nN$ is quasi-isomorphic to $\eE_{-1}\to \eE_0$ (see Lemma \ref{lem_loc_free_res_of_g_M}).
	\end{REM}
	
	\begin{PROP}\label{prop_proj_res_of_P_i}
		Let $f\colon X\to Y$ satisfy (a), $\mM$ be in $\mathscr{P}_{-1}$ and $\nN$ in $\mathscr{P}_0$. Then there exist locally free sheaves $\fF_{-1}$, $\fF_0$, $\eE_{-1}$, $\eE_0$ on $Y$ such that
		\begin{align*}
		&0 \to \mM \to f^*\fF_{-1} \to f^* \fF_0 \to \mM \to \pP_{\mM}[1] \to 0&\\
		&0 \to \nN \to f^*\eE_{-1} \to f^* \eE_0 \to \nN \to \pP_{\nN} \to 0
		\end{align*}
		are projective resolutions of $\pP_{\mM}[1] = \hH^{-1}_X \iota_f^* \mM[1]$ and $\pP_{\nN} = \hH^0_X \iota_f^* \nN$ respectively in ${}^{-1}\textrm{Per}(X/Y)$ and $\Per{0}(X/Y)$.
	\end{PROP}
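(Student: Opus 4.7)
The plan is to obtain each six-term sequence by splicing a four-term projective resolution, already at hand, with a short exact sequence that identifies the relevant cokernel; no new technical input is needed.

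For the first assertion, I would start with the projective resolution of $f^*f_*\mM$ in $\Per{-1}(X/Y)$ produced in (\ref{eqtn_res_M}),
$$0 \to \mM \to f^*\fF_{-1} \to f^*\fF_0 \to f^*f_*\mM \to 0,$$
which comes from the quasi-isomorphism of $Lg^*Rg_*\mM$ with a two-term complex $f^*\fF_{-1}\to f^*\fF_0$ (Lemma \ref{lem_loc_free_res_of_g_M}) together with the cohomology computation in Proposition \ref{prop_Lgg_M}, and where projectivity of $f^*\fF_i$ in $\Per{-1}(X/Y)$ is provided by Lemma \ref{lem_ffE_in_Per}. Next, I splice it with (\ref{eqtn_P_M_in_Per}),
$$0 \to f^*f_*\mM \to \mM \to \pP_\mM[1] \to 0,$$
which is short exact in $\Per{-1}(X/Y)$ (the underlying coherent sequence (\ref{eqtn_P_M_in_Coh}) has all terms in $\Per{-1}(X/Y)$ by Lemma \ref{lem_ffE_in_Per}). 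Splicing at the shared term $f^*f_*\mM$ replaces it by the composite $f^*\fF_0 \to f^*f_*\mM \hookrightarrow \mM$; since $f^*\fF_0 \twoheadrightarrow f^*f_*\mM$, the cokernel of this composite in $\Per{-1}(X/Y)$ is again $\pP_\mM[1]$, and exactness at every node follows from exactness of the two building blocks. All middle terms are projective in $\Per{-1}(X/Y)$, so the resulting sequence is the desired projective resolution.

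For the second assertion, I proceed identically in the heart $\Per{0}(X/Y)$: start with the projective resolution (\ref{eqtn_res_N}) from Remark \ref{rem_proj_res_N},
$$0 \to \nN \to f^*\eE_{-1} \to f^*\eE_0 \to f^*f_*\nN \to 0,$$
and splice it with the short exact sequence of Lemma \ref{lem_ses_P_N},
$$0 \to f^*f_*\nN \to \nN \to \mathcal{Q} \to 0,$$
after identifying $\mathcal{Q}$ with $\pP_\nN = \hH^0_X\iota_f^*\nN$ by the definition of $\iota_f^*$ via triangle (\ref{eqtn_triangle_for_iota*}). Projectivity of the middle terms in $\Per{0}(X/Y)$ is again provided by Lemma \ref{lem_ffE_in_Per}.

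There is no genuine obstacle: the proof is essentially a bookkeeping step that packages Propositions \ref{prop_Lgg_M} and \ref{prop_LgRgN}, Lemma \ref{lem_ses_P_N}, and Lemma \ref{lem_ffE_in_Per} into a single statement. The only point warranting care is checking that the spliced sequence is exact in the correct perverse heart with the correct final cokernel, which reduces to the surjectivity of $f^*\fF_0 \twoheadrightarrow f^*f_*\mM$ (respectively $f^*\eE_0 \twoheadrightarrow f^*f_*\nN$) established during the construction of the four-term resolution.
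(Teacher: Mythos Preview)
Your proposal is correct and follows precisely the paper's own argument: the proof there is a two-line remark that one splices resolution (\ref{eqtn_res_M}) with sequence (\ref{eqtn_P_M_in_Per}) for $\pP_{\mM}[1]$, and resolution (\ref{eqtn_res_N}) with sequence (\ref{eqtn_ses_P_N}) for $\pP_{\nN}$. You have simply unpacked that remark with the relevant cross-references (Lemma \ref{lem_ffE_in_Per} for projectivity, the identification $\mathcal{Q}=\hH^0_X\iota_f^*\nN$), which is exactly what a reader filling in the details would do.
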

	\begin{proof}
		Composing resolution (\ref{eqtn_res_M}) with sequence (\ref{eqtn_P_M_in_Per}) gives the projective resolution for $\pP_{\mM}[1]$ Similarly, resolution (\ref{eqtn_res_N}) and sequence (\ref{eqtn_ses_P_N}) give projective resolution for $\pP_{\nN}$.
	\end{proof}

	\section{The flop functor and Van den Bergh's functor}\label{sec_flop_and_VdB_functor}
	
	Let now $X$, $X^+$ and $Y$ be quasi-projective Gorenstein varieties of dimension $n$ such that $X$ and $X^+$ are related by a flop over $Y$. We assume that $Y$ has canonical hypersurface singularities of multiplicity two. Note that this condition is satisfied if $Y$ has dimension three and terminal Gorenstein singularities. We assume that fibers of $f$ have relative dimension bounded by one and the exceptional locus of $f$ has codimension greater than one in $X$. Then morphism $f^+$ satisfies the same conditions.
	
	M. Van den Bergh in \cite{VdB} proved an equivalence of $\dD^b(X)$ with $\dD^b(X ^+)$ under these assumptions.
	
	We consider a diagram
	\begin{equation}\label{eqtn_flop_diagrams}
	\xymatrix{& X \times_Y X^+ \ar[dr]^{p^+} \ar[dl]_p &\\ X \ar[dr]_f && X^+ \ar[dl]^{f^+}\\ & Y &}
	\end{equation}
	and the flop functor
	\begin{equation}\label{eqtn_def_flop}
	F= Rp^+_* Lp^*\colon \dD_{\textrm{qc}}(X) \to \dD_{\textrm{qc}}(X^+).
	\end{equation}
	
	In this section we give an alternative description of the flop functor	and the functor $\Sigma$ considered in \cite{VdB} under an extra assumption that $Y$ is affine. We show that these are inverse to each other. Keeping the assumption that $Y$ is affine we also provide an alternative description of the flop-flop functor
	\begin{equation}\label{eqtn_def_FF}
	F^+F  = Rp_*\, Lp^{+*}\, Rp^+_*\, Lp^*\colon \dD_{\textrm{qc}}(X) \to \dD_{\textrm{qc}}(X).
	\end{equation}
	
	 We construct a functorial exact triangle relating $\Sigma$, $F$ and the derived push-forward to a smooth scheme $\yY$.   We conclude that the flop functor induces an equivalence $\dD^b(X) \xrightarrow{\simeq}\dD^b(X^+)$ and, following \cite{Chen}, show that $F$ is an equivalence also in the case when $Y$ is not affine.
	
	\begin{LEM}\label{lem_g_of_O_X}
		Let $g\colon X\to Z$ be a proper surjective morphism with connected fibers. If $Z$ is normal, then $g_* \oO_X \simeq \oO_Z$.
	\end{LEM}
	\begin{proof}
		For Stein decomposition $g \colon X\to \Spec_Z g_* \oO_X \xrightarrow{\f} Z$, morphism $\f$ is finite and has connected fibers, hence it is birational. As a finite birational morphism onto a normal variety is an isomorphism, $\f$ is an isomorphism. It follows that $g_* \oO_X \simeq \oO_Z$.
	\end{proof}
	
	\begin{REM}\label{rem_RpO=O}
		We assume that $X$ and $X^+$ are normal varieties. Since morphism $p$ is proper, surjective and with connected fibers, Lemma \ref{lem_g_of_O_X} above implies that $p_* \oO_{X\times_Y X^+}\simeq \oO_X$. Moreover, by Proposition \ref{prop_vanish_Rpi_Zpi_X_of_E}, the sheaf $R^1p_* \oO_{X\times_Y X^+}$ vanishes. It follows that $p$ satisfies (r).
	\end{REM}
	
	\vspace{0.3cm}
	\subsection{An alternative description of Van den Bergh's functor}\label{ssec_alt_desc_of_VdB}~\\
	
	Recall that a sheaf $F$ is \textit{normal} when, for any open sets $U \subset V$ such that codimension of $V\setminus U$ in $V$ is greater than one, the restriction morphism $F(V) \to F(U)$ is an isomorphism.
	
	\begin{LEM}\label{lem_charct_of_reflexive}\cite[Proposition 1.6]{Har_sta}
		Let $X$ be a normal Noetherian scheme. A sheaf $F$ on $X$ is reflexive if and only if it is torsion-free and normal.
	\end{LEM}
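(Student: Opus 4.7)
The plan is to prove both implications using the basic fact that on a normal Noetherian scheme $X$ a torsion-free coherent sheaf is locally free on an open subset whose complement has codimension at least two, together with the observation that $\hHom(-,\oO_X)$ produces torsion-free normal sheaves.

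For the direction ``reflexive implies torsion-free and normal'' I would argue that if $F \simeq F^{\vee\vee} = \hHom(F^\vee,\oO_X)$, then one only needs to check that any sheaf of the form $\hHom(G,\oO_X)$ is torsion-free and normal on a normal $X$. Torsion-freeness is immediate because $\oO_X$ is torsion-free and any local section of $\hHom(G,\oO_X)$ annihilated by a non-zero-divisor must vanish. Normality follows from the fact that for an open inclusion $U\subset V$ with $\textrm{codim}_V(V\setminus U)\geq 2$, Hartogs' extension on the normal scheme $X$ gives $\oO_X(V)\xrightarrow{\sim}\oO_X(U)$, and applying this to each local section of the $\hHom$ sheaf yields $\hHom(G,\oO_X)(V)\xrightarrow{\sim}\hHom(G,\oO_X)(U)$.

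For the converse, I would first choose an open $j\colon U\hookrightarrow X$ with $\textrm{codim}_X(X\setminus U)\geq 2$ on which $F|_U$ is locally free; this exists by torsion-freeness of $F$ and normality of $X$. On $U$ the canonical morphism $\alpha\colon F\to F^{\vee\vee}$ is an isomorphism. Injectivity of $\alpha$ on $X$ then follows because $\ker\alpha$ is supported in $X\setminus U$ while $F$ is torsion-free, so $\ker\alpha=0$. For surjectivity I would use that both $F$ (by hypothesis) and $F^{\vee\vee}$ (by the first direction applied to the dual) are normal sheaves, and they are identified on $U$ by $\alpha$. Then for any open $V\subset X$, normality gives
$$F(V)\xrightarrow{\sim} F(V\cap U)\xrightarrow{\alpha}F^{\vee\vee}(V\cap U)\xleftarrow{\sim}F^{\vee\vee}(V),$$
since $\textrm{codim}_V(V\setminus(V\cap U))\geq 2$. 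This shows $\alpha$ is an isomorphism on sections over every open, hence an isomorphism of sheaves.

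The main point to get right is the use of normality at the ``surjectivity'' step: one should carefully verify that the codimension condition is preserved under restriction to an arbitrary open $V$, and that the sheaf axiom for normality permits the diagram-chase above. The rest is essentially formal, relying only on Grothendieck's Hartogs property for $\oO_X$ on normal $X$.
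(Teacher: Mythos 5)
Your proof is correct and follows essentially the same route as the paper: both directions rest on the Hartogs property of $\oO_X$ on a normal Noetherian scheme, and the converse direction in both cases localizes to the open set where $F$ is locally free and then uses normality of both $F$ and $F^{\vee\vee}$ to extend the isomorphism. The only cosmetic difference is in the forward direction, where the paper realizes $F$ as the kernel of a map $\oO_X^l\to\oO_X^m$ via a finite presentation of $F^\vee$, while you argue directly that any sheaf of the form $\hHom(G,\oO_X)$ is torsion-free and normal.
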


	For a scheme $X$, denote by $\textrm{Ref}\,(X)$ the category of reflexive sheaves.
	
	\begin{LEM}\label{lem_equiv_of_refl}(cf. \cite[Lemma 4.2.1]{VdB})
		Let $f \colon X\to Y$ be a projective birational morphism, $X,Y$ normal Noetherian and the exceptional locus of $f$ have codimension $>1$ in $X$. Then the following functors are mutually inverse equivalences:
		\begin{align*}
		&f_*\colon \textrm{Ref}\,(X) \to \textrm{Ref}\,(Y),& &(f^*(-))^{\vee \vee} \colon \textrm{Ref}\,(Y) \to \textrm{Ref}\,(X).&
		\end{align*}
	\end{LEM}
	
	\begin{proof}
		For a torsion free sheaf $F$ on $X$, its push-forward $f_* F$ is also torsion free. Thus, $f_* F$ will be reflexive if we check that it is normal.
		
		Let $G$ be a torsion-free sheaf on $Y$ which is not normal. There exist open sets $j \colon U \hookrightarrow V$ with the complement of $U$ in codimension greater than one in $V$ such that canonical morphism $G \to j_*j^* G$ is not an isomorphism. Then we have a non-trivial extension
		$$
		0 \to G \to j_*j^*G \to Q \to 0.
		$$
		The question is local, so we can assume $Y$ to be affine. Sheaf $Q$ is supported in codimension greater than one, hence there exists a closed subscheme $Z \subset Y$ of codimension greater than one such that sequence
		$$
		0 \to K \to \oO_{Z}^{\oplus k} \to Q \to 0
		$$
		is exact. Since $G$ is torsion-free, $\Hom(K, G)$ is zero. Hence, vanishing of $\Ext^1_Y(\oO_Z, G)$ implies that $\Ext^1_Y(Q, G)$ is also zero. Thus, in order to show that a torsion-free sheaf $G$ is normal it suffices to check that $\Ext^1_Y(\oO_Z, G)$ vanishes, for any closed $Z \subset Y$ of codimension greater than one.
		
		Group $\Ext^1_Y(\oO_Z, f_* F)$ is zero if and only if morphism $\Hom(\oO_Y, f_* F) \to \Hom(I_Z, f_* F)$ given by short exact sequence
		$$
		0 \to I_Z \to \oO_Y \to \oO_Z \to 0
		$$
		is an isomorphism. By adjunction, $\Hom_Y(I_Z, f_* F)$ is isomorphic to $\Hom_X(f^* I_Z, F)$. Exceptional locus of $f$ has codimension greater than one, hence $f^* I_Z$ is isomorphic to $\oO_X$ in codimension one. Since $F$ is reflexive, this implies $\Hom_X(f^*I_Z, F)= \Hom_X(\oO_X, F)$.
		
		Thus, functor $f_*$ maps reflexive sheaves on $X$ to reflexive sheaves on $Y$. Clearly, the same is true about $(f^*(-))^{\vee \vee}$.
		
		Let now $F$ be an object in $\textrm{Ref}\,(X)$. Adjunction
		implies a morphism $\alpha \colon (f^*f_*F)^{\vee \vee} \to F^{\vee \vee} \simeq F$. Both sheaves are reflexive and $\alpha$ is an isomorphism outside exceptional locus of $f$, which has codimension greater than one. Therefore $\alpha$ is an isomorphism.
		
		For $G \in \textrm{Ref}\,(Y)$, reflexification $f^*G \to (f^*G)^{\vee \vee}$ together with adjunction $G\to f_*f^*G$ give $\beta \colon G \to f_*((f^*G)^{\vee \vee})$. Again, morphism $\beta$ is an isomorphism in codimension one between reflexive sheaves, hence an isomorphism.
		
		Finally, a morphism between reflexive sheaves is determined by its restriction to any open set with the complement of codimension greater than one. Thus, functors $f_*$ and $(f^*(-))^{\vee \vee}$ give isomorphisms on the groups of morphisms between objects in $\textrm{Ref}\,(X)$ and $\textrm{Ref}\,(Y)$.
	\end{proof}

	Let $f\colon X\to Y$ satisfy (a). For $\mM$ an object in $  \mathscr{P}_{-1}$, put
	\begin{equation}\label{eqtn_def_N+}
	\nN^+ := (f^{+*}f_* \mM)^{\vee \vee}.
	\end{equation}
	By Lemma \ref{lem_equiv_of_refl}, sheaf $\nN^+$ belongs to $\textrm{Ref}\,(X^+)$.
	
	\begin{PROP}\label{prop_equiv_of_P_-1_P_0}
		Let $f\colon X\to Y$ satisfy (a) and $f^+ \colon X^+ \to Y$ its flop. Then functor
		$$
		(f^{+*}f_*(-))^{\vee \vee} \colon \textrm{Ref}\,(X) \to \textrm{Ref}\,(X^+)
		$$
		restricts to equivalences $\mathscr{P}_{-1} \xrightarrow{\simeq} \mathscr{P}_0^+$ and $\mathscr{P}_0 \xrightarrow{\simeq} \mathscr{P}_{-1}^+$.
	\end{PROP}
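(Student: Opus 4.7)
The functor $(f^{+*}f_*(-))^{\vee\vee}$ factors as the composition of the two equivalences $f_*\colon \textrm{Ref}(X) \xrightarrow{\simeq} \textrm{Ref}(Y)$ and $(f^{+*}(-))^{\vee\vee}\colon \textrm{Ref}(Y)\xrightarrow{\simeq} \textrm{Ref}(X^+)$ supplied by Lemma \ref{lem_equiv_of_refl} applied to $f$ and to $f^+$, both of which satisfy the hypotheses of that lemma. It is therefore already an equivalence on all reflexive sheaves, with quasi-inverse $(f^*f^+_*(-))^{\vee\vee}$. The substance of the proposition is that this equivalence restricts well to the perverse-projective subcategories, so it suffices to establish the two inclusions $(f^{+*}f_*\mathscr{P}_{-1})^{\vee\vee} \subseteq \mathscr{P}_0^+$ and $(f^{+*}f_*\mathscr{P}_0)^{\vee\vee} \subseteq \mathscr{P}_{-1}^+$; applying the same statements with $X$ and $X^+$ interchanged and composing with the quasi-inverse gives the reverse inclusions, hence essential surjectivity.

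Projectivity in $\Per{p}(X/Y)$ is a local condition on $Y$, so I would reduce to case (c). In this complete local setting, $\mathscr{P}_{-1}$ is the additive closure of $\oO_X$ and the bundles $\mM_i$ of (\ref{eqtn_ses_def_M_i}), $\mathscr{P}_0$ is the additive closure of $\oO_X$ and the duals $\nN_i = \hH om_X(\mM_i, \oO_X)$, and analogous descriptions hold on $X^+$ for generators $\mM_i^+ \in \mathscr{P}_{-1}^+$ and their duals $\nN_i^+ \in \mathscr{P}_0^+$. The structure sheaves correspond to one another via $f_*\oO_X \simeq \oO_Y$, so it remains to prove
\begin{equation*}
(f^{+*}f_*\mM_i)^{\vee\vee} \simeq \nN_i^+ \qquad\textrm{and}\qquad (f^{+*}f_*\nN_i)^{\vee\vee} \simeq \mM_i^+.
\end{equation*}
By Lemma \ref{lem_equiv_of_refl}, each of these is equivalent to an equality of reflexive sheaves on $Y$: namely $f_*\mM_i \simeq f^+_*\nN_i^+$ and $f_*\nN_i \simeq f^+_*\mM_i^+$.

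I would verify the first equality by pushing forward (\ref{eqtn_ses_def_M_i}) and its analogue on $X^+$: since $R^1f_*\mM_i = 0$, the sheaf $f_*\mM_i$ is an extension of $f_*\lL_i$ by $\oO_Y^{r_i-1}$ classified by the chosen generating set of $\Ext^1_X(\lL_i,\oO_X)$; dualizing the defining sequence of $\mM_i^+$ on $X^+$, pushing forward, and applying Grothendieck--Verdier duality (Appendix \ref{sec_Grot-Ver-dual}) together with the crepancy of $f^+$ gives a parallel presentation of $f^+_*\nN_i^+$ as an extension of the same form on $Y$. The crux is matching the two extension classes; this amounts to the fact, implicit in \cite{VdB}, that the non-commutative crepant resolution $\End_Y(f_*\mM)$ of $Y$ obtained from $X$ coincides with the one obtained from $X^+$. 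The second equality $f_*\nN_i \simeq f^+_*\mM_i^+$ follows by a symmetric computation. The main obstacle I expect is precisely the bookkeeping with the choices of generators of the relevant $\Ext$ groups through the duality on $X^+$ and Van den Bergh's construction, which has to be done carefully to ensure that the extension classes really do agree across the flop.
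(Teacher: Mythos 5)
Your overall structure is sound: you correctly identify that $(f^{+*}f_*(-))^{\vee\vee}$ is the composite of the two reflexive-sheaf equivalences from Lemma \ref{lem_equiv_of_refl}, and you correctly reduce the proposition to showing that the subcategories $f_*\mathscr{P}_{-1}$ and $f^+_*\mathscr{P}_0^+$ (respectively $f_*\mathscr{P}_0$ and $f^+_*\mathscr{P}_{-1}^+$) coincide inside $\textrm{Ref}(Y)$. This is exactly the pivot on which the proposition turns.

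The gap is precisely what you flag at the end as ``the main obstacle.'' You do not actually establish that the extension classes defining $f_*\mM_i$ and $f^+_*\nN_i^+$ over $Y$ agree; you only describe a parallel presentation of each and speculate that matching them ``amounts to the fact, implicit in \cite{VdB},'' that the noncommutative crepant resolutions built from $X$ and $X^+$ agree. That is not implicit — it is the explicit content of \cite[Proposition 4.3.1]{VdB}, and the paper's proof simply cites it. Redoing that comparison of extension classes through the flop from scratch would amount to reproving the key technical result of Van den Bergh's Section 4, and as written your sketch gives no argument for why the two generating sets of $\Ext^1$ groups on $X$ and $X^+$ induce the same extensions on $Y$. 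So the proposal, as it stands, reduces the claim to an unproved assertion of equal difficulty. Once you cite \cite[Proposition 4.3.1]{VdB}, your localization to case (c) also becomes unnecessary: the identification of $f_*\mathscr{P}_{-1}$ with $f^+_*\mathscr{P}_0^+$ holds globally, and the equivalence of the projective categories follows immediately by transporting through Lemma \ref{lem_equiv_of_refl}, which is exactly the paper's (much shorter) argument.
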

	
	\begin{proof}
		By \cite[Proposition 3.2.6]{VdB}, categories $\mathscr{P}_{-1}$ and $\mathscr{P}_0$ are subcategories of $\textrm{Ref}\,(X)$. By \cite[Proposition 4.3.1]{VdB}, subcategories $f_* \mathscr{P}_{-1}$ and $f^+_* \mathscr{P}_0^+$ of $\textrm{Ref}\,(Y)$ are equivalent. By Lemma \ref{lem_equiv_of_refl}, the subcategory $(f^{+*}f_* \mathscr{P}_{-1})^{\vee \vee}$ of $\textrm{Ref}\,(X^+)$ corresponds to $\mathscr{P}_{-1}$ under equivalences $\textrm{Ref}\,(X)\simeq \textrm{Ref}\,(Y) \simeq \textrm{Ref}\,(X^+)$, hence $\mathscr{P}_0^+ \simeq (f^{+*}f_* \mathscr{P}_{-1})^{\vee \vee}$.
		
Exchanging the roles of $f$ and $f^+$, we get equivaalence of $\mathscr{P}_{-1}^+$ with $\mathscr{P}_0$.
	\end{proof}

	\begin{LEM}\label{lem_Ext(M,M)=0}
		Let $f\colon X\to Y$ satisfy (r), $Y$ affine and $\mM_1$, $\mM_2$ objects in $\mathscr{P}_{p}$, for $p=-1$ or $0$. Then $\Ext_X^i(\mM_1, \mM_2) = 0$, for $i \geq 1$.
	\end{LEM}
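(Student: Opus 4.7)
My plan is to reduce the lemma to Van den Bergh's tilting theorem. Under assumption (a), \cite[Proposition 3.2.5]{VdB} produces a vector bundle $\mM$ on $X$ which is a projective generator of $\Per{-1}(X/Y)$ and is moreover a tilting object of $\dD_{\textrm{qc}}(X)$: one has $\Ext^i_X(\mM,\mM)=0$ for all $i\geq 1$, and $R\Hom_X(\mM,-)$ induces an equivalence $\dD_{\textrm{qc}}(X)\simeq \dD(\Lambda\textrm{-Mod})$ with $\Lambda=\End_X(\mM)$, which identifies $\Per{-1}(X/Y)$ with $\Lambda\textrm{-Mod}$. For $p=0$ the analogous role is played by the dual tilting generator $\nN=\mM^{\vee}$ of $\Per{0}(X/Y)$, cf. \cite[Proposition 3.2.6]{VdB}.

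First I would realise any $\mM_1\in \mathscr{P}_p$ as a direct summand of $\mM^{\oplus n}$ (respectively $\nN^{\oplus n}$) for some integer $n$. Objects of $\mathscr{P}_p$ are coherent (being locally free by \cite[Proposition 3.2.6]{VdB}), and since $\mM$ is a generator of the heart there is a surjection $\mM^{\oplus n}\twoheadrightarrow \mM_1$ in $\Per{p}(X/Y)$; the projectivity of $\mM_1$ in the heart splits this surjection, exhibiting $\mM_1$ as a direct summand.

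The conclusion is then formal: for $i\geq 1$ the group $\Ext^i_X(\mM_1,\mM_2)$ is a direct summand of $\Ext^i_X(\mM^{\oplus n},\mM^{\oplus m})=\Ext^i_X(\mM,\mM)^{\oplus nm}=0$. The only non-formal input is the tilting vanishing $\Ext^i_X(\mM,\mM)=0$, which is imported from \cite{VdB}; everything else is bookkeeping with direct summands, so I do not expect any serious obstacle beyond citing the right statements.
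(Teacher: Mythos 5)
Your proof is correct, but takes a different route from the paper's. You import the tilting vanishing $\Ext^i_X(\mM,\mM)=0$ for Van den Bergh's specific projective generator and reduce the general case by realising each $\mM_j\in\mathscr{P}_p$ as a direct summand of $\mM^{\oplus n}$. The paper instead argues directly: objects of $\mathscr{P}_p$ are vector bundles by \cite[Prop.\ 3.2.6]{VdB}, so $\eE xt^q(\mM_1,\mM_2)=0$ for $q>0$; because $Y$ is affine and $f$ is of relative dimension one, the local-to-global spectral sequence then gives $\Ext^i_X(\mM_1,\mM_2)\simeq H^0(Y,R^if_*\hHom(\mM_1,\mM_2))=0$ for $i>1$; and $\Ext^1_X(\mM_1,\mM_2)\simeq\Ext^1_{\Per{p}(X/Y)}(\mM_1,\mM_2)=0$ by projectivity of $\mM_1$ in the heart. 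The paper's argument is essentially self-contained (only the local-freeness is cited), which is convenient because this vanishing is later reused as an input in Proposition \ref{prop_DG_enha_of_sigma}. Your route leans more heavily on \cite{VdB}: the higher self-Ext vanishing of $\mM$ is itself proved there by a spectral-sequence argument of the same kind, and the finite-summand step implicitly uses the identification $\Per{p}(X/Y)\simeq\textrm{mod--}A$ to guarantee a \emph{finite} surjection $\mM^{\oplus n}\twoheadrightarrow\mM_1$. One small citation adjustment: \cite[Prop.\ 3.2.5]{VdB} constructs the projective generator but does not by itself assert $\Ext^i_X(\mM,\mM)=0$; that vanishing is a separate statement in \cite{VdB}.
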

	\begin{proof}
		By \cite[Proposition 3.2.6]{VdB}, objects in $\mathscr{P}_{p}$ are vector bundles on $X$. Hence, $\eE xt^i(\mM_1, \mM_2) \simeq 0$, for $i > 0$. Since morphism $f$ has fibers of relative dimension bounded by one, the local-to-global spectral sequence implies that $\Ext^i(\mM_1, \mM_2)\simeq H^0(Y, R^if_* \hH om(\mM_1, \mM_2))$ vanishes, for $i> 1$. Group $\Ext^1_X(\mM_1,\mM_2)$ is isomorphic to $\Ext^1_{\Per{p}(X/Y)}(\mM_1, \mM_2)$, hence also zero, as $\mM_1$ is an object of $\mathscr{P}_p$.
	\end{proof}
	
	Let $\mathscr{P} \subset \bB$ be an exact subcategory of an abelian  category. Denote by $\textrm{Hot}^{-,b}(\mathscr{P})$ the homotopy category of bounded above complexes of objects in $\mathscr{P}$ with bounded cohomology in $\bB$. Denote by $\textrm{Hot}^{-,b}(\mathscr{P})^{\lle l}$ the full subcategory of complexes with non-zero cohomology in degree up to $l$.  Denote by $\textrm{Hot}^-(\mathscr{P})$ the category of bounded above complexes of objects in $\mathscr{P}$ without any constraint on cohomology. Finally, denote by $\textrm{Hot}^b(\mathscr{P})$ the full subcategory of bounded complexes in $\textrm{Hot}^{-,b}(\mathscr{P})$. Categories $\textrm{Hot}^{-,b}(\mathscr{P})$, $\textrm{Hot}^-(\mathscr{P})$ and $\textrm{Hot}^b(\mathscr{P})$ are triangulated by \cite{Nee1}. If $\mathscr{P}$ is the category of projective objects in $\Per{p}(X/Y)$, for $p=-1,0$, then we use the above notation for the embedding $\mathscr{P}_p \subset \Coh(X)$.
	
	Assume that morphism $f\colon X\to Y$ satisfies (a).
	Since $\Per{p}(X/Y)$ has enough projective objects, for $p=-1$ or $0$, category $\dD^b(\Per{p}(X/Y))$ is equivalent to $\textrm{Hot}^{-,b}(\mathscr{P}_{p})$, see \cite[Theorem III.5.21]{GelMan}.
	
	\begin{PROP}\label{prop_D(X)=D(Per)}
		Let $f\colon X\to Y$ satisfy (r) and  $Y$ affine. Then categories $\dD^b(X)$ and $\dD^b(\Per{p}(X/Y))$ are equivalent, for $p=-1$ or $0$.
	\end{PROP}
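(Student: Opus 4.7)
The approach is to identify both categories through the intermediate homotopy category of complexes of projectives. By the equivalence $\dD^b(\Per{p}(X/Y)) \simeq \Hot^{-,b}(\mathscr{P}_p)$ already noted just before the statement (following \cite[Theorem III.5.21]{GelMan}), it suffices to construct an equivalence
$$
\Phi \colon \Hot^{-,b}(\mathscr{P}_p) \xrightarrow{\simeq} \dD^b(X),
$$
viewing a bounded above complex of locally projective perverse sheaves as an object of the derived category of coherent sheaves on $X$. I would then verify well-definedness, fully faithfulness and essential surjectivity separately.

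For well-definedness, $\mathscr{P}_p$ consists of locally free sheaves by \cite[Proposition 3.2.6]{VdB}, so any $\mM^\bcdot \in \Hot^{-,b}(\mathscr{P}_p)$ is a bounded above complex of vector bundles on $X$. The perverse $t$-structure on $\dD^b(X)$ is obtained from the standard one by tilting in the torsion pair $(\tT_p,\fF_p)$ of Remark \ref{rem_def_of_perv}, so perverse cohomology in degrees $[a,b]$ forces standard cohomology in an equally bounded range, placing $\Phi(\mM^\bcdot)$ in $\dD^b(X)$. For fully faithfulness, I would apply the spectral sequence of Appendix \ref{sec_spectr_seq} which computes $\Hom_{\dD^b(X)}(\mM^\bcdot,\mN^\bcdot[n])$ from the entries $\Ext^q_X(\mM^i,\mN^j)$. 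By Lemma \ref{lem_Ext(M,M)=0} the latter vanish for $q \ge 1$, so the spectral sequence collapses and the Hom group agrees with the cohomology of the Hom-complex $\Hom^\bcdot_X(\mM^\bcdot,\mN^\bcdot)$, which is exactly $\Hom_{\Hot^{-,b}(\mathscr{P}_p)}(\mM^\bcdot,\mN^\bcdot[n])$. The same spectral sequence run inside the abelian category $\Per{p}(X/Y)$ using projectivity of the $\mM^i$ and $\mN^j$ in $\Per{p}(X/Y)$ gives the same answer, so $\Phi$ agrees under the identification with $\dD^b(\Per{p}(X/Y))$.

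For essential surjectivity, the perverse $t$-structure on $\dD^b(X)$ is bounded as a tilt of a bounded $t$-structure, so any $E\in \dD^b(X)$ has only finitely many non-zero perverse cohomology objects and is recovered as an iterated extension of shifts of them, hence lies in the triangulated hull of $\Per{p}(X/Y) \subset \dD^b(X)$. Since $\Per{p}(X/Y)$ has enough projectives, $E$ admits a (possibly infinite) bounded above projective resolution in $\mathscr{P}_p$ whose perverse cohomology remains bounded; this furnishes the required preimage. The key technical obstacle is the $\Ext$-vanishing of Lemma \ref{lem_Ext(M,M)=0}: it depends simultaneously on $f$ having relative dimension at most one, on $Y$ being affine so that $\Ext^j_X(\mM,G)=H^0(Y,R^jf_*\hH om(\mM,G))$, and on the projectivity of elements of $\mathscr{P}_p$ in $\Per{p}(X/Y)$ (which kills the only remaining obstruction in degree one). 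Without this vanishing one cannot collapse the comparison spectral sequence, and the identification of morphisms in $\Hot^{-,b}(\mathscr{P}_p)$ with those in $\dD^b(X)$ would fail.
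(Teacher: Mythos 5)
Your proposal follows essentially the same route as the paper: construct $\Theta\colon\Hot^{-,b}(\mathscr{P}_p)\to\dD^b(X)$ using the fact that objects of $\mathscr{P}_p$ are locally free coherent sheaves, prove full faithfulness via the spectral sequence of Appendix~\ref{sec_spectr_seq} collapsing by Lemma~\ref{lem_Ext(M,M)=0}, and deduce essential surjectivity from $\Per{p}(X/Y)$ being the heart of a bounded \tr e on $\dD^b(X)$. The only detail you do not spell out is the finite $\Ext$-amplitude hypothesis that Proposition~\ref{prop_exist_of_spect_seq} requires (the existence of $N$ with $\Ext^q_X(P,C)=0$ for $q\geq N$), which the paper verifies explicitly via the local freeness of objects in $\mathscr{P}_p$; this is a routine verification and does not affect the correctness of the argument.
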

	\begin{proof}
		Since objects in $\mathscr{P}_{p}$ are coherent sheaves on $X$, we have a functor $\Theta\colon \textrm{Hot}^{-,b}(\mathscr{P}_{p})\to\dD^b(X)$.  There exists $N_0$ such that $\Ext^q_{X}(P, C) = 0$, for any $P \in \mathscr{P}_{p}$ and $C \in  \Per{p}(X/Y)$ and $q\geq N_0$. Indeed, $C$ has non-zero cohomology sheaves in degrees $-1$ and $0$ only, and objects in $\mathscr{P}_p$ are locally free sheaves on $X$ \cite[Proposition 3.2.6]{VdB}. Then Proposition \ref{prop_exist_of_spect_seq} implies that cohomology groups of complex  $\prod_{j-i = p} \Hom_X(A^i, B^j)$ are isomorphic to $\Hom_X(A^\bcdot, B^\bcdot)$, for any $A^\bcdot, B^\bcdot \in \Hot^{-,b}(\mathscr{P}_p)$ (see Lemma \ref{lem_Ext(M,M)=0}). Hence, $\Hom^\bcdot_X(A^\bcdot, B^\bcdot) \simeq \Hom_{\Hot^{-,b}(\mathscr{P}_p)}(A^\bcdot, B^\bcdot)$, i.e. $\Theta$ is fully faithful. It is essentially surjective, because $\Per{p}(X/Y)$ is the heart of a bounded \tr e.
	\end{proof}
	
	By Proposition \ref{prop_equiv_of_P_-1_P_0}, functor
	\begin{equation}\label{eqtn_def_of_sigma}
	\Sigma_o := (f^{+*}f_*(-))^{\vee \vee} \colon \mathscr{P}_{-1} \to \mathscr{P}_0^+
	\end{equation}
	is fully faithful. Denote by
	\begin{equation}\label{eqtn_def_Sigma_minus}
	\Sigma \colon \textrm{Hot}^{-,b}(\mathscr{P}_{-1}) \to \textrm{Hot}^{-}(\mathscr{P}_{0}^+)
	\end{equation}
	the functor obtained by applying $\Sigma_o$ term-wise to complexes of objects of $\mathscr{P}_{-1}$.
	
	\begin{THM}\label{thm_VdB_equivalence}
		Let $f\colon X\to Y$ satisfy (a) and 
		$f^+ \colon X^+ \to Y$ be the flop of $f$. Then functor $\Sigma$ induces an equivalence between  $\dD^b(X)$ and $\dD^b(X^+)$.
	\end{THM}
	\begin{proof}
		Lemma \ref{lem_equiv_of_refl} implies that functor
		$$
		T_o := (f^*f^+_*(-))^{\vee \vee} \colon \mathscr{P}^+_0 \to \mathscr{P}_{-1}
		$$
		is the inverse of $\Sigma_o$. Let us extend it term-wise to a functor
		$$
		T \colon \textrm{Hot}^{-,b}(\mathscr{P}_0^+) \to \textrm{Hot}^{-}(\mathscr{P}_{-1}).
		$$
		
		Proposition \ref{prop_equiv_of_P_-1_P_0} implies that $\Sigma$ and $T$ are inverse equivalences between $\mathscr{P}_{-1}$ and $\mathscr{P}_0^+$. Hence, they are also inverse equivalences between $\Hot^-(\mathscr{P}_{-1})$ and $\Hot^-(\mathscr{P}_0)$. By Proposition \ref{prop_D(X)=D(Per)}, we have $\dD^b(X) \simeq \textrm{Hot}^{-,b}(\mathscr{P}_{-1})$ and $\dD^b(X^+) \simeq \textrm{Hot}^{-,b}(\mathscr{P}_0^+)$. Thus, it suffices to show that $\Sigma$ takes $\Hot^{-,b}(\mathscr{P}_{-1})$ to $\Hot^{-,b}(\mathscr{P}_0^+)$ and $T$ takes $\Hot^{-,b}(\mathscr{P}_{0}^+)$ to $\Hot^{-,b}(\mathscr{P}_{-1})$.
		
		Let $E$ be in $\Hot^{-,b}(\mathscr{P}_{-1})$ and assume that $\Sigma(E)$ has unbounded cohomology. If cohomology of $\Sigma(E)$ is unbounded with respect to the standard \tr e it is also unbounded with respect to the \tr e with heart $\Per{0}(X^+/Y)$, because these two \tr es are related by a tilt. Let $\nN^+$ be the projective generator of $\Per{0}(X^+/Y)$. Then $\dim_k \Hom_{X^+}^\bcdot(\nN^+, \Sigma(E))$ is infinite. By adjunction of inverse functors, we have that $\dim_k \Hom_X^\bcdot(T(\nN^+), E)$ is infinite. As $T_o(\nN^+)$ is in $\mathscr{P}_{-1}$, it contradicts boundedness of cohomology of $E$ with respect to the \tr e with heart $\Per{-1}(X/Y)$. Analogously, $T$ takes $\Hot^{-,b}(\mathscr{P}_{0}^+)$ to $\Hot^{-,b}(\mathscr{P}_{-1})$.
	\end{proof}
	
	Let us show that Theorem \ref{thm_VdB_equivalence} may be considered as a write-up of the Van den Bergh's theorem.
	
	Let $f\colon X\to Y$ satisfy (a). Since $f$ has fibers of relative dimension bounded by 1, \cite[Lemma 3.2.2]{VdB} implies that for an $f$-ample line bundle $\lL$ both $\lL \oplus \oO_X$ and $\oO_X \oplus \lL^{-1}$ are compact generators of $\dD_{\textrm{qc}}(X)$. It follows 
	that both the projective generator $\mM = \bigoplus \mM_i$ of $\Per{-1}(X/Y)$ (defined as in Remark \ref{rem_divisors_D_i}) and $\nN= \hHom(\mM, \oO_X)$ of $\Per{0}(X/Y)$ (see Remark \ref{rem_pro_gen_from_N}) are compact generators of the category $\dD_{\textrm{qc}}(X)$. Analogous result holds for projective generators for $\Per{-1}(X^+/Y)$ and $\Per{0}(X^+/Y)$.
	
	As $(f^{+*}f_*(-))^{\vee \vee}$ restricts to an equivalence $\mathscr{P}_{-1} \to \mathscr{P}_0^+$, Proposition \ref{prop_equiv_of_P_-1_P_0}, $\nN^+$ (as in (\ref{eqtn_def_N+})) is a projective generator for $\Per{0}(X^+/Y)$ and the endomorphisms algebras of $\mM$ and $\nN^+$ are isomorphic.
	 
	Moreover, by Lemma \ref{lem_Ext(M,M)=0}, both $\mM$ and $\nN^+$ have no higher self Ext groups. Hence, by the result of B. Keller \cite{Kel1} both $\dD_{\textrm{qc}}(X)$ and $\dD_{\textrm{qc}}(X^+)$ are equivalent to the derived category of DG modules over the algebra $A = \Hom_X(\mM,\mM)$, cf. \cite{BvdB}. 
Denote by
$$
\Sigma_{\textrm{qc}} \colon \dD_{\textrm{qc}}(X) \xrightarrow{\simeq} \dD_{\textrm{qc}}(X^+)
$$ 
	the resulted equivalence.
	\begin{PROP}\label{prop_DG_enha_of_sigma}
		Let $f\colon X\to Y$ satisfy (a). 
	Functor $ \Sigma_{\textrm{qc}}|_{\dD^b(X)}:\dD^b(X)\to \dD^b(X^+)$ is an equivalence   that takes $\Per{-1}(X/Y)$ to $\Per{0}(X^+/Y)$, and it 
	coincides with Van den Bergh's equivalence in \cite[Theorem 4.4.2]{VdB}.
	\end{PROP}
	\begin{proof}
		Category $\Hot^{-,b}(\mathscr{P}_{-1})$ is a subcategory of the homotopy category of complexes of projective $A$-modules; it consists of bounded above complexes.
		As $\Sigma_o$ as in (\ref{eqtn_def_of_sigma}) maps projective generator for $\Per{-1}(X/Y)$ to the projective generator for $\Per{0}(X^+/Y)$, we have $\Sigma_{\textrm{qc}}|_{\mathscr{P}_{-1}} = \Sigma_o$. We conclude that $\Sigma = \Sigma_{\textrm{qc}}|_{\Hot^{-,b}(\mathscr{P}_{-1})}$ is the equivalence defined by M. Van den Bergh in \cite[Theorem 4.4.2]{VdB}.		
	\end{proof}
	Categories $\mathscr{P}_{0}$ and $\mathscr{P}_{-1}^+$ are equivalent, too. We can thus analogously construct an equivalence $\Sigma_{\textrm{qc}}^{-1} \colon \dD^b(X) \to \dD^b(X^+)$ that takes $\Per{0}(X/Y)$ to $\Per{-1}(X^+/Y)$.
	
	\vspace{0.3cm}
	\subsection{An alternative description of the flop functor}\label{ssec_alt_descr_of_flop}~\\

	Consider diagram (\ref{eqtn_flop_diagrams}). The counit of $f^* \dashv f_*$ adjunction
	$$
	f^*f_* \to \Id
	$$
	gives a morphism
	$$
	p^*f^*f_* \to p^*.
	$$	
	Isomorphism $p^*f^* \simeq p^{+*}f^{+*}$ and $p^{+*}\dashv p^+_*$ adjunction lead to a base-change morphism
	\begin{equation}\label{eqtn_def_of_omega}
	\epsilon \colon f^{+*}f_* \to p^+_*p^*.
	\end{equation}
	Note that the morphism $\epsilon$ is by definition the composition
	\begin{equation}\label{eqtn_decomp_of_omega}
	\epsilon \colon f^{+*}f_* \rightarrow p^+_*p^{+*}f^{+*}f_* \rightarrow p^{+}_*p^*.
	\end{equation}
	
	Analogous argument gives a derived base-change
	\begin{equation}\label{eqtn_def_of_der_base_change}
	\wt{\epsilon} \colon Lf^{+*}Rf_* \to Rp^+_* Lp^*.
	\end{equation}
	
	\begin{LEM}\label{lem_omegaM_is_iso}
		Let $f$ satisfy (a) and $\mM$ be an object in $\mathscr{P}_{-1}$. Then $\epsilon_{\mM} \colon f^{+*}f_* \mM \to p^+_*p^* \mM$ is an isomorphism.
	\end{LEM}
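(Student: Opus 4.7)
The statement is local on $Y$, so I would first reduce to the case where $f\colon X\to Y$ satisfies $(a)$. By \cite[Proposition 3.2.6]{VdB}, every object of $\mathscr{P}_{-1}$ is a locally free sheaf, hence $Lp^*\mM = p^*\mM$ is concentrated in degree zero. Since $\mM \in \Per{-1}(X/Y)$ gives $R^1f_*\mM = 0$, Proposition \ref{prop_vanish_Rpi_Zpi_X_of_E} applied with $E = \mM$, $l_0 = 1$, $\pi_X = p$, $\pi_Z = p^+$, yields $R^1p^+_*p^*\mM = 0$. Combined with the vanishing of $R^ip^+_*$ for $i \geq 2$ (fibers of $p^+$ have dimension at most one), this shows $Rp^+_*Lp^*\mM \simeq p^+_*p^*\mM$ is a sheaf concentrated in degree zero. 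In particular, $\epsilon_\mM$ may be recovered as $H^0$ of the derived base-change morphism $\wt{\epsilon}_\mM\colon Lf^{+*}f_*\mM \to Rp^+_*Lp^*\mM$.

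To show $\epsilon_\mM$ is an isomorphism, I would apply $Rf^+_*$ and compare on $Y$. On the left, the truncation triangle $\tau^{\lle -1}Lf^{+*}f_*\mM \to Lf^{+*}f_*\mM \to f^{+*}f_*\mM$, together with the fact that the sheaves $L^if^{+*}f_*\mM$ for $i \geq 1$ are supported on $\Ex f^+$ and lie in $\mathscr{A}_{f^+}$, gives $Rf^+_*(f^{+*}f_*\mM) \simeq Rf^+_*Lf^{+*}f_*\mM \simeq f_*\mM \otimes^L Rf^+_*\oO_{X^+} \simeq f_*\mM$, using the derived projection formula and the rationality hypothesis $Rf^+_*\oO_{X^+}\simeq \oO_Y$. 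On the right, $Rf^+_*(p^+_*p^*\mM) \simeq Rf_*Rp_*p^*\mM \simeq Rf_*(\mM \otimes^L Rp_*\oO_W)$. A symmetric application of Proposition \ref{prop_vanish_Rpi_Zpi_X_of_E} (swap the roles of $X$ and $X^+$, take $E = \oO_{X^+}$ with $l_0 = 1$) concentrates $Rp_*\oO_W$ in degree zero, and the natural map $\oO_X \to p_*\oO_W$ is an isomorphism off the codimension-$\gge 2$ exceptional locus; since $p_*\oO_W$ is torsion-free and normal on the normal scheme $X$, Lemma \ref{lem_charct_of_reflexive} and Lemma \ref{lem_equiv_of_refl} force $p_*\oO_W \simeq \oO_X$. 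Hence $Rf^+_*(\epsilon_\mM)$ is the identity $f_*\mM \to f_*\mM$ up to iso. The upgrade from an isomorphism after $Rf^+_*$ to an isomorphism on $X^+$ follows by locating both $f^{+*}f_*\mM$ and $p^+_*p^*\mM$ inside the common reflexive hull $\nN^+ = (f^{+*}f_*\mM)^{\vee\vee} \in \mathscr{P}_0^+$ (Proposition \ref{prop_equiv_of_P_-1_P_0}): $\epsilon_\mM$ is already an isomorphism outside the codimension-$\gge 2$ locus $\Ex f^+$, and the two sheaves have the same image in $\nN^+$ after $Rf^+_*$, so the inclusion $\epsilon_\mM$ of sub-sheaves of $\nN^+$ must be the identity.

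The principal obstacle is the failure of derived base-change for a general flopping contraction: the two varieties $X$ and $X^+$ need not be $\mathrm{Tor}$-independent over $Y$, and indeed Remark \ref{rem_LiffN} (applied on the $X^+$-side via $f_*\mM \simeq f^+_*\nN^+$ from Proposition \ref{prop_equiv_of_P_-1_P_0}) forces $L^1f^{+*}f_*\mM$ to be nonzero in general, while $Rp^+_*Lp^*\mM$ is concentrated in degree zero. Thus $\wt{\epsilon}_\mM$ is not a quasi-isomorphism, and one cannot extract the result by an abstract derived base-change argument; one is forced to work at the level of $H^0$ and to exploit the codimension-two character of the exceptional locus together with the reflexive-sheaf equivalence of Lemma \ref{lem_equiv_of_refl}.
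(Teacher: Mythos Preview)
Your reduction to the local case and the verification that $Rp^+_*Lp^*\mM$ is concentrated in degree zero are fine. The injectivity of $\epsilon_\mM$ also follows as you say: the kernel is supported on $\Ex f^+$ while $f^{+*}f_*\mM \simeq f^{+*}f^+_*\nN^+$ is torsion-free by Lemma \ref{lem_ses_P_N}. The gap is in your final ``upgrade'' step. Even granting that $Rf^+_*(\epsilon_\mM)$ is an isomorphism, the cokernel $Q$ of $\epsilon_\mM$ is then only known to lie in $\mathscr{A}_{f^+}$, and $\mathscr{A}_{f^+}$ is nontrivial. Your reflexive-hull argument does not close this: you have not shown that $p^+_*p^*\mM$ is torsion-free (the fiber product $X\times_YX^+$ need not be integral, so pushing forward a locally free sheaf along $p^+$ gives no automatic torsion-freeness), and even if both sheaves embedded in $\nN^+$, two subsheaves $A\subset B\subset \nN^+$ with $B/A\in\mathscr{A}_{f^+}$ certainly satisfy $f^+_*A=f^+_*B$ without forcing $A=B$. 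What you would really need is $\Hom_{X^+}(p^+_*p^*\mM,\mathscr{A}_{f^+})=0$, i.e.\ that $p^+_*p^*\mM\in\Per{-1}(X^+/Y)$; but that is essentially equivalent to the statement you are trying to prove.

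The paper's argument avoids this difficulty by working directly with the decomposition (\ref{eqtn_decomp_of_omega}),
\[
\epsilon_\mM\colon\; f^{+*}f_*\mM \;\longrightarrow\; p^+_*p^{+*}f^{+*}f_*\mM \;\longrightarrow\; p^+_*p^*\mM,
\]
and showing each arrow is an isomorphism. For the second arrow one pulls back the exact sequence $0\to\pP\to f^*f_*\mM\to\mM\to 0$ (with $\pP\in\mathscr{A}_f$) along $p$ and uses Corollary \ref{cor_pull-back_of_A_f} to kill $Rp^+_*p^*\pP$; for the first arrow one uses that $f^{+*}f_*\mM$ is torsion-free together with Lemma \ref{lem_projection_formula} and $Rp^+_*\oO_{X\times_YX^+}\simeq\oO_{X^+}$. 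The key point you are missing is this use of $\pP\in\mathscr{A}_f$ on the $X$-side: it is what lets one compare $p^+_*p^*f^*f_*\mM$ with $p^+_*p^*\mM$ directly, rather than trying to descend from an isomorphism after $Rf^+_*$.
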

	\begin{proof}
		Since $\mM$ is locally free, \cite[Propostion 3.2.6]{VdB}, applying $p^*$ to sequence (\ref{eqtn_P_M_in_Coh}) we get an exact sequence
		$$
		0 \to p^* \mathcal{P} \to p^*f^*f_* \mM \to p^*\mM \to 0
		$$
		on $X\times_Y X^+$. Sheaf $\pP$ is in $\mathscr{A}_f$, hence Proposition \ref{prop_vanish_Rpi_Zpi_X_of_E} implies an isomorphism
		$$
		p^+_*p^*f^*f_*\mM \xrightarrow{\simeq} p^+_*p^* \mM.
		$$
		Since diagram (\ref{eqtn_flop_diagrams}) is commutative, $p^+_*p^*f^*f_* \mM \simeq p^+_*p^{+*}f^{+*}f_* \mM$. Isomorphism $f^{+*}f_*\mM \simeq f^{+*}f^+_* \nN^+$ (see Proposition \ref{prop_equiv_of_P_-1_P_0}) together with Lemma \ref{lem_ses_P_N} implies that $f^{+*}f_*\mM$ is torsion-free. Moreover, by Remark \ref{rem_RpO=O}, $Rp^+_* \oO_{X\times_Y X^+} = \oO_{X^+}$. Thus, conditions of Lemma \ref{lem_projection_formula} are satisfied and we have an isomorphism $f^{+*}f_* \mM \xrightarrow{\simeq} p^+_*p^{+*}f^{+*}f_* \mM$. Hence, $\epsilon_{\mM}$ is a composite of two isomorphisms in (\ref{eqtn_decomp_of_omega}).
	\end{proof}
	
	\begin{PROP}\label{prop_flop=non_der_flop}
		Let $f$ satisfy (a). Then the flop functor $Rp^+_* Lp^*$ on $\mathscr{P}_{-1}$ is isomorphic to the non-derived flop functor $p^+_*p^*$.
	\end{PROP}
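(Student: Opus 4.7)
The plan is to check two separate vanishings which together force the derived base-change of $\mM$ to collapse to its zeroth cohomology on $W = X\times_Y X^+$, after which the natural truncation map from $Rp^+_*Lp^*$ to $p^+_*p^*$ is visibly an isomorphism.

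First, I would record that $Lp^*\mM = p^*\mM$. Under assumption (a), the projective object $\mM\in \mathscr{P}_{-1}$ is a locally free sheaf on $X$ by \cite[Proposition 3.2.6]{VdB}, so all higher $L^ip^*\mM$ vanish.

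Second, I would establish that $R^lp^+_*\,p^*\mM = 0$ for $l\geq 1$. This is precisely the setup for Proposition \ref{prop_vanish_Rpi_Zpi_X_of_E}, applied with $Z=X^+$, $g=f^+$, $\pi_X = p$, $\pi_Z = p^+$, and $l_0 = 1$. The only hypothesis to verify is that $R^lf_*\mM = 0$ for $l\geq 1$. Since $f$ is of relative dimension one, this reduces to showing $R^1f_*\mM = 0$, and this holds because $\mM$, being a coherent sheaf that lies in $\Per{-1}(X/Y)$, sits in the torsion class $\tT_{-1}$ of (\ref{eqtn_def_T_-1}) whose defining property includes the vanishing of $R^1f_*$.

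Combining the two steps gives $Rp^+_*Lp^*\mM = Rp^+_*p^*\mM = p^+_*p^*\mM$, realised by the canonical truncation map, which is natural in $\mM$ and hence upgrades the object-wise identification to an isomorphism of functors $\mathscr{P}_{-1}\to \Coh(X^+)$. I do not expect a genuine obstacle here: the only mild point is to present the identification through natural transformations rather than chosen isomorphisms, which is straightforward since both $p^*\mM \simeq Lp^*\mM$ and $\tau_{\lle 0}Rp^+_*p^*\mM \simeq p^+_*p^*\mM$ are the evident canonical maps. One may also notice as a sanity check that, composing with Lemma \ref{lem_omegaM_is_iso}, the resulting isomorphism is compatible with the base-change morphism $\epsilon_{\mM}\colon f^{+*}f_*\mM \to p^+_*p^*\mM$ appearing in (\ref{eqtn_def_of_omega}).
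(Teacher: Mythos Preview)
Your argument is correct and follows essentially the same approach as the paper: both proofs use local freeness of $\mM$ (\cite[Proposition 3.2.6]{VdB}) to identify $Lp^*\mM\simeq p^*\mM$, and then invoke Proposition \ref{prop_vanish_Rpi_Zpi_X_of_E} with the vanishing $R^1f_*\mM=0$ to conclude $R^1p^+_*p^*\mM=0$. Your justification of $R^1f_*\mM=0$ via the torsion class $\tT_{-1}$ is equivalent to the paper's citation of \cite[Lemma 3.1.2]{VdB}.
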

	
	\begin{proof}
		Every object in $\mathscr{P}_{-1}$ locally free, \cite[Propostion 3.2.6]{VdB}, hence $Lp^*|_{ \mathscr{P}_{-1}} \simeq p^*|_{\mathscr{P}_{-1}}$. Moreover, since $R^1f_* \mathscr{P}_{-1} \simeq 0$, \cite[Lemma 3.1.2]{VdB} Proposition \ref{prop_vanish_Rpi_Zpi_X_of_E} implies that $R^1p^+_*p^* \mathscr{P}_{-1} \simeq 0$.
	\end{proof}
	
	We define functor
	$$
	f^{+*}f_* \colon \textrm{Hot}^{-,b}(\mathscr{P}_{-1}) \to \textrm{Hot}^{-}(f^{+*}f_* \mathscr{P}_{-1})
	$$
	term-wise.
	We use the same notation for the composite of the above functor with the canonical functor $\textrm{Hot}^{-}(f^{+*}f_* \mathscr{P}_{-1}) \to \dD^-(X^+)$.
	Also term-wise, we define a functor
	$$
	p^+_*p^* \colon \textrm{Hot}^{-,b}(\mathscr{P}_{-1}) \to \dD^-(X^+).
	$$
	
	\begin{PROP}\label{prop_iso_of_ff_and_pp_for_flop}
		Let $f$ satisfy (a). Then functors $f^{+*} f_*$ and $p^+_*p^*$ are isomorphic on the category $\textrm{Hot}^{-,b}(\mathscr{P}_{-1})$. They are also isomorphic to the flop functor restricted to $\dD^b(X)$ under the equivalence $\dD^b(X) \simeq \textrm{Hot}^{-,b}(\mathscr{P}_{-1})$.
	\end{PROP}
	\begin{proof}
		Proposition \ref{prop_flop=non_der_flop} implies that the canonical morphism $p^+_*p^*|_{\mathscr{P}_{-1}} \to Rp^+_* p^*|_{\mathscr{P}_{-1}}$ is an isomorphism. All objects in $\mathscr{P}_{-1}$ are locally free, \cite[Propostion 3.2.6]{VdB}, hence canonical morphism $Rp^+_*Lp^*|_{\mathscr{P}_{-1}} \to Rp^+_*p^*|_{\mathscr{P}_{-1}}$ is also an isomorphism. This implies isomorphism of $p^+_*p^*|_{\mathscr{P}_{-1}}$ and $Rp^+_*Lp^*|_{\mathscr{P}_{-1}}$.
		
		Lemma \ref{lem_omegaM_is_iso} assures that the base-change $\epsilon_{\mM} \colon f^{+*}f_* \mM \to p^+_*p^* \mM$  is an isomorphism, for all $\mM\in \mathscr{P}_{-1}$.
		
		Thus all three functors are isomorphic on $\mathscr{P}_{-1}$. Induction on triangles shows that the isomorphism extends to $\Hot^b(\mathscr{P}_{-1})$. By Lemma \ref{lem_iso_on_perf_is_iso_on_Db} below they are also isomorphic on $\textrm{Hot}^{-,b}(\mathscr{P}_{-1})$.
	\end{proof}
	In the proof of Proposition \ref{prop_iso_of_ff_and_pp_for_flop} we use the fact that in order to check that a natural transformation gives an isomorphism of two functors defined on $\textrm{Hot}^{-,b}(\mathscr{P}_{-1})$, it suffices to check that it gives an isomorphism of the functors restricted to $\Hot^b(\mathscr{P}_{-1})$, which we prove now.
	
	\begin{LEM}\label{lem_iso_on_perf_is_iso_on_Db}
		Let $\aA$ be an abelian category, $\mathscr{P}\subset \bB$ an embedding of an exact subcategory and $F, G \colon \textrm{Hot}^{-,b}(\mathscr{P}) \to \dD^-(\aA)$ exact functors. Assume there exists $n_0$ such that both $F$ and $G$ map $\textrm{Hot}^{-,b}(\mathscr{P})^{\lle 0}$ to $\dD^-(\aA)^{\lle n_0}$ and that the image of either $F$ or $G$ is contained in $\dD^b(\aA)$. Let $\kappa \colon F|_{\textrm{Hot}^b(\mathscr{P})} \to G|_{\textrm{Hot}^b(\mathscr{P})}$ be an isomorphism of functors. Then $\kappa$ admits an extension to a functorial isomorphism $\wt{\kappa} \colon F\to G$.
	\end{LEM}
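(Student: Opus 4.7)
The plan is to approximate any $X^\bullet \in \Hot^{-,b}(\mathscr{P})$ by its brutal truncations $\sigma_{\geq -n} X^\bullet$, defined as the subcomplex with the same terms as $X^\bullet$ in degrees $\geq -n$ and zero below, which lie in $\Hot^b(\mathscr{P})$ once $n$ is sufficiently large. The short exact sequence of complexes $0 \to \sigma_{\geq -n} X^\bullet \to X^\bullet \to \sigma_{\leq -n-1} X^\bullet \to 0$ gives an exact triangle in $\Hot^{-,b}(\mathscr{P})$. For $n$ exceeding the absolute value of the lowest cohomology degree of $X^\bullet$, the quotient $\sigma_{\leq -n-1} X^\bullet$ has cohomology concentrated in degree $-n-1$, hence lies in $\Hot^{-,b}(\mathscr{P})^{\lle -n-1}$. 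Since $F$ and $G$ are exact and commute with shifts, the hypothesis $F, G\bigl(\Hot^{-,b}(\mathscr{P})^{\lle 0}\bigr) \subseteq \dD^-(\aA)^{\lle n_0}$ yields $F(\sigma_{\leq -n-1} X^\bullet), G(\sigma_{\leq -n-1} X^\bullet) \in \dD^-(\aA)^{\lle n_0 - n - 1}$.

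Assume without loss of generality that $F(X^\bullet) \in \dD^b(\aA)$, and fix an integer $m$ with $F(X^\bullet) \in \dD^{\gge m}(\aA)$. Choose $n$ large enough that $n_0 - n - 1 < m$. Applying the standard cohomological truncation $\tau^{\aA}_{\gge m}$ to the images under $F$ and $G$ of the above triangle annihilates the $\sigma_{\leq -n-1} X^\bullet$ terms, so the natural morphisms
$$\tau^{\aA}_{\gge m} F(\sigma_{\geq -n} X^\bullet) \to F(X^\bullet) \quad\text{and}\quad \tau^{\aA}_{\gge m} G(\sigma_{\geq -n} X^\bullet) \to \tau^{\aA}_{\gge m} G(X^\bullet)$$
are isomorphisms. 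Combined with $\tau^{\aA}_{\gge m}$ applied to the existing isomorphism $\kappa_{\sigma_{\geq -n} X^\bullet}\colon F(\sigma_{\geq -n} X^\bullet) \simto G(\sigma_{\geq -n} X^\bullet)$, this produces a canonical isomorphism $F(X^\bullet) \simeq \tau^{\aA}_{\gge m} G(X^\bullet)$. Repeating the construction with any smaller $m' \leq m$ and correspondingly larger $n$ gives $F(X^\bullet) \simeq \tau^{\aA}_{\gge m'} G(X^\bullet)$; since $F(X^\bullet) \in \dD^{\gge m}$, this forces $\hH^i_{\aA}(G(X^\bullet)) = 0$ for all $i < m$, hence $G(X^\bullet) \in \dD^{\gge m}(\aA)$. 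The truncation then acts as the identity on $G(X^\bullet)$, and the composite is the desired $\wt{\kappa}_{X^\bullet}\colon F(X^\bullet) \simto G(X^\bullet)$.

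Naturality in $X^\bullet$ follows from the functoriality of $\sigma_{\geq -n}$, $\tau^{\aA}_{\gge m}$, $F$ and $G$, together with the naturality of $\kappa$ on $\Hot^b(\mathscr{P})$; independence of the admissible choices of $n$ and $m$ is verified by comparing two such choices via the canonical inclusions $\sigma_{\geq -n} X^\bullet \hookrightarrow \sigma_{\geq -n'} X^\bullet$ (for $n \leq n'$) and the comparison $\tau^{\aA}_{\gge m'} \to \tau^{\aA}_{\gge m}$, invoking naturality of $\kappa$ to conclude. The main technical obstacle is the bookkeeping that juggles two different truncations---brutal on the source and cohomological on the target---together with the variable-$m$ argument that rules out unbounded cohomology of $G(X^\bullet)$: without the latter step one only obtains an isomorphism between $F(X^\bullet)$ and a truncation of $G(X^\bullet)$ rather than $G(X^\bullet)$ itself.
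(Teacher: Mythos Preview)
Your proof is correct and follows essentially the same route as the paper's: approximate $X^\bullet$ by its stupid truncations in $\Hot^b(\mathscr{P})$, then use the cohomological truncation $\tau^{\aA}_{\gge m}$ on the target together with boundedness of $F(X^\bullet)$ to propagate $\kappa$. One cosmetic point: for cochain complexes (differential raising degree), $\sigma_{\geq -n}X^\bullet$ is a \emph{quotient} of $X^\bullet$, not a subcomplex, so your short exact sequence and comparison maps should run in the opposite direction; this is harmless, since either way you obtain the same distinguished triangle and the truncation argument goes through unchanged.
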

	
	\begin{proof}
		
		Let $E$ be an object in $\textrm{Hot}^{-,b}(\mathscr{P})$. We denote by $\sigma_{\gge k}E$ the ``stupid'' truncation:
		$$
		\sigma_{\gge k}(E)^i = \left\{\begin{array}{cl}E^i, & \textrm{if }\, i \geq k,\\ 0, & \textrm{otherwise.} \end{array} \right.
		$$
		Morphism $\sigma_{\geq l}E \to E$ induces isomorphisms $\tau_{\geq m} F(\sigma_{\geq l}E) \xrightarrow{\simeq} \tau_{\geq m} F(E)$, $\tau_{\geq m} G(\sigma_{\geq l}E) \xrightarrow{\simeq} \tau_{\geq m} G(E)$, for any $m\in \mathbb{Z}$, and $l\leq m-n_0$.
		We have
		\begin{equation}\label{eqtn_isos}
		\tau_{\geq m} F(E) \simeq \tau_{\geq m} F(\sigma_{\geq l} E) \simeq \tau_{\geq m} G(\sigma_{\geq l} E) \simeq \tau_{\geq m} G(E),
		\end{equation}
		for any $l\leq m-n_0$. Let us assume that $F(E)$ is an object in $\dD^b(\aA)$. There exists $N$ such that morphism $F(E) \to \tau_{\geq m} F(E)$ is an isomorphism, for any $m\leq N$. Thus, isomorphisms (\ref{eqtn_isos}) imply that $G(E)$ is also an object in $\dD^b(\aA)$. We define $\wt{\kappa}$ as the composite of isomorphisms
		$$
		F(E) \rightarrow \tau_{\geq m} F(E) \rightarrow \tau_{\geq m} F(\sigma_{\geq l} E) \xrightarrow{\tau_{\geq m}\kappa_{\sigma_{\geq l}E} }\tau_{\geq m} G(\sigma_{\geq l} E) \rightarrow \tau_{\geq m} G(E) \rightarrow G(E),
		$$
		for any $m\leq N$ and $l\leq m-n_0$. Since $\kappa$ is a natural transformation, $\tau_{\geq m} \kappa_{\sigma_{\geq l}E} \simeq \tau_{\geq m} \kappa_{\sigma_{\geq l'} E}$, for any $l,l'\leq m-n_0$ . Hence, $\wt{\kappa}$ does not depend on the choice of $m\leq N$ and $l\leq m-n_0$.
	\end{proof}
	
	\vspace{0.3cm}
	\subsection{A functorial exact triangle of functors $\dD_{\textrm{qc}}(X) \to \dD_{\textrm{qc}}(X^+)$}\label{ssec_dist_triang_func_D(X)_D(X+)}~\\

	Assume that $f\colon X\to Y$ satisfies (a), i.e. there exists a closed 	embedding $i\colon Y \to \yY$, for a smooth $\yY$  of dimension $n+1$. Then, $X\times_Y X^+\simeq X\times_{\yY} X^+$, i.e. diagram
	\begin{equation}\label{eqtn_diagram_flop_2}
	\xymatrix{& X\times_Y X^+ \ar[dl]_{p} \ar[dr]^{p^+} & \\ X \ar[dr]_{g} & & X^+ \ar[dl]^{g_+} \\ & \yY&}
	\end{equation}
	is fibered.

	Recall, that we denote by $F= Rp^+_*Lp^*$ the flop functor and by $\Sigma\colon \Hot^{-,b}(\mathscr{P}_{-1})\to \dD^b(X^+)$ the term-wise extension of $\Sigma_o = (f^{+*}f_*(-))^{\vee \vee}\colon \mathscr{P}_{-1} \to \mathscr{P}_0^+$.

	\begin{LEM}\label{lem_Sigma_commutes_with_Rf}
		Let $f\colon X\to Y$ satisfy (a). Functors $Rf^+_* \Sigma,Rf_*\colon \Hot^{-,b}(\mathscr{P}_{-1}) \to \dD^b(Y)$ are isomorphic.
	\end{LEM}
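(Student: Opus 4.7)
The plan is to first establish the isomorphism on the generators of $\Hot^{-,b}(\mathscr{P}_{-1})$, namely on individual objects of $\mathscr{P}_{-1}$ sitting in degree zero, and then extend it to the full category using the triangulated structure together with Lemma \ref{lem_iso_on_perf_is_iso_on_Db}.

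First I would check that for any $\mM \in \mathscr{P}_{-1}$ there is a natural isomorphism $Rf^+_* \Sigma_o(\mM) \simeq Rf_*\mM$. Since $\mM \in \mathscr{P}_{-1} \subset \Per{-1}(X/Y)$, the exactness of $Rf_*$ on $\Per{-1}(X/Y)$ gives $R^1f_*\mM = 0$, so $Rf_*\mM \simeq f_*\mM$ is a coherent sheaf on $Y$. By \cite[Proposition 3.2.6]{VdB} objects of $\mathscr{P}_{-1}$ are locally free, so $f_*\mM$ is torsion free and, as a direct image of a reflexive sheaf under $f$, belongs to $\textrm{Ref}(Y)$ by Lemma \ref{lem_equiv_of_refl}. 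By Proposition \ref{prop_equiv_of_P_-1_P_0}, $\Sigma_o(\mM) = (f^{+*}f_*\mM)^{\vee\vee}$ is an object of $\mathscr{P}_0^+ \subset \Per{0}(X^+/Y)$, hence $R^1f^+_*\Sigma_o(\mM) = 0$ and $Rf^+_*\Sigma_o(\mM) \simeq f^+_*\Sigma_o(\mM)$. Finally, Lemma \ref{lem_equiv_of_refl} applied to $f^+$ gives a natural isomorphism $f^+_*(f^{+*}f_*\mM)^{\vee\vee} \simeq f_*\mM$ of reflexive sheaves on $Y$. Composing these identifications produces the desired natural transformation $Rf^+_*\Sigma_o(\mM) \xrightarrow{\simeq} Rf_*\mM$ on $\mathscr{P}_{-1}$.

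Next I would extend by induction on triangles to obtain an isomorphism on $\Hot^b(\mathscr{P}_{-1})$. Both $Rf_*$ and $Rf^+_* \circ \Sigma$ are triangulated functors out of $\Hot^{-,b}(\mathscr{P}_{-1})$ (the latter because $\Sigma$ is induced, after passage to DG enhancements, by the equivalence $\underline{\wt\Sigma}$ of Proposition \ref{prop_DG_enha_of_sigma}), so a natural isomorphism on the generating subcategory $\mathscr{P}_{-1}$ propagates to the thick triangulated subcategory $\Hot^b(\mathscr{P}_{-1})$ by the usual five-lemma argument on cones.

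Finally I would upgrade this to an isomorphism on all of $\Hot^{-,b}(\mathscr{P}_{-1})$ via Lemma \ref{lem_iso_on_perf_is_iso_on_Db}. For this I verify its hypotheses: both functors land in $\dD^b(Y)$ (for $Rf_*$ this is because $f$ has relative dimension one, and for $Rf^+_*\Sigma$ this follows from Theorem \ref{thm_VdB_equivalence} combined with the fact that $f^+$ has relative dimension one), and both take $\Hot^{-,b}(\mathscr{P}_{-1})^{\lle 0}$ into some $\dD^-(Y)^{\lle n_0}$ (the flop-equivalence $\wt\Sigma$ shifts $t$-structures only by a bounded amount since $\Per{-1}(X/Y)$ goes to $\Per{0}(X^+/Y)$, and $Rf_*$, $Rf^+_*$ shift by at most one). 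The main subtlety in the argument is checking the reflexivity step: one must know that $f_*\mM$ genuinely lies in $\textrm{Ref}(Y)$ so that the round-trip through Lemma \ref{lem_equiv_of_refl} produces a canonical, not merely abstract, isomorphism identifying $f^+_*\Sigma_o(\mM)$ with $f_*\mM$; this is guaranteed by the hypothesis that the exceptional locus of $f$ has codimension $\geq 2$.
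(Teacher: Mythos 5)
Your first step is in substance what the paper does: you identify $Rf^+_*\Sigma_o(\mM)$ with $Rf_*\mM$ on objects of $\mathscr{P}_{-1}$ via the reflexive-sheaf equivalences of Lemma \ref{lem_equiv_of_refl}, which is equivalent to the paper's explicit natural transformation $\alpha = (f^+_*\beta f^{+*}f_*)\circ(\eta f_*)$ built from the $f^{+*}\dashv f^+_*$ unit and reflexification. Where your argument has a genuine gap is the passage from an isomorphism on $\mathscr{P}_{-1}$ to an isomorphism on $\Hot^b(\mathscr{P}_{-1})$ by a ``five-lemma argument on cones.'' That argument only shows that a \emph{globally defined} natural transformation between the two triangulated functors, known to be an isomorphism on a generating class, is an isomorphism everywhere. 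It does \emph{not} tell you how to produce the natural transformation on $\Hot^b(\mathscr{P}_{-1})$ in the first place: the axiom TR3 completes a square to a morphism of triangles, but not canonically, so chasing cones produces non-unique, non-functorial maps. You have an isomorphism of the restrictions $Rf_*|_{\mathscr{P}_{-1}}$ and $(Rf^+_*\Sigma)|_{\mathscr{P}_{-1}}$, and the existence of a compatible extension to the ambient triangulated category is precisely the thing that needs an argument.

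The paper resolves this differently and more directly. It takes the explicit $\alpha\colon f_*|_{\mathscr{P}_{-1}}\to f^+_*\Sigma_o|_{\mathscr{P}_{-1}}$ and forms its \emph{term-wise extension} $\alpha^-$, which is by construction a natural transformation between the term-wise extensions $f_*^-$ and $(f^+_*\Sigma)^-$ on all of $\Hot^{-,b}(\mathscr{P}_{-1})$ (componentwise application to a complex is manifestly functorial, so no lifting issue arises). Since $\alpha$ is a natural isomorphism, so is $\alpha^-$. What remains, and what your proposal does not address, is the identification of the term-wise functors with the derived ones: since $R^pf_*\pP_q = 0$ for $p>0$ and $\pP_q\in\mathscr{P}_{-1}$ (and the analogous vanishing for $\Sigma_o(\pP_q)\in\mathscr{P}_0^+$), the spectral sequence $E_1^{p,q}=R^pf_*\pP_q \Rightarrow R^{p+q}f_*E$ collapses to a single row, so $f_*\pP_\bcdot\simeq Rf_*E$ and likewise $f^+_*\Sigma(\pP_\bcdot)\simeq Rf^+_*\Sigma E$. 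This is what makes $\alpha^-$ an isomorphism $Rf_*\xrightarrow{\simeq}Rf^+_*\Sigma$. To repair your argument, you should either reproduce this spectral-sequence identification (which removes the need to invoke Lemma \ref{lem_iso_on_perf_is_iso_on_Db} at all), or else supply a DG-level construction of the natural transformation on $\Hot^{-,b}(\mathscr{P}_{-1})$ before appealing to the five lemma; a bare citation of triangulated functoriality does not suffice.
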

	
	\begin{proof}
		Consider morphism $\alpha\colon f_*|_{\mathscr{P}_{-1}}\to f^+_* \Sigma_o|_{\mathscr{P}_{-1}}$ defined as the composite
		$$
		f_* \xrightarrow{\eta f_*} f^+_*f^{+*}f_* \xrightarrow{f^+_* \beta f^{+*}f_*} f^+_*(f^{+*}f_*(-)^{\vee \vee}) = f^+_* \Sigma_o,
		$$
		for the unit $\eta$ of $f^{+*}\dashv f^+_*$ adjunction and the reflexification $\beta\colon (-) \to (-)^{\vee \vee}$. Lemma \ref{lem_equiv_of_refl} implies that $\alpha$ is an isomorphism of functors (see Lemma \ref{lem_iso_on_perf_is_iso_on_Db}).
		
		Morphism $\alpha$ yields morphism $\alpha^{-} \colon  f_*^-\to (f^{+}_* \Sigma)$ of the term-wise extension of functors $f_*|_{\mathscr{P}_{-1}}$ and $f^{+}_* \Sigma_o|_{\mathscr{P}_{-1}}$ to the category $\Hot^{-,b}(\mathscr{P}_{-1})$. Since $\alpha$ is an isomorphism, the same is true about $\alpha^-$.
		
		Let now $E$ be an object in $\dD^b(X)$. It is isomorphic to a complex $\pP_{\bcdot}$ in $\Hot^{-,b}(\mathscr{P}_{-1})$. The first layer of spectral sequence
		$$
		E^1_{p,q} = R^pf_* \pP_q \Rightarrow R^{p+q}f_* E
		$$  	
		has only one non-zero row. Hence, complex $f_* \pP_{\bcdot}$ is isomorphic to $Rf_* E$. Analogously, since $\Sigma_o(\mathscr{P}_{-1}) \simeq \mathscr{P}_0^+$, complex $f^+_* \Sigma(\pP_\bcdot)$ is isomorphic to $Rf^+_* \Sigma\, E$. Thus, $\alpha^-$ induces an isomorphism of $Rf_* $ and $Rf^+_* \Sigma$.	
	\end{proof}
	
	\begin{PROP}\label{prop_triangle_for_flop}
		Let $f\colon X\to Y$ satisfy (a) and let $\Sigma_{\textrm{qc}}$ be as in Proposition \ref{prop_DG_enha_of_sigma}. The derived base change $\wt{\epsilon}$ induces a functorial exact triangle of  functors $\dD_{\textrm{qc}}(X) \to \dD_{\textrm{qc}}(X^+)$:
		$$
		\Sigma_{\textrm{qc}}[1] \to Lg^{+*}Rg_* \xrightarrow{\wt{\epsilon}} F \to \Sigma_{\textrm{qc}}[2].
		$$
	\end{PROP}
	
	\begin{proof}
		We have already seen in Section \ref{ssec_alt_desc_of_VdB} that there exists a choice of compact generators $\mM$ and $\nN^+$ of $\dD_{\textrm{qc}}(X)$ and $\dD_{\textrm{qc}}(X^+)$ respectively, such that $\dD_{\textrm{qc}}(X) \simeq \dD(\textrm{Mod--}A) \simeq \dD_{\textrm{qc}}(X^+)$, for an algebra $A =\Hom_X(\mM, \mM)$ (see Lemma \ref{lem_Ext(M,M)=0}). Under these equivalences, functor $\Sigma_{\textrm{qc}}$ is given by algebra $A$ considered as an $A^{\opp}\otimes A$ bimodule.
		
		Appendix \ref{sec_sph_funct_and_enh}, allows us to lift the base change $Lg^{+*}Rg_* \to Rp^+_* Lp^*$ to a 1-morphism in $\Bimod$. It induces a functorial exact triangle
		\begin{align}\label{eqtn_sigma_prime}
		\Sigma' \to Lg^{+*}Rg_* \to Rp^+_* Lp^* \to \Sigma'[1].
		\end{align}

		Let us show that functors $\Sigma'|_{\mathscr{P}_{-1}}$ and $\Sigma_o[1]|_{\mathscr{P}_{-1}}$ are isomorphic. To this end, we consider an object $\mM$ in $\mathscr{P}_{-1}$. By Proposition \ref{prop_equiv_of_P_-1_P_0}, object $\nN^+ := (f^{+*}f_*(\mM))^{\vee\vee}$ lies in $\mathscr{P}_0^+$. Thus, Proposition \ref{prop_LgRgN} yields an exact triangle
		\begin{equation}\label{eqtn_5}
		\nN^+[1] \to Lg^{+*}Rg_*^+ \nN^+ \to f^{+*}f^+_* \nN^+ \to \nN^+[2].
		\end{equation}
		By Lemma \ref{lem_equiv_of_refl}, we have $f^+_*\nN^+\simeq f_* \mM$, hence $g^+_* \nN^+ \simeq g_* \mM$. Proposition \ref{prop_iso_of_ff_and_pp_for_flop} assures that $f^{+*}f_*\mM \simeq F(\mM)$, hence triangle (\ref{eqtn_5}) reads
		\begin{equation}\label{eqtn_11}
		\Sigma_o(\mM)[1] \to Lg^{+*}Rg_* \mM \to F(\mM) \to \Sigma_o(\mM)[2].
		\end{equation}
		
		There exists a morphism from triangle (\ref{eqtn_11}) to triangle (\ref{eqtn_sigma_prime}) applied to $\mM$ which is equal to the identity morphism on $Lg^{+*}g_*\, \mM$ and $F(\mM)$. Thus, for any $\mM$ in $\mathscr{P}_{-1}$, there exists an isomorphism $\alpha_{\mM} \colon \Sigma_o(\mM)[1] \to \Sigma'(\mM)$. It is unique, as $\Hom_{X^+}(\Sigma_o(\mM)[1], F(\mM)[-1]) = \Ext^{-2}_{X^+}(\nN^+, f^{+*}f^+_* \nN^+) = 0$ (for degree reason). $\alpha_{\mM}$ when composed with $\Sigma'(\mM) \to Lg^{+*}Rg_*(\mM)$ is equal to $\beta_{\mM}$, for a morphism $\beta = \eta \Sigma_o \colon \Sigma_o[1] \to Lg^{+*} Rg_*$. Here, $\eta \colon  \Id[1] \to g^{+!}Rg^+_*[1] \simeq Lg^{+*}Rg^+_*$ is the adjunction unit (we use $Rg^+_*\Sigma_o \simeq Rg_*$ which follows from Lemma \ref{lem_Sigma_commutes_with_Rf}).
		
		For any $\mM, \mM_1 \in \mathscr{P}_{-1}$ and $\f\colon \mM \mapsto \mM_1$, both compositions
		\begin{align*}
		&\Sigma_o(\mM)[1] \xrightarrow{\Sigma_o(\f)[1]} \Sigma_o(\mM_1)[1] \xrightarrow{\alpha_{\mM_1}} \Sigma'(\mM_1),& &\Sigma_o(\mM)[1] \xrightarrow{\alpha_{\mM}} \Sigma'(\mM) \xrightarrow{\Sigma'(\f)} \Sigma'(\mM_1)&
		\end{align*}
		fit into a commuting diagram
		\[
		\xymatrix{F(\mM_1)[-1] \ar[r] & \Sigma'(\mM_1) \ar[r] & Lg^{+*}Rg_*(\mM_1)\\
			F(\mM)[-1] \ar[u]^{F(\f)[-1]} \ar[r]& \Sigma_o(\mM)[1] \ar[r] \ar[u] & Lg^{+*}Rg_*(\mM). \ar[u]_{Lg^{+*}Rg_*(\f)} }
		\]
		Since both $\Hom_{X^+}(\Sigma_o(\mM)[1], F(\mM_1))$ and $\Hom_{X^+}(\Sigma_o(\mM)[1], F(\mM_1)[-1])$ vanish (also for degree reason), we have $\Sigma'(\f) \circ \alpha_{\mM} = \alpha_{\mM_1} \circ \Sigma_o(\f)[1]$, i.e. $\alpha$ extends to a morphism of functors $\alpha\colon \Sigma_o[1]|_{\mathscr{P}_{-1}} \to \Sigma'|_{\mathscr{P}_{-1}}$. Moreover, any $\psi$ in $\Hom_{X^+}(\Sigma_o(\mM)[1], \Sigma_o(\mM_1)[1]) \simeq \Hom_X(\mM, \mM_1)$ uniquely determines $\wt{\psi} \colon \Sigma'(\mM) \to \Sigma'(\mM_1)$. Similarly, as $\Hom_{X^+}(\Sigma'(\mM), F(\mM_1)) = 0  =\Hom_{X^+}(\Sigma'(\mM), F(\mM_1)[-1])$, any $\tau \in \Hom_{X^+}(\Sigma'(\mM), \Sigma'(\mM_1))$ determines $\wt{\tau} \colon \Sigma_o(\mM)[1] \to \Sigma_o(\mM_1)[1]$. Thus, morphism $\alpha\colon \Hom_{X^+}(\Sigma_o(\mM)[1], \Sigma_o(\mM_1)[1])\to \Hom_{X^+}(\Sigma'(\mM), \Sigma'(\mM_1))$ is a bijection. It follows that $\alpha$ is an isomorphism of functors  $\alpha\colon \Sigma_o[1]|_{\mathscr{P}_{-1}} \xrightarrow{\simeq} \Sigma'|_{\mathscr{P}_{-1}}$.
		
		Lemma \ref{lem_Ext(M,M)=0} implies that $\dD_{qc}(X) \simeq \dD(\textrm{Mod--}A)$, for an algebra $A$. Furthermore, by definition, functor $\Sigma_{\textrm{qc}}$ is isomorphic to the bimodule functor $\Phi_A$, for $A$ considered as $A^{\opp}\otimes A$ bimodule. By construction (via an exact triangle), $\Sigma'$ is also a bimodule functor.
		Thus, assumptions of Lemma \ref{lem_iso_on_A_mod} are satisfied and we conclude that $\Sigma'[-1] \simeq \Sigma_{\textrm{qc}}$. Hence, functorial exact triangle (\ref{eqtn_sigma_prime}) reads
		\begin{equation*}
		\Sigma_{\textrm{qc}}[1] \to Lg^{+*}Rg_* \to F \to \Sigma_{\textrm{qc}}[2].
		\end{equation*}
	\end{proof}
	
	\begin{COR}\label{cor_flop_is_bounded}
		Let $f\colon X\to Y$ satisfy (p). The flop functor $F$ takes $\dD^b(X)$ to $\dD^b(X^+)$.
	\end{COR}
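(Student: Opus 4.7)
The plan is to reduce the general quasi-projective case (p) to the affine case (a) that has already been handled, by a local argument on $Y$. The property that $F(E)$ has bounded cohomology can be checked on an open cover of $X^+$, and the flop functor $F=Rp^+_*Lp^*$ commutes with restriction to open subsets of $Y$: indeed, for the open immersion $j^+_\alpha\colon X^+_\alpha:=(f^+)^{-1}(Y_\alpha)\hookrightarrow X^+$ and the corresponding open immersion $j'_\alpha$ of fibered products, flat base change yields
\[
(j^+_\alpha)^*F(E)\;\simeq\; Rp^+_{\alpha*}Lp^*_\alpha\bigl(E|_{X_\alpha}\bigr)\;=:\;F_\alpha\bigl(E|_{X_\alpha}\bigr),
\]
where $X_\alpha:=f^{-1}(Y_\alpha)$ and $p_\alpha,p^+_\alpha$ are the projections from $X_\alpha\times_{Y_\alpha}X^+_\alpha$. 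So it suffices to check that each $F_\alpha$ preserves bounded cohomology.

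Next I would cover $Y$ by affine opens $Y_\alpha$ such that each $Y_\alpha$ embeds as a principal divisor in a smooth affine variety $\yY_\alpha$. Since $Y$ has canonical hypersurface singularities of multiplicity two (part of condition (p)), each closed point of $Y$ admits an affine neighborhood where $Y$ is cut out by a single equation in a smooth affine ambient; one can refine this to an affine open cover with the required property. The restricted morphism $f_\alpha\colon X_\alpha\to Y_\alpha$ then satisfies condition (a), with $g_\alpha=i_\alpha\circ f_\alpha$ and $g^+_\alpha=i_\alpha\circ f^+_\alpha$.

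Now I would invoke Proposition \ref{prop_triangle_for_flop} for each $f_\alpha$ to obtain a distinguished triangle
\[
\wt{\Sigma}_\alpha[1]\;\to\; Lg^{+*}_\alpha Rg_{\alpha*}\;\to\; F_\alpha\;\to\; \wt{\Sigma}_\alpha[2]
\]
of functors between DG enhancements. It is enough to observe that both outer terms preserve bounded derived categories: $\wt{\Sigma}_\alpha$ restricts to the equivalence $\dD^b(X_\alpha)\simeq \dD^b(X^+_\alpha)$ of Theorem \ref{thm_VdB_equivalence} (see Proposition \ref{prop_DG_enha_of_sigma}), while $Rg_{\alpha*}$ sends $\dD^b(X_\alpha)$ into $\dD^b(\yY_\alpha)$ since $g_\alpha$ is proper of bounded relative dimension, and $Lg^{+*}_\alpha$ sends $\dD^b(\yY_\alpha)=\Perf(\yY_\alpha)$ into $\dD^b(X^+_\alpha)$ because $\yY_\alpha$ is smooth. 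Hence the cone $F_\alpha$ of a morphism between two $\dD^b$-preserving functors itself preserves $\dD^b$.

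Combining the local statements over the cover $\{X^+_\alpha\}$ of $X^+$ via the flat base change isomorphism above, we conclude that $F(E)$ has bounded cohomology on $X^+$. The only delicate point I anticipate is ensuring that the Zariski-local hypersurface presentation of $Y$ really does yield an affine cover satisfying (a); this is a direct consequence of the definition of hypersurface singularity and of the quasi-projectivity of $Y$, but one should write the reduction carefully so that the smooth ambients $\yY_\alpha$ and the principal-divisor structure on $Y_\alpha\subset\yY_\alpha$ can be chosen simultaneously over a single affine $Y_\alpha$.
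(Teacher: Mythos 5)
Your proposal is correct and follows essentially the same route as the paper: reduce to the affine case by covering $Y$ with opens embedding as principal divisors in smooth affines (the paper makes this precise in Lemma~\ref{lem_affine_cover}), then use flat base change to localize $F$ and apply the distinguished triangle of Proposition~\ref{prop_triangle_for_flop}, noting that both $\wt{\Sigma}$ and $Lg^{+*}Rg_*$ preserve bounded derived categories. The paper's own proof is terser (it simply says the statement is local in $Y$), but you have correctly filled in the base-change and boundedness details, and correctly flagged the existence of the required affine cover as the one point needing care.
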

	\begin{proof}
		The statement is local in $Y$, therefore we can assume that morphism $f$ satisfies (a). Since both $Lg^{+*}Rg_*$ and $\Sigma_{\textrm{qc}}$ take $\dD^b(X)$ to $\dD^b(X^+)$ (see Theorem \ref{thm_VdB_equivalence}), the functorial exact triangle of Proposition \ref{prop_triangle_for_flop} implies that the flop functor $F$ takes $\dD^b(X)$ to $\dD^b(X^+)$.
	\end{proof}
	
	\vspace{0.3cm}
	\subsection{An alternative description of the flop-flop functor}\label{ssec_alt_descr_of_flop_flop}~\\
	
	We show that $F^+F$ on category $\textrm{Hot}^{-,b}(\mathscr{P}_{-1})$ is isomorphic to $f^*f_*$. First, we show vanishing of higher inverse images of projective objects in $\mathscr{A}_f$ and then extend Proposition \ref{prop_flop=non_der_flop} to the sheaf $f^*f_* \nN$.
	
	\begin{LEM}\label{lem_vanishing_of_LpP}
		Let $f\colon X\to Y$ satisfy (a) and $\pP$ be projective in $\mathscr{A}_f$. Then $L^jp^* \pP = 0$, for $j\geq 2$.
	\end{LEM}
	
	\begin{proof}
		The statement is local in $X$, in particular in $Y$. Therefore, we can assume that $f$ satisfy (c) and $\pP$ is a direct sum of copies of $\pP_i$ as in (\ref{eqtn_def_P_i}).
		
		Consider the counit $Lg^*g_* \mM_i \to \mM_i$ of the $Lg^* \dashv Rg_*$ adjunction. It gives an exact triangle
		\begin{equation}\label{eqtn_S_M}
		Lg^*g_* \mM_i \to \mM_i \to S_{\mM_i} \to Lg^*g_* \mM_i[1].
		\end{equation}
		Applying functor $Lp^*$ to it yields an exact triangle on $X\times_Y X^+$:
		$$
		L(p^*g^*)g_*\mM_i \to p^* \mM_i \to Lp^* S_{\mM_i} \to L(g^*p^*)g_* \mM_i[1]
		$$
		Lemma \ref{lem_loc_free_res_of_g_M} implies that $L^j(p^*g^*)g_* \mM_i = 0$, for $j\geq 2$. Thus, $L^j p^* S_{\mM_i} = 0$, for $j>2$.
		
		Proposition \ref{prop_Lgg_M}, long exact sequence of cohomology associated to triangle (\ref{eqtn_S_M}) and sequence (\ref{eqtn_P_M_in_Coh}) imply that $S_{\mM_i}$ has two non-zero cohomology sheaves:
		$$
		\mM_i[2] \to S_{\mM_i} \to \pP_i[1] \to \mM_i[3].
		$$
		By applying $Lp^*$ and looking at the cohomology sheaves of the obtained triangle, we get an isomorphism $L^jp^*\pP_i \simeq L^{j+1}p^* S_{\mM_i} \simeq 0$, for $j\geq 3$, and an exact sequence
		$$
		0 \to L^2p^* \pP_i \to p^* \mM_i \to L^2 p^* S_{\mM_i} \to L^1p^* \pP_i \to 0.
		$$
		Sheaf $L^2p^* \pP_i$ is supported on the exceptional divisor of $p$, hence it is torsion. Since $p^* \mM_i$ is locally free, $L^2 p^* \pP_i = 0$.
	\end{proof}

	\begin{PROP}\label{prop_flop=non_der_flop_on_ffN}
		Let $f$ satisfy (a). Then flop functor $Rp^+_* Lp^*$ on category $f^*f_* \mathscr{P}_0$ is isomorphic to the non-derived flop functor $p^+_*p^*$.
	\end{PROP}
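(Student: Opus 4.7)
The plan is to prove that on objects of the form $f^*f_*\nN$ with $\nN\in\mathscr{P}_0$, the derived operations $Lp^*$ and $Rp^+_*$ both collapse to their underived versions, so that $Rp^+_*Lp^*(f^*f_*\nN)\simeq p^+_*p^*(f^*f_*\nN)$. The key input is the short exact sequence from Lemma \ref{lem_ses_P_N}:
\[
0 \to f^*f_*\nN \to \nN \to \mathcal{Q} \to 0,
\]
where $\nN$ is locally free by \cite[Proposition 3.2.6]{VdB} and $\mathcal{Q}$ is a locally projective object in $\mathscr{A}_f$. The second crucial ingredient is the vanishing $L^jp^*\mathcal{Q}=0$ for $j\ge 2$ established in Lemma \ref{lem_vanishing_of_LpP}.

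First I would verify that $L^ip^*(f^*f_*\nN)=0$ for all $i\ge 1$. Applying $Lp^*$ to the above short exact sequence yields a long exact sequence of sheaves in which $L^ip^*\nN=0$ for $i\ge 1$ (by local freeness of $\nN$) and $L^jp^*\mathcal{Q}=0$ for $j\ge 2$ (by Lemma \ref{lem_vanishing_of_LpP}). A short diagram chase then forces $L^ip^*(f^*f_*\nN)=0$ for every $i\ge 1$. In particular, $p^*$ is exact on the sequence above, giving the short exact sequence
\[
0 \to p^*(f^*f_*\nN) \to p^*\nN \to p^*\mathcal{Q} \to 0.
\]

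The second step is to apply $Rp^+_*$ to this last sequence and check that the higher direct images of $p^*(f^*f_*\nN)$ vanish. By Corollary \ref{cor_pull-back_of_A_f}, $p^*\mathcal{Q}$ lies in $\mathscr{A}_{p^+}$, so $Rp^+_*p^*\mathcal{Q}=0$. By Proposition \ref{prop_flop_on_P_0}, the flop functor sends $\nN\in\mathscr{P}_0$ to an object of $\mathscr{P}_{-1}^+$ concentrated in degree zero, that is, $R^ip^+_*p^*\nN=0$ for $i\ge 1$. The long exact sequence of higher direct images then forces $R^ip^+_*p^*(f^*f_*\nN)=0$ for $i\ge 1$. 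Combining both steps, the canonical base-change morphism furnishes a functorial isomorphism $p^+_*p^*(f^*f_*\nN)\xrightarrow{\simeq} Rp^+_*Lp^*(f^*f_*\nN)$, as required.

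The argument is essentially a two-step diagram chase: the non-trivial work is really hidden in Lemma \ref{lem_vanishing_of_LpP} (where the vanishing $L^jp^*\mathcal{Q}=0$ for $j\ge 2$ was obtained by a delicate analysis of the triangle \eqref{eqtn_S_M} and the short resolution of $g_*\mM_i$) and in Proposition \ref{prop_flop_on_P_0} (which gives the derived-direct-image vanishing for $p^*\nN$). Once these two inputs are available, the only potential obstacle I anticipate is making sure the base-change map is the canonical one and is compatible with the various long exact sequences, but this is standard.
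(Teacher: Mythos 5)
Your first step, establishing $L^ip^*(f^*f_*\nN)=0$ for $i\ge 1$ by applying $Lp^*$ to sequence (\ref{eqtn_ses_P_N}) and using the vanishing $L^jp^*\mathcal{Q}=0$ for $j\ge 2$ together with local freeness of $\nN$, is correct and matches the paper's argument exactly.

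The gap is in your second step. From the long exact sequence of $Lp^*$ applied to $0\to f^*f_*\nN\to\nN\to\mathcal{Q}\to 0$ one only gets
\[
0 \to L^1p^*\mathcal{Q} \to p^*(f^*f_*\nN) \to p^*\nN \to p^*\mathcal{Q} \to 0,
\]
so your claimed short exact sequence $0\to p^*(f^*f_*\nN)\to p^*\nN\to p^*\mathcal{Q}\to 0$ holds only if $L^1p^*\mathcal{Q}=0$. But Lemma \ref{lem_vanishing_of_LpP} gives vanishing only for $L^jp^*\mathcal{Q}$ with $j\ge 2$; the degree-one term is not controlled, and indeed (e.g.\ from the octahedron in the proof of Lemma \ref{lem_triangle_for_pP}, which identifies $L^1p^*\pP$ with $L^1p^*(f^*f_*\mM)$ for $\mM\in\mathscr{P}_{-1}$) one has no reason to expect $L^1p^*$ of a projective in $\mathscr{A}_f$ to vanish. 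Note that the vanishing of $L^ip^*(f^*f_*\nN)$ that you did establish controls exactness one spot to the \emph{left} of where you need it and does not imply left exactness of $p^*$ on the sequence. As a result, your application of $Rp^+_*$ to the (unestablished) short exact sequence does not go through without also knowing $Rp^+_*L^1p^*\mathcal{Q}=0$, which is not supplied by Corollary \ref{cor_pull-back_of_A_f} (that result only covers the underived pullback $p^*E$ of a sheaf $E\in\mathscr{A}_f$, not $L^1p^*E$).

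The paper sidesteps the whole issue: having identified $Lp^*(f^*f_*\nN)\simeq p^*(f^*f_*\nN)$, it observes that commutativity of diagram (\ref{eqtn_flop_diagrams}) gives $p^*f^*f_*\nN\simeq p^{+*}f^{+*}f_*\nN$, and then invokes Lemma \ref{lem_projection_formula} for the morphism $p^+$ (using $Rp^+_*\oO_{X\times_YX^+}=\oO_{X^+}$ from Proposition \ref{prop_vanish_Rpi_Zpi_X_of_E}) to conclude $R^1p^+_*p^{+*}E=0$ for any coherent sheaf $E$ on $X^+$. This gives $Rp^+_*p^*(f^*f_*\nN)\simeq p^+_*p^*(f^*f_*\nN)$ immediately, with no long exact sequence and no need to control $L^1p^*\mathcal{Q}$. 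You should replace your second paragraph with this commutativity argument.
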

	\begin{proof}
		In view of Lemma \ref{lem_vanishing_of_LpP}, $L^jp^* \mathcal{Q} \simeq 0$, for any $\mathcal{Q}$ projective in $\mathscr{A}_f$ and any $j\geq 2$. Since $\nN$ is locally free, by applying $Lp^*$ to sequence (\ref{eqtn_ses_P_N}), we get an isomorphism $Lp^* f^*f_*\nN \simeq p^*f^*f_* \nN$. As diagram (\ref{eqtn_flop_diagrams}) commutes, the latter sheaf is isomorphic to $p^{+*}f^{+*}f_* \nN$. Lemma \ref{lem_projection_formula} implies that $R^1p^+_* p^{+*}(E) \simeq 0$, for any sheaf $E$ on $X^+$. Thus, $Rp^+_* p^{+*}f^{+*}f_* \nN\simeq p^+_* p^{+*}f^{+*}f_* \nN \simeq p^+_* p^{*}f^{*}f_* \nN$.
	\end{proof}
	
	Base change $\epsilon$ given by (\ref{eqtn_def_of_omega}) and an analogous $\epsilon^+ \colon f^*f^+_* \to p_* p^{+*}$ give $\wt{\epsilon} \colon f^*f_* \to p_*p^{+*}p^+_*p^*$ which is the composite
	\begin{equation}
	\wt{\epsilon} \colon f^*f_* \rightarrow f^*f^+_*f^{+*}f_* \xrightarrow{\epsilon^+\circ \epsilon} p_*p^{+*}p^+_*p^*.
	\end{equation}
	\begin{PROP}\label{prop_wt_omega_iso_on_M_and_O}
		Let $f$ satisfy (a). Morphism $\wt{\epsilon}$ is an isomorphism on the category $\textrm{Hot}^{-,b}(\mathscr{P}_{-1})$.
	\end{PROP}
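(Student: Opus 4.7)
The plan is to verify that $\wt{\epsilon}_\mM$ is an isomorphism for each individual sheaf $\mM \in \mathscr{P}_{-1}$, then extend term-wise to $\Hot^{-,b}(\mathscr{P}_{-1})$ by invoking Lemma \ref{lem_iso_on_perf_is_iso_on_Db}. First I factor $\wt{\epsilon}_\mM$ as the three-step composite
$$
f^*f_*\mM \xrightarrow{\alpha} f^*f^+_*f^{+*}f_*\mM \xrightarrow{\beta} f^*f^+_*p^+_*p^*\mM \xrightarrow{\gamma} p_*p^{+*}p^+_*p^*\mM,
$$
where $\alpha$ is induced by the $f^{+*}\dashv f^+_*$ unit applied to $f_*\mM$, $\beta = f^*f^+_*\epsilon_\mM$, and $\gamma = \epsilon^+_{p^+_*p^*\mM}$.

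Setting $\nN^+ := (f^{+*}f_*\mM)^{\vee \vee} \in \mathscr{P}_0^+$ (Proposition \ref{prop_equiv_of_P_-1_P_0}), Lemma \ref{lem_equiv_of_refl} gives $f^+_*\nN^+ \simeq f_*\mM$, and Lemma \ref{lem_ses_P_N} supplies the short exact sequence $0 \to f^{+*}f_*\mM \to \nN^+ \to \mathcal{Q}^+ \to 0$ with $\mathcal{Q}^+\in\mathscr{A}_{f^+}$ projective. Applying $f^+_*$ and using $f^+_*\mathcal{Q}^+ = R^1f^+_*\mathcal{Q}^+ = 0$ (since $\mathcal{Q}^+\in\mathscr{A}_{f^+}$) together with $R^1f^+_*\nN^+ = 0$ (since $\nN^+\in\Per{0}(X^+/Y)$), I obtain $f^+_*f^{+*}f_*\mM \simeq f_*\mM$; hence $\alpha$ is iso after applying $f^*$. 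The second arrow $\beta$ is iso by Lemma \ref{lem_omegaM_is_iso}.

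For the third step, using the identification $p^+_*p^*\mM \simeq f^{+*}f_*\mM = f^{+*}f^+_*\nN^+$, I decompose $\gamma = \epsilon^+_{f^{+*}f^+_*\nN^+}$ analogously to (\ref{eqtn_decomp_of_omega}) and apply $Rp_*$ to $Lp^{+*}$ of the short exact sequence for $\nN^+$. The crucial vanishings are $Rp_*p^{+*}\mathcal{Q}^+ = 0$ (since $p^{+*}\mathcal{Q}^+ \in \mathscr{A}_p$ by Corollary \ref{cor_pull-back_of_A_f}) and $Rp_*L^1p^{+*}\mathcal{Q}^+ = 0$ (from the truncation triangle of $Lp^{+*}\mathcal{Q}^+$ together with $Rp_*Lp^{+*}\mathcal{Q}^+ = 0$, the latter following from $Rf^+_*\mathcal{Q}^+ = 0$ via the derived base change of Proposition \ref{prop_vanish_Rpi_Zpi_X_of_E}). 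Combined with $Rp_*\oO_W = \oO_X$ and Lemma \ref{lem_projection_formula} applied to the torsion-free quotient, this identifies $p_*p^{+*}\nN^+ \simeq f^*f^+_*\nN^+$, showing $\gamma$ is iso.

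The main obstacle is the asymmetry with Lemma \ref{lem_omegaM_is_iso}: there $f^{+*}f_*\mM$ was torsion-free, whereas here $f^*f^+_*\nN^+ = f^*f_*\mM$ contains the torsion subsheaf $\pP$ coming from (\ref{eqtn_P_M_in_Coh}). The additional ingredient needed is the isomorphism $p_*p^*\pP \simeq \pP$, which I plan to extract from the spectral sequence for the projection-formula isomorphism $Rp_*Lp^*\pP \simeq \pP$ together with the vanishing $R^1p_*L^1p^*\pP = 0$; this vanishing is forced by the geometry of the fibers of $p$, which over points of $\mathop{\textrm{Ex}} f$ are (reduced) trees of rational curves by Theorem \ref{thm_fiberstructure}, so the relevant first structure sheaf cohomology vanishes. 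Once $\wt{\epsilon}_\mM$ is verified iso for every $\mM\in\mathscr{P}_{-1}$, term-wise extension gives iso on $\Hot^b(\mathscr{P}_{-1})$, and Lemma \ref{lem_iso_on_perf_is_iso_on_Db} completes the extension to $\Hot^{-,b}(\mathscr{P}_{-1})$.
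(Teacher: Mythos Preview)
Your three-step decomposition of $\wt{\epsilon}_\mM$ and the treatment of $\alpha$ and $\beta$ are correct and match the paper exactly. The divergence, and the problem, is in your handling of $\gamma$.

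Your crucial claim $Rp_*Lp^{+*}\mathcal{Q}^+ = 0$ is false. Proposition~\ref{prop_vanish_Rpi_Zpi_X_of_E} only controls the \emph{non-derived} pullback: it gives $Rp_*\,p^{+*}\mathcal{Q}^+ = 0$ (this is precisely Corollary~\ref{cor_pull-back_of_A_f}), but says nothing about $Rp_*Lp^{+*}\mathcal{Q}^+$, which is the flop functor $F^+$ applied to $\mathcal{Q}^+$. There is no derived base change available here because the fibre square is not Tor-independent. In fact, later (Corollary~\ref{cor_flop_t-exact_and_equiv_on_A_f}) one learns $F^+(\mathscr{A}_{f^+}) = \mathscr{A}_f[1]$, so $F^+(\mathcal{Q}^+)$ is a nonzero shift of an object of $\mathscr{A}_f$. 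With this vanishing gone, your reduction of $\gamma$ via the short exact sequence collapses: from the truncation triangle $L^1p^{+*}\mathcal{Q}^+[1]\to Lp^{+*}\mathcal{Q}^+\to p^{+*}\mathcal{Q}^+$ and $Rp_*p^{+*}\mathcal{Q}^+=0$ one gets $Rp_*L^1p^{+*}\mathcal{Q}^+ \simeq F^+(\mathcal{Q}^+)[-1] \neq 0$, so $Rp_*\,p^{+*}(f^{+*}f^+_*\nN^+)$ is \emph{not} isomorphic to $Rp_*\,p^{+*}\nN^+$.

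Your fallback plan, establishing $p_*p^*\pP \simeq \pP$ directly, is also a gap. The justification you sketch (``fibers of $p$ are trees of rational curves, so the relevant $H^1$ vanishes'') controls $R^1p_*$ of structure sheaves of fibers, not $R^1p_*$ of the sheaf $L^1p^*\pP$, which is not a pushforward of a structure sheaf from the fibers. This isomorphism is in fact Lemma~\ref{lem_proj_form_for_P}, whose proof invokes Corollary~\ref{cor_flop_t-exact_and_equiv_on_A_f}, which in turn depends (via Proposition~\ref{prop_flop_on_P_0}) on the very proposition you are proving; using it here would be circular.

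The paper sidesteps both traps by handling $\gamma$ through a direct chain of identifications using only commutativity $p^*f^* \simeq p^{+*}f^{+*}$ and the already-established isomorphisms from Lemma~\ref{lem_omegaM_is_iso} and Lemma~\ref{lem_Lif_in_A_f}: one rewrites $p_*p^*f^*f^+_*p^+_*p^*\mM$ step by step as $p_*p^{+*}p^+_*p^*\mM$, never needing to isolate the torsion subsheaf $\pP$ or to compute $Rp_*$ of derived pullbacks of objects in $\mathscr{A}_{f^+}$.
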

	
	\begin{proof}
		Let $\mM$ be in $\mathscr{P}_{-1}$. By definition (\ref{eqtn_def_N+}) of $\nN^+$, we have $f_* \mM \simeq f^+_* \nN^+$, hence Lemma \ref{lem_Lif_in_A_f} implies that morphism $f^*f_* \mM \to f^*f^+_*f^{+*}f_* \mM$ is an isomorphism. Lemma \ref{lem_omegaM_is_iso} guarantees that $f^*f^+_*\epsilon_{\mM} \colon f^*f^+_*f^{+*}f_* \mM \to f^*f^+_*p^+_*p^*\mM$ is also an isomorphism. Finally, the counit morphism $f^*f^+_*p^+_*p^*\mM \to p_*p^* f^*f^+_* p^+_* p^* \mM$ yields an isomorphism $f^*f^+_*p^+_*p^*\mM \to p_*p^{+*}p^+_*p^* \mM$. Indeed, we have
		$$
		p_*p^* f^*f^+_* p^+_* p^* \mM \simeq p_*p^* f^*f^+_* f^{+*}f_* \mM \simeq p_*p^* f^*f_* \mM \simeq p_*p^{+*}f^{+*}f_* \mM \simeq p_*p^{+*}p^+_*p^* \mM.
		$$
		Above, the first and the isomorphisms are given by Lemma \ref{lem_omegaM_is_iso} and the second one follows from Lemma \ref{lem_Lif_in_A_f} together with the isomorphism $f_* \mM\simeq f^+_*\nN^+$ (see Proposition \ref{prop_equiv_of_P_-1_P_0}). Finally, the third isomorphism is given by the commutativity of diagram (\ref{eqtn_flop_diagrams}).
		
		Thus, for any $\mM \in \mathscr{P}_{-1}$, we have an isomorphism $f^*f_* \mM  \to p_*p^{+*}p^+_*p^* \mM$.
		By induction on triangles, the isomorphism extends to $\Hot^b(\mathscr{P}_{-1})$. Lemma \ref{lem_iso_on_perf_is_iso_on_Db} implies that base change $\wt{\epsilon}$ is an isomorphism on $\textrm{Hot}^{-,b}(\mathscr{P}_{-1})$.
	\end{proof}
	
	\begin{PROP}\label{prop_flop_flop_as_ff}
		Let $f\colon X \to Y$ satisfy (a). The flop-flop functor $F^+F$ restricted to $\dD^b(X)$ is isomorphic to term-wise functor $f^*f_*$ on the category $\textrm{Hot}^{-,b}(\mathscr{P}_{-1})$ under the equivalence $\textrm{Hot}^{-,b}(\mathscr{P}_{-1})\simeq \dD^b(X)$.
	\end{PROP}
	\begin{proof}
		In view of Proposition \ref{prop_wt_omega_iso_on_M_and_O}, it suffices to check that functors $p_*p^{+*}p^+_*p^*$ and $Rp_*Lp^{+*}Rp^+_*Lp^*$ are isomorphic on $\textrm{Hot}^{-,b}(\mathscr{P}_{-1})$. By Proposition \ref{prop_flop=non_der_flop}, we have an isomorphism $Rp^+_* Lp^* \mM \simeq p^+_* p^* \mM$, for any $\mM \in \mathscr{P}_{-1}$. Proposition \ref{prop_iso_of_ff_and_pp_for_flop} implies that $p^+_*p^* \mM\simeq f^{+*}f_* \mM$, hence $Rp_*Lp^{+*}p^+_*p^* \mM \simeq p_* p^{+*}p^+_*p^* \mM$ by Proposition \ref{prop_flop=non_der_flop}. By induction on triangles, we have an isomorphism on $\Hot^b(\mathscr{P}_{-1})$. We conclude by Lemma \ref{lem_iso_on_perf_is_iso_on_Db}.
	\end{proof}

	\vspace{0.3cm}
	\subsection{The flop functor as the inverse of Van den Bergh's functor}\label{ssec_flop_is_inv_of_VdB}~\\
	
	We show that the flop functor $F^+\colon \dD^b(X^+)\to \dD^b(X)$ and functor $\Sigma$ are inverse to each other. In view of Theorem \ref{thm_VdB_equivalence}, this statement is equivalent to the flop functor $F^+$ being isomorphic to the term-wise functor $T = (f^*f^+_*(-))^{\vee \vee}$.
	
	\begin{PROP}\label{prop_flop_on_P_0}
		Let $f\colon X \to Y$ satisfy (a). The flop functor is isomorphic to $(f^{+*}f_*(-))^{\vee \vee}$ a a functor $\mathscr{P}_{0}\to \mathscr{P}_{-1}^+$.
	\end{PROP}
	
	\begin{proof}
		By Proposition \ref{prop_LgRgN}, for any $\nN\in \mathscr{P}_0$, triangle
		\begin{equation}\label{eqtn_F1}
		Lp^* \nN[1] \to L(p^*g^*)g_* \nN\to Lp^*(f^*f_* \nN) \to Lp^*\nN[2]
		\end{equation}
		is exact. Since $g_* \nN$ has locally free resolution of length two (see Lemma \ref{lem_loc_free_res_of_g_M}), by considering exact sequence of cohomology of triangle (\ref{eqtn_F1}), we get an exact sequence
		\begin{equation}\label{eqtn_F2}
		0 \to L^2p^*(f^*f_* \nN) \to p^* \nN\to L^1(p^*g^*)g_* \nN \to L^1p^*(f^*f_* \nN) \to 0.
		\end{equation}
		By applying $Lp^*$ to sequence (\ref{eqtn_ses_P_N}), we get isomorphisms $L^3p^*\mathcal{Q} \simeq L^2p^*(f^*f_* \nN)$ and $L^2 p^* \mathcal{Q} \simeq L^1 p^*(f^*f_* \nN)$, where $\mathcal{Q} = \hH_X^0(\iota_f^* \nN)$ is a projective object in $\mathscr{A}_f$. In view of Lemma \ref{lem_vanishing_of_LpP}, sequence (\ref{eqtn_F2}) reduces to an isomorphism $p^* \nN\simeq L^1(p^*g^*)g_* \nN$.
		
		Let $\mM^+ = (f^{+*}f_* \nN)^{\vee \vee}$ be an object in $\mathscr{P}_{-1}^+$. Proposition \ref{prop_equiv_of_P_-1_P_0} implies that $f^+_* \mM^+ \simeq f_* \nN$. Since diagram (\ref{eqtn_diagram_flop_2}) commutes, we have
		$$
		p^* \nN \simeq L^1(p^*g^*)g_* \nN \simeq L^1(p^{+*}g^{+*})g^+_* \mM^+.
		$$
		By applying $Lp^{+*}$ to sequence (\ref{eqtn_LggM}), we get an exact sequence
		\begin{equation}\label{eqtn_F3}
		0 \to L^2 p^{+*}(f^{+*}f^+_* \mM^+) \to p^{+*}\mM^+ \to L^1(p^{+*}g^{+*})g^+_* \mM^+ \to L^1p^{+*}(f^{+*}f^+_*\mM^+) \to 0.
		\end{equation}
		Sequence (\ref{eqtn_P_M_in_Coh}) together with Lemma \ref{lem_vanishing_of_LpP} imply that $L^2p^{+*}(f^{+*}f^+_* \mM^+) \simeq L^2p^{+*} \pP^+ \simeq 0$. Hence, sequence (\ref{eqtn_F3}) can be rewritten as
		\begin{equation}\label{eqtn_F4}
		0 \to  p^{+*}\mM^+ \to L^1(p^{+*}g^{+*})g^+_* \mM^+ \to L^1p^{+*}(f^{+*}f^+_*\mM^+) \to 0.
		\end{equation}
		Lemma \ref{lem_Lif_in_A_f} and Proposition \ref{prop_wt_omega_iso_on_M_and_O} give isomorphisms
		$$
		p^+_*p^{+*}f^{+*}f^+_* \mM^+ \simeq p^+_*p^{+*}p^+_*p^{*}p_*p^{+*} \mM^+ \simeq p^+_*p^{*}p_*p^{+*} \mM^+ \simeq f^{+*}f^+_* \mM^+.
		$$
		Thus, sequence (\ref{eqtn_E_to_ffE}) implies that $Rp^+_*(L^1p^{+*}f^{+*}f^+_* \mM^+) \simeq 0$. Moreover, we have $Rp^+_* p^{+*}\mM^+ \simeq \mM^+$. Hence, by applying $Rp^+_*$ to sequence (\ref{eqtn_F4}), we get
		$$
		Rp^+_* p ^* \nN\simeq Rp^+_*(L^1(p^{+*}g^{+*})g^+_* \mM^+) \simeq Rp^+_* p^{+*}\mM^+ \simeq \mM^+.
		$$
		Hence, flop functor takes $\nN$ to $\mM^+$.
		
		Let $\epsilon_{\nN} \colon f^{+*}f_* \nN \to p^+_*p^* \nN\simeq \mM^+$ be the base change. Since $f_*\nN \simeq f^+_* \mM^+$, we have $f^{+*}f_* \nN \simeq f^{+*}f^+_* \mM^+$. The kernel of reflexification $f^{+*}f^+_* \mM^+ \to (f^{+*}f^+_*\mM^+)^{\vee \vee} \simeq \mM^+$ is torsion and $p^{+*}p_* \nN \simeq \mM^+$ is torsion-free, hence, by applying functor $\Hom_{X^+}(-, p^{+*}p_* \nN)$ to sequence (\ref{eqtn_P_M_in_Coh}) with $f$ and $\mM$ replaced by $f^+$ and $\mM^+$, we get an isomorphism $\Hom_{X^+}(\mM^+, p^{+*}p_* \nN) \simeq \Hom_{X^+}(f^{+*}f^+_*\mM^+, p^{+*}p_* \nN)$. It follows that $\epsilon_{\nN}$ factors uniquely via a morphism $\wt{\epsilon}_{\nN}\colon \mM^+ \simeq (f^{+*}f_* \nN)^{\vee \vee} \to p^+_*p^*\nN$ which is easily checked to be functorial. Thus, there exists a morphism $\wt{\epsilon}\colon  (f^{+*}f_*(-))^{\vee \vee} \to p^+_*p^*(-)$ of functors.
		
		Objects of $\mathscr{P}_0$ and $\mathscr{P}_{-1}^+$ are reflexive on $X$ and $X^+$, hence any morphism $\nN_1 \to \nN_2$ (and $\mM^+_1 \to \mM_2^+$) is determined by its restrictions to any open set with the complement of codimension greater than one. In particular, by its restriction to the complement $U \subset X$ of the exceptional set of $f$ (and $f^+$). Flop functor is $Rp^+_* Lp^*$ and the other functor is $(f^{+*}f_*(-))^{\vee \vee}$. Then, taking into account that maps $f$, $f^+$, $p$, and $p^+$ are isomorphisms outside the exceptional sets and $(f^{+})^{-1}\circ f=p^+\circ p^{-1}$ on $U$,
		the morphism $\Hom_X(\nN_1, \nN_2) \to \Hom_{X^+}(\mM^+_1, \mM^+_2)$ induced by the flop functor $F$ coincides with the morphism given by $(f^{+*}f_*(-))^{\vee \vee}$. It follows that $\wt{\epsilon}$ is an isomorphism of functors.
	\end{proof}
	
	\begin{THM}\label{thm_flop_is_inv_of_vdB}
		Let $f\colon X\to Y$ satisfy (a). The flop functor $F$ takes $\Per{0}(X/Y)$ to $\Per{-1}(X^+/Y)$. It is isomorphic $\Sigma_{\textrm{qc}}^{-1}$ in the category of bimodule functors $\dD_{\textrm{qc}}(X)\to \dD_{\textrm{qc}}(X^+)$.
	\end{THM}
	\begin{proof}
		The same argument as in the proof of Proposition \ref{prop_DG_enha_of_sigma} shows that $\Sigma_{\textrm{qc}}^{-1}$ is the term-wise extension of $(f^{+*}f_*(-))^{\vee \vee}$ to the category $\Hot^{-,b}(\mathscr{P}_0)$. Proposition \ref{prop_flop_on_P_0} implies that $F|_{\mathscr{P}_0}\simeq \Sigma_{\textrm{qc}}^{-1}|_{\mathscr{P}_0}$.
		
		Categories $\dD_{\textrm{qc}}(X)$ and $\dD_{\textrm{qc}}(X^+)$ have compact generators $\nN\in \mathscr{P}_0$ and $\mM^+ = (f^{+*}f_*\nN)^{\vee \vee}\in \mathscr{P}_{-1}^+$. For algebras $A_X = \Hom_X(\nN, \nN)$ and $A_{X^+} = \Hom_{X^+}(\mM^+, \mM^+)$, we have equivalences $\dD_{\textrm{qc}}(X) \simeq \dD(\textrm{Mod--}A_X)$, $\dD_{\textrm{qc}}(X^+) \simeq \dD(\textrm{Mod--}A_{X^+})$. By construction, functor $\Sigma_{\textrm{qc}}^{-1}$ is isomorphic to a bimodule functor $\Phi_{M_1}$, for some $A_X^{\opp} \otimes A_{X^+}$ DG bimodule $M_1$, see Appendix \ref{sec_sph_funct_and_enh}. Similarly, the flop functor $F$ is isomorphic to $\Phi_{M_2}$, for some DG bimodule $M_2$. Moreover, from the construction of $\Sigma_{\textrm{qc}}^{-1}$, it follows that $\hH^i(M_1)= 0$, for $i \neq 0$ (see the discussion before Proposition \ref{prop_DG_enha_of_sigma}). By Lemma \ref{lem_iso_on_A_mod}, isomorphism $F|_{\mathscr{P}_0} \simeq \Sigma_{\textrm{qc}}^{-1}|_{\mathscr{P}_0}$ implies an isomorphism $F\simeq \Sigma_{\textrm{qc}}^{-1}$.
	\end{proof}
	As an immediate consequence of Theorem \ref{thm_VdB_equivalence} and Theorem \ref{thm_flop_is_inv_of_vdB} we get
	\begin{COR}\label{cor_flop_is_equiv}
		Let $f\colon X\to Y$ satisfy (a). Then the flop functor $F$ induces an equivalence of $\dD^b(X)$ with $\dD^b(X^+)$.
	\end{COR}
	
	In view of Proposition \ref{prop_triangle_for_flop} and Theorem \ref{thm_flop_is_inv_of_vdB} we have a functorial exact triangle of functors $\dD_{\textrm{qc}}(X) \to \dD_{\textrm{qc}}(X^+)$
	\begin{equation}\label{eqtn_flop_LgRg_flop}
	(F^+)^{-1}[1] \to Lg^{+*}Rg_* \to F \to (F^+)^{-1}[2]
	\end{equation}
	Moreover, it restricts to a functorial exact triangle of functors $\dD^b(X)\to \dD^b(X^+)$.

	\begin{COR}\label{cor_flop_t-exact_and_equiv_on_A_f}
		Let $f\colon X\to Y$ satisfy (a). The flop functor $F\colon \dD^b(X) \to \dD^b(X^+)$ is $t$-exact when $\dD^b(X)$ is endowed with the \tr e with heart $\Per{p}(X/Y)$ and $\dD^b(X^+)$ with the \tr e with heart $\Per{p-1}(X^+/Y)$. Moreover, $F$ induces an equivalence of $\mathscr{A}_f$ with $\mathscr{A}_{f^+}[1]$.
	\end{COR}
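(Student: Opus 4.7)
The $t$-exactness assertion (for $p=0$, which is the substantive case) is essentially a repackaging of Proposition \ref{prop_flop_is_inv_of_vdB}, which already supplies the heart-level inclusion $F(\Per{0}(X/Y))\subset \Per{-1}(X^+/Y)$. To upgrade this to $t$-exactness, the quasi-inverse of $F$ must also be heart-preserving: running the symmetric version of the construction in Proposition \ref{prop_DG_enha_of_sigma} (swapping the roles of the two perverse hearts on the two sides) produces a functor $\dD^b(X^+)\to\dD^b(X)$ sending $\Per{-1}(X^+/Y)$ into $\Per{0}(X/Y)$ and quasi-inverse to $F$ by Corollary \ref{cor_flop_is_equiv}. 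Two mutually quasi-inverse triangulated equivalences that carry the heart of a bounded $t$-structure into the heart of another must both be $t$-exact, which yields the first claim.

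For the equivalence $\mathscr{A}_f\simeq \mathscr{A}_{f^+}[1]$ the decisive intermediate step is the identity $Rf^+_* \circ F \simeq Rf_*$ as functors $\dD^b(X)\to\dD^b(Y)$. Using commutativity of diagram (\ref{eqtn_flop_diagrams}) one writes $Rf^+_* F = Rf_*\circ Rp_* \circ Lp^*$, and the derived projection formula reduces the claim to $Rp_* \oO_{X\times_Y X^+}\simeq \oO_X$. The higher direct images vanish by Proposition \ref{prop_vanish_Rpi_Zpi_X_of_E} applied with the roles of $X$ and $X^+$ interchanged and $E = \oO_{X^+}$, while the degree zero piece is identified via the base-change isomorphism established inside the proof of that proposition together with $f^+_*\oO_{X^+}\simeq \oO_Y$, a basic property of flopping contractions recalled in the notations.

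Given the identity, take $E\in \mathscr{A}_f$: then $Rf^+_* F(E)\simeq Rf_* E = 0$, placing $F(E)$ in $\cC_{f^+}$. The first part puts $F(E)$ in $\Per{-1}(X^+/Y)$, so its possibly nonzero standard cohomology sheaves lie in $\fF_{-1}^+$ in degree $-1$ and in $\tT_{-1}^+$ in degree $0$ (Remark \ref{rem_def_of_perv}), while both must also lie in $\mathscr{A}_{f^+}$ by Lemma \ref{lem_A_f_is_a_heart}. The intersection $\mathscr{A}_{f^+}\cap \tT_{-1}^+$ is trivial, since any $T$ there satisfies $\Hom(T,T)=0$ by the defining condition of $\tT_{-1}^+$, forcing $T=0$. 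Hence $F(E) = \hH^{-1}_{X^+}(F(E))[1]\in \mathscr{A}_{f^+}[1]$, giving the inclusion $F(\mathscr{A}_f) \subset \mathscr{A}_{f^+}[1]$; full faithfulness is inherited from Corollary \ref{cor_flop_is_equiv}, and essential surjectivity follows by the symmetric inclusion applied to the quasi-inverse of $F$. The most delicate point I anticipate is the first step: sending a heart into a heart is a priori weaker than $t$-exactness, so one really does need the symmetric Van den Bergh construction to exhibit the quasi-inverse as heart-preserving as well.
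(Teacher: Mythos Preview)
Your argument is correct, but the paper handles the $t$-exactness more directly and in a way that dissolves precisely the ``delicate point'' you flag at the end. You work hard to exhibit the quasi-inverse of $F$ as a second Van den Bergh-type functor and show it too carries a heart into a heart. The paper instead observes that once $F$ is an equivalence (Corollary~\ref{cor_flop_is_equiv}) and $F(\Per{0}(X/Y))\subset \Per{-1}(X^+/Y)$ (Proposition~\ref{prop_flop_is_inv_of_vdB}), the image $F(\Per{0}(X/Y))$ is automatically the heart of a bounded $t$-structure on $\dD^b(X^+)$; and a heart contained in another heart is equal to it. So the one-sided inclusion already suffices, and no explicit identification of $F^{-1}$ is needed. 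Incidentally, your appeal to Corollary~\ref{cor_flop_is_equiv} to justify that the symmetric Van den Bergh functor is \emph{the} quasi-inverse of $F$ is not quite right: that corollary only says $F$ is an equivalence. The identification you want really comes from Lemma~\ref{lem_equiv_of_refl}, since both functors are built term-wise from mutually inverse equivalences $\mathscr{P}_0\simeq\mathscr{P}_{-1}^+$ of reflexive sheaves.

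For the equivalence $\mathscr{A}_f\simeq\mathscr{A}_{f^+}[1]$ your argument and the paper's coincide, with yours spelling out more explicitly why the intersection of $\Per{-1}(X^+/Y)$ with $\cC_{f^+}$ is exactly $\mathscr{A}_{f^+}[1]$; the paper simply invokes the gluing description from Remark~\ref{rem_def_of_perv}.

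One genuine gap: you treat only $p=0$ and assert it is ``the substantive case'' without saying how general $p$ follows. The paper does address general $p$: using the characterisation of $\Per{p}(X/Y)$ as the heart glued from the standard $t$-structure on $Y$ and $\mathscr{A}_f[-p]$ on $\cC_f$, the two ingredients $Rf^+_*F\simeq Rf_*$ and $F(\mathscr{A}_f)=\mathscr{A}_{f^+}[1]$ immediately give $F(\Per{p}(X/Y))\subset\Per{p-1}(X^+/Y)$, after which the same heart-in-heart argument finishes. Your $p=0$ case alone does not obviously imply this, since the other perverse $t$-structures are not shifts of one another.
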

	\begin{proof}
		Since $Rf^+_* \circ F \simeq Rf_*$ and, by Theorem \ref{thm_flop_is_inv_of_vdB}, functor $F$ takes $\Per{0}(X/Y)$ to $\Per{-1}(X^+/Y)$, the flop $F$ induces an equivalence of $\mathscr{A}_f\subset \Per{0}(X/Y)$ with $\mathscr{A}_{f^+}[1]\subset \Per{-1}(X^+/Y)$.
		
		Category $\Per{p}(X/Y)$ is defined by two conditions; we require that $Rf_* E$ is a pure sheaf, for any $E \in \Per{p}(X/Y)$, and that $\mathscr{A}_f[-p] \subset \Per{p}(X/Y)$. It follows from the properties of $F$ that $F(\Per{p}(X/Y)) \subset \Per{p-1}(X^+/Y)$. Since $F$ is an equivalence, both categories are hearts of bounded \tr es on $\dD^b(X^+)$. It follows that $F(\Per{p}(X/Y)) = \Per{p-1}(X^+/Y)$.
	\end{proof}

	\begin{COR}\label{cor_D(X)=D(Per)}
		Let morphism $f\colon X\to Y$ satisfy (a). Then categories $\dD^b(X)$ and $\dD^b(\Per{p}(X/Y))$ are equivalent, for $p\in \mathbb{Z}$.
	\end{COR}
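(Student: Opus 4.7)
The plan is to reduce the general case to $p \in \{-1, 0\}$, which is already handled by Proposition \ref{prop_D(X)=D(Per)}, by using flops to shift the perversity index $p$.

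First I would invoke Corollary \ref{cor_flop_is_equiv} to see that the flop functor $F \colon \dD^b(X) \to \dD^b(X^+)$ is an equivalence of triangulated categories, and similarly $F^+\colon \dD^b(X^+)\to \dD^b(X)$ is an equivalence. By Corollary \ref{cor_flop_t-exact_and_equiv_on_A_f}, $F$ is $t$-exact with respect to the perverse $t$-structure with heart $\Per{p}(X/Y)$ on $\dD^b(X)$ and the one with heart $\Per{p-1}(X^+/Y)$ on $\dD^b(X^+)$; hence it restricts to an equivalence of abelian categories $\Per{p}(X/Y)\simeq \Per{p-1}(X^+/Y)$ for every $p\in \mathbb{Z}$. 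Applying the symmetric statement to $F^+$, the flop-flop auto-equivalence $F^+F$ of $\dD^b(X)$ restricts to an equivalence of abelian categories $\Per{p}(X/Y)\simeq \Per{p-2}(X/Y)$ for every $p$.

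Given an arbitrary $p\in \mathbb{Z}$, I choose $k\in \mathbb{Z}$ such that $p-2k\in \{-1,0\}$, and iterate the flop-flop equivalence (or its inverse when $k<0$) $|k|$ times to obtain an equivalence of abelian categories $\Per{p}(X/Y)\simeq \Per{p-2k}(X/Y)$. Passing to bounded derived categories and applying Proposition \ref{prop_D(X)=D(Per)} in the reduced range, I conclude
\[
\dD^b(\Per{p}(X/Y)) \simeq \dD^b(\Per{p-2k}(X/Y)) \simeq \dD^b(X).
\]

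I do not expect a substantive obstacle: all non-trivial content has already been packaged into the two Corollaries above. The only subtlety worth a brief remark is the standard fact that an equivalence of abelian categories induces an equivalence of their bounded derived categories, so that the abelian-level equivalences furnished by the iterated flop-flop lift directly to equivalences of $\dD^b$.
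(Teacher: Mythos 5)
Your proposal is correct and follows essentially the same route as the paper: the paper's proof also uses Corollary \ref{cor_flop_t-exact_and_equiv_on_A_f} to deduce that $\Per{p}(X/Y)$ is equivalent to $\Per{q}(X/Y)$ for $q\in\{-1,0\}$ with $q\equiv p\pmod 2$, and then invokes Proposition \ref{prop_D(X)=D(Per)}. Your write-up is simply more explicit about the iteration of the flop-flop equivalence and about passing from the abelian-level equivalence to the derived one.
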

	\begin{proof}
		Corollary \ref{cor_flop_t-exact_and_equiv_on_A_f} implies that, for any $p \in \mathbb{Z}$, category $\Per{p}(X/Y)$ is equivalent either to $\Per{0}(X/Y)$ or to $\Per{-1}(X/Y)$. Hence, $\dD^b(\Per{p}(X/Y)) \simeq \dD^b(\Per{q}(X/Y))$, for $q \in \{-1,0\}$, $q \cong p \, \mod 2$. By Proposition \ref{prop_D(X)=D(Per)}, the latter category is equivalent to $\dD^b(X)$, which finishes the proof.
	\end{proof}
	
	Let $f\colon X\to Y$ satisfy (a). Proposition \ref{prop_vanish_Rpi_Zpi_X_of_E} implies that $Rp_* \oO_{X\times_Y X^+} = \oO_X$. Thus, commutativity of diagram (\ref{eqtn_flop_diagrams}) yields isomorphisms $F^+ \circ Lf^{+*} \simeq Rp_*Lp^{+*}Lf^{+*} \simeq Rp_*Lp^*Lf^* \simeq Lf^*$. Hence, also $F^+\circ Lg^{+*} \simeq Lg^*$. Composing triangle (\ref{eqtn_flop_LgRg_flop}) with $F^+$ leads to a functorial exact triangle of functors $\dD_{\textrm{qc}}(X) \to \dD_{\textrm{qc}}(X)$:
	\begin{equation}\label{eqtn_triangle_for_ff_from_f}
	\Id_{\dD_{\textrm{qc}}(X)}[1] \xrightarrow{\eta} Lg^*Rg_* \to F^+F \to \Id_{\dD_{\textrm{qc}}(X)}[2].
	\end{equation}

	\vspace{0.3cm}
	\subsection{Reduction to the affine case}\label{ssec_red_to_affine}~\\
	
	We have shown with Corollary \ref{cor_flop_is_equiv} that, if a morphism $f\colon X\to Y$ satisfies (a), then the flop functor $F$
	is an equivalence of $\dD^b(X)$ with $\dD^b(X^+)$. Now, following \cite{Chen}, we show that the flop functor is also an equivalence, provided morphism $f$ satisfies (p).
	
	Note that the condition on $Y$ to have hypersurface singularities means that completions of the local rings at all closed points of $Y$ are defined by one equation in regular complete local rings.
	\begin{LEM}\label{lem_affine_cover}
		Let $Y$ be an irreducible variety of dimension $n$ with hypersurface singularities. Then it has a finite open covering by affine subvarieties $Y_i$ such that every $Y_i$ admits a closed embedding as a principal divisor into a smooth affine $\yY_i$.
	\end{LEM}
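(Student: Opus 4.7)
The plan is to exhibit, around each closed point $y\in Y$, an affine open $Y_y\ni y$ admitting a closed embedding as a principal divisor into a smooth affine $(n{+}1)$-fold $\yY_y$, and then extract a finite subcover. Since $Y$ is a variety over an algebraically closed field it is Jacobson and Noetherian, so an open cover of all closed points is automatically an open cover of $Y$ (its complement would be closed with no closed points, hence empty), and quasi-compactness gives a finite subcover. Fix a closed point $y\in Y$ and let $e=\dim_k \mathfrak{m}_y/\mathfrak{m}_y^2$ be its embedding dimension; the hypersurface-singularity hypothesis $\widehat{\oO}_{Y,y}\simeq k[[x_1,\dots,x_{n+1}]]/(\hat f)$ forces $e\in\{n,n+1\}$, with $e=n$ exactly when $y$ is smooth.

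If $e=n$, I shrink to an affine open $Y_y\ni y$ inside the smooth locus (open in $Y$) and set $\yY_y:=Y_y\times\mathbb A^1$, embedding $Y_y$ as the zero section of the $\mathbb A^1$-coordinate. This is visibly a principal divisor in the smooth affine $\yY_y$.

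If $e=n+1$, I start from any affine neighbourhood $V\ni y$ with a closed immersion $V\hookrightarrow\mathbb A^N$ for some large $N$. The surjection $\mathfrak{m}_{\mathbb A^N,y}/\mathfrak{m}_{\mathbb A^N,y}^2\twoheadrightarrow\mathfrak{m}_y/\mathfrak{m}_y^2$ has kernel of $k$-dimension $N-(n+1)$, and this kernel is the image of $I(V)_y/\mathfrak{m}_yI(V)_y$. I pick $g_1,\dots,g_{N-n-1}\in I(V)$ whose classes in $\mathfrak{m}_{\mathbb A^N,y}/\mathfrak{m}_{\mathbb A^N,y}^2$ are $k$-linearly independent. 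By the Jacobian criterion, $\yY_0:=V(g_1,\dots,g_{N-n-1})\subset\mathbb A^N$ is smooth of dimension $n+1$ at $y$; picking a smooth affine open $\yY_y\subset\yY_0$ around $y$ gives a smooth ambient $(n{+}1)$-fold containing $Y'_y:=V\cap\yY_y$ as a closed subscheme through $y$. It remains to make the defining ideal $J$ of $Y'_y$ in $\yY_y$ principal after a further shrink. Completing the surjection $\oO_{\yY_y,y}\twoheadrightarrow\oO_{Y,y}$ yields $k[[x_1,\dots,x_{n+1}]]\twoheadrightarrow\widehat{\oO}_{Y,y}=k[[x_1,\dots,x_{n+1}]]/(\hat f)$, so $\widehat J=(\hat f)$ is principal; faithful flatness of completion gives $\dim_kJ/\mathfrak{m}_y J=\dim_k\widehat J/\widehat{\mathfrak{m}}_y\widehat J=1$, and Nakayama forces $J$ to be principal in $\oO_{\yY_y,y}$. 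Spreading out a single generator $f$ of $J$ across a principal open of $\yY_y$ realises $Y_y:=V(f)$ as a principal divisor in a smooth affine containing $y$.

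The only step with any content is the last one: promoting the principality of $\widehat J$ to the Zariski-local principality of $J$ in $\oO_{\yY_y,y}$. The rest is bookkeeping: a conormal-sequence trimming of $\mathbb A^N$ down to the smooth $(n{+}1)$-fold $\yY_0$, plus Jacobson-density of closed points and Noetherian quasi-compactness to finish.
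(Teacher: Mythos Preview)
Your proof is correct, but it follows a genuinely different route from the paper's. The paper argues geometrically: starting from an affine embedding $Y\hookrightarrow\mathbb A^m$, it compactifies to $\bar Y\subset\mathbb P^m$ and then projects successively from points $z\in\mathbb P^m\setminus\bar Y$ chosen so that the line $(zy)$ meets $\bar Y$ only at $y$ and is not tangent there (possible since the Zariski tangent space at $y$ has dimension $\le n+1$). Iterating brings the image down to a hypersurface $Y'\subset\mathbb P^{n+1}$ isomorphic to $Y$ near $y$, and principality then comes essentially for free: a codimension-one subvariety of $\mathbb P^{n+1}$ is cut out by a single homogeneous form, which on a standard affine chart becomes a single equation.

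You instead work algebraically: you trim an ambient $\mathbb A^N$ down to a smooth $(n{+}1)$-fold by the Jacobian criterion, picking $g_1,\dots,g_{N-n-1}\in I(V)$ with independent differentials at $y$, and then you must separately argue principality of the defining ideal $J$ via completion and Nakayama. This is sound; the one point you gloss over is why the kernel of your particular surjection $\widehat{\oO}_{\yY_y,y}\twoheadrightarrow\widehat{\oO}_{Y,y}$ is principal, since the $\hat f$ from the hypersurface hypothesis a priori lives in a different presentation. The standard fix is that any two surjections from a regular complete local ring of dimension $n+1$ onto a complete local ring of embedding dimension $n+1$ differ by an automorphism of the source, so principality transfers. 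Your approach is arguably more self-contained (no projective geometry), while the paper's buys principality without the completion detour.
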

	For the reader's convenience, we will give the proof of this statement.
	\begin{proof}
		Let $y\in Y$ be a closed point. Since Zariski topology is quasi-compact, it is enough to show existence of an affine open neighbourhood of $y$ which admits a closed embedding into a smooth affine $\yY$ of dimension $n+1$. Since the statement is local, we can assume that $Y$ is affine. Then $Y$ is realized as a closed subscheme in ${\mathbb A}^m$. We can assume that $m>n+1$, because otherwise the proof is obvious. Let ${\mathbb A}^m\subset {\mathbb P}^m$ be the compactification to a projective space and ${\bar Y}\subset {\mathbb P}^m$ the closure of $Y$ in ${\mathbb P}^m$.
		
		Since $Y$ has hypersurface singularities, the Zariski tangent space at point $y\in Y$ has dimension $\le n+1$. It follows that we can choose a point $z$ in ${\mathbb P}^m\setminus {\bar Y}$ such that the line $(zy)\subset {\mathbb P}^m$ does not contain any point of ${\bar Y}$ except for $y$ and is not tangent to $Y$ at $y$.
		
		Denote by $Y_1$ the image of ${\bar Y}$ under the projection $p:{\mathbb P}^m\to {\mathbb P}^{m-1}$ with center in $z$. By the choice of $z$, the morphism ${\bar Y}\to Y_1$ is an isomorphism for some Zariski neighbourhoods of $y$ and $p(y)$. We can keep projecting in the same manner, if necessary, by replacing ${\bar Y}\subset {\mathbb P}^m$ with $Y_1\subset {\mathbb P}^{m-1}$, until we come to the projective space of dimension $n+1$ and a closed irreducible subvariety $Y'$ in it of dimension $n$ together with a morphism $\pi :{\bar Y}\to Y'$ which is an isomorphism in neighbourhoods of $y$ and $\pi (y)$.
		
		We can find an affine open neighbourhood $\yY \subset {\mathbb P}^{n+1}$ of $\pi (y)$ such that $\pi$ gives an isomorphism of $Y'\cap \yY$ with a neighbourhood of $y\in Y$. By choosing a smaller affine $\yY$, if necessary, we can assume that $Y'\cap \yY \subset \yY$ is cut out by a single equation. Then $Y'\cap \yY$ is affine and embedded as a principal divisor in $\yY$, so we are done.
	\end{proof}
	
	We consider $f\colon X\to Y$ which satisfies (p).
	Lemma \ref{lem_affine_cover} implies existence of a finite affine covering $Y = \bigcup Y_i$ such that, for every $i$, space $Y_i$ admits an embedding as a principal Cartier divisor to a smooth affine $\yY_i$.
	
	For every $i$, we pull back the whole diagram to $Y_i$:
	\[
	\xymatrix{X_i \ar[dr]_{f_i} & & X_i^+ \ar[dl]^{f^+_i} & & X \ar[dr]_{f} & & X^+ \ar[dl]^{f^+} \\ & Y_i & \ar[rr]& & & Y & }
	\]
	Note that, for any $i$, morphism $f_i$ satisfies (a).
	\begin{PROP}\label{prop_global_to_affine}
		Let $f\colon X\to Y$ and $f_i \colon X_i \to Y_i$ be as above. Assume that flop functors $F_i \colon \dD^b(X_i) \to \dD^b(X_i^+)$ are equivalences, for every $i$. Then the flop functor $F \colon \dD^b(X) \to \dD^b(X^+)$ is an equivalence.
	\end{PROP}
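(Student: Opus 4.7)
The plan is to use the existence of a right adjoint for $F$ together with flat descent along the given affine cover. First I would produce a right adjoint $G\colon \dD^b(X^+) \to \dD^b(X)$ to $F$. Since $f$ and $f^+$ are proper, so are the projections $p,p^+$ from $W=X\times_Y X^+$; this makes $G = Rp_* \circ p^{+!}$ well defined and adjoint to $F = Rp^+_* Lp^*$. Boundedness of the target is preserved because fibers of all the maps involved have dimension at most one, so $p^{+!}$ has finite cohomological amplitude on coherent sheaves.

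Next I would check that $F$ and $G$ are compatible with restriction to the cover, i.e.\ for the open immersions $j_i\colon X_i \hookrightarrow X$, $j_i^+\colon X_i^+ \hookrightarrow X^+$, and $\tilde{j}_i\colon X_i\times_{Y_i} X_i^+ \hookrightarrow W$, there are canonical isomorphisms $j_i^{+*}F \simeq F_i j_i^*$ and $j_i^* G \simeq G_i j_i^{+*}$, where $F_i, G_i$ are the local flop functor and its right adjoint. The first isomorphism follows from flat base change applied to $Lp^*$ and to $Rp^+_*$ separately along the flat morphism $j_i^+$. The second requires, in addition, the identity $\tilde{j}_i^* p^{+!} \simeq p_i^{+!} j_i^{+*}$ for the open immersion $\tilde{j}_i$; this follows from $\tilde{j}_i^! \simeq \tilde{j}_i^*$ (true for any open immersion) combined with the standard commutativity of $(-)^!$ with flat base change (see Appendix \ref{sec_Grot-Ver-dual}).

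With these compatibilities, the adjunction unit $\eta\colon \Id_{\dD^b(X)} \to GF$ and counit $\epsilon\colon FG \to \Id_{\dD^b(X^+)}$ intertwine with $j_i^*$ and $j_i^{+*}$, so their restrictions to $X_i$ and $X_i^+$ coincide with the unit and counit of $F_i \dashv G_i$. By hypothesis each $F_i$ is an equivalence, hence each local $\eta_i$ and $\epsilon_i$ is an isomorphism. For any object $E \in \dD^b(X)$, the cone of $\eta_E$ thus restricts to zero on every member of the cover $\{X_i\}$ of $X$, so it is itself zero; hence $\eta$ is a functorial isomorphism. The analogous argument on $\{X_i^+\}$ shows that $\epsilon$ is a functorial isomorphism, completing the proof that $F$ is an equivalence.

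The step I expect to be most delicate is verifying the base-change identity $\tilde{j}_i^* p^{+!} \simeq p_i^{+!} j_i^{+*}$ inside the DG-enhanced framework used in the paper, so that the derived unit and counit genuinely commute with restriction and not merely on cohomology. Once this compatibility is in hand, the remainder of the argument reduces to the standard fact that an object of $\dD^b$ whose restriction to every member of an open cover vanishes must itself vanish.
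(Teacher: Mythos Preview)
Your overall strategy---construct the right adjoint $G$, verify it commutes with restriction to the affine cover, and conclude that the unit and counit are isomorphisms because their cones vanish locally---is sound and does lead to a proof. It is, however, a genuinely different route from the paper's: the paper simply observes that the skyscraper sheaves $\{\oO_x\}$ form a spanning class for $\dD^b(X)$ (using that $X$ is Gorenstein) and then defers to Chen's argument \cite[Proposition 3.2]{Chen}, which checks full faithfulness and essential surjectivity on this spanning class. That approach avoids ever asking whether $G$ preserves boundedness, at the cost of relying on an external reference.

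There is one real gap in your write-up. You assert that $p^{+!}$ has finite cohomological amplitude because the fibers of $p^+$ have dimension at most one. Fiber dimension controls the amplitude of $Rp^+_*$, not of $p^{+!}$: bounded above amplitude for $p^{+!}$ is equivalent to $p^+$ having finite Tor-dimension, and since $W=X\times_Y X^+$ is a fiber product over the singular base $Y$ there is no a~priori reason for this. So your claim that $G$ lands in $\dD^b(X)$ is unjustified as written. The fix is easy: run the adjunction on $\dD_{\textrm{qc}}$, where $G=Rp_*\,p^{+!}$ is always defined, and then observe that for $E^+\in\dD^b(X^+)$ the restriction $j_i^*G(E^+)\simeq G_i\,j_i^{+*}E^+$ is bounded because $G_i=F_i^{-1}$ is an equivalence by hypothesis; since the cover is finite, $G(E^+)$ is bounded. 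The same local argument shows the cones of $\eta$ and $\epsilon$ vanish, and you are done.

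Incidentally, the step you flagged as most delicate---the compatibility $\tilde{j}_i^*\,p^{+!}\simeq p_i^{+!}\,j_i^{+*}$---is in fact routine: it is the pseudofunctoriality of $(-)^!$ together with $\tilde{j}_i^{\,!}\simeq\tilde{j}_i^*$ for an open immersion. The genuinely delicate point was the boundedness you passed over.
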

	
	\begin{proof}
		Since $X$ is Gorenstein, category $\dD^b(X)$ has a spanning class $\Omega = \{\oO_{x} \}$, where $x$ runs over all closed points in $X$ \cite[Lemma 1.26]{HerLopSal}. With such a choice of a spanning class the same argument as in \cite[Proposition 3.2]{Chen} proves that $F$ is an equivalence if all $F_i$ are.
	\end{proof}

	\section{Spherical pairs}\label{sec_schobers}
	
	With the notation of (\ref{eqtn_flop_diagrams}), we consider the category $\dD^b(X\times_Y X^+)$ and its quotient by the intersection $\kK^b$  of kernels of $Rp_*$ and $Rp^+_*$. We prove that $\dD^b(\mathscr{A}_f)$ is equivalent to the full subcategory of $\dD^b(X\times_Y X^+)/\kK^b$ defined as the kernel of $Rp^+_*$. We show that $\dD^b(X\times_Y X^+)/\kK^b$ admits SODs $\langle \dD^b(\mathscr{A}_f), \dD^b(X^+)\rangle = \langle \dD^b(X), \dD^b(\mathscr{A}_f) \rangle$ and similarly for $\mathscr{A}_{f^+}$. We conclude that $(\dD^b(X), \dD^b(X^+))$ and $(\dD^b(\mathscr{A}_f), \dD^b(\mathscr{A}_{f^+}))$ are spherical pairs, which yields a geometric incarnation of the schober \cite{KapSche} related to the flop.
	
	\vspace{0.3cm}
	\subsection{Category $\dD(\mathscr{A}_f)$ as a full subcategory of $\dD_{\textrm{qc}}(X \times_Y X^+)$}~\\

	\begin{LEM}\label{lem_proj_form_for_P}
		Let $f\colon X\to Y$ satisfy (a) and let $\pP$ be a projective object in $\mathscr{A}_f$. Then $p_*p^* \pP \simeq \pP$, $Rp_* L^1p^* \pP = 0$ and $Rp_* p^* \pP \simeq \pP$.
	\end{LEM}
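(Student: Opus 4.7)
The plan is to reduce all three statements to a single identification, namely $Rp_*Lp^*\pP \simeq \pP$, and then extract the underived information via the truncation triangle. First, I would observe that $Rp_*\oO_W \simeq \oO_X$: Proposition~\ref{prop_vanish_Rpi_Zpi_X_of_E} applied to $\oO_{X^+}$ (with $l_0=1$, using $Rf^+_*\oO_{X^+}\simeq \oO_Y$) gives $R^lp_*\oO_W=0$ for $l\geq 1$, and a direct inspection of the affine argument in the same proposition identifies $p_*\oO_W$ with $f^*f^+_*\oO_{X^+} = \oO_X$. The projection formula then yields
$$Rp_*Lp^*\pP \simeq \pP\otimes^L_{\oO_X} Rp_*\oO_W \simeq \pP.$$

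Next, I would use the short exact sequence (\ref{eqtn_P_M_in_Coh}), $0\to\pP\to f^*f_*\mM\to \mM\to 0$. Because $\mM\in\mathscr{P}_{-1}$ is locally free, $L^1p^*\mM=0$, so pulling back underived produces the short exact sequence on $W$
$$0\to p^*\pP \to p^*(f^*f_*\mM) \to p^*\mM \to 0.$$
The crucial identification is $Rp_*p^*(f^*f_*\mM)\simeq f^*f_*\mM$. Using $fp=f^+p^+$, Lemma~\ref{lem_omegaM_is_iso}, and the symmetric version of Lemma~\ref{lem_vanishing_of_LpP} (via the sequence $0\to f^{+*}f^+_*\nN^+\to \nN^+\to \mathcal{Q}^+\to 0$ from Lemma~\ref{lem_ses_P_N}), one checks that $L^ip^{+*}(f^{+*}f_*\mM)=0$ for $i\geq 1$. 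Hence
$$Rp_*p^*(f^*f_*\mM) = Rp_*Lp^{+*}Rp^+_*Lp^*\mM = F^+F(\mM),$$
and Proposition~\ref{prop_flop_flop_as_ff} gives $F^+F(\mM)\simeq f^*f_*\mM$.

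Now I would apply $Rp_*$ to the short exact sequence above. Since fibers of $p$ are bounded by dimension one, $R^ip_*=0$ for $i\geq 2$, so the long exact sequence collapses to
$$0\to p_*p^*\pP \to f^*f_*\mM \xrightarrow{\;\;\beta\;\;} \mM \to R^1p_*p^*\pP \to 0,$$
where $\beta$ is the counit by naturality of the two identifications above. Since the counit is surjective with kernel $\pP$ (cf.\ (\ref{eqtn_P_M_in_Coh})), this gives $p_*p^*\pP\simeq\pP$ and $R^1p_*p^*\pP=0$, i.e.\ $Rp_*p^*\pP\simeq\pP$. Finally, the truncation triangle $L^1p^*\pP[1]\to Lp^*\pP\to p^*\pP$ pushed down by $Rp_*$ gives $Rp_*L^1p^*\pP[1]\to \pP\to Rp_*p^*\pP$, and as both outer terms are $\pP$ with the connecting map an isomorphism, we conclude $Rp_*L^1p^*\pP=0$.

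The main obstacle is the middle step: turning $Rp_*p^*(f^*f_*\mM)$ into $f^*f_*\mM$ via the flop–flop identification, together with the verification that the resulting arrow $f^*f_*\mM\to\mM$ is indeed the counit. The $L^i p^{+*}$-vanishing that allows one to replace $Lp^{+*}$ with $p^{+*}$ is delicate, and the naturality of $\wt{\epsilon}$ from Proposition~\ref{prop_wt_omega_iso_on_M_and_O} has to be invoked to control the arrow in the long exact sequence.
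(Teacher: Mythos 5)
Your proof is correct, but it takes a genuinely different route from the paper's, and it is worth contrasting the two.

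The paper's argument is short and structural. It uses Corollary~\ref{cor_flop_t-exact_and_equiv_on_A_f} to view $F^+=Rp_*Lp^{+*}$ as an equivalence $\mathscr{A}_{f^+}\xrightarrow{\simeq}\mathscr{A}_f[1]$, picks a projective $\pP^+\in\mathscr{A}_{f^+}$ with $F^+(\pP^+)\simeq\pP[1]$, and combines $L^jp^{+*}\pP^+=0$ for $j\geq 2$ (Lemma~\ref{lem_vanishing_of_LpP}) with $Rp_*p^{+*}\pP^+=0$ (Corollary~\ref{cor_pull-back_of_A_f}) to deduce $\pP\simeq p_*\bigl(L^1p^{+*}\pP^+\bigr)$. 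Once $\pP$ is realized as a \emph{direct image under} $p$, Lemma~\ref{lem_Lif_in_A_f} (applied to $p$ in place of $f$) gives $p_*p^*\pP\simeq\pP$ and $L^ip^*\pP\in\mathscr{A}_p$ for $i>0$ in one stroke; the projection formula then upgrades this to $Rp_*p^*\pP\simeq\pP$. The whole content is that the flop exhibits $\pP$ as an object for which the general Lemma~\ref{lem_Lif_in_A_f} applies, avoiding any computation.

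Your argument instead pulls back the defining sequence (\ref{eqtn_P_M_in_Coh}) along $p$ and extracts the claims from the long exact sequence for $Rp_*$, using the flop-flop identity $F^+F\simeq f^*f_*$ on $\mathscr{P}_{-1}$ (Proposition~\ref{prop_flop_flop_as_ff}) to compute $Rp_*p^*(f^*f_*\mM)$. This is correct (and there is no circularity: Propositions~\ref{prop_flop_flop_as_ff} and~\ref{prop_wt_omega_iso_on_M_and_O} come from Section~\ref{sec_cotwst_C_X_and_flop_flop}, before Section~\ref{sec_schobers}), but it is noticeably heavier: you must verify that the boundary map in the long exact sequence really is the counit (naturality of $\wt\epsilon$, as you flag), and your final step—showing the arrow $Rp_*Lp^*\pP\to Rp_*p^*\pP$ is an isomorphism so that $Rp_*L^1p^*\pP$ vanishes—requires matching the projection-formula isomorphism with the underived adjunction unit, another naturality check you leave to the reader. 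Both of these are true, but they are exactly the bookkeeping the paper's route sidesteps by invoking Lemma~\ref{lem_Lif_in_A_f}. Two further small remarks: (1) your opening identification $p_*\oO_W\simeq f^*f^+_*\oO_{X^+}$ comes from the affine argument in Proposition~\ref{prop_vanish_Rpi_Zpi_X_of_E} applied with the roles of $X$ and $X^+$ \emph{swapped} relative to the application that gives $R^lp_*\oO_W=0$; this is fine, but should be said. (2) Your argument is written for $\pP=\hH^{-1}_X\iota_f^*\mM$; the statement is for an arbitrary projective in $\mathscr{A}_f$, which follows since any such is a direct summand of such a $\pP$, but this closing step should be noted.
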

	\begin{proof}
		By Corollary \ref{cor_flop_t-exact_and_equiv_on_A_f}, the flop functor $F^+$ induces an equivalence of $\mathscr{A}_{f^+}$ and $\mathscr{A}_f[1]$. Hence, there exists projective $\pP^+\in \mathscr{A}_{f^+}$ such that $\pP[1] \simeq Rp_* Lp^{+*}\pP^+$. Since $L^ip^{+*} \pP^+ = 0$, for $i>1$, by Lemma \ref{lem_vanishing_of_LpP}, and $Rp_*p^{+*}\pP^+  = 0$, by Corollary \ref{cor_pull-back_of_A_f}, we have $\pP \simeq p_* L^1p^{+*}\pP^+$. Then, according to Lemma \ref{lem_Lif_in_A_f}, we have $p_*p^* \pP \simeq \pP$ and $Rp_* L^1p^* \pP = 0$. Since $Rp_* Lp^* \pP \simeq \pP$, it follows that $Rp_* p^* \pP\simeq \pP$.
	\end{proof}
	
	\begin{LEM}\label{lem_triangle_for_pP}
		Let $f\colon X \to Y$ satisfy (a) and let $\mM$ be a projective object in $\Per{-1}(X/Y)$. There exists an exact triangle
		\begin{equation}\label{eqtn_triangle_for_pP}
		p^*\pP \to Lp^{+*}(f^{+*}f^+_* \nN^+) \to Lp^* \mM \to p^*\pP[1],
		\end{equation}
		for $\pP :=\hH_X^{-1}(\iota_f^* \mM)$ projective in $\mathscr{A}_f$ and $\nN^+$ given by formula (\ref{eqtn_def_N+}).
	\end{LEM}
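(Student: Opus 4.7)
The plan is to identify $Lp^{+*}(f^{+*}f^+_*\nN^+)$ as a sheaf concentrated in degree zero and to recognize it as the pullback of $f^*f_*\mM$ from $X$, so that the triangle reduces to the (non-derived) $p^*$ of the short exact sequence (\ref{eqtn_P_M_in_Coh}).

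First I would apply $Lp^*$ to the short exact sequence
$$0 \to \pP \to f^*f_*\mM \to \mM \to 0$$
from (\ref{eqtn_P_M_in_Coh}). Since $\mM$ is locally free, $L^ip^*\mM = 0$ for $i\gge 1$, so the tail of the long exact sequence yields an exact sequence of sheaves on $W = X\times_Y X^+$:
$$0 \to p^*\pP \to p^*(f^*f_*\mM) \to p^*\mM \to 0.$$
This in particular gives a distinguished triangle $p^*\pP \to p^*(f^*f_*\mM) \to p^*\mM \to p^*\pP[1]$ in $\dD^b(W)$.

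Next I would verify that $Lp^{+*}(f^{+*}f^+_*\nN^+)$ is concentrated in degree zero. This is the crux of the argument. By Proposition \ref{prop_equiv_of_P_-1_P_0}, $\nN^+ \in \mathscr{P}_0^+$, so Lemma \ref{lem_ses_P_N} provides a short exact sequence
$$0 \to f^{+*}f^+_*\nN^+ \to \nN^+ \to \mathcal{Q}^+ \to 0,$$
where $\mathcal{Q}^+ = \hH^0_{X^+}(\iota^*_{f^+}\nN^+)$ is projective in $\mathscr{A}_{f^+}$. Since $\nN^+$ is locally free, $L^1p^{+*}\nN^+ = 0$; applying the symmetric version of Lemma \ref{lem_vanishing_of_LpP} to $\mathcal{Q}^+$ and $p^+$ gives $L^jp^{+*}\mathcal{Q}^+ = 0$ for $j\gge 2$. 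The long exact sequence of $L^\bcdot p^{+*}$ applied to the displayed SES then forces $L^ip^{+*}(f^{+*}f^+_*\nN^+)=0$ for $i\gge 1$, so $Lp^{+*}(f^{+*}f^+_*\nN^+)\simeq p^{+*}(f^{+*}f^+_*\nN^+)$ as a sheaf.

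Finally I would identify the two middle sheaves. Proposition \ref{prop_equiv_of_P_-1_P_0} (see Lemma \ref{lem_equiv_of_refl}) gives $f_*\mM \simeq f^+_*\nN^+$; call this sheaf $\eE$ on $Y$. Commutativity of (\ref{eqtn_flop_diagrams}) yields $p^*(f^*\eE) = p^{+*}(f^{+*}\eE)$ as sheaves on $W$, i.e.\ $p^*(f^*f_*\mM)\simeq p^{+*}(f^{+*}f^+_*\nN^+)$. Feeding this identification and $Lp^*\mM = p^*\mM$ into the triangle from the first step produces the desired distinguished triangle
$$p^*\pP \to Lp^{+*}(f^{+*}f^+_*\nN^+) \to Lp^*\mM \to p^*\pP[1].$$
The only non-bookkeeping step is the vanishing of $L^{\gge 1}p^{+*}(f^{+*}f^+_*\nN^+)$, and that is handled once Lemma \ref{lem_vanishing_of_LpP} is invoked on the flopped side; no further obstruction is expected.
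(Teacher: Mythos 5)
Your proof is correct and takes essentially the same route as the paper: you apply $Lp^*$ to the short exact sequence (\ref{eqtn_P_M_in_Coh}) and observe that $L^1p^*\mM=0$ forces the tail to be a short exact sequence, giving the distinguished triangle directly (where the paper packages the same observation as an octahedron); you then identify $p^*(f^*f_*\mM)$ with $Lp^{+*}(f^{+*}f^+_*\nN^+)$ using exactly the paper's ingredients — commutativity of the fiber square, $f_*\mM\simeq f^+_*\nN^+$ from Lemma~\ref{lem_equiv_of_refl}, Lemma~\ref{lem_ses_P_N} on $X^+$, and the flopped version of Lemma~\ref{lem_vanishing_of_LpP}.
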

	\begin{proof}
		By Lemma \ref{lem_vanishing_of_LpP}, we have $L^jp^* \pP = 0$, for $j>1$ and any projective object $\pP$ in $\mathscr{A}_f$. Thus, applying $Lp^*$ to sequence (\ref{eqtn_P_M_in_Coh}) and using the fact that $\mM$ is locally free, we get an isomorphism $L^jp^* \pP \simeq L^jp^*(f^*f_* \mM)$, for $j>0$. Octahedron
		\[
		\xymatrix{p^* \pP\ar[r] & p^* f^*f_* \mM \ar[r] & Lp^* \mM \\
			Lp^* \pP \ar[r] \ar[u] & Lp^*(f^*f_* \mM) \ar[r] \ar[u] & Lp^* \mM \ar[u]_{\simeq}\\ L^1p^* \pP [1] \ar[r]^{\simeq} \ar[u] & L^1p^*(f^*f_* \mM) \ar[r] \ar[u] & 0 \ar[u]}
		\]
		implies that triangle
		\begin{equation}\label{eqtn_almost_triangle_for_pP}
		p^* \pP \to p^*f^*f_* \mM \to Lp^* \mM\to p^*\pP[1]
		\end{equation}
		is exact. We also have
		\begin{equation*}\label{eqtn_iso_pffM_LpffN}
		p^*f^*f_* \mM\simeq Lp^{+*}f^{+*}f^+_*\nN^+,
		\end{equation*}
		for $\nN^+ \in \mathscr{P}^+_0$ given by (\ref{eqtn_def_N+}). Indeed, commutativity of diagram (\ref{eqtn_flop_diagrams}) implies that $p^*f^*f_* \mM \simeq p^{+*}f^{+*}f_* \mM$. Further, Lemma \ref{lem_equiv_of_refl} yields $f_* \mM \simeq f^+_* \nN^+$. Finally, $\pP^+ = \hH^0_X(\iota_{f^+}^*\nN^+)$ is a projective object in $\mathscr{A}_{f^+}$, according to Lemma \ref{lem_ses_P_N}, hence $L^jp^{+*} \pP^+ = 0$, for $j>1$ (by Lemma \ref{lem_vanishing_of_LpP}). Thus, applying $p^{+*}$ to sequence (\ref{eqtn_ses_P_N}) on $X^+$ yields $L^jp^{+*}(f^{+*}f^+_* \nN^+) = 0$, for $j>0$.
		
		It follows that triangle (\ref{eqtn_almost_triangle_for_pP}) is isomorphic to triangle (\ref{eqtn_triangle_for_pP}).
	\end{proof}
	
	\begin{PROP}\label{prop_pP_no_higher_ext}
		Let $f\colon X\to Y$ satisfy (a) and let $\mM$ be a projective object in $\Per{-1}(X/Y)$. Then $p^* \pP\in \Perf(X\times_Y X^+)$, $\Hom_{X\times_Y X^+}(p^* \pP, p^* \pP) \simeq \Hom_X(\pP, \pP)$ and $\Ext^i_{X\times_Y X^+}(p^*\pP, p^* \pP) = 0$, for $i>0$, where $\pP = \hH_X^{-1}(\iota_f^* \mM)$ is projective in $\mathscr{A}_f$.
	\end{PROP}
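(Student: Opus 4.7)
My plan centers on the distinguished triangle
\[
p^*\pP \to \mathcal{K} \to Lp^*\mM \to p^*\pP[1]
\]
on $W := X\times_Y X^+$ furnished by Lemma \ref{lem_triangle_for_pP}, where $\mathcal{K} := Lp^{+*}(f^{+*}f^+_*\nN^+)$. First, for perfection of $p^*\pP$: since $\mM$ is locally free, $Lp^*\mM = p^*\mM$ is perfect; and since Lemma \ref{lem_ses_P_N} writes $f^{+*}f^+_*\nN^+$ as the kernel of $\nN^+\to \pP^+$ with $\nN^+$ locally free and $\pP^+$ admitting a finite locally-free resolution by Proposition \ref{prop_proj_res_of_P_i}, the sheaf $f^{+*}f^+_*\nN^+$ is perfect on $X^+$, whence $\mathcal{K}$ is perfect on $W$ and so is $p^*\pP$.

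Second, to compute $R\Hom_W(p^*\pP,p^*\pP)$ I will apply $R\Hom_W(-,p^*\pP)$ to the triangle. The auxiliary term $R\Hom_W(Lp^*\mM,p^*\pP)$ simplifies, by $Lp^*\dashv Rp_*$-adjunction and the identity $Rp_*p^*\pP\simeq\pP$ of Lemma \ref{lem_proj_form_for_P}, to $R\Hom_X(\mM,\pP)$. Under Proposition \ref{prop_D(X)=D(Per)} and the projectivity of $\mM$ in $\Per{-1}(X/Y)$, this is $\Hom_{\Per{-1}(X/Y)}(\mM,\pP[1])$ concentrated in cohomological degree $1$. The other auxiliary term $R\Hom_W(\mathcal{K},p^*\pP)$ reduces by $Lp^{+*}\dashv Rp^+_*$-adjunction to $R\Hom_{X^+}(f^{+*}f^+_*\nN^+,Rp^+_*p^*\pP)$, so the main task is to show $Rp^+_*p^*\pP = 0$.

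For this vanishing, apply $Rp^+_*$ to the short exact sequence on $W$: using the projection formula (valid because $Rp^+_*\oO_W=\oO_{X^+}$ by Proposition \ref{prop_vanish_Rpi_Zpi_X_of_E}) together with $Rp^+_*p^*\mM = F(\mM) = f^{+*}f^+_*\nN^+$ (Proposition \ref{prop_iso_of_ff_and_pp_for_flop} combined with $f_*\mM = f^+_*\nN^+$ from Lemma \ref{lem_equiv_of_refl}), one obtains a triangle
\[
Rp^+_*p^*\pP \to f^{+*}f^+_*\nN^+ \xrightarrow{Rp^+_*\phi} f^{+*}f^+_*\nN^+.
\]
The key step is to identify $Rp^+_*\phi$ with the base-change morphism $\epsilon_\mM$ of \eqref{eqtn_def_of_omega}: tracing the construction of $\phi\colon\mathcal{K}\to p^*\mM$ from the proof of Lemma \ref{lem_triangle_for_pP} as $p^*\ve_\mM\circ(\text{commutativity iso})$ and chasing the $Lp^{+*}\dashv Rp^+_*$ adjunction will show that under the above identifications $Rp^+_*\phi = \epsilon_\mM$, which is an isomorphism by Lemma \ref{lem_omegaM_is_iso}. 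Thus $Rp^+_*p^*\pP = 0$.

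Finally the long exact sequence collapses to $R\Hom_W(p^*\pP,p^*\pP)\simeq\Hom_{\Per{-1}(X/Y)}(\mM,\pP[1])$ in cohomological degree $0$. The short exact sequence $0\to f^*f_*\mM\to\mM\to\pP[1]\to 0$ in $\Per{-1}(X/Y)$, together with $\Hom_{\Per{-1}(X/Y)}(f^*f_*\mM,\pP[1])=0$ (via the $\hH^0_{\Per{-1}}(Lf^*(-))\dashv Rf_*|_{\Per{-1}}$ adjunction applied to $f_*\mM$ and $\pP[1]$, using $Rf_*\pP=0$) and projectivity of $\mM$, identifies $\Hom_{\Per{-1}(X/Y)}(\mM,\pP[1])$ with $\End_X(\pP)$. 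The hard part will be the third paragraph, since the careful tracking of adjunction units against the commutativity of diagram \eqref{eqtn_flop_diagrams} is what lets us recognize $Rp^+_*\phi$ as the base-change morphism $\epsilon_\mM$; everything else is an orderly application of the tools already developed earlier in the paper.
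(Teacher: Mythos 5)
Your proposal has the same skeleton as the paper's proof --- the triangle of Lemma \ref{lem_triangle_for_pP}, perfection from the triangle, vanishing of the middle $R\Hom$ via $Rp^+_*p^*\pP=0$, then a reduction to projectivity of $\mM$ in $\Per{-1}(X/Y)$ --- but your execution is considerably more laborious at the two spots where the paper has a shortcut. What you flag as ``the hard part'' (identifying $Rp^+_*\phi$ with the base-change $\epsilon_\mM$ to deduce $Rp^+_*p^*\pP=0$) is made trivial by Corollary \ref{cor_pull-back_of_A_f}: since $\pP\in\mathscr{A}_f$, its pullback $p^*\pP$ lies in $\mathscr{A}_{p^+}$, so $Rp^+_*p^*\pP=0$ immediately, with no base-change chasing. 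Your identification would also work --- $\epsilon_\mM$ does factor as $p^+_*\phi$ precomposed with the unit $f^{+*}f_*\mM\to p^+_*p^{+*}f^{+*}f_*\mM$, both isomorphisms in the proof of Lemma \ref{lem_omegaM_is_iso} --- but this essentially reproves that lemma instead of using its corollary. Similarly, the degree-zero isomorphism $\Hom_{X\times_Y X^+}(p^*\pP,p^*\pP)\simeq\End_X(\pP)$ is, in the paper, a one-line consequence of the underived $p^*\dashv p_*$ adjunction plus $p_*p^*\pP\simeq\pP$ from Lemma \ref{lem_proj_form_for_P}; your route through $\Hom_{\Per{-1}(X/Y)}(\mM,\pP[1])$ and the short exact sequence (\ref{eqtn_P_M_in_Per}) is valid but longer. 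The $\Ext^{>0}$ vanishing part of your argument matches the paper's.
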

	
	\begin{proof}
		Sheaf $\mM$ is locally free on $X$, hence $Lp^* \mM \in \Perf(X\times_Y X^+)$. Sequence (\ref{eqtn_res_N}) implies that $f^{+*}f^+_* \nN^+ \in \Perf(X^+)$, hence $Lp^{+*}f^{+*}f_* \nN$ is also a perfect complex on $X\times_Y X^+$. Triangle (\ref{eqtn_triangle_for_pP}) implies that $p^* \pP \in \Perf(X\times_Y X^+)$.
		
		By adjunction and Lemma \ref{lem_proj_form_for_P},  we have $\Hom_{X\times_Y X^+}(p^* \pP,p^* \pP) \simeq \Hom_X(\pP, \pP)$. Thus, it suffices to check that $\Ext^i_{X\times_Y X^+}(p^* \pP, p^* \pP) = 0$, for $i>0$. Applying $\Hom(-, p^* \pP)$ to triangle (\ref{eqtn_triangle_for_pP}) yields, for any $i\in \mathbb{Z}$, an isomorphism
		$$
		\Ext^{i+1}_{X\times_Y X^+}(Lp^* \mM, p^* \pP) \simeq \Ext^i_{X\times_Y X^+}(p^*\pP, p^* \pP),
		$$
		because $\Ext^j_{X\times_Y X^+}(Lp^{+*}(f^{+*}f^+_* \nN^+), p^* \pP) \simeq \Ext^j_{X^+}(f^{+*}f^+_* \nN^+, Rp^+_* p^*\pP)= 0$ (see Corollary \ref{cor_pull-back_of_A_f}). Sheaf $\mM$ is projective in $\Per{-1}(X/Y)$ and $\pP[1] \in \Per{-1}(X/Y)$. Thus, by Lemma \ref{lem_proj_form_for_P} again, we conclude that
		$$
		\Ext^{i+1}_{X\times_Y X^+}(Lp^* \mM, p^* \pP) \simeq \Ext^{i+1}_X(\mM, Rp_*p^*\pP) \simeq \Ext^i_X(\mM, \pP[1]) = 0,
		$$
		for $i\neq 0$, which finishes the proof.
	\end{proof}
	
	Let now $\mM$ be a projective generator of $\Per{-1}(X/Y)$ and $\pP= \hH_X^{-1}(\iota_f^* \mM)$. Denote by
	\begin{equation*}
	A_P := \Hom_{X\times_Y X^+}(p^*\pP, p^*\pP)
	\end{equation*}
	the endomorphism algebra of $p^* \pP$. Functor $\Hom_{X\times_YX^+}(p^*\pP, -)\colon \textrm{QCoh}(X\times_YX^+) \to \textrm{Mod--}A_P$ has left adjoint $(-)\otimes_{A_P}E\colon \textrm{Mod--}A_P \to \textrm{QCoh}(X\times_Y X^+)$, \cite[Theorem 3.6.3]{Pop}. We denote by $\bar{p}^* \colon \dD(\mathscr{A}_f) \to\dD_{\textrm{qc}}(X\times_Y X^+)$ its derived functor.
	
	Since $\pP$ is a projective generator of $\mathscr{A}_f$, categories $\dD(\mathscr{A}_f)$ and $\dD(\textrm{Mod--}A_P)$ are equivalent. As algebras $A_P$ and $\wt{A}_P$ are quasi-isomorphic, we also have $\dD(\textrm{Mod--}A_P) \simeq [\textit{SF--}\wt{A}_P]$, for the category $\textit{SF--}\wt{A}_P \subset \textrm{DGMod--}\wt{A}_P$ of semi-free $A_X$ DG modules, i.e. DG modules that admit a filtration with direct sums of shifts of representable DG modules as the graded factors of the filtration \cite{Dri}.
	
	\begin{PROP}\label{prop_fully_faith_D(A_f)_to_D(W)}
		Functor $\bar{p}^* \colon \dD(\mathscr{A}_f) \to \dD_{\textrm{qc}}(X\times_Y X^+)$ is  fully faithful.
	\end{PROP}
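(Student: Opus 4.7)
The plan is to reduce fully faithfulness to the computation on the compact generator $\pP$, where it is immediate from Proposition \ref{prop_pP_no_higher_ext}, and then extend by devissage. The key observation is that $\wt{A}_P$ is formal: by Proposition \ref{prop_pP_no_higher_ext},
$$H^0(\wt{A}_P) \simeq \Hom_{X\times_Y X^+}(p^*\pP,p^*\pP) \simeq A_P, \qquad H^i(\wt{A}_P) = 0 \text{ for } i>0,$$
so the canonical truncation gives a quasi-isomorphism $\wt{A}_P \to A_P$ of DG algebras. Combined with the equivalence $\dD(\mathscr{A}_f) \simeq \dD(\textrm{Mod--}A_P)$ coming from the projective generator $\pP$, and with the standard $\dD(\textrm{Mod--}A_P) \simeq [\textit{SF--}\wt{A}_P]$, this identifies the source of $\bar{p}^*$ with the homotopy category of semi-free $\wt{A}_P$-modules, in which $\wt{A}_P$ itself is a compact generator.

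Next, I would unwind the construction of $\underline{p}^*$ from Appendix \ref{ssec_DG_enh_of_func} to see that it is the tensor-product functor $(-) \otimes_{\wt{A}_P} \eE$, hence it is triangulated, commutes with small coproducts, and sends the representable $\wt{A}_P$ to $\eE$, a lift of $p^*\pP$. The sought fully faithfulness on the generator is then exactly the content of Proposition \ref{prop_pP_no_higher_ext}:
$$\Hom_{[\textit{SF--}\wt{A}_P]}(\wt{A}_P, \wt{A}_P[i]) = H^i(\wt{A}_P) = \Ext^i_{X\times_Y X^+}(p^*\pP, p^*\pP) = \Hom_{\dD_{\textrm{qc}}(X\times_Y X^+)}(\bar{p}^*\wt{A}_P, \bar{p}^*\wt{A}_P[i])$$
for every $i \in \zz$, and this isomorphism clearly extends to arbitrary direct sums of shifts of $\wt{A}_P$.

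Finally, I would extend fully faithfulness from $\wt{A}_P$ to the whole of $[\textit{SF--}\wt{A}_P]$ by a standard argument. Since every semi-free module is built by iterated extensions and direct sums from shifts of $\wt{A}_P$, since $\bar{p}^*$ is triangulated and commutes with coproducts, and since $p^*\pP \in \Perf(X\times_Y X^+)$ is compact in $\dD_{\textrm{qc}}(X\times_Y X^+)$ (so $\Hom(p^*\pP, -)$ commutes with coproducts), the five-lemma and transfinite induction along the filtration of a semi-free module force $\bar{p}^*$ to be fully faithful on all of $[\textit{SF--}\wt{A}_P]$. The delicate step is the verification that $\underline{p}^*$ indeed preserves small coproducts in the DG enhancement; everything else is formal manipulation with quasi-isomorphic DG algebras.
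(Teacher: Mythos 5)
Your proposal is correct and takes essentially the same route as the paper: the paper simply cites Keller's Lemma 4.2, which packages exactly the devissage you carry out by hand (fully faithfulness on the compact generator $p^*\pP$ plus compactness of its image implies fully faithfulness everywhere). The only cosmetic remark is that your formality claim for $\wt{A}_P$ also tacitly uses the vanishing of $H^i(\wt{A}_P)$ for $i<0$, which holds because $p^*\pP$ is a sheaf; this is harmless but should be said.
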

	\begin{proof}
		By \cite[Lemma 4.2]{Kel2} in order to prove that functor $\dD(\textrm{Mod--}A_P) \to \dD_{qc}(X\times_Y X^+)$ is fully faithful, it suffices to show that it induces a bijection $\Hom_{\dD(A_f)}(\pP, \pP[n]) \to \Hom_{X\times_Y X^+}(p^*\pP, p^* \pP[n])$, for any $n\in \mathbb{Z}$ and that $p^* \pP \in \dD_{\textrm{qc}}(X\times_Y X^+)$ is compact. Both statements follow from Proposition \ref{prop_pP_no_higher_ext}.
	\end{proof}
	
	Functor $\overline{p}^*$ restricts to a fully faithful functor $\dD^b(\mathscr{A}_f)\to \dD^-(X\times_Y X^+)$. We shall show that $\dD^b(\mathscr{A}_f)$ is in fact a subquotient of $\dD^b(X\times_Y X^+)$. Following an idea of M. Kapranov, we show that the flop-flop functor is a spherical cotwist arising from a spherical pair.
	
	\vspace{0.3cm}
	\subsection{Spherical pairs $(\dD^b(X)$, $\dD^b(X^+))$, $(\dD^b(\mathscr{A}_f), \dD^b(\mathscr{A}_{f^+}))$ }\label{ssec_spherical_pair}~\\

	For a morphism $f\colon X\to Y$ satisfying (a) and its flop $f^+\colon X^+ \to Y$, we consider
	\begin{equation}\label{eqtn_def_intersec_of_kern}
	\kK = \{ E \in \dD^-(X\times_Y X^+)\,|\, Rp_*(E) = 0,\, Rp^+_*(E) = 0\}
	\end{equation}
	and quotient category $\dD^-(X\times_Y X^+)/\kK$. Above, as in diagram (\ref{eqtn_flop_diagrams}), morphisms $p$ and $p^+$ denote the projections for $X\times_Y X^+$. Since both $p$ and $p^+$ have fibers of relative dimension bounded by one, category $\kK$ inherits the standard \tr e from $\dD^-(X\times_Y X^+)$ (Lemma \ref{lem_A_f_is_a_heart}). In particular, $K\in \kK$ if and only if $\hH^i(K) \in \kK$, for all $i\in \mathbb{Z}$.
	
	Let $\textrm{Mor}_{\kK}\subset \dD^-(X\times_Y X^+)$ denote the category with objects as in $\dD^-(X\times_Y X^+)$ and morphisms $f\colon E \to F$ such that the cone of $f$ lies in $\kK$ \cite{V2, Nee2}. The quotient category $\dD^-(X\times_Y X^+)/\kK$ has the same objects as $\dD^-(X\times_Y X^+)$. For any pair of objects $E, F$, morphisms $E\to F$ in $\dD^-(X\times_Y X^+)/\kK$ are equivalence classes of triples $(f, Z, g)$, where $ Z\xrightarrow{f} E$ is a morphism in $\textrm{Mor}_{\kK}$ and $Z\xrightarrow{g} F$ is a morphism in $\dD^-(X\times_Y X^+)$. We have $(f,Z,g) \sim (f', Z', g')$ if there exists a triple $(f'', Z'', g'')$ and $Z''\xrightarrow{u} Z$, $Z''\xrightarrow{v} Z'$ for which diagram
	\begin{equation}\label{eqtn_diagram_for_equivalence_classes}
	\xymatrix{& Z \ar[dl]_f \ar[dr]^{g} & \\ E & Z'' \ar[l]|{f''} \ar[r]|{g''} \ar[u]_{u} \ar[d]^{v} & F \\ & Z'\ar[ul]^{f'} \ar[ur]_{g'}}
	\end{equation}
	commutes, cf. \cite{Nee2}.
	
	We denote by $\kK^b$ the intersection $\kK \cap \dD^b(X\times_Y X^+)$. The embedding $\dD^b(X\times_Y X^+) \to \dD^-(X\times_Y X^+)$ induces a functor
	\begin{equation}\label{eqtn_def_Theta}
	\chi \colon \dD^b(X \times_Y X^+)/ \kK^b \to \dD^-(X\times_Y X^+)/\kK.
	\end{equation}
	\begin{LEM}\label{lem_theta_fullly_faith}
		Functor $\chi$ is fully faithful.
	\end{LEM}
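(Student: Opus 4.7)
The strategy is the standard truncation argument relating bounded and unbounded Verdier quotients, using the fact that $\kK$ inherits the standard $t$-structure from $\dD^-(X\times_Y X^+)$. Since both projections $p$, $p^+$ have fibers of relative dimension bounded by one, Lemma~\ref{lem_A_f_is_a_heart} applied to both of them shows that $F \in \dD^-(X\times_Y X^+)$ lies in $\kK$ if and only if $\hH^i(F) \in \kK \cap \Coh(X\times_Y X^+)$ for every $i$. In particular $\tau^{\geq n}$ and $\tau^{\leq n}$ preserve $\kK$, and $\kK^b$ consists of those $K \in \kK$ with bounded cohomology.

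\textbf{Fullness.} Given $E, F \in \dD^b(X\times_Y X^+)$ and a roof $(f, Z, g)$ in $\dD^-(X\times_Y X^+)/\kK$ representing a morphism $E \to F$, I would choose $a$ small enough that $E, F \in \dD^{\geq a}(X\times_Y X^+)$. The $t$-structure axioms give $\Hom(\tau^{< a}Z, E) = \Hom(\tau^{< a}Z, F) = 0$ and $\Hom(\tau^{< a}Z[1], E) = \Hom(\tau^{< a}Z[1], F) = 0$, so $f$ and $g$ factor uniquely through the truncation $Z \to \tau^{\geq a}Z$ as morphisms $f', g'$. Taking $Z'' = Z$, $u = \Id_Z$, and $v$ the truncation map in diagram~(\ref{eqtn_diagram_for_equivalence_classes}) exhibits $(f, Z, g) \sim (f', \tau^{\geq a}Z, g')$ in $\dD^-(X\times_Y X^+)/\kK$. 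To verify $\textrm{cone}(f') \in \kK^b$, the long exact sequence for $Z \xrightarrow{f} E \to \textrm{cone}(f)$ with $E$ bounded yields $\hH^{i}(Z) \simeq \hH^{i-1}(\textrm{cone}(f))$ for $i < a$, placing $\tau^{< a}Z$ in $\kK$ by the first paragraph. The octahedral axiom for $Z \to \tau^{\geq a}Z \xrightarrow{f'} E$ then gives a triangle $\textrm{cone}(f) \to \textrm{cone}(f') \to \tau^{< a}Z[1]$, so $\textrm{cone}(f') \in \kK$; boundedness of $\textrm{cone}(f')$ is automatic as the cone of a morphism between bounded complexes.

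\textbf{Faithfulness and main obstacle.} If two roofs $(f_j, Z_j, g_j)$, $j = 1, 2$, with $Z_j \in \dD^b(X\times_Y X^+)$ and $\textrm{cone}(f_j) \in \kK^b$ become equal in $\dD^-(X\times_Y X^+)/\kK$ via an intermediate roof $(f'', Z'', g'')$ and morphisms $u \colon Z'' \to Z_1$, $v \colon Z'' \to Z_2$, I would choose $a$ strictly below the cohomology ranges of $Z_1, Z_2, E, F$. Then $u$, $v$, $f''$, $g''$ all factor uniquely through $\tau^{\geq a}Z''$, the induced factorizations still fit into a commutative diagram~(\ref{eqtn_diagram_for_equivalence_classes}) by the uniqueness of the factorizations, and the cone computation of the previous paragraph shows that the induced map $\tau^{\geq a}Z'' \to E$ has cone in $\kK^b$. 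This furnishes an intermediate roof inside $\dD^b(X\times_Y X^+)/\kK^b$ that witnesses the equivalence of the two original roofs there. The single delicate point in the whole argument is the $t$-exactness of $\kK$, namely that $\tau^{< a}Z$ stays in $\kK$ whenever $Z$ admits a map to a bounded object $E$ with cone in $\kK$; once this stability is secured via Lemma~\ref{lem_A_f_is_a_heart}, the remainder is routine roof-manipulation.
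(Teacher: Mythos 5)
Your proof is correct and follows essentially the same route as the paper's: both observe that $\kK$ is stable under the standard truncations and replace the peak $Z$ (resp.\ $Z''$) of a roof by $\tau^{\geq a}Z$; factoring $f,g$ through $Z \to \tau^{\geq a}Z$ via the orthogonality $\Hom(\tau^{<a}Z[\bullet], E)=0$ is the same as applying $\tau^{\geq a}$ functorially, which is what the paper does with its formula~(\ref{eqtn_cone_of_trunc}). One small slip: the octahedron for $Z \to \tau^{\geq a}Z \to E$ yields the triangle $\tau^{<a}Z[1] \to \textrm{cone}(f) \to \textrm{cone}(f') \to \tau^{<a}Z[2]$, so the shift on $\tau^{<a}Z$ in your version is off by one, though this does not affect the conclusion that $\textrm{cone}(f') \in \kK^b$.
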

	\begin{proof}
		It is clear from the above definitions that $\chi$ is well-defined.
		Let us check that $\chi$ is faithful. To this end, we assume that $(f,Z,g)$, $(f',Z',g')$ are morphisms $E_1 \to E_2$ in $\dD^b(X\times_Y X^+)/\kK^b$ which are equivalent as morphisms in $\dD^-(X\times_Y X^+)/\kK$. Let $(f'', Z'', g'')$ with $Z''\in \dD^-(X\times_Y X^+)$ be the triple defining the equivalence as in (\ref{eqtn_diagram_for_equivalence_classes}). Choose $l\in \mathbb{Z}$ such that $E_1$, $E_2$, $Z$ and $Z'$ belong to $\dD^b(X\times_Y X^+)^{\gge l}$. Then
		\begin{equation}\label{eqtn_cone_of_trunc}
		\hH^i(\textrm{Cone}(\tau_{\gge l} f'')) = \left\{\begin{array}{ll} \hH^i(\textrm{Cone}(f'')),& \textrm{for }\, i\geq l-1,\\
		0,&\textrm{for }\, i<l-1,\end{array} \right.
		\end{equation}
		implies that all cohomology sheaves of $\textrm{Cone}(\tau_{\gge l} f'')$ belong to $\kK$, i.e. $\textrm{Cone}(\tau_{\gge l}f'')$ is an object in $\kK^b$. The truncation $\tau_{\gge l}$ of diagram (\ref{eqtn_diagram_for_equivalence_classes}) is isomorphic to the original diagram on the boundary diamond, while the middle line $(\tau_{\gge l}f'', \tau_{\gge l} Z'', \tau_{\gge l} g'')$ defines a morphism in $\dD^b(X\times_Y X^+)/\kK^b$. Hence, $(f,Z,g) \sim (f', Z', g')$ holds in $\dD^b(X\times_Y X^+)/\kK^b$.
		
		Finally, let $(f, Z, g)$ be a morphism $E_1 \to E_2$ in $\dD^-(X\times_Y X^+)/\kK$ and assume there exists $l\in \mathbb{Z}$ such that both $\hH^i(E_1)$ and $\hH^i(E_2)$ vanish, for $i\leq l$. Then (\ref{eqtn_cone_of_trunc}) implies that $\tau_{\gge l}f \in \textrm{Mor}_{\kK}$. Hence, $(\tau_{\gge l} f, \tau_{\gge l} Z, \tau_{\gge l} g)$ is a morphism $E_1 \to E_2$ in $\dD^b(X\times_Y X^+)/\kK^b$ which functor $\chi$ takes to a morphism equivalent to $(f,Z,g)$. Thus, functor $\chi$ is also full. 	
	\end{proof}
	In view of Lemma \ref{lem_theta_fullly_faith}, we can regard $\dD^b(X\times_Y X^+)/\kK^b$ as a full subcategory of \mbox{$\dD^-(X\times_Y X^+)/\kK$}.
	\begin{PROP}\label{prop_Lp_bounded_up_to_torsion}
		The composite of $Lp^* \colon \dD^b(X) \to \dD^-(X\times_Y X ^+)$ with the quotient functor $Q\colon \dD^-(X\times_Y X^+) \to \dD^-(X\times_Y X^+)/\kK$ takes $\dD^b(X)$ to the essential image of $\chi$.
	\end{PROP}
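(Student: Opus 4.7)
The strategy is to exhibit, for every $E \in \dD^b(X)$, a morphism in $\dD^-(W)$ (where $W = X\times_Y X^+$) from $Lp^*E$ to a bounded object whose cone lies in $\kK$; this will present $Lp^*E$ in $\dD^-(W)/\kK$ as isomorphic to an object of the essential image of $\Theta$. The truncation used will be the standard $t$-structure truncation in a sufficiently negative degree, and the main work is checking that the truncated tail lies in $\kK$.

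The starting observation is that, although $Lp^*E$ need not itself be bounded, its derived pushforwards to $X$ and to $X^+$ are. Indeed, Proposition \ref{prop_vanish_Rpi_Zpi_X_of_E} applied to $\oO_X$ gives $Rp_*\oO_W \simeq \oO_X$, so the derived projection formula yields $Rp_*Lp^*E \simeq E$; similarly $Rp^+_*Lp^*E = F(E)$ lies in $\dD^b(X^+)$ by Corollary \ref{cor_flop_is_bounded}. Let $a$ and $a'$ denote the minimal cohomological degrees of $E$ and of $F(E)$, and fix an integer $N$ with $-N-1<\min(a,a')$. Since $Lp^*$ is right $t$-exact, $Lp^*E\in \dD^{\lle b}(W)$ for $b$ the top cohomological degree of $E$, so in the truncation triangle
\[
\tau_{\lle -N-1}Lp^*E \to Lp^*E \to \tau_{\gge -N}Lp^*E \to \tau_{\lle -N-1}Lp^*E[1]
\]
the object $\tau_{\gge -N}Lp^*E$ has cohomology in $[-N,b]$ and is therefore in $\dD^b(W)$.

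The heart of the proof is the claim that $\tau_{\lle -N-1}Lp^*E \in \kK$. Since both $p$ and $p^+$ have fibers of dimension at most one (inherited from $f^+$ and $f$), the functors $Rp_*$ and $Rp^+_*$ have cohomological amplitude at most $[0,1]$. Hence $Rp_*\tau_{\lle -N-1}Lp^*E \in \dD^{\lle -N}(X)$ and $Rp_*\tau_{\gge -N}Lp^*E \in \dD^{\gge -N}(X)$. Applying $Rp_*$ to the truncation triangle and comparing with $Rp_*Lp^*E \simeq E$ via the long exact sequence of cohomology sheaves, the vanishing $\hH^i(E)=0$ for $i\lle -N-1$ forces $\hH^i(Rp_*\tau_{\lle -N-1}Lp^*E)=0$ for every $i$, so this complex is zero. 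The parallel argument with $p$ replaced by $p^+$ and $E$ by $F(E)$ (using $\hH^i(F(E)) = 0$ for $i \lle -N-1$, which holds by the choice $-N-1 < a'$) gives $Rp^+_*\tau_{\lle -N-1}Lp^*E=0$.

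Consequently $\tau_{\lle -N-1}Lp^*E\in \kK$, the cone of $Lp^*E \to \tau_{\gge -N}Lp^*E$ lies in $\kK$, and this morphism becomes an isomorphism in $\dD^-(W)/\kK$. Thus $Q\circ Lp^*(E)$ is isomorphic to the image of the bounded complex $\tau_{\gge -N}Lp^*E$, which, via Lemma \ref{lem_theta_fullly_faith}, lies in the essential image of $\Theta$. The argument is largely direct; the only delicate point is the amplitude estimate ensuring the truncation triangle cooperates with $Rp_*$ and $Rp^+_*$, and this is immediate from the relative dimension bound on the flopping contractions.
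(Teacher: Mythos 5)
Your overall strategy coincides with the paper's: truncate $Lp^*E$ below a sufficiently negative degree and show the tail lies in $\kK$ by checking that both $Rp_*$ and $Rp^+_*$ kill it. The paper establishes the vanishing cohomology-sheaf by cohomology-sheaf via the degeneration of the Leray spectral sequence and the criterion $K\in\kK \Leftrightarrow \hH^i(K)\in\kK$ for all $i$, whereas you argue at the level of whole complexes through the truncation triangle. The paper also bounds $a'$ more precisely through the perverse $t$-structure (Corollary \ref{cor_flop_t-exact_and_equiv_on_A_f}, yielding $a'\gge a-1$), while you invoke only Corollary \ref{cor_flop_is_bounded}; both are admissible.

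There is, however, an off-by-one error in your bound on $N$. With $A=Rp_*\tau_{\lle -N-1}Lp^*E$, the amplitude argument shows $A$ can have nonzero cohomology only in degree $-N$, and $\hH^{-N}(A)\simeq R^1p_*\hH^{-N-1}(Lp^*E)$. The long exact sequence gives an injection $\hH^{-N}(A)\hookrightarrow \hH^{-N}(E)$, so to conclude $A=0$ you need $\hH^{-N}(E)=0$, which requires $-N<a$ strictly. Your stated condition $-N-1<\min(a,a')$ amounts only to $-N\lle\min(a,a')$, so the borderline case $-N=a$ (or $-N=a'$) is allowed, and there $\hH^{-N}(E)$ (resp. $\hH^{-N}(F(E))$) is typically nonzero, breaking the claimed vanishing. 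The repair is trivial: replace the constraint by $-N<\min(a,a')$, i.e., take $N$ one larger. This agrees with the paper's $\tau_{\lle l-3}$, which retains only degrees $j\lle \min(a,a')-2$. Everything else in your argument is sound.
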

	\begin{proof}
		Let $E\in \dD^b(X)^{\gge l}$. Since $p$ has fibers of relative dimension bounded by one and $Rp_*Lp^*(E) \simeq E$, we have $Rp_* \hH^j(Lp^* E) = 0$, for $j<l-1$.
		
		As $\Per{-1}(X/Y)$ is obtained from $\Coh(X)$ by means of the tilt in a torsion pair, sequence (\ref{eqtn_cohom_after_tilt}) implies that $\hH^j_{\Per{-1}(X/Y)}(E) =0$, for $j<l$. By Corollary \ref{cor_flop_t-exact_and_equiv_on_A_f}, object $Rp^+_* Lp^*(E)$ lies in $\dD^b(X^+)^{\gge l}_{\Per{0}(X^+/Y)}$. Category $\Coh(X^+)$ is obtained from $\Per{0}(X^+/Y)$ by the tilt in a torsion pair, hence $Rp^+_*Lp^*E \in \dD^b(X^+)_{X^+}^{\gge l-1}$. Finally, the relative dimension fibers of $p^+$ being bounded by one implies that $Rp^+_* \hH^j(Lp^* E) = 0$, for $j< l-2$. Thus, $\tau_{\lle l-3} Lp^*E$ lies in $\kK$, i.e. morphism $Lp^*E \to \tau_{\gge l-2} Lp^* E$ is an isomorphism in $\dD^-(X\times_Y X^+)/\kK$.	 
	\end{proof}
	
	We denote by
	$$
	\wt{L}p^* \colon \dD^b(X) \to \dD^b(X\times_Y X^+)/\kK^b
	$$
	the functor invoked by Lemma \ref{lem_theta_fullly_faith} and Proposition \ref{prop_Lp_bounded_up_to_torsion}.
	
	In order to show existence of a right adjoint to $\wt{L}p^*$, we first prove the following general statement.
	\begin{LEM}\label{lem_homs_in_D/K}
		Let $\dD$ be a triangulated category and $\kK$ a thick triangulated subcategory. Let further $E\in \dD$ be such that $\Hom_{\dD}(E, K) = 0$, for any $K\in \kK$. Then, for any $F \in \dD$, we have an isomorphism
		$$
		\Hom_{\dD}(E,F) \xrightarrow{\simeq} \Hom_{\dD/\kK}(E,F).
		$$
	\end{LEM}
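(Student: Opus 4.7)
The plan is to work directly with the roof description of $\dD/\kK$ recorded in diagram (\ref{eqtn_diagram_for_equivalence_classes}). The canonical localization map sends a morphism $g\colon E\to F$ to the class of the roof $(\Id_E, E, g)$, and this is the map we wish to show is bijective. The single fact driving both directions is that, for any $C\in \kK$, every morphism $E\to C$ in $\dD$ vanishes by hypothesis.

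For surjectivity, given a roof $(f, Z, g)$ with $f\colon Z\to E$ in $\textrm{Mor}_{\kK}$, I embed $f$ into a distinguished triangle $Z\xrightarrow{f} E \xrightarrow{r} C \to Z[1]$ with $C\in\kK$. The hypothesis forces $r = 0$, so the long exact sequence
$$
\Hom_{\dD}(E, Z) \xrightarrow{f_*} \Hom_{\dD}(E, E) \xrightarrow{r_*} \Hom_{\dD}(E, C)
$$
shows $\Id_E$ lifts across $f$, yielding $t\colon E\to Z$ with $f\circ t = \Id_E$. I then verify the equivalence $(f, Z, g)\sim (\Id_E, E, g\circ t)$ using the dominating datum $(f'', Z'', g'') = (\Id_E, E, g\circ t)$ together with $u = t\colon E\to Z$ and $v = \Id_E\colon E\to E$, which clearly satisfy $f\circ u = \Id_E = \Id_E\circ v$ and $g\circ u = g\circ t = (g\circ t)\circ v$. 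Hence every morphism in $\dD/\kK$ is represented by the genuine morphism $g\circ t \in \Hom_{\dD}(E, F)$.

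For injectivity, suppose $g_1, g_2\colon E\to F$ give equivalent roofs $(\Id_E, E, g_i)$. Unpacking diagram (\ref{eqtn_diagram_for_equivalence_classes}) with $f = f' = \Id_E$ forces $u = v = f''$, so the dominating datum reduces to a single morphism $u\colon Z''\to E$ with $\textrm{Cone}(u)\in\kK$ satisfying $g_1\circ u = g_2\circ u$. Extending $u$ to a triangle $Z''\xrightarrow{u} E \xrightarrow{r} C \to Z''[1]$ with $C\in \kK$, the difference $g_1 - g_2$ factors through $r\colon E\to C$; the hypothesis again gives $r = 0$, whence $g_1 = g_2$.

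No significant obstacle is expected, as this is a standard Verdier-localization fact. The only point that requires attention is orientation: the hypothesis $\Hom_{\dD}(E,-)|_{\kK}=0$ is exactly what produces a \emph{right} inverse of $f$ (not a left inverse), which is what matches the left-roof convention $(f, Z, g)$ used in the paper.
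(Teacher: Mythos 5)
Your proof is correct and takes essentially the same approach as the paper's: both exploit the hypothesis $\Hom_{\dD}(E,-)|_{\kK}=0$ to split the covering map $f$ in a roof and thereby replace it by a genuine morphism. The only cosmetic difference is that you produce the section $t$ of $f$ directly from the long exact sequence of $\Hom_{\dD}(E,-)$, whereas the paper first observes the decomposition $Z\simeq K[-1]\oplus E$ (using additionally $\Hom_{\dD}(E,K[-1])=0$) and reads off the section from that splitting; your route is marginally more economical but the substance is identical.
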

	\begin{proof}
		We have the map
		\begin{align*}
		&\alpha \colon \Hom_{\dD}(E, F) \to  \Hom_{\dD/\kK}(E, F),& &\alpha\colon h \mapsto (\textrm{id}, E, h).&
		\end{align*}
		
		Let now $(f, Z, g)$ be any element in $\Hom_{\dD/\kK}(E, F)$. By definition, $f$ is a morphism in $\textrm{Mor}_{\kK}$. In other words we have an exact triangle
		$$
		Z \xrightarrow{f} E \rightarrow K \rightarrow Z[1],
		$$
		with $K\in \kK$. Since both $\Hom_{\dD}(E,K)$ and $\Hom_{\dD}(E,K[-1])$ vanish, object $Z$ is uniquely decomposed as $Z \simeq K[-1] \oplus E$ and $f=(0, \textrm{id})$ is the projection to $E$. Accordingly, we have a decomposition $g=(g_K,g_E)$. Diagram
		\[
		\xymatrix{& K[-1] \oplus E \ar[dl]_{(0,\textrm{id})} \ar[dr]^{(g_K,g_E)} & \\ E & E \ar[l]_{\textrm{id}} \ar[u]|{(0,\textrm{id})} \ar[r]^{g_E} & F}
		\]
		commutes, i.e. $(f,Z,g) \sim (\textrm{id}, E, g_E)$. Thus, map $\alpha$ is surjective.
		
		Let now $(\textrm{id}, E, g) \sim (\textrm{id}, E, g')$ in $\dD/\kK$. For the triple $(f'', Z'', g'')$ giving the equivalence (see diagram (\ref{eqtn_diagram_for_equivalence_classes})), we have, as above, the decomposition $Z'' = K''[-1] \oplus E$. Commutativity of
		\[
		\xymatrix{&&  E \ar[dll]_{\textrm{id}}\ar[drr]^g & &\\
			E & & K''[-1] \oplus E  \ar[ll]|{(0,\textrm{id})} \ar[rr]|{(g''_K,g''_E)} \ar[u]|{(0,\textrm{id})} \ar[d]|{(0,\textrm{id})} & & F \\
			& & E \ar[ull]^{\textrm{id}} \ar[urr]_{g'} & & }
		\]
		yields $g = g''_E = g'$, i.e. $\alpha$ is also a monomorphism.
	\end{proof}

	\begin{PROP}\label{prop_Lp_has_adjoint}
		Functor $\wt{L}p^*$ is fully faithful and has the right adjoint $Rp_* \colon \dD^b(X\times_Y X^+)/\kK^b\to \dD^b(X)$.
	\end{PROP}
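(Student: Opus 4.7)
The plan is to derive both assertions from a single adjunction computation, using that $Rp_*$ annihilates $\kK$.

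First I would construct the right adjoint. Since $Rp_*(K) = 0$ for any $K \in \kK$, any morphism in $\textrm{Mor}_{\kK}$ is sent by $Rp_*$ to an isomorphism. Hence $Rp_*$ descends uniquely to a functor $\dD^-(X\times_Y X^+)/\kK \to \dD^-(X)$. Because $p$ is proper of relative dimension one, the usual $Rp_*$ preserves bounded objects, so this descended functor restricts to $Rp_* \colon \dD^b(X\times_Y X^+)/\kK^b \to \dD^b(X)$.

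Next I would establish the adjunction via the Hom-vanishing trick. For any $E_X \in \dD^b(X)$ and any $K \in \kK$, the ordinary adjunction gives
\[
\Hom_{\dD^-(X\times_Y X^+)}(Lp^* E_X, K) \simeq \Hom_X(E_X, Rp_* K) = 0,
\]
so Lemma~\ref{lem_homs_in_D/K} applies to $E = Lp^* E_X$ in $\dD = \dD^-(X\times_Y X^+)$ with subcategory $\kK$. Combining with Lemma~\ref{lem_theta_fullly_faith}, for any object $W$ of $\dD^b(X\times_Y X^+)/\kK^b$ viewed in $\dD^-(X\times_Y X^+)/\kK$ via $\Theta$,
\begin{align*}
\Hom_{\dD^b/\kK^b}(\wt{L}p^* E_X, W) &= \Hom_{\dD^-/\kK}(Lp^* E_X, W)\\
&= \Hom_{\dD^-(X\times_Y X^+)}(Lp^* E_X, W_0)\\
&= \Hom_X(E_X, Rp_* W_0),
\end{align*}
where $W_0 \in \dD^b(X\times_Y X^+)$ is any representative of $W$. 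Since $Rp_*$ kills cones of morphisms in $\textrm{Mor}_{\kK^b}$, the right-hand side depends only on the class $W$, and equals $\Hom_X(E_X, Rp_*(W))$ with $Rp_*$ the descended functor. This is exactly the adjunction $\wt{L}p^* \dashv Rp_*$.

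Finally, full faithfulness reduces to $Rp_* \circ \wt{L}p^* \simeq \Id_{\dD^b(X)}$. Taking $W = \wt{L}p^* F_X$, represented by $\tau_{\gge l'-2} Lp^* F_X$, the canonical morphism $Lp^* F_X \to \tau_{\gge l'-2} Lp^* F_X$ becomes an isomorphism in $\dD^-/\kK$ by (the proof of) Proposition~\ref{prop_Lp_bounded_up_to_torsion}, so after applying the descended $Rp_*$ we obtain $Rp_*(\wt{L}p^* F_X) \simeq Rp_*Lp^* F_X$. Projection formula together with $Rp_* \oO_{X\times_Y X^+} \simeq \oO_X$ (a special case of Proposition~\ref{prop_vanish_Rpi_Zpi_X_of_E}) yields $Rp_* Lp^* F_X \simeq F_X$. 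Thus $\Hom_{\dD^b/\kK^b}(\wt{L}p^* E_X, \wt{L}p^* F_X) \simeq \Hom_X(E_X, F_X)$, via the unit map, giving full faithfulness. The one subtle point, which is the main thing to watch rather than a serious obstacle, is the bookkeeping between $\dD^-$ and $\dD^b$: one must consistently use $\Theta$ to identify $\dD^b/\kK^b$ with a full subcategory of $\dD^-/\kK$, and then verify that the descended $Rp_*$ agrees with the usual $Rp_*$ applied to any chosen truncation representative.
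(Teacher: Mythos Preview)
Your proposal is correct and follows essentially the same approach as the paper: both use the vanishing $\Hom(Lp^*E,K)=0$ together with Lemma~\ref{lem_homs_in_D/K} and the full faithfulness of $\Theta$ to establish the adjunction, and both reduce full faithfulness to $Rp_*Lp^*\simeq\Id$. The only cosmetic difference is that the paper phrases the last step as ``$Lp^*\colon\dD^b(X)\to\dD^-(X\times_Y X^+)$ is already fully faithful'' and then applies the Hom isomorphism with $F=Lp^*E'$, whereas you spell out the projection formula explicitly; these are the same argument.
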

	
	\begin{proof}
		First, note that functor $Rp_* \colon \dD^b(X\times_Y X^+)/\kK^b \to \dD^b(X)$ is well-defined, because $\kK^b\subset \textrm{Ker} Rp_*$.
		
		For any $E\in \dD^b(X)$ and $F \in \dD^-(X\times_Y X^+)$, we have an isomorphism $\Hom_{X\times_Y X^+}(Lp^* E, F) \simeq \Hom_X(E, Rp_* F)$. Since functor $\chi$ is fully faithful (Lemma \ref{lem_theta_fullly_faith}), in order to prove that $Rp_*$ is right adjoint to $\wt{L}p^*$, it suffices to show that, for any $F \in \dD^-(X\times_Y X^+)$, we have
		\begin{equation}\label{eqtn_iso_Lp}
		\Hom_{X\times_Y X^+}(Lp^* E, F) \simeq \Hom_{\dD^-(X\times_Y X^+)/\kK}(Lp^*E, F).
		\end{equation}
		By adjunction, $\Hom_{X\times_Y X^+}(Lp^*E, K) = 0$, for any $K\in \kK$, thus (\ref{eqtn_iso_Lp}) follows from Lemma \ref{lem_homs_in_D/K}.
		
		Since functor $Lp^* \colon \dD^b(X) \to \dD^-(X\times_Y X^+)$ is fully faithful, isomorphism (\ref{eqtn_iso_Lp}), for $F = Lp^*(E')$, implies that functor $\wt{L}p^*$ is fully faithful too.
	\end{proof}
	
	Since $Rp_* \wt{L}p^* \simeq \textrm{Id}_{\dD^b(X)}$, we have a semi-orthogonal decomposition \cite[Lemma 3.1]{B}
	\begin{equation}\label{eqtn_SOD_Ker_Lp}
	\dD^b(X\times_Y X^+)/\kK^b  = \langle \cC, \wt{L}p^* \dD^b(X)\rangle,
	\end{equation}
	with $\cC = \{ E\in \dD^b(X\times_Y X^+)/\kK^b\,|\, Rp_* E = 0\}$. We aim at showing that $\cC \simeq \dD^b(\mathscr{A}_{f^+})$.
	
	By Proposition \ref{prop_pP_no_higher_ext}, we know that, for a projective object $\mM\in \Per{-1}(X/Y)$ and $\pP= \hH^{-1}_{X} \iota_{f}^* \mM$, the endomorphisms algebras of $p^{*}\pP$ and $\pP$ are isomorphic. Next, we show that they remain the same when we pass to the quotient category $\dD^-(X\times_Y X^+)/\kK$.
	
	\begin{LEM}\label{lem_pP_nice_endo_in_quot}
		Let $\pP$ be as above. Then, for any $E\in \dD^-(X\times_Y X^+)$,
		$$
		\Hom_{X\times_Y X^+}(p^* \pP, E) \simeq \Hom_{\dD^-(X\times_Y X^+)/\kK}(p^*\pP, E).
		$$
	\end{LEM}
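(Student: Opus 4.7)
The plan is to reduce the statement to Lemma \ref{lem_homs_in_D/K} by showing that $\Hom_{X\times_Y X^+}(p^*\pP, K)=0$ for every object $K\in\kK$. Once this vanishing is established, the hypothesis of Lemma \ref{lem_homs_in_D/K} is satisfied for $E = p^*\pP$ inside the ambient category $\dD^-(X\times_Y X^+)$ with $\kK$ as the thick subcategory, so the lemma immediately delivers the required isomorphism with $\Hom$ in the quotient category.

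To prove the vanishing, I would invoke the distinguished triangle of Lemma \ref{lem_triangle_for_pP}, which presents $p^*\pP$ as a cone built from two objects pulled back from the two sides of $W = X\times_Y X^+$:
$$
p^*\pP \to Lp^{+*}\bigl(f^{+*}f^+_*\nN^+\bigr) \to Lp^*\mM \to p^*\pP[1].
$$
Applying $\Hom_{X\times_Y X^+}(-, K)$ gives a long exact sequence whose outer terms I want to annihilate. This is done using adjunctions: $(Lp^*, Rp_*)$ yields
$$
\Hom_{X\times_Y X^+}(Lp^*\mM[n], K) \simeq \Hom_X(\mM[n], Rp_*K) = 0
$$
for all $n\in\mathbb{Z}$, since $Rp_*K = 0$ by definition of $\kK$; similarly $(Lp^{+*}, Rp^+_*)$ yields
$$
\Hom_{X\times_Y X^+}\bigl(Lp^{+*}(f^{+*}f^+_*\nN^+)[n], K\bigr) \simeq \Hom_{X^+}\bigl(f^{+*}f^+_*\nN^+[n], Rp^+_*K\bigr) = 0,
$$
since $Rp^+_*K = 0$ as well. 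Feeding these vanishings into the long exact sequence forces $\Hom_{X\times_Y X^+}(p^*\pP, K) = 0$ (and in fact $\Hom_{X\times_Y X^+}(p^*\pP[n], K) = 0$ for all $n$, although only the case $n=0$ is needed to apply Lemma \ref{lem_homs_in_D/K}, since shifts of $K$ remain in $\kK$).

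The argument is short because Lemma \ref{lem_triangle_for_pP} does all the geometric work: it replaces $p^*\pP$, whose derived/non-derived distinction is delicate (there is a non-vanishing $L^1p^*\pP$), by a two-step extension of objects that are genuinely derived pullbacks from $X$ and $X^+$ and therefore susceptible to the obvious adjunctions. The only potential obstacle is purely bookkeeping --- making sure that the hypotheses of Lemma \ref{lem_homs_in_D/K} (vanishing of $\Hom$ to every object of $\kK$, not merely to shifts of a generating set) are met; but this is automatic since $K\in\kK$ was arbitrary and no generation assumption is used in the proof of Lemma \ref{lem_homs_in_D/K}.
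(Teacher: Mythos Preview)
Your proof is correct and follows exactly the paper's approach: reduce to Lemma~\ref{lem_homs_in_D/K} by verifying $\Hom_{X\times_Y X^+}(p^*\pP,K)=0$ for all $K\in\kK$, which in turn comes from applying $\Hom(-,K)$ to the triangle of Lemma~\ref{lem_triangle_for_pP} and using the adjunctions $Lp^*\dashv Rp_*$, $Lp^{+*}\dashv Rp^+_*$ together with $Rp_*K=Rp^+_*K=0$. The paper states this in a single sentence, but your expanded version is the intended argument.
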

	\begin{proof}
		By Lemma \ref{lem_homs_in_D/K}, it suffices to check that $\Hom_{X\times_Y X^+}(p^*\pP, K) = 0$, for any $K \in \kK$. It immediately follows from applying $\Hom(-, K)$ to triangle (\ref{eqtn_triangle_for_pP}).
	\end{proof}
	Consider the composite functor
	$$
	\wt{p}^* \colon \dD^b(\mathscr{A}_f) \xrightarrow{\bar{p}^*|_{\dD^b(\mathscr{A}_f)}} \dD^-(X\times_Y X^+) \xrightarrow{Q} \dD^-(X\times_Y X^+)/\kK,
	$$
	where $Q$ denotes the quotient functor and $\overline{p}^*$ is as in Proposition \ref{prop_fully_faith_D(A_f)_to_D(W)}. Recall that we assume $Y= \Spec R$ to be affine and Noetherian. Endomorphism algebra $A_P$ of $\pP$ is finite over $R$, hence Noetherian. Thus, the category $\textrm{mod--}A_P$ of finitely generated right $A_P$ modules is abelian. Since $\pP$ is a projective generator of $\mathscr{A}_f$, $\dD^b(\mathscr{A}_f)$ is equivalent to  $\dD^b(\textrm{mod--}A_P)$. Using this equivalence and taking into account Proposition \ref{prop_pP_no_higher_ext}, we can define $\wt{p}_*$, right adjoint to $\wt{p}^*$, as
	\begin{equation}\label{eqtn_adjoint_to_p}
	\wt{p}_* \colon \dD^b(X\times_Y X^+)\to \dD^b(\textrm{mod--}A_P),\quad \wt{p}_*\colon E \mapsto R\Hom_{X\times_Y X^+}(p^* \pP, E).
	\end{equation}
	Finally, let $\mathscr{P}$ be the category of projective objects in $\mathscr{A}_f$ of the form $\hH^{-1}_X(\iota_f^* \mM)$, for $\mM\in \mathscr{P}_{-1}$. Since there is no non-trivial extension of objects in $\mathscr{P}$, it is an exact subcategory of $\mathscr{A}_f$. Moreover, for any $E \in \mathscr{A}_f$, there exists $\mM \in \mathscr{P}_{-1}$ and $\f\colon \mM \to E[1]$, surjective in $\Per{-1}(X/Y)$. As functor $\iota_{f*}\colon \mathscr{A}_f[1] \to \Per{-1}(X/Y)$ is $t$-exact, its left adjoint $\iota_f^*$ is right exact, i.e. morphism $\hH_X^{-1}(\iota_f^* \mM) \to \hH^{-1}_X(\iota_f^* E[1]) \simeq E$ is surjective in $\mathscr{A}_f$. This implies that
	$$
	\dD^b(\mathscr{A}_f) \simeq \Hot^{-,b}(\mathscr{P}).
	$$
	The last equivalence allows us to view $\wt{p}^*$ as a functor  
	\begin{equation}\label{eqtn_def_wt_p_term-wise}
	\wt{p}^{*}\colon \Hot^{-,b}(\mathscr{P}) \to \dD^-(X\times_Y X^+)/\kK.
	\end{equation}
	It is the term-wise extension of the functor $p^*\colon \mathscr{P}\to \dD^-(X\times_Y X^+)/\kK$.
	
	Lemma \ref{lem_proj_form_for_P} implies that the adjunction unit $\Id \to \wt{p}_* \wt{p}^*$ is an isomorphism on $\pP$. By Lemma \ref{lem_iso_on_perf_is_iso_on_Db} we have an isomorphism
	\begin{equation}\label{eqtn_pp=Id}
	\Id_{\dD^b(\mathscr{A}_f)} \to \wt{p}_*\wt{p}^*
	\end{equation}
	of functors $\dD^b(\mathscr{A}_f) \to \dD^b(\mathscr{A}_f)$.

	The embedding $\mathscr{A}_f \to \Per{0}(X/Y)$ is an exact functor of abelian categories. Hence, it induces the functor
	$$
	\Psi \colon \dD(\mathscr{A}_f) \to \dD_{\textrm{qc}}(X).
	$$	
	\begin{LEM}\label{lem_Rpp=Id_term-wise}
		There exists an isomorphism $Rp_* \wt{p}^* \xrightarrow{\simeq} \Psi|_{\dD^b(\mathscr{A}_f)}$ of functors $\Hot^{-,b}(\mathscr{P}) \to \dD^b(X)$.
	\end{LEM}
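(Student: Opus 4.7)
The plan is to first produce a natural isomorphism on objects of $\mathscr{P}$, extend it term-wise to $\Hot^b(\mathscr{P})$, and finally pass to $\Hot^{-,b}(\mathscr{P})$ using the extension principle of Lemma \ref{lem_iso_on_perf_is_iso_on_Db}. The morphism itself will be built from the $Lp^* \dashv Rp_*$ adjunction combined with the projection formula: since $Rp_*\oO_{X\times_Y X^+} \simeq \oO_X$ by Proposition \ref{prop_vanish_Rpi_Zpi_X_of_E}, and $\pP \in \Perf(X)$ by Proposition \ref{prop_proj_res_of_P_i}, the projection formula yields a natural isomorphism $Rp_* Lp^*\pP \xrightarrow{\simeq} \pP = \Psi(\pP)$ on all of $\mathscr{P}$.

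To rewrite this in terms of $p^*$ (rather than $Lp^*$), I combine the vanishings $L^i p^* \pP = 0$ for $i \geq 2$ (Lemma \ref{lem_vanishing_of_LpP}) with $Rp_* L^1 p^*\pP = 0$ (Lemma \ref{lem_proj_form_for_P}). The hypercohomology spectral sequence $E_2^{i,j} = R^jp_*(L^{-i}p^*\pP) \Rightarrow \hH^{i+j}(Rp_* Lp^*\pP)$ then degenerates and identifies $Rp_* Lp^*\pP$ with $Rp_* p^*\pP$ naturally in $\pP$. Composing with the projection-formula isomorphism produces the natural isomorphism $\mu_{\pP}\colon Rp_* p^*\pP \xrightarrow{\simeq} \Psi(\pP)$. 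Since $Rp_*$ kills $\kK$, this morphism passes without change to the quotient $\dD^-(X\times_Y X^+)/\kK$, so $\mu$ defines a natural transformation of functors restricted to $\mathscr{P}$.

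To extend $\mu$ to $\Hot^b(\mathscr{P})$ term-wise, I use that each $p^*\pP_j$ has no higher direct images under $p$ (Lemma \ref{lem_proj_form_for_P}), so for any bounded complex $\pP_\bullet \in \Hot^b(\mathscr{P})$ the spectral sequence $E_2^{i,j} = R^jp_*(p^*\pP_i) \Rightarrow \hH^{i+j}(Rp_* p^*\pP_\bullet)$ degenerates at $E_2$. This identifies $Rp_* p^*\pP_\bullet$ with the underived $p_* p^*\pP_\bullet = \pP_\bullet$, functorially in $\pP_\bullet$. Equivalently, $\mu$ is an isomorphism on $\Hot^b(\mathscr{P})$ by induction on triangles.

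Finally, to obtain the natural isomorphism on the entire category $\Hot^{-,b}(\mathscr{P})$ I invoke Lemma \ref{lem_iso_on_perf_is_iso_on_Db}: both $Rp_*\wt{p}^*$ and $\Psi$ are exact functors $\Hot^{-,b}(\mathscr{P}) \to \dD^-(X)$ which take $\Hot^{-,b}(\mathscr{P})^{\le 0}$ into $\dD^-(X)^{\le n_0}$ for some uniform $n_0$ (namely, $\Psi$ is $t$-exact for the standard structures, and the same bound propagates through the term-wise construction together with the above vanishings), and $\Psi$ visibly lands in $\dD^b(X)$. The main technical point, and the only thing beyond bookkeeping, is the uniform cohomological-amplitude bound needed to apply that lemma; once it is in place, the isomorphism $\mu|_{\Hot^b(\mathscr{P})}$ extends uniquely to the required natural isomorphism $Rp_* \wt{p}^* \xrightarrow{\simeq} \Psi$.
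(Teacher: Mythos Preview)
Your proposal is correct and follows essentially the same route as the paper: establish the isomorphism on $\mathscr{P}$ via Lemma~\ref{lem_proj_form_for_P}, use the degeneration of the spectral sequence $R^qp_*p^*\pP_r \Rightarrow R^{q+r}p_*(\wt{p}^*\pP_\bullet)$ to identify $Rp_*\wt{p}^*$ with the term-wise $p_*p^*$, and invoke Lemma~\ref{lem_iso_on_perf_is_iso_on_Db} to handle unbounded complexes. The only difference is cosmetic: your detour through $Lp^*$ and the projection formula is unnecessary, since Lemma~\ref{lem_proj_form_for_P} already gives $Rp_*p^*\pP\simeq\pP$ directly; and the paper applies Lemma~\ref{lem_iso_on_perf_is_iso_on_Db} to compare $\Psi$ with term-wise $p_*p^*$ first and then runs the spectral sequence on all of $\Hot^{-,b}(\mathscr{P})$, whereas you reverse this order.
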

	\begin{proof}
		Lemma \ref{lem_proj_form_for_P} implies that the adjunction unit gives an isomorphism $\Psi|_{\mathscr{P}} \xrightarrow{\simeq} p_*p^*|_{\mathscr{P}}$ of functors $\mathscr{P} \to \Coh(X)$. Since functor $\Psi|_{\dD^b(\mathscr{A}_f)}$ takes $\Hot^{-,b}(\mathscr{P})$ to $\dD^b(X)$, Lemma \ref{lem_iso_on_perf_is_iso_on_Db} implies that the term-wise extension of $p_*p^*$ to the category $\Hot^{-,b}(\mathscr{P})$ is isomorphic to $\Psi|_{\dD^b(\mathscr{A}_f)}$.
		
		Let now $\pP_{\bcdot}$ be a complex in $\Hot^{-,b}(\mathscr{P})$ and $F = \wt{p}^* \pP_{\bcdot}$. Lemma \ref{lem_proj_form_for_P} implies that the first layer of spectral sequence
		$$
		E_1^{q,r} = R^qp_* p^*\pP_r \, \Rightarrow R^{q+r} p_* F.
		$$
		has one non-zero row only. Hence, $Rp_* F$ is quasi-isomorphic to $p_* p^* \pP_{\bcdot}$, which finishes the proof.
	\end{proof}
	
	\begin{PROP}\label{prop_wt_p_ff}
		There exists a fully faithful functor
		$$
		\wt{p}^* \colon \dD^b(\mathscr{A}_f) \to \dD^b(X\times_Y X^+)/\kK^b.
		$$
	\end{PROP}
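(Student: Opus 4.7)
My plan has two main steps: first, I will show that $\wt{p}^{*}$ lands in the essential image of the embedding $\Theta\colon\dD^{b}(X\times_{Y}X^{+})/\kK^{b}\hookrightarrow \dD^{-}(X\times_{Y}X^{+})/\kK$ from Lemma \ref{lem_theta_fullly_faith}; then I will exhibit a left inverse to obtain fully faithfulness.

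For the boundedness step, fix $E\in \dD^{b}(\mathscr{A}_{f})$ and take a representative $\pP_{\bcdot}\in \Hot^{-,b}(\mathscr{P})$, so that $\wt{p}^{*}(E)$ is the image of the term-wise pullback $p^{*}\pP_{\bcdot}$ in the quotient. Lemma \ref{lem_Rpp=Id_term-wise} already gives $Rp_{*}(p^{*}\pP_{\bcdot})\simeq \Psi(E)\in \dD^{b}(X)$. I will prove a parallel statement for $Rp^{+}_{*}$: for each $\pP_{i}\in \mathscr{P}$ the distinguished triangle $L^{1}p^{*}\pP_{i}[1]\to Lp^{*}\pP_{i}\to p^{*}\pP_{i}$, together with $Rp^{+}_{*}Lp^{*}\pP_{i}=F(\pP_{i})\in \mathscr{A}_{f^{+}}[1]$ and Lemma \ref{lem_vanishing_of_LpP}, bounds each $Rp^{+}_{*}p^{*}\pP_{i}$, and the resolution-type triangle (\ref{eqtn_triangle_for_pP}) will let me control the totalisation. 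With both $Rp_{*}(p^{*}\pP_{\bcdot})$ and $Rp^{+}_{*}(p^{*}\pP_{\bcdot})$ bounded, a cohomology-sheaf analysis modelled on Proposition \ref{prop_Lp_bounded_up_to_torsion} yields, for sufficiently negative $l$, that $Rp_{*}$ and $Rp^{+}_{*}$ both annihilate $\tau^{X\times_{Y}X^{+}}_{\leq l-1}(p^{*}\pP_{\bcdot})$; Lemma \ref{lem_A_f_is_a_heart} (applied to both projections, which have fibers of relative dimension at most one) then places this truncation in $\kK$, so $p^{*}\pP_{\bcdot}\simeq \tau_{\geq l}(p^{*}\pP_{\bcdot})\in \dD^{b}$ in $\dD^{-}/\kK$ and the image of $\wt{p}^{*}(E)$ lies in the essential image of $\Theta$.

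For fully faithfulness, Lemma \ref{lem_pP_nice_endo_in_quot} applied to every shift of any $K\in \kK^{b}$ shows $R\Hom_{X\times_{Y}X^{+}}(p^{*}\pP,K)=0$; hence the functor $\wt{p}_{*}=R\Hom_{X\times_{Y}X^{+}}(p^{*}\pP,-)$ from (\ref{eqtn_adjoint_to_p}) descends to a well-defined functor $\wt{p}_{*}\colon \dD^{b}(X\times_{Y}X^{+})/\kK^{b}\to \dD^{b}(\mathscr{A}_{f})$. The already-established isomorphism $\Id_{\dD^{b}(\mathscr{A}_{f})}\xrightarrow{\simeq}\wt{p}_{*}\wt{p}^{*}$ from (\ref{eqtn_pp=Id}) then exhibits a left inverse to $\wt{p}^{*}$, forcing the latter to be fully faithful.

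The hardest point will be the uniform boundedness of $Rp^{+}_{*}(p^{*}\pP_{\bcdot})$: term-wise each $Rp^{+}_{*}p^{*}\pP_{i}$ is bounded, but preventing the totalisation from accumulating cohomology in arbitrarily negative degrees requires exploiting the triangle (\ref{eqtn_triangle_for_pP}), which expresses $p^{*}\pP$ as an extension of pull-backs from $X^{+}$ with controlled $p^{+}$-pushforwards. Once that uniform bound is in hand, the remainder of the argument is a mechanical combination of truncation triangles, Lemma \ref{lem_pP_nice_endo_in_quot}, and the identity $\Id\simeq \wt{p}_{*}\wt{p}^{*}$.
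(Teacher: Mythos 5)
Your argument has two genuine gaps, one conceptual and one technical.

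The conceptual gap is in the last step. You conclude fully faithfulness from the existence of a left inverse $\wt{p}_*$ with $\wt{p}_*\wt{p}^*\simeq\Id$. But a left inverse only gives faithfulness, not fullness: for $h\colon \wt{p}^*E_1\to\wt{p}^*E_2$ you can set $g:=\wt{p}_*h$, but nothing guarantees $\wt{p}^*g=h$ unless $\wt{p}^*\wt{p}_*\simeq\Id$ on the essential image, i.e.\ unless $\wt{p}_*$ is actually a \emph{right adjoint} to $\wt{p}^*$ on the quotient category and the isomorphism $\Id\simeq\wt{p}_*\wt{p}^*$ is the unit. To establish the adjunction on the quotient you must show that
$\Hom_{\dD^b(X\times_Y X^+)/\kK^b}(\wt{p}^*E, F)\simeq\Hom_{\dD^b(X\times_Y X^+)}(\wt{p}^*E, F)$,
which is precisely what Lemma \ref{lem_homs_in_D/K} provides once you know $\Hom(\wt{p}^*E, K)=0$ for all $K\in\kK$. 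You verify this vanishing only for $\wt{p}^*\pP$ itself (via Lemma \ref{lem_pP_nice_endo_in_quot}); passing from the projective generator to arbitrary $E\in\dD^b(\mathscr{A}_f)$ — which are bounded-above complexes of projectives — requires the limit argument of Lemma \ref{lem_limit_of_hom_is_hom}, which you do not address. The paper's route is in fact to bypass $\wt{p}_*$ entirely: it uses the hom-isomorphism from Lemma \ref{lem_homs_in_D/K} to reduce fully faithfulness in the quotient to the already-proved fully faithfulness of $\bar p^*\colon\dD^b(\mathscr{A}_f)\to\dD^-(X\times_Y X^+)$ (Proposition \ref{prop_fully_faith_D(A_f)_to_D(W)}).

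The technical gap is in the boundedness step, and you flag it yourself as the hardest point. You set out to bound each $Rp^+_*p^*\pP_i$ and then worry (correctly) that term-wise boundedness does not control the totalisation of an unbounded-above complex. But the statement you should be proving is much stronger and makes the difficulty evaporate: by Corollary \ref{cor_pull-back_of_A_f} (applied with $Z=X^+$, so $\pi_X=p$, $\pi_Z=p^+$), $p^*\pP_i\in\mathscr{A}_{p^+}$, i.e.\ $Rp^+_*(p^*\pP_i)=0$, not merely bounded. This immediately gives $Rp^+_*(\wt{p}^*E)=0$, hence $Rp^+_*\hH^i(\wt{p}^*E)=0$ for all $i$ by Lemma \ref{lem_A_f_is_a_heart}. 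Combining this with $Rp_*\wt{p}^*E\simeq\Psi(E)\in\dD^b(X)$ (Lemma \ref{lem_Rpp=Id_term-wise}) and again with Lemma \ref{lem_A_f_is_a_heart} for $p$, one gets $\tau_{<k-1}\wt{p}^*E\in\kK$ for $k$ with $\Psi(E)\in\dD^b(X)^{\gge k}$. Your triangle $L^1p^*\pP_i[1]\to Lp^*\pP_i\to p^*\pP_i$ and the flop functor are not needed here and would only give the weaker boundedness, which — as you note — you cannot sum up without further work.
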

	\begin{proof}
		Proposition \ref{prop_fully_faith_D(A_f)_to_D(W)} implies that $\wt{p}^* \colon \dD^b(\mathscr{A}_f) \to \dD^-(X\times_Y X^+)$ is fully faithful. In order to show that the composite $\dD^b(\mathscr{A}_f) \xrightarrow{\wt{p}^*} \dD^-(X\times_Y X^+) \xrightarrow{Q} \dD^-(X\times_Y X^+)/\kK$ is fully faithful, we check that, for any $E \in \dD^b(\mathscr{A}_f)$ and $F\in \dD^-(X\times_Y X^+)$, we have
		$$
		\Hom_{X\times_Y X^+}(\wt{p}^* E,F) \simeq \Hom_{\dD^-(X\times_Y X^+)/\kK}(\wt{p}^* E,F).
		$$
		By Lemma \ref{lem_homs_in_D/K}, it suffices to check that $\Hom(\wt{p}^*E, K) = 0$, for any $E \in \dD^b(\mathscr{A}_f)$ and any $K \in \kK$. By induction on triangles, Lemma \ref{lem_pP_nice_endo_in_quot} implies that, for any $i\in \mathbb{Z}$, $\Hom(\sigma_{\gge i} \wt{p}^*E, K) =0$. Since, for any $P\in \mathscr{P}$, the space $\Hom(p^*P, K)$ vanishes, we have $\Ext^j((\wt{p}^*E)^i,K) =0$, for any $i$ and $j$. It then follows from Lemma \ref{lem_limit_of_hom_is_hom} that $\Hom(\wt{p}^*E, K) = 0$.
		
		Thus, it suffices to show that functor $\wt{p}^*$ takes $\dD^b(\mathscr{A}_f)$ to the image of $\dD^b(X\times_Y X^+)/\kK^b$ in $\dD^-(X\times_Y X^+)/\kK$, see (\ref{eqtn_def_Theta}). Consider $E\in\dD^b(\aA_f)$. We shall show that $\wt{p}^* E$ is quasi-isomorphic in $\dD^-(X\times_Y X^+)/\kK$ to a complex with bounded cohomology. To this end, we shall find $N$ such that $\tau_{< N}\, \wt{p}^* E$ is an object in $\kK$.
		
		First, we note that $Rp^+_*(p^* \pP_i) = 0$, for any $i\in \mathbb{Z}$ (Corollary \ref{cor_pull-back_of_A_f}). Thus, the term-wise definition of $\wt{p}^*$ (\ref{eqtn_def_wt_p_term-wise}) implies that $Rp^+_*(\wt{p}^* E) = 0$ and, as $p^+$ has fibers of relative dimension bounded by one, $Rp^+_* \hH^i(\wt{p}^* E) =0$, for any $i\in \mathbb{Z}$ (see Lemma \ref{lem_A_f_is_a_heart}).
		
		By Lemma \ref{lem_Rpp=Id_term-wise}, we have $Rp_* \wt{p}^* E \simeq \Psi(E) \in \dD^b(X)^{\gge k}$, for some $k \in \mathbb{Z}$. Since $p$ has fibers of relative dimension bounded by one, it follows that $Rp_* \hH^i(\wt{p}^* E) = 0$, for $i<k-1$. Thus, $\tau_{< k-1} \wt{p}^* E $ lies in $\kK$.
	\end{proof}
	
	\begin{LEM}\label{lem_RHom(pP,C)}
		Let $\mM$ be a projective generator of $\Per{-1}(X/Y)$ and put $\pP = \hH^{-1}_X(\iota_f^* \mM)$. Further, let $C\in \dD^-(X\times_Y X^+)$ be such that $Rp^+_* C = 0$ and $R\Hom_{X\times_Y X^+}(p^*P, C) =0$. Then $Rp_* C = 0$.
	\end{LEM}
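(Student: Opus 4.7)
The plan is to exploit the distinguished triangle of Lemma \ref{lem_triangle_for_pP},
$$
p^{*}\pP \to Lp^{+*}(f^{+*}f^{+}_{*}\nN^{+}) \to Lp^{*}\mM \to p^{*}\pP[1],
$$
which is available because $\mM$ is a projective generator of $\Per{-1}(X/Y)$, with $\nN^{+}=(f^{+*}f_{*}\mM)^{\vee\vee}\in\mathscr{P}^{+}_{0}$ as in (\ref{eqtn_def_N+}).

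First I would apply the functor $R\Hom_{X\times_{Y}X^{+}}(-,C)$ to this triangle. The assumption $R\Hom_{X\times_{Y}X^{+}}(p^{*}\pP,C)=0$ together with its shift $R\Hom_{X\times_{Y}X^{+}}(p^{*}\pP[1],C)=0$ forces the middle morphism of the resulting triangle to be an isomorphism, so that
$$
R\Hom_{X\times_{Y}X^{+}}(Lp^{*}\mM,C)\simeq R\Hom_{X\times_{Y}X^{+}}(Lp^{+*}(f^{+*}f^{+}_{*}\nN^{+}),C).
$$
Next, I would rewrite both sides via the $Lp^{*}\dashv Rp_{*}$ and $Lp^{+*}\dashv Rp^{+}_{*}$ adjunctions. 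The right hand side becomes $R\Hom_{X^{+}}(f^{+*}f^{+}_{*}\nN^{+},Rp^{+}_{*}C)$, which vanishes by the hypothesis $Rp^{+}_{*}C=0$. The left hand side becomes $R\Hom_{X}(\mM,Rp_{*}C)$, so we conclude
$$
R\Hom_{X}(\mM,Rp_{*}C)=0.
$$

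Finally, I would invoke the fact, recalled in the proof of Proposition \ref{prop_DG_enha_of_sigma}, that the projective generator $\mM$ of $\Per{-1}(X/Y)$ is also a compact generator of $\dD_{\textrm{qc}}(X)$. Since $Rp_{*}C$ lies in $\dD_{\textrm{qc}}(X)$, the vanishing of $R\Hom_{X}(\mM[n],Rp_{*}C)$ for all $n\in\mathbb{Z}$ yields $Rp_{*}C=0$, as required.

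The only step requiring any care is the adjunction reduction, since one must keep the computation inside a setting where both $Lp^{*}\dashv Rp_{*}$ and $Lp^{+*}\dashv Rp^{+}_{*}$ apply to possibly unbounded complexes; but this causes no problem because $Rp_{*}$ and $Rp^{+}_{*}$ are defined on the unbounded categories of quasi-coherent sheaves and $p$, $p^{+}$ have fibers of relative dimension bounded by one.
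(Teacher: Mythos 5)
Your proof is correct and follows essentially the same route as the paper: apply $R\Hom(-,C)$ to the triangle of Lemma \ref{lem_triangle_for_pP}, use the two vanishing hypotheses (one directly, one via adjunction and $Rp^+_*C=0$) to conclude $R\Hom_X(\mM,Rp_*C)=0$, and then invoke compact generation. The paper merely compresses this into a single line.
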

	\begin{proof}
		Applying $R\Hom_{X\times_Y X^+}(-,C)$ to triangle (\ref{eqtn_triangle_for_pP}) yields, $R\Hom_{X\times_Y X^+}(Lp^*\mM, C) \simeq R\Hom_X(\mM, Rp_* C) =0$. Since $\mM$ is a compact generator of $\dD_{\textrm{qc}}(X)$, cf. \cite[Lemma 3.2.2]{VdB} and proof of Proposition \ref{prop_DG_enha_of_sigma}, it follows that $Rp_* C = 0$.
	\end{proof}
	
	\begin{PROP}\label{prop_SOD_for_D(W)/K}
		Category $\dD^b(X\times_Y X^+)/\kK^b$ admits a semi-orthogonal decomposition
		$$
		\dD^b(X\times_Y X^+)/\kK^b = \langle \wt{p}^* \dD^b(\mathscr{A}_f), \wt{L}p^{+*} \dD^b(X^+)\rangle.
		$$
	\end{PROP}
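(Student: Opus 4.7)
The strategy is to apply the symmetric analogue of decomposition (\ref{eqtn_SOD_Ker_Lp}) with the roles of $X$ and $X^+$ exchanged, and then identify the appropriate orthogonal component with $\wt{p}^{*}\dD^b(\mathscr{A}_f)$. By repeating word for word the argument that established Proposition \ref{prop_Lp_has_adjoint} and the decomposition (\ref{eqtn_SOD_Ker_Lp}) with $p$ replaced by $p^+$, one obtains a fully faithful functor $\wt{L}p^{+*}\colon \dD^b(X^+)\to \dD^b(X\times_Y X^+)/\kK^b$ with right adjoint $Rp^+_*$, and an associated semi-orthogonal decomposition
$$
\dD^b(X\times_Y X^+)/\kK^b \;=\; \langle \cC^+,\, \wt{L}p^{+*}\dD^b(X^+)\rangle,\quad \cC^+ := \{E\,|\, Rp^+_*E = 0\}.
$$
The proposition will therefore follow once we verify $\wt{p}^{*}\dD^b(\mathscr{A}_f) = \cC^+$.

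The inclusion $\wt{p}^{*}\dD^b(\mathscr{A}_f)\subset \cC^+$ is the easy direction: since $\wt{p}^*$ is the term-wise extension of $p^*$ on the category $\mathscr{P}$ (see (\ref{eqtn_def_wt_p_term-wise})), and Corollary \ref{cor_pull-back_of_A_f} gives $Rp^+_*p^*\pP = 0$ for every projective $\pP$ in $\mathscr{A}_f$, the functor $Rp^+_*\circ \wt{p}^*$ vanishes on $\mathscr{P}$, hence on $\dD^b(\mathscr{A}_f)\simeq \Hot^{-,b}(\mathscr{P})$. Combining this with the adjunction of Proposition \ref{prop_Lp_has_adjoint} applied to $p^+$ immediately yields the semi-orthogonality $\Hom(\wt{L}p^{+*}H,\wt{p}^{*}G)=\Hom(H,Rp^+_*\wt{p}^{*}G)=0$ for any $H\in \dD^b(X^+)$, $G\in \dD^b(\mathscr{A}_f)$.

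For the reverse inclusion $\cC^+\subset \wt{p}^{*}\dD^b(\mathscr{A}_f)$, note that the functor $\wt{p}_* := R\Hom(p^{*}\pP,-)$ of (\ref{eqtn_adjoint_to_p}) descends to $\wt{p}_*\colon \dD^b(X\times_Y X^+)/\kK^b \to \dD^b(\mathscr{A}_f)$, because $\wt{p}_*K=R\Hom(\pP,Rp_*K)=0$ for $K\in \kK^b$; and it remains right adjoint to $\wt{p}^{*}$ in the quotient, since Lemma \ref{lem_pP_nice_endo_in_quot} together with the argument in the proof of Proposition \ref{prop_wt_p_ff} shows that the Hom in $\dD^b(X\times_Y X^+)/\kK^b$ out of $\wt{p}^{*}G$ coincides with the Hom in $\dD^b(X\times_Y X^+)$. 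Given $E\in \cC^+$, set $G:=\wt{p}_*E\in \dD^b(\mathscr{A}_f)$ and let $F$ denote the cone in $\dD^b(X\times_Y X^+)/\kK^b$ of the counit $\wt{p}^{*}\wt{p}_*E\to E$. Applying $\wt{p}_*$ to the defining triangle, using the isomorphism $\wt{p}_*\wt{p}^{*}\simeq \Id$ of (\ref{eqtn_pp=Id}), yields $\wt{p}_*F=0$; and since $\cC^+$ is triangulated and contains both endpoints, $F\in \cC^+$ as well. Choosing any lift $\wt{F}\in \dD^b(X\times_Y X^+)$ of $F$, the vanishings $\wt{p}_*F = 0$ and $Rp^+_*F = 0$ translate, via Lemma \ref{lem_pP_nice_endo_in_quot}, into $R\Hom(p^{*}\pP,\wt{F})=0$ and $Rp^+_*\wt{F}=0$; Lemma \ref{lem_RHom(pP,C)} now forces $Rp_*\wt{F}=0$, so $\wt{F}\in \kK^b$ and $F=0$ in the quotient.

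The main obstacle is the point addressed in the last step: passing from vanishing statements formulated in the quotient category $\dD^b(X\times_Y X^+)/\kK^b$ to vanishings in the ambient derived category $\dD^b(X\times_Y X^+)$ that can be fed into Lemma \ref{lem_RHom(pP,C)}. This rests on the key fact, recorded in Lemma \ref{lem_pP_nice_endo_in_quot}, that Hom-groups out of $p^{*}\pP$ are unaffected by quotienting by $\kK$; once this is in hand, choosing a bounded lift of $F$ and invoking boundedness of $\wt{p}_*E$, which follows from perfectness of $p^{*}\pP$ established in Proposition \ref{prop_pP_no_higher_ext}, closes the argument.
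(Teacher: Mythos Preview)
Your proposal is correct and follows essentially the same approach as the paper: establish the $X^+$-version of the decomposition (\ref{eqtn_SOD_Ker_Lp}), verify $\wt{p}^*\dD^b(\mathscr{A}_f)\subset \cC^+$ via Corollary \ref{cor_pull-back_of_A_f}, and for the reverse inclusion consider the cone of the counit $\wt{p}^*\wt{p}_*E\to E$ and use Lemma \ref{lem_RHom(pP,C)} to show it lies in $\kK$. The only difference is organizational: the paper forms the triangle directly in $\dD^-(X\times_Y X^+)$ (where $\bar{p}^*\wt{p}_*E$ naturally lives) and shows $C_E\in\kK$, whereas you form it in the quotient and then lift the cone back to $\dD^b(X\times_Y X^+)$; both routes invoke the same key lemma and yield the same conclusion.
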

	\begin{proof}
		Since $\wt{p}^*$ is defined term-wise, Corollary \ref{cor_pull-back_of_A_f} implies that $Rp^+_*\circ\wt{p}^* = 0$. In view of semi-orthogonal decomposition (\ref{eqtn_SOD_Ker_Lp}) and Proposition \ref{prop_wt_p_ff}, it suffices to show that any $E\in \dD^b(X\times_YX^+)/\kK^b$ such that $Rp^+_* E = 0$ is isomorphic to $\wt{p}^* F$, for some $F \in \dD^b(\mathscr{A}_f)$.
		
		Consider triangle
		\begin{equation}\label{eqtn_trian_Hom_(pP)}
		\wt{p}^*\wt{p}_*(E) \to E\to C_E \to  \wt{p}^*\wt{p}_*(E)[1]
		\end{equation}
		in $\dD^-(X\times_Y X^+)$. Since $Rp^+_* E = 0$ and $Rp^+_* \wt{p}^*(F)=0$, for any $F \in \dD^b(\mathscr{A}_f)$, by applying $Rp^+_*$ to triangle (\ref{eqtn_trian_Hom_(pP)}) we obtain $Rp^+_* C_E  =0$. In view of isomorphism (\ref{eqtn_pp=Id}), by applying $\wt{p}_*$ to triangle (\ref{eqtn_trian_Hom_(pP)}), we get $\wt{p}_* C_E = 0$. By formula (\ref{eqtn_adjoint_to_p}), $R\Hom(p^* \pP, C_E) = 0$. By Lemma \ref{lem_RHom(pP,C)}, $Rp_* C_E = 0$, i.e. $C_E \in \kK$. Hence, $\wt{p}^*\wt{p}_*E\to E $ is an isomorphism in $\dD^b(X\times_Y X^+)/\kK^b$.
	\end{proof}

	The following Corollary gives a geometric description for the category $\dD^b(\mathscr{A}_f)$.
	\begin{COR}\label{cor_D(A_f)_as_a_quotient}
		We have an equivalence of categories
		$$
		\dD^b(\mathscr{A}_f) = \{E \in \dD^b(X\times_Y X^+)\, |\, Rp^+_*(E) = 0\}/\kK^b.
		$$
	\end{COR}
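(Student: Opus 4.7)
The corollary is essentially a reformulation of Proposition~\ref{prop_SOD_for_D(W)/K}, and the plan is to prove it in two short steps.

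First, I would show that $\wt{p}^*\dD^b(\mathscr{A}_f)$ coincides, as a full subcategory of $\dD^b(X\times_Y X^+)/\kK^b$, with
$$
\cC' := \{E \in \dD^b(X\times_Y X^+)/\kK^b \,|\, Rp^+_*(E) = 0\},
$$
where $Rp^+_*$ descends to the quotient because $\kK^b \subset \ker(Rp^+_*)$ by the definition of $\kK$. By Corollary~\ref{cor_pull-back_of_A_f} together with the term-wise definition of $\wt{p}^*$, the composite $Rp^+_* \circ \wt{p}^*$ vanishes, so $\wt{p}^*\dD^b(\mathscr{A}_f) \subseteq \cC'$. Applying~(\ref{eqtn_SOD_Ker_Lp}) with the roles of $p$ and $p^+$ exchanged gives a semi-orthogonal decomposition $\dD^b(X\times_Y X^+)/\kK^b = \langle \cC', \wt{L}p^{+*}\dD^b(X^+)\rangle$; comparing with Proposition~\ref{prop_SOD_for_D(W)/K}, both $\cC'$ and $\wt{p}^*\dD^b(\mathscr{A}_f)$ are realised as the left-orthogonal of the admissible subcategory $\wt{L}p^{+*}\dD^b(X^+)$, hence they coincide. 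Together with the full faithfulness of $\wt{p}^*$ (Proposition~\ref{prop_wt_p_ff}), this produces an equivalence $\wt{p}^*\colon \dD^b(\mathscr{A}_f) \xrightarrow{\simeq} \cC'$.

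Second, I would identify $\cC'$ with the Verdier quotient $\cC/\kK^b$, where $\cC := \{E \in \dD^b(X\times_Y X^+) \,|\, Rp^+_*(E) = 0\}$. The inclusion $\cC \hookrightarrow \dD^b(X\times_Y X^+)$ induces a functor $\cC/\kK^b \to \dD^b(X\times_Y X^+)/\kK^b$ whose essential image equals $\cC'$: indeed an object $E$ of $\cC'$, regarded as an object of $\dD^b(X\times_Y X^+)$, satisfies $Rp^+_*(E) = 0$ already in $\dD^b(X^+)$, and hence lies in $\cC$. For fullness, a standard calculus-of-fractions argument applies: any roof $E \xleftarrow{f} Z \to F$ representing a morphism in $\dD^b(X\times_Y X^+)/\kK^b$ between $E, F \in \cC$ has $\mathrm{Cone}(f) \in \kK^b$, whence $Rp^+_*(Z) \simeq Rp^+_*(E) = 0$ and therefore $Z \in \cC$. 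Consequently $\Hom_{\cC/\kK^b}(E,F) = \Hom_{\dD^b(X\times_Y X^+)/\kK^b}(E,F)$, and combining the two steps yields the claimed equivalence.

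The only non-routine ingredient is the appeal to Proposition~\ref{prop_SOD_for_D(W)/K}, already established; after that, the identification of the full subcategory $\cC'$ of the big quotient with the Verdier quotient $\cC/\kK^b$ is a formal exercise in Verdier localisation and should present no genuine obstacle.
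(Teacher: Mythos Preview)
Your proof is correct and matches the paper's intent: the paper gives no explicit proof and treats the corollary as an immediate consequence of Proposition~\ref{prop_SOD_for_D(W)/K}, which is precisely how you derive it. One small slip: in the semi-orthogonal decomposition $\langle A, B\rangle$ used here (with $\Hom(B,A)=0$), the component $A$ is the \emph{right} orthogonal $B^\perp$, not the left orthogonal; this does not affect the argument, since both $\cC'$ and $\wt{p}^*\dD^b(\mathscr{A}_f)$ are identified with the same orthogonal complement $(\wt{L}p^{+*}\dD^b(X^+))^\perp$ regardless. Your second step, making explicit the standard identification of the Verdier quotient $\cC/\kK^b$ with the full subcategory $\cC'$ of $\dD^b(X\times_YX^+)/\kK^b$ via calculus of fractions, is exactly the routine verification the paper leaves to the reader.
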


	Proposition \ref{prop_SOD_for_D(W)/K} implies the existence of $\wt{p}_!\colon \dD^b(X \times_Y X^+)/\kK^b \to \dD^b(\mathscr{A}_f)$, left adjoint to $\wt{p}^{*}$, which is defined by the functorial exact triangle 
	\begin{equation}\label{eqtn_def_p!}
	\wt{L}p^{+*} Rp^+_*\to \Id \to \wt{p}^{*}\wt{p}_! \to \wt{L}p^{+*}Rp^+_*[1]
	\end{equation}
	associated to a semi-orthogonal decomposition. By exchanging the role of $X$ and $X^+$ we also get $\wt{p}^+_! \colon \dD^b(X\times_YX^+)/\kK^b \to \dD^b(\mathscr{A}_{f^+})$.

	\begin{LEM}\label{lem_cotwist_is_shift}
		The composite $\wt{p}_!\wt{p}^{+*} \wt{p}^+_!\wt{p}^*\colon \dD^b(\mathscr{A}_f)\to \dD^b(\mathscr{A}_f)$ is isomorphic to $\Id_{\dD^b(\mathscr{A}_f)}[4]$.
	\end{LEM}
	\begin{proof}
		First, we check that $\wt{p}^+_!\wt{p}^*$ restricts to a functor $\mathscr{P}\to \mathscr{P}^+$. By Lemma \ref{lem_Rpp=Id_term-wise}, $Rp_*\wt{p}^* \simeq \Psi$. Hence, for $P\in \mathscr{A}_f$ projective, triangle (\ref{eqtn_def_p!}), with $p$ and $p^+$ interchanged, applied to $p^*P = \wt{p}^*P$ yields an exact triangle
		$$
		\wt{L}p^*P\to p^*P \to \wt{p}^{+*}\wt{p}^+_!\wt{p}^*P\to \wt{L}p^*P[1].
		$$
		By Lemma \ref{lem_vanishing_of_LpP} object $Lp^*P$ has two non-zero cohomology sheaves, $p^*P$ and $L^1p^*P$. It follows that $\wt{p}^{+*}\wt{p}^+_!\wt{p}^*P\simeq L^1p^*P[2]$. Hence, in view of Lemma \ref{lem_Rpp=Id_term-wise}, $\Psi \wt{p}^+_!\wt{p}^*P \simeq Rp^+_*\wt{p}^{+*}\wt{p}^+_!\wt{p}^*P \simeq Rp^+_*L^1p^*P[2]$. As $Rp^+_*Lp^*$ restricts to an equivalence $\mathscr{A}_f\xrightarrow{\simeq} \mathscr{A}_{f^+}[1]$, Corollary \ref{cor_flop_t-exact_and_equiv_on_A_f}, object $Rp^+_*L^1p^*P = P^+$ is projective in $\mathscr{A}_f^+$. Hence $\wt{p}^+_!p^*P[-2] = P^+$ is projective in $\mathscr{A}_{f^+}$.
		
		As $\wt{p}^+_! \wt{p}^*[-2]$ maps objects of $\mathscr{P}$ to objects of $\mathscr{A}_{f^+}$, it maps acycylic complexes in $\mathscr{P}$ to acycylic complexes. Hence, the class $\mathscr{P}$ is adapted to $\wt{p}^+_!\wt{p}^*[-2]$, i.e. the functor can be calculated term-wise using the equivalence $\dD^b(\mathscr{A}_f) \simeq \Hot^{-,b}(\mathscr{P})$. The same is true for the functor $\wt{p}_!\wt{p}^{+*}[-2]$ in the opposite direction. Moreover, since $\wt{p}^+_!\wt{p}^*[-2]$ maps complexes of objects of $\mathscr{P}$ to complexes of objects of $\mathscr{P}^+$ also the composite $\wt{p}_!\wt{p}^{+*}\wt{p}^+_!\wt{p}^*[-4]$ can be calculated term-wise. Therefore, it suffices to check that $\wt{p}_!\wt{p}^{+*}\wt{p}^+_!\wt{p}^*[-4]|_{\mathscr{P}}\simeq \Id_{\mathscr{P}}$.
		
		Precomposing triangle (\ref{eqtn_def_p!}), with $p$ and $p^+$ interchanged, with $\wt{p}^*$, composing with $\wt{p}_!$ and using the isomorphism $\wt{p}_!\wt{p}^*\simeq \Id$, we arrive at an exact triangle:
		\begin{equation}\label{eqtn_tr}
		\wt{p}_!\wt{L}p^*Rp_*p^*P \to P \to \wt{p}_!\wt{p}^{+*} \wt{p}^+_!\wt{p}^*P \to \wt{p}_!\wt{L}p^*Rp_*p^*P[1].
		\end{equation} 
		First, we calculate the image of $\wt{p}_!\wt{L}p^*Rp_*p^*P \simeq \wt{p}_!\wt{L}p^*\Psi P \simeq \wt{p}_!\wt{L}p^*P$ under  $\Psi = Rp_*\wt{p}^*$ using triangle (\ref{eqtn_def_p!}) composed with $Rp_*$ and applied to $\wt{L}p^*P$:
		$$
		Rp_*Lp^{+*}Rp^+_*\wt{L}p^*P \to Rp_* \wt{L}p^*P \to \Psi\wt{p}_!\wt{L}p^*P \to Rp_*Lp^{+*}Rp^+_*\wt{L}p^*P [1].
		$$
		By Lemma \ref{lem_proj_form_for_P} we have $Rp_*\wt{L}p^*P \simeq P$. As $Rg_*P =0$, triangle (\ref{eqtn_triangle_for_ff_from_f}) implies that $Rp_*Lp^{+*}Rp^+_*\wt{L}p^*P \simeq P[2]$. It follows that  $\Psi\wt{p}_!\wt{L}p^*P$ has non zero cohomology in degrees $0$ and $-3$ only, both isomorphic to $P$. The map $\wt{p}_!\wt{L}p^*Rp_*p^*P \to P$ in (\ref{eqtn_tr}) is the composition of 2 adjunction counits $\wt{L}p^*Rp_*\to \Id$ and $\wt{p}_!\wt{p}^* \to \Id$. The first when applied to $p^*P$ is the truncation at the zero cohomology, the second is an isomorphism. Hence, $\wt{p}_!\wt{p}^{+*} \wt{p}^+_!\wt{p}^*P \simeq P[4]$.
	\end{proof}
	
		\begin{PROP}\label{prop_another_SOD_for_D(W)}
		Category $\dD^b(X\times_Y X^+)/\kK^b$ admits a semi-orthogonal decomposition
		\begin{equation}\label{eqtn_another_SOD_for_D(W)}
		\dD^b(X\times_Y X^+)/\kK^b \simeq \langle \wt{L}p^*\dD^b(X), \wt{p}^*\dD^b(\mathscr{A}_f)\rangle.
		\end{equation}
	\end{PROP}
	\begin{proof}
		Functor $\wt{p}_*(-) \simeq R\Hom_{\dD^b(X\times_Y X^+)/\kK^b}(p^*\pP, -)$ is right adjoint to the fully faithful functor $\wt{p}^*$ of Proposition \ref{prop_wt_p_ff}.
		
		First, we check that $\wt{p}_*\wt{L}p^*(E) = 0$, for any $E \in \dD^b(X)$. It follows from Lemma \ref{lem_pP_nice_endo_in_quot} that the above statement is equivalent to
		$$
		\Hom_{X\times_Y X^+}(p^*\pP, Lp^*E) = 0,
		$$
		for any $E \in \dD^b(X)$.
		
		By applying $\Hom_{X\times_Y X^+}(-, Lp^*E)$ to triangle (\ref{eqtn_triangle_for_pP}), we learn that $\Hom_{X\times_Y X^+}(p^* \pP, Lp^* E)$ is a cone of a morphism
		$$
		\theta_E\colon \Hom_{X\times_Y X^+}(Lp^*\mM, Lp^*E) \to \Hom_{X\times_Y X^+}(Lp^{+*}(f^{+*}f^+_*\nN^+), Lp^*E).
		$$
		Since $Rp_*\oO_{X\times_Y X^+} \simeq \oO_X$, we have $\Hom_{X\times_Y X^+}(Lp^*\mM, Lp^*E)\simeq \Hom_X(\mM, E)$. Moreover, in view of Proposition \ref{prop_iso_of_ff_and_pp_for_flop} and definition (\ref{eqtn_def_N+}) of $\nN^+$, we have $\Hom_{X\times_Y X^+}(Lp^{+*}(f^{+*}f^+_*\nN^+), Lp^*E) \simeq \Hom_{X^+}(F(\mM), F(E))$. Therefore, $\theta_E$ is interpreted as the morphism induced by functor $F$
		$$
		\nu_E\colon \Hom_X(\mM, E) \to \Hom_{X^+}(F(\mM), F(E)).
		$$
		Since the flop functor is an equivalence of categories (see Corollary \ref{cor_flop_is_equiv}), $\nu_E$ is an isomorphism, i.e. $\Hom_{X \times_Y X^+}(p^*\pP, Lp^*E) \simeq \textrm{Cone}(\theta_E) \simeq 0$.
		
		In order to prove the semi-orthogonal decomposition (\ref{eqtn_another_SOD_for_D(W)}) it suffices to show that, for any $E\in \dD^b(X\times_Y X^+)/\kK^b$ such that $\wt{p}_*(E)\simeq 0$, we have $E\simeq \wt{L}p^*(E')$, for some $E'\in \dD^b(X)$.
		
		Let now $E\in \dD^b(X\times_Y X^+)/\kK^b$ satisfy $\wt{p}_*E \simeq 0$. Consider an exact triangle
		\begin{equation}\label{eqtn_proof_of_SOD}
		\wt{L}p^*Rp_* E\to E \to \wt{C}_E \to \wt{L}p^*Rp_* E [1].
		\end{equation}
		We have $Rp_* \wt{C}_E =0$. Moreover, since $\wt{p}_* \wt{L}p^*Rp_*E = 0$, it follows that $\wt{p}_*C_E$. Now, we proceed as in the proof of Lemma \ref{lem_RHom(pP,C)} to show that $Rp^+_*\wt{C}_E \simeq 0$.
		
		Applying $R\Hom_{X\times_Y X^+}(-,\wt{C}_E)$ to triangle (\ref{eqtn_triangle_for_pP}) yields
		$$
		R\Hom_{X\times_Y X^+}(Lp^{+*}(f^{+*}f^+_*\nN^+), \wt{C}_E) \simeq R\Hom_{X^+}(f^{+*}f^+_*\nN^+, Rp^+_* \wt{C}_E) =0.
		$$
		As we have already mentioned, the sheaf $f^{+*}f^+_*\nN^+$ is isomorphic to the flop $F(\mM)$. Since the flop functor $F$ is an equivalence (see Corollary \ref{cor_flop_is_equiv}), $F(\mM)$ is a compact generator of $\dD^b(X^+)$. Thus, vanishing of $R\Hom_{X^+}(F(\mM), Rp^+_*\wt{C}_E)$ implies that $Rp^+_*\wt{C}_E \simeq 0$.
		
		Hence, in triangle (\ref{eqtn_proof_of_SOD}) sheaf $\wt{C}_E$ is an object of $\kK^b$. Therefore, $E \simeq \wt{L}p^*Rp_* E$ in $\dD^b(X\times_Y X^+)/\kK^b$.
	\end{proof}
	
	\begin{THM}\label{thm_schober}
		Category $\dD^b(X\times_YX^+)/\kK^b$ admits 4-periodical SODs
		\begin{align*}
		\dD^b(X\times_YX^+)/\kK^b &= \langle \wt{p}^*\dD^b(\mathscr{A}_f), \wt{L}p^{+*}\dD^b(X^+)\rangle = \langle \wt{L}p^{+*}\dD^b(X^+), \wt{p}^{+*}\dD^b(\mathscr{A}_{f^+})\rangle = \\
		&= \langle \wt{p}^{+*}\dD^b(\mathscr{A}_{f^+}), \wt{L}p^{*}\dD^b(X)\rangle = \langle \wt{L}p^{*}\dD^b(X), \wt{p}^{*}\dD^b(\mathscr{A}_{f})\rangle.
		\end{align*}
		In particular, $(\wt{p}^*\dD^b(\mathscr{A}_f), \wt{p}^{+*}\dD^b(\mathscr{A}_{f^+}))$ and $(\wt{L}p^*\dD^b(X), \wt{L}p^{*}\dD^b(X^+))$ are spherical pairs. 
	\end{THM}
	\begin{proof}
		Propositions \ref{prop_SOD_for_D(W)/K} and \ref{prop_another_SOD_for_D(W)} for $X$ and $X^+$ imply the 4-periodical SODs. Proposition \ref{prop_quad_of_recol} claims that $(\wt{p}^*\dD^b(\mathscr{A}_f), \wt{p}^{+*}\dD^b(\mathscr{A}_{f^+}))$ and $(\wt{L}p^*\dD^b(X), \wt{L}p^{*}\dD^b(X^+))$ are spherical pairs.
	\end{proof}
	
	\begin{COR}\label{cor_co_twist_for_Psi}
		Functor $\Psi \colon \dD^b(\mathscr{A}_f) \to \dD^b(X)$ is spherical. 
		The unit and counit for the $\Psi^* \dashv \Psi$ adjunction fit into functorial exact triangles:
		\begin{align}\label{eqtn_func_tr_Psi} 
		&\Psi^*\Psi \to \Id_{\dD^b(\mathscr{A}_f)} \to \Id_{\dD^b(\mathscr{A}_f)}[4] \to \Psi^*\Psi[1] ,& &F^+F \to \Id_{\dD^b(X)} \to \Psi \Psi^* \to {F^+F}[1].& 
		\end{align} 
	\end{COR}
	\begin{proof}
		By Proposition \ref{prop_quad_of_recol} and Theorem \ref{thm_schober} $(\wt{p}^*\dD^b(\mathscr{A}_f), \wt{p}^{+*}\dD^b(\mathscr{A}_{f^+}))$ is a spherical pair. 
		By Proposition \ref{prop_Kap_Sche} the corresponding spherical functor is $Rp_*\wt{p}^*$. 
		By Lemma \ref{lem_Rpp=Id_term-wise} the latter is isomorphic to $\Psi$. 
		
		By Proposition \ref{prop_Kap_Sche} the cone of the counit $\Psi^*\Psi \to \Id_{\dD^b(\mathscr{A}_f)}$ is isomorphic to $\wt{p}_!\wt{p}^{+*}\wt{p}^+_!\wt{p}^*$ and the cone of the unit $\Id_{\dD^b(X)} \to \Psi \Psi^*$ is $ Rp_*\wt{L}p^{+*}Rp^+_*\wt{L}p^*\simeq Rp_*Lp^{+*}Rp^+_*Lp^*=F^+F$. We conclude by the isomorphism $\wt{p}_!\wt{p}^{+*}\wt{p}^+_!\wt{p}^* 
		\simeq \Id_{\dD^b(\mathscr{A}_f)}[4]$, Lemma \ref{lem_cotwist_is_shift}. 
	\end{proof}

	\section{Contraction algebra as the endomorphism algebra of a projective generator}\label{sec_contr_alg}

	In \cite{DonWem} W. Donovan and M. Wemyss considered a morphism $f\colon X\to Y$ between Gorenstein varieties of dimension three that contracts a rational irreducible curve $C$ to a point. If $Y$ is a spectrum of a complete local Noetherian ring, the category $\Per{-1}(X/Y)$ has projective generator. In \cite{DonWem} a \emph{contraction algebra} $A_\textrm{con}$ is defined as the quotient of the endomorphism algebra of the projective generator for $\Per{-1}(X/Y)$ by morphisms which factor through $\oO_X$. Algebra $A_\textrm{con}$ is shown to govern non-commutative deformations of $\oO_C(-1)$. If $N_{X/C} \simeq \oO_C\oplus \oO_C(-2)$, algebra $A_\textrm{con}$ is commutative and was first considered in terms of deformation theory by Y. Toda in \cite{Toda3}.
	
	We prove in our, more general, set-up that $A_\textrm{con}$ is isomorphic to the endomorphism algebra of a projective generator for $\mathscr{A}_f$.

	Category $\Per{0}(X/Y)$ has a projective generator $\nN$ and $Q = \hH^0_X(\iota_f^*\nN)$ is a projective generator for $\mathscr{A}_f$, Lemma \ref{lem_ses_P_N}. Endomorphisms algebras $A_Q = \End_X(\qQ)$ and $A_X = \End_X(\nN)$ are Noetherian, and we have equivalences $\dD(\textrm{Mod--}A_Q)\simeq \dD(\mathscr{A}_f)$, $\dD(\textrm{Mod--}A_X)\simeq \dD_{\textrm{qc}}(X)$.
	
	Projective generator $\qQ$ for $\mathscr{A}_f$ is an object in $\Per{0}(X/Y)$. It follows that $\qQ' :=R\Hom_X(\nN,\qQ)$ is an $A_Q^{\opp}\otimes_k A_X$ module and, under the above equivalences, functor $\Psi$ is isomorphic to 
	\begin{equation*}
	\Psi\simeq \Phi_{Q'} \colon \dD(\textrm{Mod--}A_Q)\to \dD(\textrm{Mod--}A_X).
	\end{equation*}
	Proposition \ref{prop_proj_res_of_P_i} yields a finite projective resolution of $\qQ$ in $\Per{0}(X/Y)$. Hence, $\qQ'$ is a perfect  $A_X$ DG module and by Proposition \ref{prop_AnnLog} the right adjoint $\Psi^!$ exists and is isomorphic to a bimodule functor.

	\begin{PROP}\label{prop_Psi!_M_i}
		Let $f$ satisfy (a), $\mM_i$ be as in (\ref{eqtn_ses_def_M_i}), and $\pP_i$ as in (\ref{eqtn_def_P_i}). Then $\Psi\Psi^!(\mM_i)\simeq \Psi \pP_i[-2]$.
	\end{PROP}
	\begin{proof}
		Considering the right adjoint to the functorial exact triangles of Corollary \ref{cor_co_twist_for_Psi} yields an exact triangle:
		\begin{equation}\label{eqtn_tr_M_i_K_i} 
		\Psi \Psi^!(\mM_i) \to \mM_i \to (F^+F)^{-1}(\mM_i)\to 	\Psi \Psi^!(\mM_i)[1].
		\end{equation} 
		Put $\kK_i=(F^+F)^{-1}(\mM_i)$. As $Rf_* F^+F = Rf_*$, we have $Rg_*\kK_i \simeq Rg_*\mM_i$. Hence, by Proposition \ref{prop_Lgg_M}, $\hH^0_X(Lg^*Rg_*\kK_i)=f^*f_*\mM_i$, $\hH^{-1}_X(Lg^*Rg_*\kK_i) = \mM_i$ and the remaining cohomology sheaves vanish. Considering long exact sequence of cohomology sheaves of the exact triangle obtained by applying functorial exact triangle (\ref{eqtn_triangle_for_ff_from_f}) to $\kK_i$:
		$$
		\kK_i[1] \to Lg^*Rg_*\kK_i \to \mM_i \to \kK_i[2],
		$$
		we get that $\hH^0(\kK_i) \simeq \mM_i$ and $\hH^1(\kK_i) \simeq \pP_i$ (as $\pP_i$ is the kernel of the surjective morphism $f^*f_*\mM_i \to \mM_i$). The long exact sequence of cohomology sheaves of (\ref{eqtn_tr_M_i_K_i}) yields an isomorphism $\Psi \Psi^!(\mM_i) \simeq \pP_i[-2]\simeq \Psi \pP_i[-2]$.
	\end{proof}
	
	\begin{THM}\label{thm_A_P_deform_alge}
		Let $f$ satisfy (c), $\mM_i$ be as in (\ref{eqtn_ses_def_M_i}), and $\pP_i$ as in (\ref{eqtn_def_P_i}). The endomorphism algebra
		$$
		A_P= \Hom_{\mathscr{A}_f}(\bigoplus_{i=1}^n \pP_i, \bigoplus_{i=1}^n \pP_i)
		$$
		is isomorphic to the quotient of $\Hom_X(\bigoplus_{i=1}^n\mM_i, \bigoplus_{i=1}^n \mM_i)$ by the subspace of morphisms that factor via direct sums of copies of the structure sheaf.
	\end{THM}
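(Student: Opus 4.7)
The plan is to identify, for each pair of indices $1 \le i, j \le n$, the space $\Hom_{\mathscr{A}_f}(\pP_i, \pP_j)$ with $\Hom_X(\mM_i, \mM_j)$ modulo the subspace of morphisms factoring through a direct sum of copies of $\oO_X$. The additive identification will assemble into a ring isomorphism via functoriality of the construction $\mM \mapsto \pP_\mM$.

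First, I would apply $\Hom_X(\mM_i,-)$ to the short exact sequence
$$0 \to \pP_j \to f^*f_*\mM_j \to \mM_j \to 0$$
of (\ref{eqtn_P_M_in_Coh}). Since $\mM_i$ is projective in $\Per{-1}(X/Y)$ while $f^*f_*\mM_j \in \Per{-1}(X/Y)$ by Lemma \ref{lem_ffE_in_Per}, one has $\Ext^1_X(\mM_i, f^*f_*\mM_j)=0$. Hence the long exact sequence yields an isomorphism
$$\Hom_X(\mM_i,\mM_j)/\mathrm{Im}(\rho) \;\xrightarrow{\sim}\; \Ext^1_X(\mM_i,\pP_j),$$
where $\rho\colon \Hom_X(\mM_i, f^*f_*\mM_j) \to \Hom_X(\mM_i,\mM_j)$ is composition with the counit $\varepsilon\colon f^*f_*\mM_j\to\mM_j$. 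Via the argument in the proof of Proposition \ref{prop_proj_gen_of_A_f}, the right-hand side is naturally isomorphic to $\Hom_{\mathscr{A}_f}(\pP_i,\pP_j)$.

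Second, I would compute $\mathrm{Im}(\rho)$ by translating everything to $R$-modules. Set $M_i = (f_*\mM_i)(Y)$ and $N_j = (f_*\mM_j)(Y)$, and note that $\nN_i = \mM_i^\vee \in \mathscr{P}_0$ has $R^1f_*\nN_i=0$, so the Leray spectral sequence together with affineness of $Y$ gives $H^1(X,\nN_i)=0$. Applying $\nN_i\otimes(-)$ to a pullback of a free presentation $R^{m_1}\to R^{m_0}\to N_j\to 0$ and taking global sections produces
$$\Hom_X(\mM_i, f^*f_*\mM_j) \;=\; \Gamma(X,\nN_i\otimes f^*f_*\mM_j) \;\simeq\; M_i^*\otimes_R N_j,$$
where $M_i^* := \Hom_R(M_i,R) \simeq \Gamma(X,\nN_i)$. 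Under the equivalence $\mathrm{Ref}(X)\simeq \mathrm{Ref}(Y)$ of Lemma \ref{lem_equiv_of_refl}, which sends $\oO_X$ to $\oO_Y=R$ and identifies $\Hom_X(\mM_i,\mM_j)$ with $\Hom_R(M_i,N_j)$, the map $\rho$ becomes the canonical trace
$$M_i^*\otimes_R N_j \to \Hom_R(M_i,N_j),\qquad \theta\otimes n \mapsto (m\mapsto \theta(m)\,n).$$
Its image consists precisely of those $R$-linear maps that factor through some free module $R^m$, and these correspond under the equivalence to morphisms $\mM_i\to\mM_j$ factoring through $\oO_X^m$.

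Finally, the construction $\phi\mapsto f^*f_*\phi|_{\pP_\bullet}$ supplies a ring homomorphism $\End_X(\bigoplus_{i=0}^n\mM_i)\to A_P$. Since $\pP_0 = \hH^{-1}_X(\iota_f^*\oO_X)=0$, all morphisms factoring through $\oO_X$ lie in its kernel; combined with the surjectivity and the computation of the kernel above, this descends to the desired ring isomorphism $A_{\mathrm{con}}\simeq A_P$. The main technical obstacle is the second step, namely verifying that $\rho$ matches the algebraic trace: this is done by tracking a pure tensor $\theta\otimes n$ through the adjunctions, showing that the associated map factors as $\mM_i\xrightarrow{\theta}\oO_X\xrightarrow{f^*(n)} f^*f_*\mM_j$, so that its $f_*$-image is $m\mapsto \theta(m)n$.
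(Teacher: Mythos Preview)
Your argument is correct, and it follows a genuinely different route from the paper. The paper's proof runs through the spherical machinery built up in Sections~\ref{sec_cotwst_C_X_and_flop_flop}--\ref{sec_twist_TX}: it uses $\Psi^!(\mM_i)\simeq\pP_i[-2]$ (via Lemmas~\ref{lem_Lambda_M} and~\ref{lem_Psi!(M)}) to obtain a homomorphism $\Hom_X(\mM,\mM)\to A_P$, proves surjectivity from the triangle (\ref{eqtn_tw_on_X}) for $T_X$ by showing $\Hom_X(T_X\mM[-1],\mM)=0$ using that the flop functor is an equivalence, and identifies the kernel by rewriting $T_X\mM$ via the triangle for $Lg^*Rg_*\mM$ and a stupid truncation into copies of $\oO_X$. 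Your approach bypasses all of this: the short exact sequence (\ref{eqtn_P_M_in_Coh}) and the vanishing $\Ext^1_X(\mM_i,f^*f_*\mM_j)=0$ give surjectivity immediately, while the identification $\Hom_X(\mM_i,f^*f_*\mM_j)\simeq M_i^*\otimes_R N_j$ together with the reflexive equivalence of Lemma~\ref{lem_equiv_of_refl} pins down the kernel as morphisms factoring through free modules. Your argument is more elementary and, notably, never invokes the flop $X^+$ or the embedding $Y\hookrightarrow\yY$; it only uses that $\mathscr{P}_{-1}$ has the right projectivity properties and that $f_*$ induces an equivalence on reflexive sheaves. The paper's approach, on the other hand, situates the result as a consequence of the spherical structure of $\Psi$ and thereby ties $A_P$ directly into the narrative of the flop-flop functor.
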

	\begin{proof}
		By Proposition \ref{prop_Psi!_M_i} $\Psi \Psi^!(\mM_i) \simeq \Psi \pP_i[-2]$. Since $\Psi|_{\mathscr{A}_f}$ is fully faithful, we have $\Psi^!(\mM_i) \simeq \pP_i[-2]$. Thus, functor $\Psi^!$ gives a homomorphism of algebras
		$$
		\f\colon \Hom_X(\mM, \mM) \to A_P = \Hom_{\dD^b(\mathscr{A}_f)}(\Psi^! \mM, \Psi^! \mM) \simeq \Hom_X(\Psi \Psi^! \mM, \mM),
		$$
		where $\mM = \bigoplus_{i} \mM_i$. Triangle of functors right adjoint to the second triangle of (\ref{eqtn_func_tr_Psi}) implies that the morphism $\f$ fits into an exact sequence
		\begin{align*}
		&\Hom_X((F^+F)^{-1}\mM, \mM) \xrightarrow{\psi} \Hom_X(\mM, \mM)\xrightarrow{\f} \Hom_X(\Psi \Psi^! \mM, \mM) \to\\
		&\to \Hom_X((F^+F)^{-1}\mM[-1], \mM).
		\end{align*}
		By Corollary \ref{cor_flop_is_equiv}:
		$$
		\Hom_X(F^{-1} {F^+}^{-1}(\mM)[-1], \mM) \simeq \Ext^1_{X^+}({F^+}^{-1}(\mM), F(\mM)).
		$$
		Theorem \ref{thm_flop_is_inv_of_vdB} implies that ${F^+}^{-1}(\mM) \simeq \nN^+$ is a projective object in $\Per{0}(X^+/Y)$. On the other hand, $F(\mM) \simeq f^{+*}f_* \mM$ by Proposition \ref{prop_flop_flop_as_ff}. By Lemma \ref{lem_equiv_of_refl}, $f^{+*}f_*\mM \simeq f^{+*}f^+_*\nN^+$. Then, Lemma \ref{lem_Lif_in_A_f} implies that $R^1f^+_*(f^{+*}f_* \mM) = 0$, i.e. $f^{+*}f_* \mM$  is an object in $\Per{0}(X^+/Y)$ (see Remark \ref{rem_def_of_perv} and formula (\ref{eqtn_def_T_0}) therein). Thus,
		$$
		\Hom_X((F^+F)^{-1} \mM[-1], \mM) \simeq \Ext^1_{X^+}(\nN^+, f^{+*}f_* \mM) \simeq \Ext^1_{\Per{0}(X/Y^+)}(\nN^+, f^{+*}f_* \mM)= 0,
		$$
		i.e. morphism $\f$ is surjective.
		
		Let us now describe the image of $\psi$. Triangle (\ref{eqtn_flop_LgRg_flop}) with roles of $X$ and $X^+$ interchanged, when applied to $\nN^+$, gives an exact triangle
		$$
		\mM[-2] \to (F^+F)^{-1}\mM \to Lg^*Rg_* \mM[-1] \to \mM[-1],
		$$
		in view of $(F^+F)^{-1} \mM \simeq F^{-1}(\nN^+)$, $Rg^+_* \nN^+ \simeq Rg_* \mM$, and $F^+(\nN^+)\simeq \mM$. Since $\Ext^1_X(\mM, \mM)= 0 = \Ext^2_X(\mM, \mM)$, we have isomorphisms $\Hom_X(\mM, (F^+F)^{-1}\mM) \simeq \Hom_X(\mM, Lg^*Rg_* \mM[-1])$ and $\Hom_X((F^+F)^{-1}\mM, \mM) \simeq \Hom_X(Lg^*Rg_* \mM[-1], \mM)$. Thus, a morphism $\mM\to \mM$ is decomposed as $\mM \to (F^+F)^{-1}\mM \to \mM$ if an only if it is decomposed as $\mM \to Lg^*Rg_* \mM[-1] \to \mM$. By Lemma \ref{lem_loc_free_res_of_g_M}, object $Lg^*Rg_* \mM$ is quasi-isomorphic to a complex $\{\oO_X^S \to \oO_X^S\}$, for some $S\in \mathbb{Z}_+$. Thus, ``stupid'' truncation gives an exact triangle
		$$
		\oO_X^S[-2] \to \oO_X^S \to Lg^*Rg_* \mM[-1] \to \oO_X^S[-1].
		$$
		Since $\Ext^1_X(\oO_X^S, \mM) = 0 = \Ext^2_X(\oO_X^S, \mM)$, we have: $\Hom_X(\mM, Lg^*Rg_*\mM[-1]) \simeq \Hom_X(\mM, \oO_X^S)$ and $\Hom_X(Lg^*Rg_* \mM[-1], \mM) \simeq \Hom_X(\oO_X^S, \mM)$. Hence, a morphism $\mM \to \mM$ factors via $Lg^*Rg_* \mM[-1]$ if and only if it factors via $\oO_X^S$.
		
		Therefore, the image of $\psi$ is contained in the space of morphisms $\mM \to \mM$ that factor via $\oO_X^S$. Conversely, for a projective generator $\pP$ of $\mathscr{A}_f$,
		\begin{align*}
		&R\Hom_{\dD^b(\mathscr{A}_f)}(\Psi^!\mM, \Psi^!\oO_X) \simeq R\Hom_{\dD^b(\mathscr{A}_f)}(\pP, \Psi^!\oO_X[2]) \simeq R\Hom_X(\Psi\pP, \oO_X[2]) \\
		& \simeq R\Hom_X(\Psi\pP, f^! \oO_Y[2]) \simeq R\Hom_Y(Rf_* \Psi \pP, \oO_Y[2]) = 0,
		\end{align*}
		which implies that the space of morphisms $\mM \to \mM$ that factor via direct sums of copies of $\oO_X$ is annihilated by $\Psi^!$, i.e. it is contained in the kernel of $\f$.
	\end{proof}

	\appendix
	\section{Grothendieck duality and the twisted inverse image functor}\label{sec_Grot-Ver-dual}
	
	Let $f\colon X \to Y$ be a proper morphism of Noetherian schemes. Direct image functor $Rf_*$ maps $\dD^{-}(X)$ to $\dD^{-}(Y)$. Considered as a functor $\dD_{\textrm{qc}}(X)\to \dD_{\textrm{qc}}(Y)$, $Rf_*$ admits the right adjoint denoted by $f^!$. Functor $f^!$ maps $D^{+}_\textrm{qc}(Y)$ to $D^{+}_{\textrm{qc}}(X)$ and, for any $E^\bcdot\in \dD_{\textrm{qc}}(X)$ and $F^\bcdot\in \dD^{+}_{\textrm{qc}}(Y)$, the \emph{sheafified Grothendieck duality}
	\begin{equation}\label{eqtn_G-Vdual}
	Rf_* R\hHom_X(E^\bcdot, f^!(F^\bcdot)) = R\hHom_Y(Rf_*(E^\bcdot), F^\bcdot),
	\end{equation}
	is satisfied, see \cite[Cor 4.2.2]{LM}.
	
	Let $f$ be a proper morphism. We construct a morphism $\alpha \colon Lf^*(-) \otimes f^!(\oO_Y) \rightarrow f^!(-)$. In view of adjunction
	$$
	\Hom_X(Lf^*(F^\bcdot) \otimes f^!(\oO_Y), f^!(F^\bcdot)) \simeq  \Hom_Y(Rf_*(Lf^*(F^\bcdot) \otimes f^!(\oO_Y)), F^\bcdot),
	$$
	finding $\alpha_{F^\bcdot}$ is equivalent to finding a morphism $Rf_*(Lf^*(F^\bcdot)\otimes f^!(\oO_Y)) \to F^\bcdot$. By projection formula, we have
	$$
	Rf_*(Lf^*(F^\bcdot) \otimes f^!(\oO_Y)) = F^\bcdot \otimes Rf_*f^!(\oO_Y).
	$$
	Let $\varepsilon \colon Rf_* f^!(\oO_Y)\to \oO_Y$ be the counit of adjunction. Then morphism
	$$
	\textrm{id}_{F^\bcdot}\otimes \varepsilon \colon F^\bcdot \otimes Rf_*f^!(\oO_Y) \to F^\bcdot \otimes \oO_Y \simeq F^\bcdot
	$$
	gives a morphism
	\begin{equation}\label{eqtn_def_alpha}
	\alpha_{F^\bcdot} \colon Lf^*(F^\bcdot)\otimes f^!(\oO_Y) \to f^!(F^\bcdot).
	\end{equation}
	\begin{LEM}\label{lem_two_def_of_f_for_perf_compl}
		Let $f\colon X \to Y$ be a proper morphism. Then $\alpha_{F^\bcdot}$ is an isomorphism for $F^\bcdot \in \Perf(Y)$.
	\end{LEM}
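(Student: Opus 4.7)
The plan is to first check that $\alpha_{\oO_Y}$ is an isomorphism by unwinding the construction of $\alpha$ through the adjunction $Rf_*\dashv f^!$, and then use a devissage argument to extend to all perfect complexes.

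\textbf{Step 1 (base case $F^\bcdot = \oO_Y$).} Here $Lf^*\oO_Y \otimes f^!\oO_Y \simeq \oO_X \otimes f^!\oO_Y \simeq f^!\oO_Y$, so both source and target of $\alpha_{\oO_Y}$ are $f^!\oO_Y$. By definition, $\alpha_{\oO_Y}$ is the morphism corresponding under $Rf_* \dashv f^!$ to
$$
Rf_*\bigl(Lf^*\oO_Y \otimes f^!\oO_Y\bigr) \simeq \oO_Y \otimes Rf_*f^!\oO_Y \xrightarrow{\,\mathrm{id}\otimes \varepsilon\,} \oO_Y \otimes \oO_Y \simeq \oO_Y,
$$
which under the above identifications is precisely the counit $\varepsilon \colon Rf_*f^!\oO_Y \to \oO_Y$. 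But $\varepsilon$ is the image of $\mathrm{id}_{f^!\oO_Y}$ under the adjunction isomorphism, hence $\alpha_{\oO_Y} = \mathrm{id}_{f^!\oO_Y}$, so it is an isomorphism.

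\textbf{Step 2 (devissage).} Consider the full subcategory
$$
\mathcal{E} \;=\; \{\,F^\bcdot \in \dD^+_{\textrm{qc}}(Y) \mid \alpha_{F^\bcdot}\ \text{is an isomorphism}\,\}.
$$
Both functors $Lf^*(-)\otimes f^!\oO_Y$ and $f^!(-)$ are exact in $F^\bcdot$ (triangulated), and $\alpha$ is a natural transformation between them because each of the three constituent morphisms in its construction (projection formula, counit, and the identification $Lf^*\oO_Y \simeq \oO_X$) is natural in $F^\bcdot$. Therefore $\mathcal{E}$ is a triangulated subcategory, stable under shifts, cones, and direct summands; i.e.\ $\mathcal{E}$ is thick. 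By Step 1, $\oO_Y \in \mathcal{E}$, hence any object of the thick subcategory generated by $\oO_Y$ lies in $\mathcal{E}$.

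\textbf{Step 3 (localizing and concluding).} The question is local on $Y$, so we may assume $Y = \Spec A$ is affine; replacing $X$ by $f^{-1}(Y)$ preserves properness of $f$. Over an affine base, a perfect complex is quasi-isomorphic to a bounded complex of finitely generated projective $A$-modules, and on a sufficiently small distinguished affine each such projective module is a direct summand of a free module of finite rank. Hence $F^\bcdot$ lies in the thick subcategory of $\dD^+_{\textrm{qc}}(Y)$ generated by $\oO_Y$, so by Step 2, $F^\bcdot \in \mathcal{E}$ and $\alpha_{F^\bcdot}$ is an isomorphism.

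The only mildly delicate point is the verification in Step 2 that $\alpha$ is indeed a natural transformation of triangulated functors; this is just a matter of tracing through the definition of $\alpha$ via the projection formula and the naturality of the counit $\varepsilon$, and presents no substantive obstacle.
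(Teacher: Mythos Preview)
Your proof is correct and follows essentially the same strategy as the paper: verify the base case $F^\bcdot=\oO_Y$, then extend to all perfect complexes by working locally on $Y$ and using that perfect complexes are locally built from $\oO_Y$ by shifts, cones, and summands. Your Steps 1--2 spell out in detail what the paper compresses into ``clearly an isomorphism'' and an implicit d\'evissage.

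The one point you pass over is the opening clause of Step 3, ``the question is local on $Y$''. This is not automatic: it requires that the formation of $f^!$ is compatible with restriction to open subsets of the base (equivalently, flat base change for $f^!$ along open immersions), so that $\alpha_{F^\bcdot}$ restricted to $f^{-1}(U)$ agrees with the corresponding $\alpha$ for $f|_{f^{-1}(U)}$. This is precisely the content the paper invokes by citing Deligne's appendix to Hartshorne's \emph{Residues and Duality}. With that citation (or an equivalent reference to the local nature of $f^!$) inserted, your argument is complete.
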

	\begin{proof}
		Morphism $\alpha_{\oO_Y}$ is clearly an isomorphism. By P. Deligne's appendix to \cite{Har_res}, functor $f^!$ is defined locally on $Y$, hence $\alpha$ is also an isomorphism for any complex $F^\bcdot$ which is locally quasi-isomorphic to a complex of locally free sheaves, i.e. for $F^\bcdot\in \Perf(Y)$.
	\end{proof}
	By \cite[\textsection 4.7]{LM} morphism $Lf^*(F^\bcdot) \otimes f^!(\oO_Y) \to f^!(F^\bcdot)$ is an isomorphism for all $F^\bcdot$ in $\dD_{\textrm{qc}}(Y)$ if and only if $f$ is proper and of finite Tor dimension.

\section{Functorial exact triangles, spherical functors and pairs}\label{sec_func_ex_tr}

Let $\cC$ and $\dD$ be triangulated categories. The category of exact functors $\cC\to \dD$ is not triangulated, thus we cannot speak about exact triangles of functors. Instead, we work with a weaker notion of a \emph{functorial exact triangle} by which  we mean a triple of exact functors $\cC \to \dD$ and natural transformations between them
\begin{equation}\label{eqtn_funct_ex_tr}
F_1(-) \to F_2(-) \to F_3(-)\to F_1(-)[1]
\end{equation} 
such that when applied to any object $C\in \cC$ they give an exact triangle in $\dD$. Note that (\ref{eqtn_funct_ex_tr}) is not an exact triangle in any triangulated category.

A basic example of a functorial exact triangle is given by an SOD
$$
\dD = \langle \aA, \bB\rangle.
$$ 
Denote by $i_{\aA}\colon \aA\to \dD$ the embedding functor and by $i_{\aA}^*$ its left adjoint. Similarly, let $i_{\bB}\colon \bB \to \dD$ be the embedding and $i_{\bB}^!$ its right adjoint. Then the $i_{\aA}^*\dashv i_{\aA}$ adjunction unit $\eta$ and the $i_{\bB}^!\dashv i_{\bB}$ adjunction counit $\ve$ fit into a functorial exact triangle \cite{B}:
\begin{equation}\label{eqtn_func_tr_SOD} 
i_{\bB}i_{\bB}^!\xrightarrow{\ve} \Id_{\dD} \xrightarrow{\eta} i_{\aA}i_{\aA}^* \rightarrow i_{\bB}i_{\bB}^![1].
\end{equation}

Consider a general adjoint pair $F^*\dashv F$ of exact functors with  $F\colon \cC\to \dD$. If the adjunction counit $\ve$ fits into a functorial exact triangle
$$
F^*F \xrightarrow{\ve} \Id_{\cC} \to T\to F^*F[1]
$$
we shall refer to $T$ as a \emph{twist functor}. If the adjunction unit $\eta$ fits into a functorial exact triangle
$$
C \to \Id_{\dD}\xrightarrow{\eta} FF^* \to H[1]
$$ 
we shall refer to $C$ as a \emph{cotwist functor}. 

Constructions and uniqueness of  twist and cotwist functors are discussed in details in Subsection \ref{ssec_sph_funct} in the framework of 2-categories. 
Let us briefly say that twist and cotwist are well-defined if $F$ is a functor $\dD(A)\to \dD(B)$, for some DG algebras $A$ and $B$, given by the tensor product with an $A^{\opp}\otimes B$ DG bimodule and both of his adjoints are of this form too. If $F$ satisfies this condition, we say that it is an \emph{$A$ and $B$ perfect bimodule functor}. Indeed, in this case the bimodule defining $F$ is perfect as an $A$ and $B$ module, \textit{cf.} Proposition \ref{prop_AnnLog}. Once the bimodule corresponding to $F$ is fixed, the twist $T$ and the cotwist $C$ are defined uniquely up to isomorphisms.

Consider an  \emph{$A$ and $B$ perfect bimodule functor} $F\colon \dD(A)\to \dD(B)$. We say that $F$ is \emph{spherical} \cite{AnnLog} if the twist $T$ and the cotwist $C$ are both equivalences. If this is the case we shall refer to $T$ as a \emph{spherical twist} and to $C$ as a \emph{spherical cotwist}.

Spherical functors can be constructed via spherical pairs.
We recall the definition following \cite{KapSche}.
Let $\eE$ be a triangulated category and $\eE_+, \eE_- \subset \eE$ a pair of admissible subcategories \cite{B}. Let $i_{\pm}\colon \eE_{\pm} \to \eE$ be the inclusions. They admit left $i_{\pm}^*$ and right $i_{\pm}^!$ adjoint functors $\eE\to \eE_{\pm}$. By $j_{\pm} \colon {}^\perp \eE_{\pm} \to \eE$ we denote the inclusions of the left orthogonal complements and by $j_{\pm}^! \colon \eE\to {}^\perp \eE_{\pm}$ their right adjoints. We assume that all these functors are induced by appropriate DG functors.

\begin{DEF}\cite[Definition 3.4]{KapSche}\label{def_spherical_pair}
	The pair $(\eE_{+}, \eE_-)$ of admissible subcategories is a \emph{spherical pair} if
	\begin{enumerate}
		\item The composites $i_+^* i_- \colon \eE_- \to \eE_+$, $i_-^* i_+ \colon \eE_+ \to \eE_-$ are equivalences,
		\item The composites $j_+^!j_- \colon {}^\perp \eE_- \to {}^\perp \eE_+$, $j_-^!j_+ \colon {}^\perp \eE_+ \to {}^\perp \eE_-$ are equivalences.
	\end{enumerate}
\end{DEF}

\begin{PROP}\cite[Propositions 3.6, 3.7]{KapSche}\label{prop_Kap_Sche}
	Let $(\eE_{+}, \eE_-) \subset \eE$ be a spherical pair. 
	Then functor $\Psi := j_-^! i_+ \colon \eE_+ \to {}^{\perp}\eE_-$ is spherical. 
	The unit and the counit for the $\Psi^*\dashv \Psi$ adjunction fit into functorial exact triangles:
	\begin{align}\label{eqtn_sphr_tw_and_cotw}
	&\Psi^* \Psi \xrightarrow{\ve} \Id_{\eE_+} \to i_+^*i_-i_-^*i_+\to \Psi^* \Psi [1],& &j_-^!j_+j_+^!j_- \to \Id_{{}^\perp \eE_-} \xrightarrow{\eta} \Psi \Psi^*\to j_-^!j_+j_+^!j_-[1].&
	\end{align}
\end{PROP}
The functorial exact triangles (\ref{eqtn_sphr_tw_and_cotw}) are deduced from functorial exact triangles (\ref{eqtn_func_tr_SOD}) for the SOD's related to (admissible subcategories) $\eE_+$ and $\eE_-$. Therefore, in the presence of spherical pairs, one can define spherical functors purely in the realm of triangulated categories.

With the following proposition we describe spherical pairs associated to 4-periodical SODs
\begin{equation}\label{eqtn_quad_of_rec}
\tT = \langle \aA, \bB \rangle = \langle \bB, \cC \rangle = \langle \cC, \dD \rangle = \langle \dD, \aA \rangle.
\end{equation}
\begin{PROP}\label{prop_quad_of_recol}
	Let $\tT$ be a triangulated category with 4-periodical SODs (\ref{eqtn_quad_of_rec}). Then pairs $(\aA, \cC)$, $(\bB, \dD)$ of subcategories of $\tT$ are spherical. 
\end{PROP}
\begin{proof}
	We prove that $(\aA, \cC)$ is a spherical pair. Let $i_\aA \colon \aA \to \tT$ denote the inclusion functor and $i_\aA^*$ its left adjoint, similarly for $\cC \subset \tT$. The composite $i_\cC^* i_{\aA}\colon \aA \to \cC$ is the right mutation over $\bB$, hence it is an equivalence \cite[Lemma 1.9]{BK1}. Similarly, the right mutation $i_\aA^* i_\cC$ over $\dD$ is an equivalence, i., the first condition of Definition \ref{def_spherical_pair} is satisfied.
	
	Let now $i_\bB \colon \bB\to \tT$ denote the inclusion functor and $i_\bB^! \colon \tT\to \bB$ its right adjoint, similarly for $\dD \subset \tT$. The composites $i_{\bB}^! i_\dD$, respectively $i_\dD^! i_\bB$, are the left mutations over $\cC$, respectively $\aA$, hence they are equivalences. It follows that $(\aA, \cC)$ is a spherical pair in $\tT$. 
	The proof for the pair $(\bB, \dD)$ is analogous.   
\end{proof}

\begin{REM}
	In \cite[Theorem 3.11]{HLSchi} D. Halpern-Leistner and I. Shipman proved that any spherical functor $F\colon \cC\to \dD$  is given by 4-periodical SODs.
\end{REM}

\section{Lifting to the bicategory $\textrm{Bimod}$}\label{sec_sph_funct_and_enh}

Let $k$ be a field. For a $k$-linear DG category $\eE$, we denote by $[\eE ]$ the \emph{homotopy category} of $\eE$, i.e. the category with the same objects as in $\eE$ and morphisms  the 0-th cohomology of the DG complexes of morphisms in $\eE$   \cite{BK}. It is an ordinary $k$-linear category. 

By a \emph{DG enhancement} of a triangulated category $\cC$ we mean a choice of a pre-triangulated 
DG category $\eE$ together with an equivalence $\cC \simeq [\eE ]$ which is compatible with the induced triangulated structure on $[\eE ]$ \cite{BK}. 
A choice of suitable DG enhancements for our categories prompts a convenient replacement for the category of functors, as well as a lift of adjunction (co)units to morphisms in this replacement. This allows us to get an interpretation for 'cones' of morphisms of functors that we need in the main body of the text.

For the sake of simplicity, we take as DG enhancements of our categories only suitable categories of DG modules over DG algebras. The category of functors is replaced by the derived category of bimodules, and natural transformations by morphisms in the latter category. As a result we arrive at the bicategory $\Bimod$ whose objects are DG algebras and 1-morphisms are objects of the derived categories of DG bimodules. There is a  bifunctor $\Phi :\Bimod \to \textbf{Cat}$ to the 2-category of categories. We are interested in lifting of adjoint pairs of functors in $ \textbf{Cat}$ along the functor $\Phi$.

In the general framework of 2-categories (or bicategories), we have the uniqueness of the 2-categorical adjunction for a given 1-morphism.  Also we show that 2-adjunctions can be transported over equivalences of objects in 2-categories, which implies an invariance of 2-categorical adjunctions in $\Bimod$ under the choice of DG enhancements.

By fixing a lift of a functor to a 1-morphism in $\Bimod$, we get an essentially  unique 2-categorical adjunction. This allows us to define a unique twist and cotwist of 2-categorical adjunctions as 1-morphisms in $\Bimod$, thus interpreting constructions of R. Anno and T. Logvinenko \cite{AnnLog}. 
In particular, we have the notion of spherical twists for spherical functors and, more generally, for spherical couples. 

Further we show how the 2-adjunction theory can be applied to functors between derived categories of (quasi)coherent sheaves. Here another suitable 2-category is the category $\FM$ whose objects are schemes and 1-morphisms are objects of 
the derived category of the product of two schemes. By the results of V. Lunts and O.Schn{\"u}rer \cite{LunSchnu} the 2-categorical adjunctions, twists and cotwists are readily transferred to this context. We discuss uniqueness of the functorial exact triangles associated to an FM functor and its adjoints. We also construct a 2-categorical lift to $\Bimod$ of the base-change morphism.

Finally, we provide a criterion for isomorphism of functors in terms of the restriction to one (generating) object.

\vspace{0.3cm}
\subsection{2-categorical adjunctions}\label{ssec_2-cat_adj}~\\

Let $\cC$ be a 2-category. A quadruple $(s,r, \eta, \varepsilon)$ of 1-morphisms $s\in \Hom_{\cC}(A,B)$, $r\in \Hom_{\cC}(B,A)$ and 2-morphisms $\eta \colon \Id_A \to rs$, $\varepsilon \colon sr \to \Id_B$ is a \emph{2-categorical $(A,B)$-adjunction} if 
\begin{align}\label{eqtn_triangle_equalities} 
&s \xrightarrow{s\eta}srs \xrightarrow{\varepsilon s}s,& &r \xrightarrow{\eta r}rsr  \xrightarrow{r\varepsilon} r&
\end{align} 
are equal to the identity morphisms of $s$ and $r$ respectively. In this case, $r$ is said to be a \emph{right 2-categorical adjoint} of $s$, $s$ a \emph{left 2-categorical adjoint} of $r$ and $\eta$ and $\varepsilon$ the \emph{unit} and \emph{counit} of the adjunction. We write $s\dashv r$ if there exist $\eta$ and $\ve$ such that $(s,r,\eta, \ve)$ is a 2-categorical adjunction. The choice of such $\eta$ and $\ve$ is not unique.

The proof of the following fact is standard.
\begin{LEM}\emph{cf.}\cite{Benab, ToeVez2}\label{lem_uniqueness_of_adjoint}
	Let $s$ be a 1-morphism. If $s$ has a right 2-categorical adjoint, then the 2-categorical adjunction $(s,r, \eta, \varepsilon)$ is unique up to a canonical 2-isomorphism. More precisely, for any other adjunction $(s,r', \eta', \varepsilon')$, the composite $\alpha\colon r\xrightarrow{\eta'r}r'sr  \xrightarrow{r\varepsilon}r'$ is a unique 2-isomorphism commuting with the units and counits. Conversely, any 2-isomorphism $\alpha \colon r \to r'$ yields a 2-categorical adjunction $(s, r',\alpha s\circ \eta, \ve \circ s\alpha^{-1})$.
\end{LEM}

An \emph{$(A,B)$-equivalence} in $\cC$ is a quadruple $(f,g,\nu, \mu)$ of 1-morphisms $f\in \Hom_{\cC}(A,B)$, $g\in \Hom_\cC(B,A)$ and 2-isomorphisms $\nu\colon \Id_A \xrightarrow{\simeq} gf$, $\mu \colon fg \xrightarrow{\simeq} \Id_B$. It is an \emph{adjoint $(A,B)$-equivalence} if $(f,g,\nu,\mu)$ is, in addition, a 2-categorical $(A,B)$-adjunction.

Let $\cC_1$ and $\cC_2$ be 2-categories. A \emph{pseudo-functor} $\theta \colon \cC_1 \to \cC_2$ consists of a map $\theta \colon \textrm{Ob}(\cC_1)\to \textrm{Ob}(\cC_2)$,  functors $\theta_{C,C'} \colon \Hom_{\cC_1}(C,C') \to \Hom_{\cC_2}(\theta(C), \theta(C'))$, for any pair $(C,C')$ of objects of $\cC_1$, and 2-isomorphisms $u\colon \theta(\Id_C) \to \Id_{\theta(C)}$, for any object $C\in \cC_1$, and $a\colon \theta(g\circ f) \to \theta(g)\circ \theta(f)$, for any pair $(f, g)$ of composable 1-morphisms in $\cC_1$. These need to satisfy coherence conditions: for any triple $(h, g, f)$ of composable 1-morphisms in $\cC_1$, the two possible 2-morphisms $\theta(h\circ g\circ f)\to \theta(h)\circ \theta(g)\circ \theta(f)$ induced by $a$ are equal, and so are the 2-morphisms $\theta(f) \to \theta(f)\circ \theta(\Id)$, $\theta(g) \to \theta(\Id)\circ \theta(g)$ induced by $a$ and $u^{-1}$.

We say that a pseudo-functor $\theta\colon \cC_1\to \cC_2$ is a \emph{pseudo-equivalence} if $\theta_{C_1,C_2}$ is an equivalence for all $(C_1,C_2)\in \cC_1$. 

Let $\alpha \colon e\to f$, $\alpha'\colon e'\to f'$ be 2-morphisms in a 2-category $\cC$. We say that $\alpha$ and $\alpha'$ are \emph{isomorphic}, and depict it by $\alpha \simeq \alpha'$, if there exist isomorphisms $\tau_e\colon e \xrightarrow{\simeq} e'$, $\tau_f\colon f\xrightarrow{\simeq}f'$ such that $\tau_f\circ \alpha = \alpha'\circ \tau_e$.

\begin{LEM}\label{lem_pseudo-funct_and_co_unit}
	Let $\theta \colon \cC_1 \to \cC_2$ be a pseudo-functor and $(s,r, \eta, \ve)$ a 2-categorical $(A,B)$-adjunction in $\cC_1$. Then $\theta(s)$ fits into a 2-categorical $(\theta(A),\theta(B))$-adjunction $(\theta(s), \theta(r),\eta', \ve')$ such that $\theta_{A,A}(\eta) \simeq \eta'$, $\theta_{B,B}(\ve)\simeq \ve'$.
\end{LEM}
\begin{proof} 
	A standard diagram chasing shows that for $\eta'$ and $\ve'$ defined as the composites:
	\begin{align*}
	& \eta'\colon \Id_{\theta(A)} \xrightarrow{u^{-1}} \theta(\Id_A) \xrightarrow{\theta(\eta)} \theta(rs) \xrightarrow{a} \theta(r)\theta(s),&\\
	&\ve' \colon \theta(s)\theta(r) \xrightarrow{a^{-1}} \theta(sr)\xrightarrow{\theta(\ve)} \theta(\Id_B) \xrightarrow{u}\Id_{\theta(B)}& 
	\end{align*} 
	equalities (\ref{eqtn_triangle_equalities}) hold, i.e. $(\theta(s),\theta(r),\eta', \ve')$ is a 2-categorical $(\theta(A), \theta(B))$-adjunction. 
	
	By definition of $\eta'$ and $\ve'$, diagrams
	\[
	\xymatrix{\Id_{\theta(A)} \ar[r]^{\eta'} & \theta(r)\theta(s)\\
		\theta(\Id_A) \ar[u]^{u}_{\simeq} \ar[r]^{\theta_{A,A}(\eta)} & \theta(rs) \ar[u]_{a}^{\simeq}}
	\xymatrix{\theta(s)\theta(r) \ar[r]^{\ve'} & \Id_{\theta(B)} \\
		\theta(sr) \ar[u]^{a}_{\simeq} \ar[r]^{\theta_{B,B}(\ve)} & \theta(\Id_B) \ar[u]_{u}^{\simeq}}
	\]
	commute, which proves isomorphisms $\theta_{A,A}(\eta)\simeq \eta'$, $\theta_{B,B}(\ve)\simeq \ve'$.
\end{proof}
\begin{PROP}\label{prop_adjun_inv_under_equiv}
	Let $\cC$ be a 2-category, $(f_A,g_A, \nu_A, \mu_A)$ an $(A,A')$-equivalence and $(f_B,g_B, \nu_B, \mu_B)$ a $(B,B')$-equivalence in $\cC$. Denote by $\cC_{A,B}$, resp. $\cC_{A',B'}$, the full 2-subcategories of $\cC$ with two objects $A$ and $B$,  resp. $A'$ and $B'$. The four functors
	$$
	\theta_{S,T}(-) := f_T\circ(-) \circ g_S\colon \Hom_{\cC}(S,T) \to \Hom_{\cC}(S',T'),
	$$ 
	defined for pairs $(S,T)\in\{(A,A), (A,B), (B,A), (B,B)\}$ of objects of $\cC_{A,B}$ and the corresponding pair $(S',T')$ of  objects of $\cC_{A',B'}$, can be extended to a
	pseudo-equivalence $\theta \colon \cC_{A,B} \to \cC_{A',B'}$.
\end{PROP}
\begin{proof}
	By changing the 2-isomorphism $\nu_A$, $\mu_A$, $\nu_B$, and $\mu_B$ if necessary, one can assume that $(f_A,g_A, \nu_A, \mu_A)$ and $(f_B, g_B, \nu_B, \mu_B)$ are adjoint equivalences  \cite{MacLane2}.
	
	Functor $\theta_{S,T}$ is an equivalence with quasi-inverse $g_T\circ(-)\circ f_S$.
	We need to define morphisms $u$ and $a$ from the definition of pseudo-functors.
	The 2-isomorphisms $u$ are defined as $\mu_A$ and $\mu_B$:
	\begin{align*}
	&\theta_{A,A}(\Id_A) = f_A\circ g_A \xrightarrow{\mu_A} \Id_{A'},& &\theta_{B,B}(\Id_B) = f_B \circ g_B\xrightarrow{\mu_B} \Id_{B'}.&
	\end{align*}
	For a pair $(f,g)$ of 1-morphisms such that $B$ is the target of $f$ and the source of $g$, the 2-isomorphism $a$:
	\begin{align*}
	&f_{T_g}\circ g \circ f  \circ g_{S_f}\xrightarrow{f_{T_g} \circ g \circ \nu_B \circ f\circ g_{S_f}} f_{T_g}\circ  g \circ f_B \circ g_B \circ  f\circ g_{S_f},&
	\end{align*}
	where $S_f, T_g\in \{A,B\}$ are respectively the source of $f$ and the target of $g$. Similarly, we use $\nu_A$ if $A$ is the target of $f$ and the source of $g$. 
	
	Equalities (\ref{eqtn_triangle_equalities}) for 2-adjunctions $(f_A,g_A,\nu_A,\mu_A)$ and $(f_B,g_B,\nu_B, \mu_B)$ imply that $u$ and $a$ defined in this way satisfy the required coherence conditions.
\end{proof}

We say that a 2-category $\cC$ is \emph{1-triangulated} if, for any pair $(B,C)$ of objects of $\cC$, the category $\Hom_{\cC}(B,C)$ is triangulated and, for any $\f_A\in \Hom_\cC(A,B)$, $\f_D \in \Hom_\cC(C,D)$, functors $(-)\circ \f_A\colon \Hom_{\cC}(B,C) \to \Hom_{\cC}(A,C)$, $\f_D\circ(-) \colon \Hom_{\cC}(B,C) \to \Hom_{\cC}(B,D)$ are exact.

A 2-categorical $(A,B)$-adjunction $(s,r,\eta,\ve)$ in a 1-triangulated 2-category $\cC$ induces 1-endomorphisms $c_s$, $t_s$ defined via exact triangles:
\begin{equation}\label{eqtn_def_of_tw_and_cotw} 
\begin{aligned}
&c_s \to \Id_A \xrightarrow{\eta} rs \to c_s[1],& &sr \xrightarrow{\ve} \Id_B\to t_s \to sr[1].&
\end{aligned}
\end{equation}
We say that $t_s$ is the \emph{twist} and $c_s$ the \emph{cotwist} for the 2-categorical adjunction $(s,r,\eta, \ve)$. 

\begin{PROP}\label{prop_eqiuv_and_tw}
	Let $(s,r,\eta,\ve)$ be a 2-categorical $(A,B)$ adjunction in a 1-triangulated 2-category $\cC$.  Consider a pair of  $(A,A')$- and $(B,B')$-equivalences and the induced pseudo-equivalence $\theta \colon \cC_{A,B}\to \cC_{A',B'}$. Then the twist $t_{\theta(s)}$ is isomorphic to $\theta(t_s)$ and the cotwist $c_{\theta(s)}$ is isomorphic to $\theta(c_s)$.
\end{PROP}	
\begin{proof}
By Proposition \ref{prop_adjun_inv_under_equiv} $(A,A')$ and $(B,B')$-equivalences induce a pseudo-equivalence $\theta \colon \cC_{A,B}\to \cC_{A',B'}$ 
Lemma \ref{lem_pseudo-funct_and_co_unit} implies that 
$(\theta(s), \theta(r), \eta', \ve')$ is an $(A',B')$-adjunction and 
$\theta(\eta) \simeq \eta'$, $\theta(\ve) \simeq \ve'$. The isomorphism of cones follows.
\end{proof}

We say that a 2-categorical $(A,B)$-adjunction $(s, f, \eta, \ve)$ is a \emph{spherical couple} if the twist $t_s$, respectively the cotwist $c_s$, defined in (\ref{eqtn_def_of_tw_and_cotw}), is a  $(B,B)$-, resp. $(A,A)$-equivalence.

Under the above assumption, octahedron
\[
\xymatrix{sc_s[1] \ar[r] & 0 \ar[r] & t_ss \\
	0 \ar[r] \ar[u] & s \ar[r]^{\simeq} \ar[u]& s\ar[u] \\
	sc_s \ar[r] \ar[u] & s \ar[r] \ar[u]^{\simeq} & srs\ar[u]}
\]
implies an isomorphism 
\begin{equation}\label{eqtn_tw_psi=psi_ctw}
t_ss\simeq sc_s[2].
\end{equation}

A 1-morphism $s\colon A\to B$ 
is said to be \emph{spherical} \cite{AnnLog} if it has the 2-categorical left and right adjoints and $(s,r,\eta,\ve)$ is a spherical couple.
\vspace{0.3cm}
\subsection{The bicategory $\Bimod$}~\\

Let $k$ be a field. We consider the bicategory $\Bimod$ whose objects are unital DG $k$-algebras, 1-morphisms $A \to B$ are objects in the derived category $\dD(A^\textrm{op} \otimes_k B)$ of $A^{\opp}\otimes_k B$ DG modules and 2-morphisms  are morphisms in $\dD(A^\textrm{op} \otimes_k B)$. 
The derived tensor product of bimodules $M \otimes^L_B N$, for $M \in  \dD(A^\textrm{op} \otimes_k B)$ and $N\in \dD(B^\textrm{op} \otimes_k C)$, defines the composition of 1-morphisms in $\Bimod$. As the derived tensor product is unique up to a canonical isomorphism, we shall write formulas as if the composition of 1-morphisms in $\Bimod$ was
strictly associative and the identity morphisms were strict. If necessary, all morphisms of associativity and unitors can readily find their places in formulas. Equally, one can refer to the fact that every bicategory is biequivalent to a 2-category \cite{MacLanePare}.

The tensor product is an exact functor, hence the 2-category $\Bimod$ is 1-triangulated.

Let $\textbf{Cat}$ denote the 2-category of categories, functors and natural transformations. Consider the 2-functor 
\begin{align}\label{eqtn_func_Phi}
&\Phi \colon \Bimod \to \textbf{Cat}& &\Phi(A) = \dD(A).&
\end{align}
For $M\in \dD(A^{\opp} \otimes_k B)$, $\Phi_M \colon \dD(A) \to \dD(B)$ is the functor 
$$
\Phi_M(-) = (-) \otimes_A^LM.
$$ 
Finally, morphism $\alpha \colon M \to M'$ induces a natural transformation $\Phi_{\alpha}\colon \Phi_M \to \Phi_{M'}$.

In particular, as $\Phi$ is a 2-functor, the tensor product of bimodules corresponds to the composition of tensor functors: 
$$
\Phi_{M_1}\circ \Phi_{M_2} \simeq \Phi_{M_2\otimes^L M_1}.
$$

We note that the 2-functor $\Phi$ is in general neither full nor faithful.

Note that by \cite[Corollary 7.6]{Toen}, any DG enhanceable (i.e. admitting a lift to a functor of suitable DG enhancements) commuting with direct sums functor $\dD(A)\to \dD(B)$ is of the form $\Phi_M$, for some bimodule $M$. We shall refer to functors of the form $\Phi_M$ as \emph{bimodule functors}.

An exact triangle of bimodules in the category $\dD(A^\textrm{op}\otimes B)$:
$$
M_1\to M_2\to M_3 \to M_1[1]
$$ 
induces 
via the functor $\Phi$ a functorial exact triangle
\begin{equation}\label{eqtn_exact_tr_tens_func}
\Phi_{M_1}(-) \to \Phi_{M_2}(-) \to \Phi_{M_3}(-)\to \Phi_{M_1}(-)[1].
\end{equation} 

\begin{LEM}\label{lem_Psi_conservative}
	Functor $\Phi$ is conservative on the categories of 1-morphisms in $\Bimod$.
\end{LEM}
\begin{proof}
	Let $A$ and $B$ be unital DG algebras and $f\colon M_1\to M_2$ a morphism in $\dD(A^{\opp}\otimes B)$ such that $\Phi_f\colon \Phi_{M_1} \to \Phi_{M_2}$ is an isomorphism. Morphism $f$ fits into an exact triangle $M_1\xrightarrow{f} M_2\to M_3\to M_1[1]$.
	For any $E\in \dD(A)$, complex $\Phi_{M_1}(E)\xrightarrow{\Phi_f} \Phi_{M_2}(E)\to \Phi_{M_3}(E) \to \Phi_{M_1}(E)[1]$ is an exact triangle. As $\Phi_f$ is an isomorphism, $\Phi_{M_3}(E) \simeq 0$. In particular, $M_3\simeq \Phi_{M_3}(A) \simeq 0$, which implies that $f$ is an isomorphism.
\end{proof}

\vspace{0.3cm}
\subsection{The bicategory $\FM$}~\\

Let $k$ be a field. Consider the bicategory $\FM$ whose objects are quasi-compact, quasi-separated $k$-schemes. The category of morphisms $X\to Y$ is  $\Hom_{\FM}(X,Y) =\dD_{\textrm{qc}}(X\times Y)$ and the composition is given by the convolution: for schemes $X$, $Y$, $Z$, $K\in \dD_{\textrm{qc}}(X\times Y)$ and $L\in\dD_{\textrm{qc}}(Y\times Z)$ their composition is 
$$
K\ast L :={\pi_{XZ}}_*(\pi_{XY}^*K \otimes \pi_{YZ}^*L),
$$
where $\pi_{XY}\colon X\times Y \times Z \to X\times Y$, $\pi_{YZ}\colon X\times Y \times Z \to Y\times Z$, $\pi_{XZ}\colon X\times Y \times Z \to X\times Z$ are the projections.

Similarly to  $\Bimod$, we consider $\FM$ as a 1-triangulated bicategory.

Consider the (contravariant) bifunctor 
\begin{align*}
&\Xi \colon \FM \to \Cat,& &\Xi(X) =\dD_{\textrm{qc}}(X)& 
\end{align*}
which to a 1-morphism  $K\in \dD_{\textrm{qc}}(X\times Y)$ assigns the \emph{Fourier-Mukai functor} with kernel $K$:
$$
\Xi_K(-) = {Rp_Y}_*(Lp_X^*(-)\otimes K),
$$
where $p_X\colon X\times Y \to X$, $p_Y \colon X\times Y \to Y$ are the projections. Finally, 2-morphism $\alpha \colon K \to K'$ induces a natural transformation $\Xi_\alpha \colon \Xi_{K} \to \Xi_{K'}$.

An exact triangle in the category $\dD_{\textrm{qc}}(X\times Y)$
$$
\eE_1\to \eE_2\to \eE_3 \to \eE_1[1]
$$ 
induces 
via the functor $\Xi$ 
 a functorial exact triangle of FM functors: 
\begin{equation}\label{eqtn_exact_FM_func}
\Xi_{\eE_1} \to \Xi_{\eE_2} \to \Xi_{\eE_3}\to \Xi_{\eE_1}[1].
\end{equation}

\vspace{0.3cm}
\subsection{2-adjunctions, spherical couples and spherical 1-morphisms in $\Bimod$}\label{ssec_sph_funct}~\\

Any 2-functor preserves 2-adjunctions, in particular the 2-functor $\Phi$ maps a 2-categorical adjunction $(M, R, \eta, \varepsilon)$ in $\Bimod$ to a pair of adjoint functors $\Phi_M\dashv \Phi_R$ between the corresponding derived categories. 

Let $A$ and $B$ be unital DG algebras. We discuss when a pair of adjoint functors between $\dD(A)$ and $\dD(B)$ can be lifted to a 2-categorical adjunction in $\Bimod$. For  $M\in \dD(A^{\opp}\otimes_k B)$, define its $A$ and $B$ duals $M^A, M^B\in \dD(B^{\opp} \otimes_k A)$ \cite{AnnLog2}: 
\begin{align*}
&M^A :=R\Hom_A(M,A),& &M^B:=R\Hom_B(M,B).
\end{align*}
We have several (derived) evaluation  and action morphisms \cite{AnnLog2}:
\begin{equation}\label{eqtn_act_and_ev_maps} 
\begin{aligned}
&\varepsilon_R\colon M^B \otimes_A^L M \to B,& &\varepsilon_L \colon M\otimes_B^L M^A \to A,& \\
&\eta_R\colon A \to M \otimes_B^LM^B,& &\eta_L \colon B \to M^A\otimes_A^L M.&
\end{aligned}
\end{equation} 
Note that $\eta_R$ is defined if $M$ is $B$-perfect, i.e. if its image in $\dD(B)$ lies in the full subcategory of compact objects. Respectively, $\eta_L$ is defined if $M$ is $A$-perfect. 

The following proposition allows us to lift suitable pairs of  adjoint functors between triangulated categories to 2-categorical adjunctions in $\Bimod$.
\begin{PROP}\label{prop_AnnLog}
	Let $M$ be in $\dD(A^{\opp}\otimes_kB)$. Then
	\begin{enumerate}
		\item[(i)] $M$ is $A$-perfect if and only if the left adjoint $\Phi_M^*$ to $\Phi_M$ exists. Under these assumptions functor $\Phi_M^*$ is isomorphic to $\Phi_{M^A}$ and the evaluation and action maps yield a 2-categorical adjunction $(M^A, M, \eta_L, \varepsilon_L)$ in the bicategory $\Bimod$.
		\item[(ii)] $M$ is $B$-perfect if and only if $\Phi_M$ maps compact objects to compact ones if and only if the right adjoint $\Phi_M^!$ is isomorphic to $\Phi_{M^B}$. Under these assumptions the evaluation and action maps yield a 2-categorical adjunction $(M, M^B, \eta_R, \varepsilon_R)$.
	\end{enumerate}
\end{PROP}
\begin{proof}
	This is basically the statement of \cite[Proposition 4.2 and 4.7]{AnnLog2}. In \cite{AnnLog2} object $M$ is lifted to $\ol{M}$ in a suitable (weak, in the sense of Drinfeld \cite{Dri}) DG enhancement for $\dD(A^{\opp}\otimes_kB)$. 
	The $A$ and $B$ duals of $\ol{M}$ are defined, which lift $M^A$ and $M^B$. Also, the evaluation and action maps are constructed on the DG level. With \cite[Proposition 4.7]{AnnLog2} the authors check that the composites in (\ref{eqtn_triangle_equalities}) are identities of objects in the derived categories of bimodules.
\end{proof}

Consider $B$-perfect $M\in \dD(A^\textrm{op} \otimes_k B)$. The 2-categorical adjunction $(M,M^B, \eta_R, \ve_R)$ in $\Bimod$ (Proposition \ref{prop_AnnLog}$(ii)$) allows us to consider the twist $T_B$ and the cotwist $C_A$, defined with triangles (\ref{eqtn_def_of_tw_and_cotw}). Lemma \ref{lem_uniqueness_of_adjoint} implies that $T_B$ and $C_A$ are unique up to isomorphisms. 
Applying functor $\Phi$ as in (\ref{eqtn_func_Phi}) we get endo-functors $\Omega_{A,M}$ and $\Theta_{B,M}$ of $\dD(A)$, resp. $\dD(B)$.
They fit into functorial exact triangles: 
\begin{equation*}
\begin{aligned}
&\Omega_{A,M}\rightarrow \Id_{\dD(A)} \xrightarrow{\eta_R} \Phi_M^!\Phi_M \rightarrow \Omega_{A,M}[1],&\\
&\Phi_M \Phi_M^! \xrightarrow{\ve_R} \Id_{\dD(B)} \rightarrow \Theta_{B,M} \rightarrow \Phi \Phi^![1].&
\end{aligned}
\end{equation*} 
Uniqueness of $T_A$ and $C_B$ implies that $\Omega_{A,M} = \Phi_{C_A}$ and $\Theta_{B,M} = \Phi_{T_B}$ are defined uniquely by $M$ up to a (non-unique) functorial isomorphism. Note that we do not claim that the cotwist functor $\Omega_{A,M}$ and twist functor $\Theta_{B,M}$ are uniquely defined by the functor $\Phi_M$. 

Assume now that $M$ is $A$-perfect. According to Proposition \ref{prop_AnnLog}$(i)$, we can consider the twist $T_A$ and the cotwist $C_B$ for the 2-categorical adjunction $(M^A,M,\eta_L,\ve_L)$. 
By applying functor $\Phi$ as in (\ref{eqtn_func_Phi}), we get endo-functors $\Omega_{B,M}=\Phi_{C_B}$
and $\Theta_{A,M}= \Phi_{T_A}$.
 Those fit into functorial exact triangles:
\begin{align*}
&	\Omega_{B,M}\rightarrow \Id_{\dD(B)} \xrightarrow{\eta_L} \Phi_M\Phi_M^* \rightarrow \Omega_{B,M}[1],&\\
&\Phi_M^* \Phi_M \xrightarrow{\ve_L} \Id_{\dD(A)} \rightarrow \Theta_{A,M} \rightarrow \Phi_M^* \Phi_M[1].&
\end{align*}

We say that $\Phi_M$ is a spherical functor if $M$ is a spherical 1-morphism in $\Bimod$.
\begin{THM}\cite[5.1 and 5.2]{AnnLog}\label{thm_Ann_Log}
	Let $M$ be an $A$ and $B$-perfect bimodule.
	Functor $\Phi_M$ is spherical if any two of the following hold
	\begin{itemize}
		\item[$(S_1)$] $\Theta_{B,M}$ is an equivalence,
		\item[$(S_2)$] $\Omega_{A,M}$ is an equivalence,
		\item[$(S_3)$] composite $\Phi^*_M \Theta_{B,M}[-1] \to \Phi_M^* \Phi_M \Phi_M^! \to \Phi_M^!$ is an isomorphism of functors,
		\item[$(S_4)$] composite $\Phi_M^! \to \Phi_M^!\Phi_M \Phi_M^* \to \Omega_{A,M} \Phi_M^*[1]$ is an isomorphism of functors.
	\end{itemize}
	Then also $\Omega_{B,M}$ and $\Theta_{A,M}$ are also equivalences of categories, quasi-inverse to $\Theta_{B,M}$ and $\Omega_{A,M}$ respectively.
\end{THM}

\vspace{0.3cm}
\subsection{Lifting push-forwards, pull-backs and the base-change to $\Bimod$}\label{ssec_geometric_liftings}~\\

We fix a DG enhancement \cite{BK} for the category $\dD_\textrm{qc}(X)$, for example, by $h$-injective complexes \cite{Spa, KasSch2}.
Let $P \in \dD_{\textrm{qc}}(X)$ be a compact generator (see \cite{BvdB}) and $A$ the DG endomorphism algebra of its lift to the DG enhancement. Then $\dD(A)\simeq \dD_\textrm{qc}(X)$ by \cite{Kel2}, which paves the way to performing the necessary constructions in the category $\Bimod$. With Proposition \ref{prop_cotwist_up_to_iso} we check that the constructed functorial exact triangles are independent of the choice of $P$.

Assume that $X$ and $Y$ are Noetherian separated schemes such that any perfect complex on both $X$ and $Y$ is isomorphic to a bounded complex of locally free sheaves. Choose compact generators in $\dD_\textrm{qc}(X)$ and in $\dD_\textrm{qc}(Y)$ and fix lifts of the generators to DG enhancements of both categories. Denote by $A_X$ and $A_Y$ their DG endomorphism algebras.

By \cite{LunSchnu} there exists a choice of equivalences $\Upsilon_X \colon \dD_\textrm{qc}(X) \to \dD(A_X)$, $\Upsilon_Y \colon \dD_\textrm{qc}(Y) \to \dD(A_Y)$ and $\Upsilon_{X,Y}\colon \dD_{\textrm{qc}}(X\times Y) \simeq \dD(A_X^{\textrm{op}} \otimes A_Y)$ which map FM functors $\dD_{\textrm{qc}}(X) \to \dD_{\textrm{qc}}(Y)$ to bimodule functors $\dD(A_X)\to \dD(A_Y)$. More precisely, for any $\eE\in \dD_\textrm{qc}(X \times Y)$ the diagram 
\begin{equation}\label{eqtn_diag_LS1}
\xymatrix{\dD(A_X) \ar[rr]^{\Phi_{\Upsilon_{X,Y}(\eE)}} && \dD(A_Y)\\\dD_{\textrm{qc}}(X) \ar[u]^{\Upsilon_X}_\simeq \ar[rr]^{\Xi_\eE}&& \dD_{\textrm{qc}}(Y) \ar[u]^{\Upsilon_Y}_\simeq}
\end{equation}
commutes up to a functorial isomorphism.

Commutativity of (\ref{eqtn_diag_LS1}) implies that having fixed compact generators for $\dD_{\textrm{qc}}(X)$ and $\dD_{\textrm{qc}}(Y)$, we can view FM functors $\dD_{\textrm{qc}}(X) \to \dD_{\textrm{qc}}(Y)$ as bimodule functors and vice versa. 

Fix compact generators for $\dD_{\textrm{qc}}(X)$ and $\dD_{\textrm{qc}}(Y)$ and consider $\eE\in \dD_{\textrm{qc}}(X\times Y)$.  In view of (\ref{eqtn_diag_LS1}), for 
$$
M=\Upsilon_{X,Y}(\eE)
$$
functors  $\Upsilon_Y\Xi_\eE$ and $\Phi_{M} \Upsilon_X$ are isomorphic. 

\begin{LEM}
	If $\Xi_\eE$ has left adjoint $\Xi_\eE^*$ then $\Xi_\eE^*\simeq \Xi_{\Upsilon_{Y,X}^{-1}(M^{A_X})}$ is an FM functor.
	If $\Xi_\eE$ maps compact objects to compact objects, then the right adjoint exists and it is an FM functor, $\Xi_{\eE}^!\simeq \Xi_{\Upsilon_{Y,X}^{-1}(M^{A_Y})}$.	
\end{LEM}
\begin{proof}
	If $\Xi_{\eE}^*$ exists then $\Upsilon_X \Xi_{\eE}^*\Upsilon_Y^{-1}$ is left adjoint to $\Phi_M$. Proposition \ref{prop_AnnLog}$(i)$ implies that $\Upsilon_X \Xi_{\eE}^*\Upsilon_Y^{-1}\simeq \Phi_{M^{A_X}}$.
	Commutativity of (\ref{eqtn_diag_LS1}) implies $\Xi_{\eE}^*\simeq \Xi_{\Upsilon_{Y,X}^{-1}(M^{A_X})}$.
	
	If $\Xi_\eE$ maps compact objects to compact objects then so does $\Phi_M\simeq \Upsilon_Y \Xi_\eE \Upsilon_X^{-1}$. By Proposition \ref{prop_AnnLog}$(ii)$, $\Phi_M^!\simeq \Phi_{M^{A_Y}}$. Moreover, $\Upsilon_X^{-1}\Phi_{M^{A_Y}} \Upsilon_Y$ is right adjoint to $\Xi_{\eE}$, i.e. $\Xi_\eE^!\simeq \Upsilon_X^{-1}\Phi_{M^{A_Y}} \Upsilon_Y$. As above we conclude that  $\Xi_{\eE}^! \simeq \Xi_{\Upsilon_{Y,X}^{-1}(M^{A_Y})}$.
\end{proof}

The 2-categorical adjunctions $(M^{A_X}, M, \eta_L,\varepsilon_L )$, $(M, M^{A_Y}, \eta_R, \varepsilon_R)$ in $\Bimod$ 
define the twist and  the cotwist by triangles  (\ref{eqtn_def_of_tw_and_cotw}). In view of the isomorphism $\Xi_{\eE}\simeq \Phi_{M}$ and the uniqueness of adjoint functors (see Lemma \ref{lem_uniqueness_of_adjoint}), the functorial exact triangles read:
\begin{equation}\label{eqtn_geome_cotwist}
\begin{aligned}
&\Xi_\eE^*\Xi_\eE \to \Id_{\dD_{\textrm{qc}}(X)}\to \Theta_{X,\eE}\to \Xi_\eE^*\Xi_\eE[1],&\\
&\Omega_{Y,\eE} \to\Id_{\dD_{\textrm{qc}}(Y)} \to \Xi_\eE \Xi_\eE^*\to \Omega_{Y,\eE}[1],&\\
&\Xi_{\eE}\Xi_{\eE}^! \to \Id_{\dD_{\textrm{qc}}(Y)}\to \Theta_{Y,\eE}\to \Xi_{\eE}\Xi_{\eE}^![1],&\\
&\Omega_{X,\eE}\to\Id_{\dD_{\textrm{qc}}(X)} \to \Xi_{\eE}^!\Xi_{\eE}\to \Omega_{X,\eE}[1].&
\end{aligned}
	\end{equation}
Note that we construct the triangles via the category $\Bimod$. We do not check that (\ref{eqtn_geome_cotwist}) are induced by morphisms between the convolutions of $\eE$ with FM kernels of the adjoint functors and the structure sheaf of the diagonal.

\begin{PROP}\label{prop_cotwist_up_to_iso}
	Consider $\eE \in \dD_{\textrm{qc}}(X\times Y)$. Then, up to an isomorphism, the functorial exact triangles of FM functors (\ref{eqtn_geome_cotwist}) do not depend on the choice of compact generators for $\dD_{\textrm{qc}}(X)$ and $\dD_{\textrm{qc}}(Y)$. 
\end{PROP}
\begin{proof}
	Another choice of generators for $\dD_{\textrm{qc}}(X)$ and $\dD_{\textrm{qc}}(Y)$ gives equivalences $\Upsilon_X^B\colon \dD_{\textrm{qc}}(X)\xrightarrow{\simeq} \dD(B_X)$, $\Upsilon_Y^B\colon \dD_{\textrm{qc}}(Y) \xrightarrow{\simeq} \dD(B_Y)$, for some DG algebras $B_X$, $B_Y$. The composite $\Upsilon_X^B\circ \Upsilon_X^{-1}$, resp. $\Upsilon_Y^B\circ \Upsilon_Y^{-1}$, is DG enhanceable and commutes with direct sums, hence it admits a lift to a bimodule functor, \emph{cf.} \cite{Toen}. Functor $\Upsilon_X^B\circ \Upsilon_X^{-1}$, resp. $\Upsilon_Y^B\circ \Upsilon_Y^{-1}$, together with its quasi-inverse yields $(A_X, B_X)$-, resp. $(A_Y,B_Y)$-, equivalence in $\Bimod$. By Proposition \ref{prop_eqiuv_and_tw}, the induced pseudo-equivalence $\theta \colon \Bimod_{A_X,A_Y} \to \Bimod_{B_X,B_Y}$ preserves the twist and the cotwist of the adjunction. The image of $\theta$ under functor $\Phi$ yields the required isomorphism of functorial triangles induced by $\Upsilon$ and $\Upsilon^B$.
\end{proof}

Let $f\colon X\to Y$ be a proper morphism. The FM kernel of the functor $Rf_*\colon \dD_{\textrm{qc}}(X) \to \dD_{\textrm{qc}}(Y)$ is the structure sheaf of the graph of $f$. As functor $Lf^*$, left adjoint to $Rf_*$, exists, 
there are functorial exact triangles of endo-functors of $\dD_{\textrm{qc}}(X)$ and $\dD_{\textrm{qc}}(Y)$:
\begin{align*} 
&Lf^*Rf_* \to \Id_{\dD_{\textrm{qc}}(X)}\to \Theta_{X,f}\to Lf^*Rf_*[1],&\\
&\Omega_{Y,f} \to\Id_{\dD_{\textrm{qc}}(Y)} \to Rf_*Lf^*\to \Omega_{Y,f}[1].&
\end{align*} 
Assume further that $Y$ is smooth. Then $Rf_*\colon \dD_{\textrm{qc}}(X)\to \dD_{\textrm{qc}}(Y)$ maps compact objects to compact objects
and we have functorial exact triangles:
\begin{align*}
&Rf_*f^! \to \Id_{\dD_{\textrm{qc}}(Y)}\to \Theta_{Y,f}\to Rf_*f^![1],&\\
&\Omega_{X,f}\to\Id_{\dD_{\textrm{qc}}(X)} \to f^!Rf_*\to \Omega_{X,f}[1].&
\end{align*} 
Without the smoothness assumption the pair $Rf_*\dashv f^!$ admits a lift to DG functors of the categories of ind-coherent sheaves, \cite{GaiRozI}.

Below we discuss compatibility of liftings of push-forward functors to $\Bimod$ with the composition of 1-morphisms.

Let $X$ be a quasi-compact quasi-separated scheme over a field $k$. 
Let $\mathscr{C}(X)$ denote the category of complexes of sheaves of $\oO_X$-modules with quasi-coherent cohomology and $\mathscr{I}(X)$ its full subcategory of $h$-injective objects. The category $\mathscr{I}(X)$ is a DG enhancement for $\dD_{\textrm{qc}}(X)$. By \cite[Corollary 2.3]{Schnu} there exists a DG functor $\iota_X\colon \mathscr{C}(X) \to \mathscr{I}(X)$ together with a natural transformation $\alpha_X \colon \Id_{\mathscr{C}(X)} \to \iota_X$ whose evaluation at every object of $\mathscr{C}(X)$ is a quasi-isomorphism. In fact, functor $\iota_X$ is defined in  \cite{Schnu} on the bigger category of complexes of $\oO_X$-modules, here we denote by the same letter its restriction to $\mathscr{C}(X)$. 

Let $f\colon X\to Y$ be a morphism of quasi-compact quasi-separated schemes. The composite 
$$
\wt{f}_*\colon \mathscr{I}(X) \xrightarrow{f_*}\mathscr{C}(Y) \xrightarrow{\iota_Y} \mathscr{I}(Y)
$$
defines a DG functor such that the induced functor on homotopy categories is isomorphic to $Rf_*$. For $g\colon Y \to Z$, the natural transformation $\alpha_Y \colon \Id_{\mathscr{C}(Y)} \to \iota_Y$ yields a morphism 
\begin{equation}\label{eqtn_composition_push-forw} 
\beta \colon \wt{gf}_* = \iota_Zg_*f_*\to \iota_Z g_*\iota_Yf_* = \wt{g}_*\wt{f}_*
\end{equation}
which induces an isomorphism of the induced functors of homotopy categories.

Let $E_X\in \mathscr{I}(X)$ be an object whose image in the homotopy category is a compact generator. Denote by $A_X$  its DG endomorphism algebra. Then 
$$
\Hom_{\mathscr{I}(X)}(E_X,-)\colon \mathscr{I}(X) \to \textrm{DGMod-}A_X
$$ 
induces an equivalence 
$$
\zeta_X \colon \dD_{\textrm{qc}}(X) \xrightarrow{\simeq} \dD(A_X).
$$

Analogously, a lift to $\mathscr{I}(Y)$ of a compact generator for $\dD_{\textrm{qc}}(Y)$ to $\mathscr{I}(Y)$ yields an equivalence $\zeta_Y\colon \dD_{\textrm{qc}}(Y) \to \dD(A_Y)$. Consider an $A_X^{\opp} \otimes A_Y$-bimodule: 
\begin{equation}\label{eqtn_M_f}
M_f := \Hom_{\mathscr{I}(Y)}(E_Y,\wt{f}_*E_X).
\end{equation}
Then 
\begin{equation}\label{eqtn_Phi_M_f_is_push_forw}
\Phi_{M_f} \circ \zeta_X\simeq \zeta_Y \circ Rf_*.
\end{equation}

\begin{PROP}\label{prop_convolution}
	Consider morphisms $X\xrightarrow{f} Y \xrightarrow{g}Z$ of quasi-compact separated schemes and denote by $A_Z$ the DG endomorphism algebra of a lift $E_Z$ of a compact generator for $\dD_{\textrm{qc}}(Z)$ to $\mathscr{I}(Z)$. Then there exists a canonical isomorphism in $\dD(A_X^{\opp} \otimes A_Z)$:
	\begin{equation}\label{eqtn_isom_conv}
	M_f\otimes_{A_Y}^LM_g \to M_{gf}
	\end{equation} 
\end{PROP}
\begin{proof} 
	Morphism $\beta$ (\ref{eqtn_composition_push-forw}) applied to $E_X$ is a quasi-isomorphism of $h$-injective complexes, hence it has a homotopy inverse. It follows that morphism of DG bimodules 
	$$
	\delta \colon M_{gf} = \Hom_{\mathscr{I}(Z)}(E_Z, \wt{gf}_*E_X)  \xrightarrow{\beta \circ (-)}  \Hom_{\mathscr{I}(Z)}(E_Z, \wt{g}_*\wt{f}_*E_X) =: N_{gf}
	$$ 
	has a homotopy inverse, i.e. the induced morphism $\ol{\delta}$ in $\dD(A_X^{\opp} \otimes A_Z)$ is an isomorphism. In particular, for equivalence $\zeta_Z\colon \dD_{\textrm{qc}}(Z) \to \dD(A_Z)$ induced by $E_Z$, formula (\ref{eqtn_Phi_M_f_is_push_forw}) with $f$ replaced by $gf$, $\zeta_Y$ by $\zeta_Z$, and $M_f$ by $M_{gf}$ implies $\zeta_Z \circ R(gf)_*\simeq \Phi_{N_{gf}}\circ \zeta_X$.
	
	Functor $\wt{g}_*$ together with the composition of morphisms in $\mathscr{I}(Z)$ defines a morphism of bimodules
	\begin{align*} 
	&\gamma \colon M_f\otimes_{A_Y} M_g = \Hom_{\mathscr{I}(Y)}(E_Y, \wt{f}_*E_X) \otimes_{A_Y} \Hom_{\mathscr{I}(Z)}(E_Z,\wt{g}_* E_Y) \to\\
	&\to  \Hom_{\mathscr{I}(Z)}(E_Z, \wt{g}_*\wt{f}_*E_X) =: N_{gf}.
	\end{align*}
	Let $\ol{\gamma}$ be the induced morphism in $\dD(A_X^{\opp} \otimes A_Z)$.

	Formula (\ref{eqtn_Phi_M_f_is_push_forw}) and its analogue for $g\colon Y \to Z$ yields isomorphisms 
	 $\zeta_{Z} \circ (Rg_*\circ Rf_*)\simeq \Phi_{M_g}\circ \zeta_Y \circ Rf_*\simeq \Phi_{M_g}\circ \Phi_{M_f} \circ \zeta_X \simeq  \Phi_{M_f\otimes_{A_Y}^LM_g} \circ \zeta_X$. 
	 Therefore, $\Phi_{N_{gf}}$ and $\Phi_{M_f\otimes_{A_Y}^LM_g}$ correspond via $\zeta_X$ and $\zeta_Z$ to $R(gf)_*$ and $Rg_*\circ Rf_*$. 
	As these functors are isomorphic, we conclude that applying $\Phi$ to $\ol{\gamma}$ yields an isomorphism of functors. Since, by Lemma \ref{lem_Psi_conservative}, $\Phi$ is conservative, $\ol{\gamma}$ is an isomorphism. Then $\ol{\delta}^{-1} \circ \ol{\gamma}$ is the sought isomorphism (\ref{eqtn_isom_conv}).
\end{proof} 

Note that as functor $\Phi_{M_f} \colon \dD(A_X) \to \dD(A_Y)$ has left adjoint, Proposition \ref{prop_AnnLog}$(ii)$ implies a 2-categorical adjunction $(M_f^{A_X}, M_f, \eta_L, \ve_L)$ in the category $\Bimod$.

Consider a commutative diagram of morphisms of quasi-compact quasi-separated schemes
\begin{equation}\label{eqtn_base-change_diag}
\xymatrix{&Z \ar[dl]_p \ar[dr]^q & \\ X\ar[dr]^f && W \ar[dl]_g \\ & Y &}
\end{equation}
We search for a 1-morphism in $\Bimod$ whose image under the functor $\Phi$ as in (\ref{eqtn_func_Phi}) is the base-change $Lg^*Rf_*\to Rq_*Lp^*$.

Choose $E_X\in \mathscr{I}(X)$, $E_Y\in \mathscr{I}(Y)$, $E_Z\in \mathscr{I}(Z)$ and $E_W\in \mathscr{I}(W)$ whose images in the homotopy categories are compact generators. Denote by $A_X$, $A_Y$, $A_Z$ and $A_W$ their DG endomorphism algebras. Let $M_f$, $M_q$, $M_g$ and $M_p$ be DG bimodules as in (\ref{eqtn_M_f}).

In view of Proposition \ref{prop_convolution} equality $f\circ p = g \circ q$ implies an isomorphism 
$\nu \colon  M_p\otimes^L_{A_X} M_f\to M_q\otimes^L_{A_W}M_g$.
Consider the composite 
\begin{equation} \label{eqtn_base-change}
\begin{aligned}
\omega \colon M_f \otimes^L_{A_Y}M_g^{A_W} &\to M_p^{A_Z} \otimes^L_{A_Z} M_p\otimes^L_{A_X}M_f \otimes^L_{A_Y}M_g^{A_W}\to \\ &\to M_p^{A_Z} \otimes^L_{A_Z}M_q\otimes^L_{A_W}M_g\otimes^L_{A_Y}M_g^{A_W}\to M_p^{A_Z} \otimes^L_{A_Z}M_q
	\end{aligned}
\end{equation}
where the first map is 
induced by the lift of the $Lp^*\dashv Rp_*$ adjunction unit to the unit $A_X \to M_p^{A_Z}\otimes_{A_Z}^L M_p$ of the 2-categorical $(A_X,A_Z)$-adjunction, the second by $\nu$, and the third by the lift of the $Lg^*\dashv Rg_*$ adjunction counit to the counit $M_g\otimes_{A_Y}^LM_g^{A_W} \to A_W$ of the  $(A_Y,A_W)$-adjunction. Then $\omega$ is a lift of the 'base change' $Lg^*Rf_* \to Rq_*Lp^*$ to a 1-morphism in $\Bimod$. 

The base-change morphism $\omega$ (\ref{eqtn_base-change}) is the composite of appropriate adjunction units and counits. Hence, Proposition 
\ref{prop_cotwist_up_to_iso}
implies that $\omega$ yields a unique up to isomorphism morphism $Lg^*Rf_*\to Rq_*Lp^*$ of FM functors.

\vspace{0.3cm}
\subsection{Isomorphisms of functors via the restriction to one object}\label{ssec_iso_of_fun}~\\

Throughout the paper we repeatedly use the fact that, for \emph{algebras} $A$, $B$ and functors $F,G\colon \dD(A) \to \dD(B)$ defined by pure modules,
an isomorphism of $F$ and $G$ on generators yields an isomorphism $F\xrightarrow{\simeq} G$.

\begin{LEM}\label{lem_determining_functor}
	Let $A$, $B$ be algebras and $M$ a complex of $A-B$ bimodules such that $H^i(M) = 0$, for $i\neq j$, for a fixed $j\in \mathbb{Z}$. Let also $\eE \subset \dD(A)$ be the full subcategory with one object $A$. Then the quasi-isomorphism class of $M$ is determined by the isomorphism class of the restriction functor $\Phi_{M}|_{\eE} \colon \eE\to \dD(B)$.
\end{LEM}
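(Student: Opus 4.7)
The plan is to use the hypothesis $H^{i}(M) = 0$ for $i \neq 0$ to reduce to the case where $M$ is an ordinary $A$-$B$ bimodule concentrated in degree zero, and then to read off the bimodule structure directly from the restriction $\mathbb{T}_{M}|_{\eE}$. For the reduction, I would first note that the canonical truncations give quasi-isomorphisms
\[
M \xleftarrow{\sim} \tau_{\gge 0}\, M \xrightarrow{\sim} H^{0}(M)
\]
of DG $A$-$B$ bimodules, so that $\mathbb{T}_{M} \simeq \mathbb{T}_{H^{0}(M)}$ as functors $\dD(\textrm{Mod--}A) \to \dD(\textrm{Mod--}B)$. It therefore suffices to prove that the isomorphism class of $\mathbb{T}_{H^{0}(M)}|_{\eE}$ determines $H^{0}(M)$ up to isomorphism of $A$-$B$ bimodules.

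Next I would unpack the restriction. Because $A$ is itself semi-free as a right $A$-module and is an ordinary algebra, $\End_{\dD(\textrm{Mod--}A)}(A) = A$ and $\mathbb{T}_{M}(A) = A\otimes_{A} M = M$ with its right $B$-module structure; moreover, for $a \in A = \End_{\eE}(A)$ the induced map $\mathbb{T}_{M}(a)$ is left multiplication by $a$ on $M$. Since $H^{0}(M)$ is concentrated in degree zero as an object of $\dD(\textrm{Mod--}B)$, one has $\End_{\dD(\textrm{Mod--}B)}(H^{0}(M)) = \End_{B}(H^{0}(M))$, so the isomorphism class of $\mathbb{T}_{H^{0}(M)}|_{\eE}$ is precisely the datum of the right $B$-module $H^{0}(M)$ together with a ring homomorphism $A \to \End_{B}(H^{0}(M))$, which is exactly the structure of an $A$-$B$ bimodule up to isomorphism.

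Consequently, any isomorphism $\mathbb{T}_{M}|_{\eE} \simeq \mathbb{T}_{M'}|_{\eE}$ amounts to an $A$-equivariant $B$-linear isomorphism $H^{0}(M) \xrightarrow{\simeq} H^{0}(M')$, i.e.\ an isomorphism of $A$-$B$ bimodules; combined with the first step, this shows $M$ and $M'$ are quasi-isomorphic as DG bimodules. The main subtlety, and the essential use of the vanishing hypothesis, lies in the identification $\End_{\dD(\textrm{Mod--}B)}(M) = \End_{B}(H^{0}(M))$: without it, the restriction $\mathbb{T}_{M}|_{\eE}$ would only detect the $A$-action on $M$ at the level of the degree-zero cohomology of the endomorphism complex, losing the higher homotopical coherence data needed to reconstruct a general DG bimodule up to quasi-isomorphism.
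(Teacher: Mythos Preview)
Your argument is correct and follows essentially the same route as the paper: evaluate $\mathbb{T}_M$ at the free module $A$ to recover $M$ as a right $B$-module, then read off the left $A$-action from the induced ring map $A=\End_{\eE}(A)\to\End_{\dD(B)}(M)\simeq\End_B(H^0(M))$, the last isomorphism being exactly where the cohomological concentration hypothesis is used. Your additional remarks on the truncation and on why the hypothesis is essential are fine elaborations of the same idea.
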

\begin{proof}
	By definition, $M \simeq \Phi_{M}(A)$ as a right $B$ module. Functor $\Phi_{M}|_{\eE}$ gives a map $A \simeq \Hom_A(A,A) \xrightarrow{\beta} \Hom_{B}(M,M)$. By assumption on $M$, we have a quasi-isomorphism $M \simeq H^j(M)$, hence $\Hom_{B}(M,M)\simeq \Hom_{B}(H^j(M), H^j(M))$. Then morphism $\beta$ recovers the left $A$ module structure of $H^j(M)$, hence the quasi-isomorphism class of $M$ as an $A-B$ bimodule.
\end{proof}

As an easy corollary, we get the following
\begin{LEM}\label{lem_iso_on_A_mod}
	Let $A$, $B$ be algebras and $M_1$, $M_2$ complexes of $A-B$ bimodules such that $H^i(M_1)  =0$, for $i \neq j$, for a fixed $j\in \mathbb{Z}$. Let $\eE \subset \dD(A)$ denote the full subcategory with one object $A$. Then functors $\Phi_{M_1}$, $\Phi_{M_2}$ are isomorphic if and only if $\Phi_{M_1}|_{\eE}$ and $\Phi_{M_2}|_{\eE}$ are.
\end{LEM}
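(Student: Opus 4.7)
The ``only if'' direction is immediate: any natural isomorphism $\mathbb{T}_{M_1}\simeq \mathbb{T}_{M_2}$ restricts to a natural isomorphism of the restrictions to $\eE$. The substance of the lemma is the converse, and the plan is to bootstrap it from Lemma \ref{lem_determining_functor}.

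Given an isomorphism $\mathbb{T}_{M_1}|_{\eE}\simeq \mathbb{T}_{M_2}|_{\eE}$, evaluating at the unique object $A\in \eE$ and using $\mathbb{T}_{M_i}(A)\simeq M_i$ in $\dD(\textrm{Mod--}B)$ first gives a quasi-isomorphism $M_1\simeq M_2$ of right $B$-modules. In particular, the hypothesis $H^i(M_1)=0$ for $i\neq 0$ forces $H^i(M_2)=0$ for $i\neq 0$, so Lemma \ref{lem_determining_functor} applies to $M_2$ as well. Since the quasi-isomorphism class of each $M_i$ as an $A\text{-}B$ bimodule is determined by the isomorphism class of $\mathbb{T}_{M_i}|_{\eE}$ (Lemma \ref{lem_determining_functor}), and the two restrictions are isomorphic by assumption, we conclude that $M_1\simeq M_2$ in $\dD(\textrm{Mod--}A^{\textrm{opp}}\otimes B)$.

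Finally, the DG equivalence $\underline{\mathbb{T}}$ of (\ref{eqtn_DG_tensor}) promotes this quasi-isomorphism of $A\text{-}B$ bimodules to an isomorphism of the associated functors in the homotopy category of DG functors, i.e. $\mathbb{T}_{M_1}\simeq \mathbb{T}_{M_2}$.

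The only potentially delicate step is verifying that the isomorphism produced by Lemma \ref{lem_determining_functor} is really functorial, i.e. that the left $A$-action recovered from $\mathbb{T}_{M_1}|_{\eE}$ coincides with the one recovered from $\mathbb{T}_{M_2}|_{\eE}$ under the identification of the underlying right $B$-modules coming from the assumed isomorphism; this is built into the statement of Lemma \ref{lem_determining_functor} (which already packages both the underlying module and the action), so no further work is needed.
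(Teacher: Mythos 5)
Your proof is correct and follows essentially the same route as the paper's: evaluate the restrictions at $A$ to transfer the cohomological vanishing from $M_1$ to $M_2$, apply Lemma \ref{lem_determining_functor} to get a quasi-isomorphism of bimodules, and conclude via the DG equivalence $\underline{\mathbb{T}}$. You spell out the last step and the functoriality check a bit more explicitly than the paper's terse "We conclude by Lemma \ref{lem_determining_functor}", but the underlying argument is the same.
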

\begin{proof}
	Since $\Phi_{M_1}(A) \simeq M_1$ and $\Phi_{M_2}(A) \simeq M_2$, isomorphism of functors $\Phi_{M_1}|_{\eE}\simeq \Phi_{M_2}|_{\eE}$ implies a quasi-isomorphism of $M_1$ and $M_2$ as complexes of right $B$ modules. Therefore, $H^i(M_2) = 0$, for $i\neq j$. We conclude by Lemma \ref{lem_determining_functor}.
\end{proof}

	\section{The structure of the reduced fiber}\label{sec_fiber_of_f}
	
	Let $f\colon X \to Y$ be a proper morphism with fibers of relative dimension bounded by one such that $Rf_*(\oO_X) = \oO_Y$. Let $C$ denote the reduced fiber of $f$ over a closed point of $Y$.
	
	Applying functor $Rf_*$ to short exact sequence
	$$
	0 \to I_C \to \oO_X \to \oO_C \to 0
	$$
	of sheaves on $X$ we obtain a surjective morphism $R^1f_* \oO_X \to R^1f_* \oO_C$, which implies
	$$
	H^1(\oO_C) =0.
	$$
	Moreover, by \cite[Corollary II.11.3]{Har}, $C$ is connected, i.e. $H^0(\oO_C) = k$.
	
	Let $C = \bigcup C_i$ be a reduced curve such that all of its irreducible components $C_i$ are smooth. An \emph{incidence graph} is a bipartite graph whose vertices correspond to irreducible components and singular points of $C$. An edge connects a vertex corresponding to an irreducible component $C_i$ with a vertex corresponding to a singular point $c \in \textrm{Sing}(C)$ if and only if $c \in C_i$. Note that the incidence graph can be simply connected even though the dual intersection graph might be not, as the example of three curves meeting in one point shows: the incidence graph is a tree - a point connected with three other points, while the dual graph is a triangle.
	
	We say that a reduced curve has {\em normal crossing} singularities if all components of the curve are smooth and the Zariski tangent space at every singular point is the direct sum of tangent subspaces corresponding to components that meet at this point.

	\begin{THM}\label{thm_fiberstructure}
		Let $C$ be a reduced proper algebraic curve over field $k$. Then $H^1(\oO_C) =0$ if and only if the following conditions are satisfied:
		\begin{itemize}
			\item[(i)] Every irreducible component $C_i$ of $C$ is a smooth rational curve,
			\item[(ii)] The incidence graph of $C$ has no cycles,
			\item[(iii)] The curve has normal crossing singularities.
		\end{itemize}
	\end{THM}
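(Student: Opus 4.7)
The plan is to express $h^1(\oO_C)$ as a sum of local and global invariants of the normalization of $C$ and then match its vanishing with the numerical constraints imposed by (i)--(iii).

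First I would reduce to $C$ connected, since both the cohomology and the conditions (i)--(iii) respect disjoint unions. For connected $C$, consider the normalization $\nu\colon \widetilde{C}=\bigsqcup_i \widetilde{C_i}\to C$ and the short exact sequence
$$0 \to \oO_C \to \nu_*\oO_{\widetilde{C}} \to \kK \to 0,$$
where $\kK$ is a skyscraper sheaf on $\mathrm{Sing}(C)$ with stalk dimensions $\delta_p:=\dim_k(\widetilde{\oO}_{C,p}/\oO_{C,p})$. The long exact sequence in cohomology, together with $H^0(\oO_C)=k$, gives
$$h^1(\oO_C) = \sum_{i=1}^{n} g(\widetilde{C_i}) + (\delta - n + 1), \qquad \delta:=\sum_p \delta_p,$$
and the injection $k\hookrightarrow k^n=H^0(\nu_*\oO_{\widetilde{C}})$ already forces $\delta\geq n-1$. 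Hence $h^1(\oO_C)=0$ is equivalent to $g(\widetilde{C_i})=0$ for every $i$ (so $\widetilde{C_i}\simeq \mathbb{P}^1$) together with $\delta=n-1$.

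Next I would analyse $\delta_p$ locally by factoring $\oO_{C,p}\subset \widetilde{\oO}_{C,p}$ through the ``partial normalization'' $B_p:=\prod_{j:\, p\in C_{i_j}}\oO_{C_{i_j},p}$, the product taken over the $d:=\mathrm{deg}_{\Gamma}(p)$ irreducible components of $C$ through $p$. The composite $\oO_{C,p}\hookrightarrow B_p\twoheadrightarrow B_p/\mathfrak{m}_{B_p}\simeq k^d$ has image equal to the diagonal copy of $k$, so $\dim_k(B_p/\oO_{C,p})\geq d-1$, with equality iff $\mathfrak{m}_{B_p}\subset \oO_{C,p}$, which happens precisely when each $C_{i_j}$ is smooth at $p$ and the tangent directions of these components span linearly independent lines in $T_pC$---the normal crossing condition of (iii). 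The remaining term $\dim_k(\widetilde{\oO}_{C,p}/B_p)=\sum_j \delta_p^{(i_j)}$ is the sum of the classical delta invariants of individual components at $p$, and vanishes iff each $C_{i_j}$ is smooth at $p$. Therefore $\delta_p\geq d-1$, with equality iff $p$ is a normal crossing point of $C$.

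Summing over singular points yields $\delta\geq \sum_p(\mathrm{deg}_{\Gamma}(p)-1)=E-V_s$, where $E$ and $V_s$ are the numbers of edges and singular-point vertices of the incidence graph $\Gamma$. Connectedness of $C$ forces $\Gamma$ to be connected, hence $E\geq (n+V_s)-1$ and $\delta\geq n-1$, with overall equality iff $\Gamma$ is a tree \emph{and} every singular point of $C$ is normal crossing. Combined with the first step, $h^1(\oO_C)=0$ is then equivalent to conditions (i), (ii), and (iii) holding simultaneously; both directions follow by reversing the same chain of (in)equalities. The main technical obstacle is the local analysis of the second paragraph, in particular the sharp equality criterion $\delta_p = d-1$; the remainder is long-exact-sequence bookkeeping and elementary graph theory on $\Gamma$.
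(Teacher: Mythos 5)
Your approach is correct but genuinely different from the paper's. You compute $h^1(\oO_C)$ in one shot from the normalization sequence as $\sum_i g(\widetilde{C_i}) + (\delta - n + 1)$, then saturate the chain of inequalities $\delta = \sum_p \delta_p \geq \sum_p (d_p - 1) = E - V_s \geq n - 1$ (Euler characteristic of the incidence graph), obtaining all three conditions (and both implications) simultaneously. The paper instead treats each condition separately in the forward direction: (i) by restricting to a component and normalizing that component alone; (ii) by exhibiting a minimal cycle subcurve $C' \subset C$ with $h^1(\oO_{C'}) = 1$; (iii) by constructing a ``universal'' curve $\bar{C}\to C$ with normal crossings and comparing $\oO_C$ with $\bar{\pi}_*\oO_{\bar{C}}$ --- and proves the converse separately, by induction on the number of components, peeling off a leaf of the tree. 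Your version is more economical and makes the arithmetic transparent, but it outsources the real content to the local lemma ``$\delta_p \geq d_p - 1$, with equality iff $(C,p)$ is a normal-crossing germ,'' which is essentially of the same depth as the paper's universal-curve step.

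One imprecision worth flagging if you write this out in full: you attribute the equivalence ``equality in $\dim_k(B_p/\oO_{C,p}) \geq d-1$'' to ``each $C_{i_j}$ smooth at $p$ and tangent lines independent.'' In fact $\mathfrak{m}_{B_p}\subset\oO_{C,p}$ does not force the branches to be smooth --- for a unibranch cusp it holds trivially (with $B_p = \oO_{C,p}$) even though the cusp is singular. What that containment actually encodes is the tangent-space condition; the smoothness of each branch is detected by the other summand $\dim_k(\widetilde{\oO}_{C,p}/B_p) = \sum_j \delta_p^{(i_j)}$, exactly as you go on to say. So your decomposition $\delta_p = \dim(B_p/\oO_{C,p}) + \sum_j \delta_p^{(i_j)}$ with each summand bounded below and the equality characterizations split between them is the right statement; just don't claim the first equality alone is equivalent to normal crossing. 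The final conclusion ``$\delta_p = d_p - 1$ iff $p$ is a normal-crossing point'' is correct, and the rest of your argument (connectedness of $\Gamma$ from connectedness of $C$, $E \geq V - 1$ with equality iff tree) goes through as stated.
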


	\begin{proof} Assume that $H^1(\oO_C) =0$. Let $C_i$ be an irreducible component of $C$. The restriction morphism $\oO_C \to \oO_{C_i}$ gives a surjection on cohomology $H^1(\oO_C) \to H^1(\oO_{C_i})$, hence $H^1(\oO_{C_i}) =0$.
		Let
		$$
		\pi_i\colon \wt{C_i} \to C_i
		$$
		denotes the normalization of $C_i$.  Consider a short exact sequence of sheaves on $C_i$:
		$$
		0 \to \oO_{C_i} \to \pi_{i*} \oO_{\wt{C_i}} \to \fF \to 0.
		$$
		Since $\fF$ is supported at singular points of $C_i$, the group $H^1(\fF)$ vanishes, hence $H^1(\oO_{\wt{C_i}}) = 0$. As $\wt{C_i}$ is smooth,
		it is isomorphic to $\mathbb{P}^1_k$.
		
		Sequence
		$$
		0 \to H^0(\oO_{C_i}) \to H^0(\oO_{\wt{C_i}}) \to H^0(\fF) \to 0
		$$
		is exact and the first morphism is an isomorphism. It follows that $H^0(\fF)=0$, i.e. sheaf $\fF$ is trivial. This proves that $C_i$ is isomorphic to its normalization $\wt{C_i}$, which is a smooth rational curve. This proves (i).
		
		Assume that the incidence graph of $C$ has a cycle. Let $C'\subset C$ be the corresponding (minimal) cycle  of smooth rational curves, a subscheme in $C$ with irreducible components $C_1,\ldots,C_l$.
		Again, we know that $H^1(\oO_{C'}) = 0$.
		Then we have a short exact sequence
		$$
		0 \to\oO_{C'} \to \bigoplus_{i=1}^l \oO_{C_i} \to \bigoplus_{i=1}^l \oO_{c_i} \to 0,
		$$
		where $c_1, \ldots, c_l$ are singular points of $C'$. From long exact sequence of cohomology groups it follows that $H^1(\oO_{C'}) = k$. This contradiction proves (ii).
		
		Note that, for curves $C$ satisfying conditions (i) and (ii), there exists a universal curve ${\bar C}$ and morphism
		$$
		{\bar \pi}: {\bar C}\to C,
		$$
		with the property that ${\bar C}$ has normal crossing singularities and, for every curve $D$ with normal crossing singularities and a map $D\to C$, there is a unique lifting map $D\to {\bar C}$.
		
		The construction of the curve ${\bar C}$ is simple. Consider an affine neighbourhood $U\subset C$ of every point of the curve which does not contain more than one singular point. Assume there is one. The irreducible components that meet at the singular point give some affine subschemes in $U$. Since the category of affine schemes is opposite to the category of unital commutative algebras, we can define the open chart ${\bar U}$ of the universal curve over $U$
		as a colimit over the diagram of embedding of the singular point into all these affine subschemes
		(by taking the spectrum of the limit over the diagram of the corresponding commutative algebras). It has one normal crossing singularity.
		
		If $U$ has no singular points, then we put ${\bar U}=U$. We can glue ${\bar U}$'s over $C$ into a curve, ${\bar C}$, with normal crossing singularities. Conditions (i) and (ii) guarantee  that ${\bar C}$ has the required universal property. Since we do not need this for our purpose, we skip the proof.
		
		Clearly, ${\bar \pi}$ is a set-theoretic isomorphism.
		We again have a short exact sequence with sheaf $\fF$ supported at singular points of $C$:
		$$
		0 \to \oO_C \to {\bar \pi}_{*} \oO_{\bar C} \to \fF \to 0,
		$$
		whose long exact sequence of cohomology shows that $\fF$ is trivial sheaf. This implies that the schematic structure of $C$ and ${\bar C}$ coincide, which proves (iii).
		
		Conversely, assume that $C$ is a reduced curve satisfying conditions (i), (ii) and (iii). We can assume that the curve is connected. We proceed by induction on the number of irreducible components of $C$. The base of induction, the case of a single component, is obvious. Since the incidence graph is a tree, then we can choose an irreducible component $C_1$ which has only one singular point, $c$. The curve $C'$, the union of the other components,  satisfies the same assumptions as $C$. Hence, $H^1(\oO_{C'}) = 0$ by induction hypothesis. Since the point $c$ is a normal crossing singularity, then the kernel of the restriction morphism $\oO_C\to \oO_{C'}$ is easily seen to be $\oO_{C_1}(-1)$. Then looking at the cohomology sequence for the short exact sequence
		$$
		0\to \oO_{C_1}(-1) \to  \oO_C \to \oO_{C'} \to 0,
		$$
		we see that $H^1(\oO_C) = 0$.
	\end{proof}
	
	\section{Calculation of Ext-groups for bounded above complexes}\label{sec_spectr_seq}

	Let $\aA$ be an abelian category and $A^{\bcdot}$ a bounded above complex over $\aA$. Its "stupid" truncations $\sigma_{\ge i}A$ define direct system $\sigma_{\ge i}A^{\bcdot} \to \sigma_{\ge i-1}A^{\bcdot}$. Given a complex $B^{\bcdot}$, we will be interested in the group ${\rm Hom}^{\bcdot}(A^{\bcdot}, B^{\bcdot})$ in terms of the stupid truncations of both complexes. There is a spectral sequence with ${\rm E}_1$-layer:
	\begin{equation}\label{eqtn_specseqfin}
	{\rm E}_1^{pq}=\bigoplus_{j-i=p}{\rm Ext}^q(A^i,B^j).
	\end{equation}
	If both complexes $A^{\bcdot}$, $B^{\bcdot}$ have finite number of non-trivial components then it is well-known that spectral sequence (\ref{eqtn_specseqfin}) converges. If complex $B^{\bcdot}$ is infinite then we need to put some conditions on $A^{\bcdot}$ in order to guarantee the spectral sequence is converging to ${\rm Hom}^{\bcdot}(A^{\bcdot}, B^{\bcdot})$.
	We shall show that if, in addition, both complexes are bounded above and $\Ext^q(A^i, B^j) =0$, for $q>0$ and $i,j \in \mathbb{Z}$, then we have a graded complex with terms
	\begin{equation}\label{eqtn_complex_calc_Ext}
	{\rm C}^p = \prod_{j-i = p} \Hom(A^i, B^j)
	\end{equation}
	whose cohomology calculate $\Ext^p(A^\bcdot, B^\bcdot)$. Here the differentials are the limits of the relevant differentials in the sequence (\ref{eqtn_specseqfin}) when we allow the truncations of $A^{\bcdot}$ and $B^{\bcdot}$ to go to infinity.
	
	Recall the following fact about cohomology of limits. Let $(M_i)_{i \in \mathbb{Z}}$ be an inverse system of abelian groups and $M = \varprojlim M_i$. We say that system $(M_i)_{i \in \mathbb{Z}}$ satisfies $(\star)$ if morphism $M_i \to M_{i-1}$ is surjective, for any $i$.
	\begin{LEM}\cite[Lemma 0.11]{Spa}\label{lem_Spalten}
		Let $(A_i \xrightarrow{f_i} B_i \xrightarrow{g_i} C_i \xrightarrow{h_i} D_i)_{i \in \mathbb{Z}}$ be an inverse system of complexes of abelian groups such that systems $(A_i)$, $(B_i)$, $(C_i)$, $(D_i)$ satisfy $(\star)$ for all $i$. Denote by $(A \xrightarrow{f} B \xrightarrow{g} C \xrightarrow{h} D)$ the limit complex. Let $A_i'$, $B_i'$, $C_i'$ and $D'_i$ be kernels of $A_i \to A_{i-1}$, $B_i \to B_{i-1}$, $C_i \to C_{i-1}$ and $D_i \to D_{i-1}$ respectively. Assume there is $j\in \mathbb{Z}$ such that sequence $A'_i \to B'_i \to C'_i \to D'_i$ is exact for all $i >j$. Then the natural homomorphism $\textrm{Ker} g/ \textrm{Im} f \to \textrm{Ker} g_j/ \textrm{Im} f_j$ is an isomorphism.
	\end{LEM}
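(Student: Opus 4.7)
The plan is to establish that the natural morphism in question, induced by the projection from the limit, is both surjective and injective, using only the Mittag-Leffler hypothesis $(\star)$ and the four-term exactness of the kernel sequences for $i>j$. I begin by observing that the map is well-defined: the projection $B=\varprojlim B_i \to B_j$ takes $\ker g$ into $\ker g_j$ and $\textrm{Im}\, f$ into $\textrm{Im}\, f_j$, so it descends to a map of quotients. No choices enter at this stage.

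For surjectivity, given $b_j \in \ker g_j$, I would inductively construct a compatible family of lifts $b_i \in \ker g_i$ for $i>j$. Given $b_i$ already chosen, pick any preimage $\tilde b_{i+1}\in B_{i+1}$ of $b_i$, which exists by $(\star)$. The element $g_{i+1}(\tilde b_{i+1})$ maps to $g_i(b_i)=0$ in $C_i$, so lies in $C'_{i+1}$, and it is killed by $h_{i+1}$ since $h_{i+1}\circ g_{i+1}=0$. Exactness of $A'_{i+1}\to B'_{i+1}\to C'_{i+1}\to D'_{i+1}$ at $C'_{i+1}$ produces $b'\in B'_{i+1}$ with $g_{i+1}(b')=g_{i+1}(\tilde b_{i+1})$; then $b_{i+1}:=\tilde b_{i+1}-b'$ still projects to $b_i$ (as $b'$ is in the kernel of the transition map) and lies in $\ker g_{i+1}$. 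For $i<j$ the components are forced by $b_j$ via the transition maps. The resulting tuple is an element of $\ker g$ projecting to $b_j$.

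For injectivity, suppose $b=(b_i)\in \ker g$ projects to an element of $\textrm{Im}\, f_j$, say $b_j=f_j(a_j)$. I would inductively lift to $a_i\in A_i$ for $i>j$ with $f_i(a_i)=b_i$. Given $a_i$, pick a lift $\tilde a_{i+1}$ of $a_i$ in $A_{i+1}$, available by $(\star)$ for $A$. Then $f_{i+1}(\tilde a_{i+1})-b_{i+1}$ projects to $f_i(a_i)-b_i=0$ in $B_i$, so lies in $B'_{i+1}$, and is killed by $g_{i+1}$ since $g_{i+1}\circ f_{i+1}=0$. Exactness of $A'_{i+1}\to B'_{i+1}\to C'_{i+1}$ at $B'_{i+1}$ writes it as $f_{i+1}(a')$ for some $a'\in A'_{i+1}$; setting $a_{i+1}:=\tilde a_{i+1}-a'$ yields the required lift. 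For $i<j$ propagate $a_j$ via the transition maps of $A$. Together these assemble into $a\in A$ with $f(a)=b$.

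The only technical point is the repeated correction of a naive lift by an element of the kernel of the previous transition map; the four-term exactness for the primed terms is precisely what provides the "room" to carry this out, while $(\star)$ provides the existence of naive lifts in the first place. No higher derived functor argument is needed: the vanishing $\varprojlim{}^1=0$ on the short exact sequences $0\to X'_i\to X_i\to X_{i-1}\to 0$ (for $X\in\{A,B,C,D\}$) that would usually be invoked is subsumed into these explicit constructions.
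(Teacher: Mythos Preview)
The paper does not supply a proof of this lemma; it is quoted verbatim from Spaltenstein with the citation \cite[Lemma 0.11]{Spa} and used as a black box. Your argument is the standard direct proof by inductive lifting and is correct. One cosmetic remark: in the injectivity step, when you assert that $f_{i+1}(\tilde a_{i+1})-b_{i+1}$ is killed by $g_{i+1}$, the justification should invoke both $g_{i+1}\circ f_{i+1}=0$ and the hypothesis $b\in\ker g$ (so $g_{i+1}(b_{i+1})=0$); you only cite the first, though the second is clearly implicit.
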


	\begin{LEM}\label{lem_limit_of_hom_is_hom}
		Let $A^\bcdot$ and $B^\bcdot$ be  complexes over $\aA$,  $A^\bcdot$ bounded above. Assume $l\in \mathbb{Z}$ such that $\Ext^j_{\dD^b(\aA)}(A^i, B^{\bcdot}) = 0$, for $j<l$ and all $i$. 
		Then the canonical morphism 
		$$
		\Ext^j(A^{\bcdot},B^{\bcdot}) \to \varprojlim \Ext^j(\sigma_{\gge i} A^{\bcdot}, B^{\bcdot})
		$$
is an isomorphism for any $j\in \mathbb{Z}$. Here $\sigma_{\gge i} A^{\bcdot}$ is the 'stupid' truncation of $A^\bcdot$.
	\end{LEM}
	
	\begin{proof}
		Fix $I^{\bcdot}$, an h-injective resolution of $B^{\bcdot}$ \cite{Spa}. The direct system $\sigma_{\gge i+1}A^\bcdot \to \sigma_{\gge i}A^\bcdot$ induces an inverse system $\alpha_i \colon \Hom_{\textrm{Com}}(\sigma_{\gge i}A^\bcdot, I^\bcdot) \to \Hom_{\textrm{Com}}(\sigma_{\gge i+1}A^\bcdot, I^\bcdot)$. Morphism $\alpha_i$ is surjective, i.e. property $(\star)$ is satisfied, and its kernel is complex $\Hom_{\textrm{Com}}(A^{i}[-i], I^\bcdot)$. Cohomology of this complex is $H^j(\Hom_{\textrm{Com}}(A^{i}[-i], I^\bcdot)) \simeq \Ext^{j+i}_{\dD^b(\aA)}(A^{i}, B^\bcdot)$.
		
		Our assumption implies that, for any $j$, there exists $N_j\in \mathbb{Z}$ such that, for $i<N_j$, complex $\Hom_{\textrm{Com}}(A^{i}[-i], I^\bcdot)$ is exact at degree $j$. By Lemma \ref{lem_Spalten}, we conclude that
		\begin{equation}\label{eqtn_cohomo_of_lim}
		H^j(\varprojlim \Hom_{\textrm{Com}}(\sigma_{\gge i} A^\bcdot, I^\bcdot)) \simeq H^j(\Hom_{\textrm{Com}}(\sigma_{\gge i} A^\bcdot, I^\bcdot)) \simeq \Ext^j_{\dD^b(\aA)}(\sigma_{\gge i} A^\bcdot, B^\bcdot).
		\end{equation}
		In particular, the inverse system $\Ext^j_{\dD^b(\aA)}(\sigma_{\gge i}A^\bcdot, B^\bcdot) \to \Ext^j_{\dD^b(\aA)}(\sigma_{\gge i+1} A^\bcdot, I^\bcdot)$ stabilizes, hence RHS of (\ref{eqtn_cohomo_of_lim}) is isomorphic to $\varprojlim \Ext^j_{\dD^b(\aA)}(\sigma_{\gge i} A^\bcdot, B^\bcdot)$, for $i$ sufficiently negative. Moreover, we have $\varprojlim \Hom_{\textrm{Com}}(\sigma_{\gge i} A^\bcdot, I^\bcdot) \simeq \Hom_{\textrm{Com}}(A^\bcdot, I^\bcdot)$. Hence, isomorphism $(\ref{eqtn_cohomo_of_lim})$ yields
		$$
		\Ext^j_{\dD^b(\aA)}(A^\bcdot,B^\bcdot) \simeq H^j(\Hom_{\textrm{Com}}(A^\bcdot, I^\bcdot)) \simeq \varprojlim \Ext^j_{\dD^b(\aA)}(\sigma_{\gge -i} A^\bcdot, B^\bcdot).
		$$
	\end{proof}
	For the reader's convenience, we also give a proof of the following well-known fact.
	
	\begin{LEM}\label{lem_spectral_sequence_for_bounded}
		Let $A^\bcdot$ and $B^\bcdot$ be complexes over $\aA$ such that $A^\bcdot$ is finite and $B^\bcdot$ is bounded above and with bounded cohomology. Assume there exists $N\in \mathbb{Z}$ such that $\Ext^j_{\aA}(A^i, A) = 0$, for all $j>N$, $i\in \mathbb{Z}$ and $A\in \aA$. Then, there exists a spectral sequence
		\begin{equation}\label{eqtn_spec_seq_bound_unbound}
		E_1^{p,q} = \bigoplus_k \Ext^p(A^k, B^{k+q})\, \Rightarrow \, \Ext^\bcdot_{\dD^b(\aA)}(A^\bcdot, B^\bcdot).
		\end{equation}
	\end{LEM}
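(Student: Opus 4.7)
The approach is to construct the spectral sequence via a Cartan--Eilenberg injective resolution of $B^\bcdot$, combined with filtration by the resolution direction, and then to read off convergence from the Ext-bound hypothesis together with boundedness of $A^\bcdot$.

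First, I would fix a Cartan--Eilenberg injective resolution $J^{\bcdot,\bcdot}$ of $B^\bcdot$: for each $j$, $B^j \to J^{j,\bcdot}$ is an ordinary injective resolution in $\aA$, arranged so that the columns $J^{\bcdot,\ell}$ form complexes and $\mathrm{Tot}(J^{\bcdot,\bcdot})$ is quasi-isomorphic to $B^\bcdot$ (this requires $\aA$ to have enough injectives, which is implicit in the use of $\Ext$). Since $B^\bcdot$ is bounded above and the injective resolutions $J^{j,\bcdot}$ are bounded below, the totalization is well-defined and can be taken h-injective, so that $\Hom^\bcdot_{\dD^b(\aA)}(A^\bcdot, B^\bcdot)$ is the cohomology of the Hom complex $K^\bcdot := \Hom^\bcdot(A^\bcdot, \mathrm{Tot}(J^{\bcdot,\bcdot}))$. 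Because $A^\bcdot$ is finite, $K^\bcdot$ is the totalization of the triple complex $\Hom_\aA(A^i, J^{j,\ell})$ with differentials induced by $d_A$, $d_B$ and the vertical CE differential $d_J$.

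Next, I would filter $K^\bcdot$ by the $\ell$-degree (depth in the injective resolution). The associated graded at level $p$ is, in total degree $p+q$, the direct sum $\bigoplus_{k} \Hom_\aA(A^k, J^{k+q,p})$, with residual differential being the $d_J$-operator connecting successive layers of the CE resolution of each $B^{k+q}$. Taking $d_J$-cohomology in the $p$-direction identifies $E_1^{p,q}$ with $\bigoplus_k \Ext^p_\aA(A^k, B^{k+q})$, as required. The $d_1$ differential then comes from the residual $d_A + d_B$ action and assembles these Ext groups into the $E_2$-page.

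Convergence follows from the hypotheses: for each fixed total degree $n = p + q$, the nonzero contributions to $E_1^{p,q}$ have $0 \le p \le N$ (by the uniform Ext-bound), $k$ in the finite range where $A^k \neq 0$, and $k + q$ in the finite range where $B^{k+q} \neq 0$ (using boundedness of $A^\bcdot$ and bounded-aboveness of $B^\bcdot$). Hence the $E_1$-page has only finitely many nonzero entries in each total degree, the filtration on $K^\bcdot$ is bounded in each degree, and the spectral sequence converges strongly to $\Hom^\bcdot_{\dD^b(\aA)}(A^\bcdot, B^\bcdot)$ by the classical convergence theorem for filtered complexes.

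The main obstacle will be organizing the indexing so that the filtration by the CE-resolution direction yields exactly the claimed $E_1^{p,q} = \bigoplus_k \Ext^p(A^k, B^{k+q})$ (rather than, say, $\Ext^p(A^\bcdot, B^{k+q})$), and verifying that the boundedness cut-offs coming from the Ext-hypothesis genuinely control convergence in the degree where $B^\bcdot$ is unbounded below. Once this book-keeping is handled, the spectral sequence is an immediate consequence of the standard machinery of filtered complexes.
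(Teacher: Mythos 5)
Your route is genuinely different from the paper's: you attempt a single global Cartan--Eilenberg resolution of the unbounded-below complex $B^\bcdot$ and filter its $\Hom$-complex by resolution depth, whereas the paper first replaces $B^\bcdot$ by the stupid truncations $\sigma_{\gge n}B^\bcdot$ (which are bounded), runs the classical bounded-filtration spectral sequence for each $n$, observes that the $E_1$-terms and all higher pages stabilize as $n\to -\infty$, and separately checks that $\Ext^j(A^\bcdot, \sigma_{\gge n}B^\bcdot) \to \Ext^j(A^\bcdot, B^\bcdot)$ is eventually an isomorphism.

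There is a concrete gap in your convergence argument. You claim that since the nonzero $E_1^{p,q}$ live in $0 \le p \le N$, ``the filtration on $K^\bcdot$ is bounded in each degree.'' This does not follow. The filtration by resolution depth $\ell$ has $F^p K$ built from $J^{j,\ell}$ with $\ell \geq p$, and there is no reason the injective resolutions $J^{j,\bcdot}$ terminate after $N$ steps: the hypothesis $\Ext^j(A^i,A) = 0$ for $j>N$ is a statement about the objects $A^i$ (their projective/flat dimension for mapping out), not about injective dimension of the $B^j$. Hence $\operatorname{gr}^p K$ is typically nonzero for all $p\geq 0$, and the filtration is only complete and exhaustive, not bounded; what you actually have is vanishing of $E_1^{p,q}$ for $p>N$, which says the \emph{cohomology} of $\operatorname{gr}^p K$ vanishes, not $\operatorname{gr}^p K$ itself. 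Citing ``the classical convergence theorem for bounded filtered complexes'' is therefore not legitimate here; one needs a complete-convergence statement (Boardman-type) for a complete, exhaustive filtration whose spectral sequence degenerates at a finite page in each total degree. That can be made to work, but it requires verifying completeness of the filtration (which forces you to use the \emph{product} total complex of the CE resolution) and the relevant $\lim^1$-vanishing; none of this is addressed. There is also a secondary unaddressed point: h-injectivity of the product total complex of a CE resolution of a merely bounded-above complex is not automatic and typically needs a hypothesis like exactness of products in $\aA$. The paper's truncation-and-stabilization argument via Lemma~\ref{lem_Spalten} and Lemma~\ref{lem_limit_of_hom_is_hom} avoids both issues by only ever invoking the bounded-filtration spectral sequence.
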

	\begin{proof}
		For any $n\in \mathbb{Z}$, injective resolutions $I^{i,\bcdot}$ of $B^i$ give an injective resolution $J_n^\bcdot$ of $\sigma_{\gge n}B^\bcdot$ such that complex $J_n^\bcdot$ admits a filtration with graded factors $I^{i,\bcdot}$. The filtered complex $J_n^\bcdot$ yields a spectral sequence
		\begin{equation}\label{eqtn_spec_seq_for_truncation}
		{}^nE_1^{p,q} = \bigoplus_{\{k\,|\, k+q \geq n\}} \Ext^p(A^k, B^{k+q})\, \Rightarrow \, \Hom^\bcdot_{\dD^b(\aA)} (A^\bcdot, \sigma_{\gge n} B^\bcdot).
		\end{equation}
		It follows from our assumptions that ${}^nE_1^{p,q}$ is stationary for fixed $p$, $q$ and $n \rightarrow - \infty$. Moreover, the term ${}^nE_1^{p,q}$ is zero, for any $p\notin [0,N]$ or for any sufficiently positive $q$. This confirms existence of the spectral sequence (\ref{eqtn_spec_seq_bound_unbound}) and also implies that the $l$-th layer ${}^nE^{p,q}_{l}$ is stationary for fixed $p$, $q$ and $n \rightarrow - \infty$ and $l \rightarrow \infty$.
		
		Since $B^\bcdot$ has bounded cohomology, for $n$ sufficiently negative, we have $\tau_{\gge n+1}\sigma_{\gge n} B^\bcdot \simeq B^\bcdot$ and $\tau_{\lle n} \sigma_{\gge n} B^\bcdot \simeq \hH^n(\sigma_{\gge n} B^\bcdot)[-n]$. Then triangle $\tau_{\lle n} \sigma_{\gge n} B^\bcdot \to \sigma_{\gge n} B^\bcdot \to \tau_{\gge n+1} \sigma_{\gge n} B^\bcdot$ reads:
		$$
		\hH^n(\sigma_{\gge n}B^\bcdot)[-n] \to \sigma_{\gge n}B^\bcdot \to B^\bcdot \to \hH^n(\sigma_{\gge n}B^\bcdot)[-n+1].
		$$
		
		Since $A^\bcdot$ is finite, the assumptions on $A^i$ imply that there exists $\wt{N}$ such that $\Hom(A^\bcdot, C[k]) = 0$, for $k> \wt{N}$ and all $C \in \aA$. Thus, $\Ext^j(A^\bcdot, \hH^n(\sigma_{\gge n} B^\bcdot)[-n]) \simeq \Hom(A^\bcdot, \hH^n(\sigma_{\gge n} B^\bcdot)[j-n]) = 0$, for $n< j- \wt{N}$. Then, for $n<j - \wt{N}$, we have from the above triangle an isomorphism $\Ext^{j-1}_{\dD^b(\aA)}(A^\bcdot, \sigma_{\gge n} B^\bcdot) \simeq \Ext^{j-1}_{\dD^b(\aA)}(A^\bcdot, B^\bcdot)$.
		
		This shows that the spectral sequence in (\ref{eqtn_spec_seq_bound_unbound}) converges to $\Hom^\bcdot_{\dD^b(\aA)}(A^\bcdot, B^\bcdot)$.
	\end{proof}
	
	\begin{PROP}\label{prop_exist_of_spect_seq}
		Let $A^\bcdot$, $B^\bcdot$ be bounded above complexes of objects in $\aA$ with bounded cohomology. Assume that there exists $N\in \mathbb{Z}$ such that $\Ext^j(A^i, A) = 0$, for all $j>N$, $i \in \mathbb{Z}$ and $A\in \aA$. Assume further that $\Ext^j(A^i, B^k) = 0$, for $j>0$ and all $i, k \in \mathbb{Z}$. Then, $\Ext^j_{\dD^b(\aA)}(A^\bcdot, B^\bcdot)$ is isomorphic to the $j$'th cohomology group of complex (\ref{eqtn_complex_calc_Ext}).
	\end{PROP}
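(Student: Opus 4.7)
The plan is to combine Lemmas \ref{lem_limit_of_hom_is_hom} and \ref{lem_spectral_sequence_for_bounded} by approximating $A^\bcdot$ by its ``stupid'' truncations $\sigma_{\gge n} A^\bcdot$ and then passing to the inverse limit. First I would verify the hypothesis of Lemma \ref{lem_limit_of_hom_is_hom}: since $\Ext^q(A^i, B^k)=0$ for all $q>0$, each term $B^k$ is $\Hom(A^i,-)$-acyclic, and so $R\Hom(A^i, B^\bcdot)$ is computed by the naive Hom complex. Because $B^\bcdot$ has bounded cohomology, $R\Hom(A^i,B^\bcdot)\simeq R\Hom(A^i,\tau_{\gge a}^{\aA} B^\bcdot)$ for $a$ a lower bound of the support of $H^\bcdot(B^\bcdot)$, and its cohomology vanishes below $a$ uniformly in $i$. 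Lemma \ref{lem_limit_of_hom_is_hom} then yields
$$
\Ext^j_{\dD^b(\aA)}(A^\bcdot,B^\bcdot) \simeq \varprojlim_n \Ext^j_{\dD^b(\aA)}(\sigma_{\gge n}A^\bcdot,B^\bcdot).
$$

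Second, for each fixed $n$ the complex $\sigma_{\gge n}A^\bcdot$ is bounded, so Lemma \ref{lem_spectral_sequence_for_bounded} applies (its hypothesis on $\Ext^j(A^i,A)$ is part of our standing assumptions). The hypothesis $\Ext^p(A^k,B^{k+q})=0$ for $p>0$ forces the spectral sequence
$$
E_1^{p,q} = \bigoplus_{k\gge n} \Ext^p(A^k, B^{k+q}) \Rightarrow \Hom^\bcdot_{\dD^b(\aA)}(\sigma_{\gge n} A^\bcdot, B^\bcdot)
$$
to collapse at $E_1$ in the column $p=0$, giving
$$
\Ext^j_{\dD^b(\aA)}(\sigma_{\gge n}A^\bcdot, B^\bcdot) \simeq H^j(C^\bcdot_n),\qquad C^p_n := \bigoplus_{k\gge n} \Hom(A^k,B^{k+p}).
$$
The sum defining $C^p_n$ is finite, since $A^\bcdot$ is bounded and $B^\bcdot$ is bounded above.

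Finally I would pass to the inverse limit in $n$. The truncation maps make $(C^\bcdot_n)$ an inverse system with surjective transition maps in every degree, whose limit is precisely the complex $C^\bcdot$ of (\ref{eqtn_complex_calc_Ext}). What remains is to exchange $H^j$ with $\varprojlim_n$, and this is the main technical step: the Mittag-Leffler condition on the cohomologies must be verified. I would do this by repeating the argument at the end of the proof of Lemma \ref{lem_limit_of_hom_is_hom}, applying Lemma \ref{lem_Spalten} to the inverse systems of complexes whose kernels at level $n$ are $\Hom(A^{n-1}[-(n-1)], B^\bcdot)$, using once more that $\Ext^j(A^i,B^\bcdot)=0$ for $j<a$ uniformly in $i$ so that these kernels are acyclic in each fixed degree in a cofinal range. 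Combining the three steps gives $H^j(C^\bcdot)\simeq \varprojlim_n H^j(C^\bcdot_n) \simeq \Ext^j_{\dD^b(\aA)}(A^\bcdot,B^\bcdot)$. The only real obstacle is this last interchange; the other two steps are direct appeals to the preceding lemmas and collapse of a spectral sequence.
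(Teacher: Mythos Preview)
Your proposal is correct and follows essentially the same route as the paper: degenerate the spectral sequence of Lemma \ref{lem_spectral_sequence_for_bounded} for the bounded truncations $\sigma_{\gge n}A^\bcdot$ to identify $\Ext^j(\sigma_{\gge n}A^\bcdot,B^\bcdot)$ with $H^j(C^\bcdot_n)$, use Lemma \ref{lem_Spalten} on the inverse system $(C^\bcdot_n)$ with kernels $\Hom(A^{n-1},B^\bcdot)[n-1]$ to pass to the limit, and invoke Lemma \ref{lem_limit_of_hom_is_hom}. The paper differs only in the order: it runs Spalten first (showing the system $\Ext^j(\sigma_{\gge i}A^\bcdot,B^\bcdot)$ is eventually constant), and applies Lemma \ref{lem_limit_of_hom_is_hom} at the end rather than the beginning. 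One small terminological point: with the indexing of Lemma \ref{lem_spectral_sequence_for_bounded} the surviving line is a row, and the sequence does not collapse at $E_1$ but degenerates at $E_2$---the $d_1$-differential along that row is exactly what produces the complex $C^\bcdot_n$ whose cohomology you need.
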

	
	\begin{proof}
		Lemma \ref{lem_spectral_sequence_for_bounded} applied to $\sigma_{\gge i} A^\bcdot$ and $B^\bcdot$, for any $i\in \mathbb{Z}$, implies that the spectral sequence (\ref{eqtn_spec_seq_bound_unbound})  reduces to a single row. Hence, we have $\Ext^j(\sigma_{\gge i}A^\bcdot, B^\bcdot) \simeq H^j(\oplus_{k>i} \Hom(A^k, B^{\bcdot+k}))$. The direct system $\sigma_{\gge i+1} A^\bcdot \to \sigma_{\gge i} A^\bcdot$ induces an inverse system of complexes $\oplus_{k>i}\Hom(A^k, B^{\bcdot+k}) \to \oplus_{k>i+1} \Hom(A^k, B^{\bcdot+k})$ satisfying condition $(\star)$. The kernel of $\oplus_{k>i}\Hom(A^k, B^{\bcdot+k}) \to \oplus_{k>i+1} \Hom(A^k, B^{\bcdot+k})$ is a complex $\Hom(A^i, B^\bcdot)[i]$. By Lemma \ref{lem_spectral_sequence_for_bounded}, we have $H^j(\Hom(A^i, B^\bcdot)[i]) \simeq \Ext^{j+i}(A^i, B^\bcdot)$, thus, for a fixed $j$ and any sufficiently negative $i$, complex $\Hom(A^i, B^\bcdot)[i]$ is exact at degree $j$. Lemma \ref{lem_Spalten} implies that, for such $i$, we have:
		$$
		H^j(\prod_k \Hom(A^k, B^{\bcdot +k})) \simeq H^j(\bigoplus_{k>i}\Hom(A^k, B^{\bcdot+k})) \simeq \Ext^j(\sigma_{\gge i} A^\bcdot, B^\bcdot).
		$$
		Thus, $\Ext^j(\sigma_{\gge i} A^\bcdot, B^\bcdot)$ is stationary, and $H^j(\prod_k \Hom(A^k, B^{\bcdot +k})) \simeq \varprojlim \Ext^j(\sigma_{\gge i}A^\bcdot, B^\bcdot)$. Since $B^\bcdot$ has bounded cohomology, the assumptions of Lemma \ref{lem_limit_of_hom_is_hom} are satisfied. It follows that the latter space is isomorphic to $\Ext^j(A^\bcdot, B^\bcdot)$.
	\end{proof}

	\bibliographystyle{alpha}
	\bibliography{../ref}
\end{document}